\author{Andreas Klein}
\title{Hamiltonian spectral invariants, symplectic spinors and Frobenius structures I}
\newcommand{\R}{\mathbb{R}}
\newcommand{\Z}{\mathbb{Z}}
\newcommand{\C}{\mathbb{C}}
\newcommand{\N}{\mathbb{N}}
\newtheorem{theorem}{Theorem}[section]
\newtheorem{Def}[theorem]{Definition}
\newtheorem{prop}[theorem]{Proposition}
\newtheorem{lemma}[theorem]{Lemma}
\newtheorem{folg}[theorem]{Corollary}
\newtheorem{propdef}[theorem]{Proposition/Definition}
\newtheorem{conj}[theorem]{Conjecture}
\begin{document}
\maketitle

\begin{abstract} This is the first of two articles aiming to introduce symplectic spinors into the field of symplectic topology and the subject of Frobenius structures. After exhibiting a (tentative) axiomating setting for Frobenius structures resp. 'Higgs pairs' in the context of symplectic spinors, we present immediate observations concerning a local Schroedinger equation, the first structure connection and the existence of 'spectrum', its topological interpretation and its connection to 'formality' which are valid for the case of standard Frobenius structures. We give a classification of the irreducibles and the indecomposables of the latter in terms of certain $U(n)$-reductions of the $G$-extension of the metaplectic frame bundle and a certain connection on it, where $G$ is the semi-direct product of the metaplectic group and the Heisenberg group, while the indecomposable case involves in addition the combinatorial structure of the eigenstates of the $n$-dimensional harmonic oscillator. In the second part, we associate an irreducible Frobenius structure to any Hamiltonian diffeomorphism $\Phi$ on a cotangent bundle $T^*M$ by letting elements of $T(T^*M)$ act on a line bundle $E$ on $T^*M$ spanned by 'coherent states'. The spectral Lagrangian in $T^*(T^*M)$ associated to this Frobenius structure intersects the zero-section $T^*M$ exactly at the fixed points of $\Phi$. We give lower bounds for the number of fixed points of $\Phi$ by defining a $C^*$-valued function on $T^*\tilde M$ defined by matrix coeficients of the Heisenberg group acting on spinors, where $\tilde M$ is a certain 'complexification' of $M$, whose critical points are in bijection to the fixed points of $\Phi$ resp. to the intersection of the spectral Lagrangian with the zero section $T^*\tilde M$. We discuss how to define spectral invariants in the sense of Viterbo and Oh by lifting the above function to a real-valued function on an appropriate cyclic covering of $T^*\tilde M$ and using minimax-methods for 'half-infinite' chains. 
\end{abstract}

\section{Introduction}\label{intro}

This is the first of a series of articles (\cite{kleinlag}, \cite{kleinham}) which aim to introduce the concept of symplectic spinors (Kostant \cite{kost}) into symplectic topology on one hand and the field of 'Frobenius structures' as introduced by Dubrovin (\cite{dubrovin}) on the other hand. Note that neither the former nor the latter relation is completely new in the mathematical literature, as can be read off for instance from the occurence of symplectic spinors in the literature concerning the Maslov index, semiclassical approximation and geometric quantization (cf. Guillemin, Leray, Crumeyrolle \cite{cru1}, \cite{guillemin}, \cite{leray}) on one hand and the introduction of the 'Geometric Weil representation' by Deligne (letter to Kazhdan, 1982 \cite{deligne}) on the other hand. The latter was reinforced in contemporary discourse in the realms of the Langlands program (cf. V. Lafforgue and Lysenko \cite{lafforgue}) resp. the 'mirror-symmetry'-conjecture first introduced by Kontsevich into mathematics. However, as far as the author knows, there has been no systematic treatment yet to explore the possible role of the notion of symplectic spinors and the Weil representation in 'modern symplectic topology', which can be traced back to pseudoholomorphic curves introduced by Gromov and the advent of infinite dimensional variational methods as introduced by Floer. In between both, one can consider the finite dimensional variational methods of Viterbo (\cite{viterbo}) and their relation to symplectic capacities as introduced by Hofer (\cite{hofer}) and exactly this will be the starting point of this series of papers. The main observation linking symplectic spinors to symplectic topology on one hand and 'Frobenius structures' on the other hand is the existence of a construction which links Lagrangian submanifolds of the cotangent bundle $T^*M$ of a compact Riemanian manifold $M$, intersecting each cotangent fibre transversally, at least outside of their 'caustic' to sums of complex lines, viewed as subbundles in the symplectic spinor bundle, that is we have a correspondence:
\[
{\rm (unramified)\ Lagrangian\ submanifolds\ of}\ T^*M \quad \leftrightarrow\quad {\rm direct\ sums}\ \bigoplus_i (\mathcal{L}_i\rightarrow M)
\]
where $\mathcal{L}_i, i=1,\dots, k$ are a certain set of complex line-subbundles of the symplectic spinor bundle $i^*\mathcal{Q}$ on $T^*M$, pulled back to $M$ where $i:M\hookrightarrow T^*M$ is the inclusion of the zero section. Recall that the symplectic spinor bundle $\mathcal{Q}$ over the symplectic manifold $(T^*M^{n},\omega)$ is the bundle associated to a certain connected $2$-fold covering of the principal bundle of symplectic frames, called a metaplectic structure, by the Shale-Weil-representation of the connected $2$-fold cover of the symplectic group acting as intertwining operators for the Schroedinger representation $\rho$ of the Heisenberg group $H_n$ on $L^2(\mathbb{R}^n)$. Metaplectic structures exist under relatively mild conditions on $M$, that is if $c_1(T^*M)=0\ {\rm mod}\ 2$. Note that each branch of the Lagrangian submanifold $\pi:L\subset T^*M\rightarrow M$ covering $M$ gives over any $x \in M$ rise to an element $\psi_{i,x} \in i^*\mathcal{Q}_x\simeq L^2(\mathbb{R}^n)$ by setting
\[
\psi_{i,\lambda,x}(u)=\rho((0,p_i),\lambda)f(u), \quad ((0,p_i),\lambda) \in H_n, \ u\in \mathbb{R}^n.
\]
Here, $p_i \in \mathbb{R}^n$ locally parametrizes the $i$-th branch of $L$, $\lambda \in \mathbb{R}$ (arbitrary at this point) and $f \in L^2(\mathbb{R}^n)$ is the Gaussian, we identify $H_n=\mathbb{R}^{2n} \times \mathbb{R}$. The set $\psi_{i,\lambda, x}, \ x \in M$ defines a smooth complex line bundle $\mathcal{L}_i$ (outside of ramification points) over $M$ since $i^*\mathcal{Q}_x$ allows a reduction to the structure group $O(n)$ (or its two-fold covering) and $\rho$ acts equivariantly w.r.t. to the Shale-Weil-representation. By construction, $k$ equals the local number of branches of $L$. Note that physically, the vectors $\psi_{i,\lambda,x}$ correspond exactly to 'coherent states' of the quantum mechanical Harmonic oscillator. The above correspondence will be called a {\it symplectic Fourier Mukai transformation}. In this and the second paper in this series, we will mostly assume that $\pi$ is of constant non-zero degree (hence surjective) and the set of caustic points ${\rm ker}\ d\pi\cap TL\neq\{0\}$ is empty (note however the second example below Corollary \ref{curvature} where the case ${\rm dim}({\rm ker}\ d\pi\cap TL)=1$ is studied). Under this hypothesis, each branch of the above non-ramified Lagrangian furthermore corresponds to a summand of a certain $\mathbb{C}$-valued function on $M$, namely we pair the above $\psi_{i,\lambda_i, x} \in E= \bigoplus_i^k \mathcal{L}_i\rightarrow M$ over each point $x \in M$ with certain 'elementary vectors' of $i^*\mathcal{Q}$ (cf. \cite{mumford}). Let us assume each fibre $E_x$ carries a lattice $\Gamma_x$ being compatible with $L\cap E_x$ in the sense that $L=p^{-1}(\tilde L)$ for a Lagrangian $\tilde L$ in the torus bundle $p:E\rightarrow E/\Gamma$. Then, by duality, the structure group of $i^*\mathcal{Q}$ is reducible to $O(n)\cap Sp(2n, \mathbb{Z})$. In this situation, the canonical pairing in $i^*\mathcal{Q}$ of the $\psi_{i,\lambda_i, x}$ with another (the globally defined) distinguished vector $e_\mathbb{Z} \in i^*\mathcal{Q}_x$, which can be considered as a sum of delta distributions centered on the integer points of $\mathbb{R}^n$, defines over each point of $M$ a sum of matrix elements which extends to a mapping
\[
\Theta: E\rightarrow \mathbb{C},\quad (x,c) \mapsto \sum_i^k<\psi_{i,\lambda_i, x}, e_\mathbb{Z}>(c)
\]
where we extend over each fibre $\mathcal{L}_{i,x}$ by multiplying the argument of $\rho$ acting on $f$ as well as the argument of $e_\mathbb{Z}$ by an affine-linear polynomial in $c$ ($c=(c_i)_{i=1}^k$ is the complex coordinate on the fibres of $E$, for details see \cite{kleinlag}). In case of {\it exact} $L$, that is, the canonical one-form $\alpha$ on $T^*M$ is exact on $L$, we will fix the above $\lambda_i$ by being the integral of the Poincare-Cartan-form $\alpha_H=\alpha-H_tdt$ along rays emanating from $x$ to the $i$-th branch of $L$, where $H_t$ is defined so that its Hamiltonian flow generates these rays. Choosing an appropriate basis for $i^*T(T^*M)$, each summand of this function, evaluated at $x \in M$, considering $M$ as the zero-section of $E$, can be interpreted as a value of a certain (sum of) theta functions, that is of functions of the form
\[
\theta(z, \Omega)=\sum_{k \in \mathbb{Z}^n}e^{\pi i (k, \Omega k)+2 \pi i(k, z)+i\lambda},
\]
where $\Omega$ is an element of the Siegel upper half space (a symmetric complex $n\times n$-matrix $\Omega$ whose imaginary part is positive definite) and $(\cdot,\cdot)$ denotes the standard sesquilinar form on $\mathbb{C}^n$. Note that in the case the above torus-bundle structure is absent, we will use different distinguished vectors of $i^*\mathcal{Q}_x$ to define $\Theta$, one choice is to replace $e_\mathbb{Z}$ by the Gaussian $f$. The above choice $e_\mathbb{Z}$ in the presence of a transversal Lagrangian $L$ and a compatible lattice $\Gamma$ will be considered as the most fundamental for reasons that will hopefully become clearer in the course of this article and its followers. To summarize the above philosophically, we want to stress that using these constructions, there is a local correspondence between Lagrangian submanifolds and (special values of) theta functions on one hand and complex line bundles over $M$ on the other hand, as long as the latter are spanned by 'coherent states'. For this terminology, see Perelmov (\cite{perelmov}). If $L$ is furthermore exact, then choosing the data as above, $\Theta$, outside of an eventual zero set $S$ (to be interpreted as some sort of theta divisor) defines a generating function $\Theta:E\setminus S\rightarrow \mathbb{C}^*$ for $L$ (generalizing Viterbo's construction) that reproduces $L$ by taking the 'logarithmic derivative' and, lifted to a suitable cyclic covering $\tilde E$ (associated for instance to $\Theta_*:\pi_1(E\setminus S)\rightarrow \pi_1(S^1)$), allows to define spectral invariants in a very similar way, using the Morse theory for Novikov one forms developed by Novikov, Farber, Ranicki and others. The critical points of $\Theta$ then correspond to the intersection points of $L$ with the zero section. Note that $\tilde E$ is a vector bundle over a (non-compact) cyclic covering $\tilde M$, of $M$.\\ 
Finally, since the vectors $\psi_{i,\lambda_i}$ define a non-vanishing section of $E=\bigoplus_i^k  \mathcal{L}_i$ on $M$, symplectic Clifford multipliction on $T^*M$ allows us to define a Frobenius multiplication $\star$ in the sense of Dubrovin \cite{dubrovin} for tangent vectors on $M$, that is for $v \in TM$ we set
\[
\star \in H^0(T^*M\otimes End(E)),\quad v\star \psi_i:=(v-iJv)\cdot\psi_i,
\]
where $\cdot$ denotes symplectic Clifford multiplication over $T^*M$ and $J$ denotes a compatible nearly complex structure on $T(T^*M)$. As it turns out, the $\psi_i$ diagonalize $\star$ and its eigenvalues ($\star$ is semisimple, which is a consequence of our assumption of $L$ being non-ramified), considered as elements of $\Gamma(\Lambda^1(T^*M))$, are precisely the branches of the above Lagrangian submanifold $L$, that is, we recover $L$ as the spectral Lagrangian of $\star$. As a set, this Lagrangian thus identifies with 
\[
L\simeq {\rm Spec}(\frac{{\rm Sym}(TM)}{\mathcal{I}_s}),
\]
where ${\rm Sym}(TM)$ denotes the sheaf of symmetric tensor algebras in the fibres of $TM$ and $\mathcal{I}_s$ is the ideal spanned by the characteristic polynomial $s$ of $\star$, acting on $E$. Note that in appropriate coordinates, $\star$ is pointwise nothing else than the 'creation' operator of the quantum mechanical harmonic oscillator and the 'diagonalizing' vectors are 'coherent states'.\\

In this first article, we will mainly present an axiomatic setting and certain classification results for irreducible resp. indecomposable Frobenius structures arising in the context of symplectic spinors (cf. Definition \ref{frobenius}, Theorem \ref{genclass}, Theorem \ref{genclassN} and Proposition \ref{higgs}). The regular semisimple case describes the situation where the Frobenius multiplication is diagonalizable and the eigenvalues of $\star$ are distinct, in this situation, one can restrict to an examination of irreducible, hence one dimensional, semisimple Frobenius structures $E$ (and their sums). The indecomposable, non-irreducible case typically appears in a situation where on certain subsets of $M$ certain 'directions' in $M$ are distinguished as it is the case of a stratification of $M$ resp. $L$ by smooth (closed) submanifolds, this case will be discussed in Theorem \ref{genclassN}, a typical example is the Frobenius structure associated to a Lagrangian embedding in $T^*M$ with one-dimensional smooth caustic and trivial normal bundle of its Thom-Boardman-strata, cf. the discussion in the second example below Corollary \ref{curvature}.\\
The emphasis of the second part of this article, \cite{kleinham}, will be applications to Hamiltonian systems and their spectral invariants, while we will postpone a closer examination of the above Lagrangian case and its Frobenius structure, i.e. its connection to 'higher Maslov classes' and miniversal deformations of holomorphic functions with isolated singularities to the third article in the series (\cite{kleinlag}). It will turn out that a given Hamiltonian function $H:M\times[0,1]\rightarrow \mathbb{R}$ on a symplectic manifold which is a contangent bundle $(M=T^*N, \omega)$ (we will always assume that the time one map of the corresp. Hamiltonian flow has only non-degenerate fixed points and is of the form $|p|^2$ outside of some compact subset in $T^*N$ containing $N$) also defines a Frobenius structure $\star:TU\rightarrow End(E)$ in analogy to the above, where $E$ is a complex line bundle on a neighbourhood $U$ of the diagonal $\Delta$ in $(M\times M, \omega\oplus\omega)$ so that the corresponding spectral Lagrangian lies in the complexification $(T_{\mathbb{C}}^*U, \omega_{\mathbb{C}})$ and $\pi:L\subset T_{\mathbb{C}}^*U \rightarrow U$ has degree one as well as a $S^1$-valued 'generating function' on $U\subset M\times M$ in the above sense. This function $\Theta$ can be considered to live on $U\subset M \times M$ since $L$ is a section of $\pi:T^*_{\C}U\rightarrow U$, then the critical points of $\Theta$ on $U$ correspond exactly to the fixed points of the time-one map of the Hamiltonian flow on $M\times M$, where one extends the Hamiltonian flow of $H$ to $M\times M$ by taking $\tilde H(x,y)=1/2(H(x)+H(y))$ on $U$ (we will assume that $|d\Theta|\rightarrow \infty$ near the boundary of $U$). Since the critical points of the generating function $\Theta$ on $U$ also correspond to the zeros of the spectral Lagrangian, we have the theorem:
\begin{theorem}\label{theorem1}
A Hamiltonian function $H:M\times[0,1]\rightarrow \mathbb{R}$ on a cotangent bundle $M=T^*N$ as above defines a Frobenius structure $\star:TU \rightarrow End(E)$ over a neighbourhood $U$ of the diagonal of $(M\times M, \omega \oplus \omega)$, $E$ being a complex line bundle over $U$, so that the following discrete subsets in $U$ coincide:
\begin{itemize}
\item the intersection of the spectral Lagrangian $L$ in $T_{\mathbb{C}}^*U$ with the zero section in $T_{\mathbb{C}}^*U$.
\item the fixed points of the time one flow of $\tilde H$ on $U$. 
\item the critical points of the corresponding generating function $\Theta: U\rightarrow \mathbb{C}^*$.
\end{itemize}
These points are in turn in bijective correspondence to the fixed points of the time one flow of $H$ on $M$.
\end{theorem}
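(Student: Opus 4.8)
The plan is to attach to $H$ a single unramified, exact Lagrangian $L$ inside a complexified cotangent bundle $T^*_{\C}U$, to run the construction of the introduction on it -- producing $E$, the Frobenius multiplication $\star$ and the generating function $\Theta$ -- and then to match the three discrete subsets to each other and to the fixed-point set of $\phi:=\phi_H^1$. First I would assemble the Lagrangian. Since $\tilde H(x,y)=\tfrac12\bigl(H(x)+H(y)\bigr)$ splits, the Hamiltonian isotopy of $\tilde H$ on $M\times M$ is the product isotopy, so $\Phi:=\phi_{\tilde H}^1=\phi\times\phi$ and $\operatorname{Fix}(\Phi)=\operatorname{Fix}(\phi)\times\operatorname{Fix}(\phi)$; by hypothesis the fixed points of $\phi$ are nondegenerate, hence isolated, and by the $|p|^2$-normalisation at infinity they are finite in number. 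By the standard construction of generating functions for Hamiltonian symplectomorphisms (cf.\ \cite{viterbo}), $\phi$ admits a generating function $S\colon M\to\R$ -- an honest function when $\phi$ is $C^1$-close to the identity, and quadratic at infinity in general -- whose critical points are exactly $\operatorname{Fix}(\phi)$. I would then regard a neighbourhood $U$ of the diagonal $\Delta\cong M$ in $(M\times M,\omega\oplus\omega)$ as a complexification $\tilde M$ of $M$, the compatible almost complex structure that $\omega\oplus\omega$ furnishes on the normal bundle of the totally real $\Delta$ playing the role of $i$, and complexify $S$ to $\tilde S\colon U\to\C$. The graph of $d\tilde S$ is then a degree-one section $L\subset T^*_{\C}U$, transversal to the complexified fibres and with empty caustic, and it is exact because $\tilde H$ is a Hamiltonian: the primitive, hence the weight $\lambda$ entering the coherent states, is the Poincar\'e--Cartan integral of $\alpha-\tilde H_t\,dt$ along the rays of $\phi_{\tilde H}^t$, exactly as in the introduction.

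Next I would run the symplectic-spinor machinery of the introduction on $L$. Symplectic Clifford multiplication on $T^*_{\C}U$ acting on the coherent states attached to the branch of $L$ produces the line bundle $E\to U$ -- a line bundle because $L$ has degree one -- and the Frobenius multiplication $v\star\psi:=(v-iJv)\cdot\psi$; then $\star$ is automatically semisimple and, by the diagonalisation statement of the introduction, its unique eigenvalue, a section of $\Lambda^1T^*_{\C}U$, is the branch of $L$, i.e.\ $d\tilde S$. Hence $L\cong\operatorname{Spec}\bigl(\operatorname{Sym}(TU)/\mathcal I_s\bigr)$ is the graph of $d\tilde S$, so $L\cap(\text{zero section})=\{\,d\tilde S=0\,\}=\operatorname{crit}(\tilde S)$. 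Pairing the coherent states over each point of $U$ with a distinguished vector of $i^*\mathcal Q$ -- the Gaussian $f$, or $e_{\Z}$ once a compatible lattice has been fixed -- produces $\Theta\colon U\to\C$; off its zero set $S$ (the ``theta divisor'') it is $\C^*$-valued and, by the logarithmic-derivative property, a generating function for $L$, so there $d\log\Theta=d\tilde S$ and, since the critical points of $\tilde S$ avoid $S$, $\operatorname{crit}(\Theta)=\operatorname{crit}(\tilde S)=L\cap(\text{zero section})$; the hypothesis $|d\Theta|\to\infty$ near $\partial U$ confines all critical points to the interior of $U$. This identifies the first and third bullets.

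It then remains to close the loop with the second bullet and the final bijection. On one side, since $\tilde S$ is the complexification of the Morse function $S$ over a small enough neighbourhood of the totally real $\Delta$, the implicit function theorem applied to $d\tilde S=0$ shows that no complex critical points appear, so $\operatorname{crit}(\tilde S)=\operatorname{crit}(S)$, which under $\Delta\cong M$ is $\operatorname{Fix}(\phi)$ sitting on the diagonal. On the other side $\operatorname{Fix}(\Phi)\cap U=\{(x,y)\in U:\phi(x)=x,\ \phi(y)=y\}$; because the fixed points of $\phi$ are finite and isolated while $U$ is a neighbourhood of $\Delta$, shrinking $U$ forces $x=y$, so $\operatorname{Fix}(\Phi)\cap U=\{(x_0,x_0):x_0\in\operatorname{Fix}(\phi)\}$, which is precisely $\operatorname{crit}(S)$ under $\Delta\cong M$. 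Hence all three discrete sets coincide with the diagonal fixed points of $\Phi$ in $U$, and $(x_0,x_0)\mapsto x_0$ is the asserted bijection onto $\operatorname{Fix}(\phi_H^1)\subset M$.

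The step I expect to be the main obstacle is the construction in the first paragraph: one must show that the ``complexification'' $T^*_{\C}U$ can be arranged so that the graph of a Hamiltonian time-one map which need not be $C^1$-close to the identity is, over a suitable $U$, a genuine degree-one section $L$ with empty caustic -- equivalently that the extra fibre variables of a generating function quadratic at infinity are swallowed by the complexification rather than forcing $\Theta$ onto a nontrivial vector bundle over $U$ -- and that $|d\Theta|\to\infty$ near $\partial U$ can indeed be achieved from the $|p|^2$-normalisation, so that $\{\,d\tilde S=0\,\}$ is the \emph{entire} fixed-point set and contains nothing spurious. A subsidiary check is that the coherent-state line bundle $E$ is smooth over all of $U$ (no ramification points) and that symplectic Clifford multiplication genuinely reads off the branch of $L$ as the eigenvalue of $\star$, i.e.\ that $\star$ sees the spectral Lagrangian correctly. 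These verifications are the substance of \cite{kleinham}; the theorem above is their summary.
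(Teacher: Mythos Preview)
The paper does not actually prove this theorem here: it is announced in the introduction with the proof deferred to the sequel \cite{kleinham}, and the only argument supplied in this article is the one-line remark immediately after the statement (for the final bijection: shrink $U$ and modify $\tilde H$ off the diagonal so that its only fixed points lie on $\Delta$). So there is no full proof to compare against, only the architectural hints scattered through the introduction.

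Those hints, however, indicate that your outline runs in the opposite direction from the paper's announced approach, and relies on an input the paper explicitly says is unavailable. You begin with a classical generating function $S$ for $\phi$ (invoking \cite{viterbo}), complexify it, set $L=\operatorname{graph}(d\tilde S)$, and then decorate with the Frobenius/spinor language. The paper instead builds $E$ and $\star$ first --- by letting elements of $T(T^*M)$ act on coherent-state spinors --- and the spectral Lagrangian $L$ emerges as the eigenvalue locus of $\star$; it explicitly cautions that this $L$ ``is not connected to the image of the zero section in $T^*N$ under the time one flow of $H$ in an obvious way'', which is precisely what your $L$ is built from. The paper also says that Viterbo's finite-dimensional method is ``in the Hamiltonian case only applicable for $T^*N=\R^{2n}$'', and that for general $N$ one must Nash--Moser embed $N$ into an affine space $A$ and pull the spinor bundle back from $T^*A\times T^*A$. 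Finally, the passage to $M\times M$ is motivated there by the need to find invariant Lagrangian subspaces for the differential of the time-one flow, not as a device to absorb extra GFQI fibre variables into a complexification. Your closing caveat correctly flags the main obstacle, but the resolution the paper gestures at (Nash--Moser embedding, intrinsic spinor construction of $L$) is different from the one you sketch.
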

Note that the latter correspondence follows by choosing $U$ sufficiently small and altering $\tilde H$ outside $\Delta\subset U$ so that its only fixed points lie on $\Delta$. Note further that we have to pass from $M$ to a neighbourhood of the diagonal $U\subset M\times M$ to identify the critical points of an $S^1$-valued function $\Theta$ with the fixed points of the time-one flow of $H$ for reasons which will become clear in \cite{kleinham} (it is closely connected to the question of finding invariant Lagrangian subspaces for the differential of the time one flow of $H$). A Frobenius structure $E$ and a spectral Lagrangian living in the complex bundle $T_{\mathbb{C}}^*M$ is always associated to $H$ on $M$ alone, but the zeros of the corresponding spectral Lagrangian do not necessarily correspond to the critical points of a function on $M$ given by matrix elements associated to $E$ over $M$ (as opposed to the case of the Frobenius structure associated to a 'real' Lagrangian of degree one as above), while these zeros still coincide with the fixed points of the time one flow of $H$. Alternatively, one can consider a certain 'dual' $E'$ of a given $E$ (cf. Definition \ref{frobenius}) to define a function by matrix elements associated to $E'$ in the sense that its logarithmic derivative gives the spectral Lagrangian of $E$. Note also, that for general $M=T^*N$, we have to embed $N$ into a higher dimensional affine space $A$ using the embedding theorem of Nash and Moser (a certain almost complex structure on $TM$ determining the embedding) and then proceed by pulling back the symplectic spinor bundle over $T^*A\times T^*A$ to $U\subset M\times M$ (cf. \cite{kleinham}). We will give in the second part of this article \cite{kleinham} first a discussion for $N=T^n$, where $T^n$ denotes the flat torus, which requires no such embedding, then $\Theta$ is again determined by special theta values. Note finally that the spectral Lagrangian $L$ in $T^*_{\mathbb{C}}U$ is not connected to the image of the zero section in $T^*N$ under the time one flow of $H$ in an obvious way.\\
To estimate the number of fixed points of the time one flow of $\tilde H$ on $U$, note that the class $\xi=\Theta^*(\frac{dz}{z}) \in H^1(U, \mathbb{Z})$ associated to $\Theta:U\rightarrow \mathbb{C}^*$ defines a local system $\mathcal{L}_{\xi}$ over $U$ by the ring homomorphism
\[
\phi_{\xi}:\mathbb{Z}[\pi]\rightarrow {\bf Nov}(\pi), \quad \phi_{\xi}(g)=t^{<\xi,g>}
\]
where $\pi=\pi_1(U)=\pi_1(M)$ is the fundamental group, $\mathbb{Z}[\pi]$ its group ring, ${\bf Nov}(\pi)$ is the Novikov ring in the indeterminate variable $t$ and $<\xi,g> \in \mathbb{R}$ denotes the evaluation of $\xi$ on the homology class represented by $g \in H_1(U, \mathbb{Z})$. $\mathcal{L}_{\xi}$ is then a left ${\bf Nov}$-module over $U$. Recall that the Novikov ring denotes formal sums 
\[ 
\sum_{i=1}^{\infty}n_it^{\gamma_i},
\]
where $\gamma_i \in \mathbb{R}, \gamma_i\rightarrow -\infty$ and $n_i \in \mathbb{Z}$ are unequal to zero for only a finite number of $i$ obeying $\gamma_i>c$ for any given $c\in \mathbb{R}$. Let $b_i(\xi)$ denote the rank of $H_i(U; \mathcal{L}_{\xi})$ as a module over ${\bf Nov}(\pi)$ and $q_i(\xi)$ the minimal number of generators of its torsion part. Then by the Novikov inequalities resp. their generalizations to manifolds with boundary (cf. Bravermann \cite{bravermann}), Theorem \ref{theorem1} allows to estimate the number of geometrically distinct critical points of $\Theta$ and thus the number of fixed points of $H$ on $M$ by
\begin{folg}
Let $\phi_H$ be the time-one flow of a time-dependent Hamiltonian $H$ on $M$, $n={\rm dim}{M}$ and $\#{\rm Fix}(\phi_H)$ be the number of its fixed points. Then we have the following estimate:
\[
\#{\rm Fix}(\phi_H)\geq \sum_{i=0}^{2n} b_i(\xi)+2 \sum_{i=1}^{2n} q_i(\xi)+ q_0(\xi).
\]
\end{folg}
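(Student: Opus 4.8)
The plan is to read off the bound from Theorem~\ref{theorem1} together with the Novikov--Morse inequalities for a Morse closed $1$-form on a manifold with boundary (\cite{bravermann}). By the last assertion of Theorem~\ref{theorem1}, $\#\mathrm{Fix}(\phi_H)$ equals the number of critical points of the generating function $\Theta\colon U\to\C^*$, so it suffices to bound $\#\mathrm{crit}(\Theta)$ below by $\sum_{i=0}^{2n}b_i(\xi)+2\sum_{i=1}^{2n}q_i(\xi)+q_0(\xi)$.

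First I would produce the relevant closed real $1$-form and check that it is Morse. Writing $\Theta=|\Theta|\,e^{2\pi i\vartheta}$ with $\vartheta$ locally defined, the form $\lambda:=d\vartheta$ is globally defined, closed, and represents the class $\xi=\Theta^*(\tfrac{1}{2\pi i}\tfrac{dz}{z})\in H^1(U;\Z)$, so it induces the same local system $\mathcal L_\xi$ over $\mathbf{Nov}(\pi)$ as in the statement. The geometric content one has to extract from the construction of $\Theta$ is that the zero set of $\lambda$ is exactly $\mathrm{crit}(\Theta)$: the spectral Lagrangian $L$ is the graph of the closed complex-valued $1$-form $\tfrac1{2\pi i}\tfrac{d\Theta}{\Theta}=\tfrac1{2\pi i}\,d\log|\Theta|+\lambda$ in $(T^*_\C U,\omega_\C)$, and since $\Theta$ is built from matrix coefficients of coherent states — the theta series $\theta(z,\Omega)$ in the torus case — which are holomorphic in the natural coordinates on the complexification underlying the construction, the two real $1$-forms $d\log|\Theta|$ and $\lambda$ are conjugate (up to sign) via the complex structure and hence vanish on the same set. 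Consequently $\{\lambda=0\}=\{d\Theta=0\}=L\cap(\text{zero section})$. Under the correspondence of Theorem~\ref{theorem1} a zero of $\lambda$ is non-degenerate if and only if $L$ meets the zero section transversally there, which in turn holds if and only if the associated fixed point of $\phi_H$ is non-degenerate; by the standing hypothesis on $H$ all fixed points are non-degenerate, so $\lambda$ is a Morse $1$-form.

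Next I would apply the boundary version of the Novikov inequalities. The running assumption $|d\Theta|\to\infty$ near $\partial U$ guarantees that $\lambda$ is nowhere zero near $\partial U$, that all zeros of $\lambda$ lie in a fixed compact subset of $\mathrm{int}\,U$, and that one can choose a gradient-like vector field for $\lambda$ transverse to $\partial U$ with the correct sign; these are exactly the hypotheses under which the Novikov complex of $\lambda$ over $\mathbf{Nov}(\pi)$ with coefficients in $\mathcal L_\xi$ is defined and computes $H_*(U;\mathcal L_\xi)$ (here one uses $\pi_1(U)=\pi_1(M)$), so that the inequalities of \cite{bravermann} apply. In homological form these read, for each $i$, $m_i(\lambda)\ge b_i(\xi)+q_i(\xi)+q_{i+1}(\xi)$, where $m_i(\lambda)$ denotes the number of zeros of $\lambda$ of Morse index $i$. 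Summing over $i=0,\dots,2n=\dim U$ and using $\sum_{i\ge0}\bigl(q_i(\xi)+q_{i+1}(\xi)\bigr)=q_0(\xi)+2\sum_{i=1}^{2n}q_i(\xi)$ gives $\#\mathrm{Fix}(\phi_H)=\#\mathrm{crit}(\Theta)=\sum_{i=0}^{2n}m_i(\lambda)\ge\sum_{i=0}^{2n}b_i(\xi)+2\sum_{i=1}^{2n}q_i(\xi)+q_0(\xi)$, which is the assertion.

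The step I expect to be the real obstacle is the middle paragraph: one must make precise in which sense $\Theta$ is holomorphic on the relevant part of the complexification, so as to be sure that $\mathrm{crit}(\Theta)$ is genuinely the zero set of the circle-valued piece $\lambda$ and not the a priori larger locus $\{d\vartheta=0\}$, and at the same time that transversality of $L$ with the zero section is equivalent to $\lambda$ being Morse. A secondary, more routine point is the verification of the boundary hypotheses needed for Braverman's version of the Novikov inequalities, for which the assumption $|d\Theta|\to\infty$ near $\partial U$ is precisely designed.
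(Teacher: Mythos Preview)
Your approach is essentially the same as the paper's: invoke Theorem~\ref{theorem1} to identify $\#\mathrm{Fix}(\phi_H)$ with $\#\mathrm{crit}(\Theta)$, then apply the Novikov--Morse inequalities for manifolds with boundary from \cite{bravermann} to the closed $1$-form representing $\xi$. The paper is in fact terser than you are --- it simply cites Theorem~\ref{theorem1} and \cite{bravermann} and remarks that $\Theta$ is modified in a collar of $\partial U$ so as to satisfy Braverman's hypotheses without creating new critical points, rather than appealing directly to $|d\Theta|\to\infty$; your concern about identifying $\mathrm{crit}(\Theta)$ with the zero locus of the \emph{real} closed form (the ``holomorphicity'' issue) is a genuine point the paper does not address here either, deferring the construction and fine properties of $\Theta$ to the sequel \cite{kleinham}.
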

We assume here that $\Theta$ is modified along a tubular neighbourhood of the boundary $\partial U$ to match the conditions in \cite{bravermann} (which can always be achieved without introducing new critical points). Note that the Novikov numbers $b_i(\xi), q_i(\xi)$ equivalently appear as Betti- resp. torsion numbers of the $\mathbb{Z}[\pi_1(U)]$-module $H_i(\tilde U_{\xi}, \mathbb{Z})$ on the covering $\tilde U_{\xi}$ of $U$ associated to the kernel of the monodromy homomorphism $Per_\xi:\pi_1(U)\rightarrow \mathbb{R}, \ [\gamma]\mapsto <\gamma, \xi>$. Here, $\pi_1(U)$ act as the group deck-transformations on $\tilde U_{\xi}$. We expect to extract further information on the critical points of $\Theta$ by examining the structure of the underlying Morse-Novikov-complex on the chain level more closely. In especially, in the absence of 'homoclinic orbits' estimates involving Lusternik-Schnirelman-like categories of the type introduced in Farber (\cite{farber}) give estimates like the following.
\begin{folg}
Let $\phi_h$ be the time-one flow of a time-dependent Hamiltonian $H$ on $M$ as above and let ${\rm cat}(U, \xi)$ be the category of $U$ with respect to $\xi$ as in introduced in Farber \cite{farber}. Assume that the homology class $[\xi] \in H^1(U, \mathbb{R})$ admits a gradient-like vector field with no homoclinic cycles. Then
\[
\#{\rm Fix}(\phi_H)\geq {\rm cat}(U, \xi).
\]
\end{folg}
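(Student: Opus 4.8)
The plan is to convert the count of fixed points into a count of zeros of a closed $1$-form representing $\xi$, and then to apply Farber's Lusternik--Schnirelmann theory for closed $1$-forms \cite{farber}. First I would invoke Theorem \ref{theorem1}: the fixed points of $\phi_H$ on $M$ are in bijection with the critical points of the generating function $\Theta\colon U\to\C^*$, and these critical points are precisely the points of $L\cap(\text{zero section})$, where $L\subset T^*_{\C}U$ is the spectral Lagrangian, realised as the graph of the complex $1$-form $d\log\Theta=\Theta^*(dz/z)$ on $U$. Hence $x\in U$ is critical for $\Theta$ exactly when $d\log\Theta|_x=0$, and it suffices to bound $\#\{\,d\log\Theta=0\,\}$ from below by ${\rm cat}(U,\xi)$.

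Next I would replace $d\log\Theta$ by a genuine real closed $1$-form. The natural choice is $\omega:={\rm Im}(d\log\Theta)=d\arg\Theta=\Theta^*(d\theta)$, with $[\omega]=2\pi\xi$; since ${\rm cat}(U,\cdot)$ depends only on the ray spanned by the class (equivalently, only on the cyclic cover $\tilde U_\xi$), one has ${\rm cat}(U,[\omega])={\rm cat}(U,\xi)$. The inclusion $\{\,d\log\Theta=0\,\}\subseteq\{\,\omega=0\,\}$ is obvious; what must be shown is the reverse, i.e. that the modulus $|\Theta|$ produces no extra zeros of $\omega$. For this I would use the construction of $\Theta$: it is (the restriction to $U$ of) a holomorphic function assembled from theta functions and matrix coefficients of the Heisenberg representation on symplectic spinors, so that $d\log\Theta$ is of type $(1,0)$ for the complex structure on the ambient complexification $T^*\tilde M\supset U$; a $(1,0)$-form whose imaginary part vanishes at a point vanishes there, whence $\{\,\omega=0\,\}=\{\,d\log\Theta=0\,\}$, which by the first step and the standing non-degeneracy hypothesis is a finite set. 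Finally, exactly as in the preceding Novikov-inequality corollary and following Bravermann \cite{bravermann}, I would modify $\Theta$ in a collar of $\partial U$ (keeping $|d\Theta|\to\infty$ there) so as to introduce no new zeros; then $\omega$ is zero-free near $\partial U$ and Farber's machinery applies to $(U,\omega)$ in its version for manifolds with boundary, handled as in the Novikov case above.

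By hypothesis $\xi$ is represented by a closed $1$-form admitting a gradient-like vector field without homoclinic cycles; a $C^0$-small homotopy of $\omega$ inside its cohomology class (which does not change the zero count, by non-degeneracy) lets me take $\omega$ itself to be such a form. Farber's Lusternik--Schnirelmann theorem for closed $1$-forms \cite{farber} then gives
\[
\#\{\,\omega=0\,\}\;\geq\;{\rm cat}(U,[\omega])\;=\;{\rm cat}(U,\xi).
\]
Combining the steps, $\#{\rm Fix}(\phi_H)=\#\{\,d\log\Theta=0\,\}=\#\{\,\omega=0\,\}\geq{\rm cat}(U,\xi)$, which is the claim.

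The step I expect to be the main obstacle is the second one: producing a bona fide real closed $1$-form on $U$ whose zero set is exactly the intersection of the a priori complex spectral Lagrangian $L$ with the zero section, i.e. ruling out spurious zeros arising from the modulus of $\Theta$. This genuinely uses the holomorphic nature of $\Theta$ and the compatibility of the (almost) complex structure on $T^*\tilde M$ with the whole construction. A secondary, more routine issue is arranging the behaviour of $\Theta$ near $\partial U$ compatibly with the $|p|^2$-normalisation of $H$ at infinity and with a boundary version of Farber's theorem, which is handled exactly as in the Novikov corollary above.
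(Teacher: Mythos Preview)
The paper does not give an explicit proof of this corollary; it is stated in the introduction as a direct consequence of Theorem~\ref{theorem1} together with Farber's Lusternik--Schnirelmann theory for closed $1$-forms \cite{farber}, the details of the construction of $\Theta$ being deferred to the sequel \cite{kleinham}. Your overall strategy---convert fixed points to critical points of $\Theta$ via Theorem~\ref{theorem1}, then apply Farber's bound---is exactly the intended one.

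Where you go beyond the paper is in your steps~2--3, the passage from the complex $1$-form $d\log\Theta$ to a real closed $1$-form with the same zero set. Your proposed resolution via holomorphicity is not supported by the construction as presented here: $U$ is a neighbourhood of the diagonal in the \emph{real} manifold $M\times M$ (with $M=T^*N$), and the paper nowhere asserts that $\Theta$ is the restriction of a holomorphic function on an ambient complexification, nor that $U$ sits inside $T^*\tilde M$ in the way you describe. What the paper does say (see the paragraph preceding Theorem~\ref{theorem1}) is that $\Theta$ is an ``$S^1$-valued `generating function'\,''. If one takes this at face value, then $|\Theta|\equiv 1$, hence $d\log|\Theta|\equiv 0$, and the zeros of $d\arg\Theta$ coincide with the critical points of $\Theta$ trivially---no holomorphicity argument is needed. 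Your identification of this step as ``the main obstacle'' is well placed, but the resolution the paper has in mind is the $S^1$-valuedness of $\Theta$ (to be established in \cite{kleinham}), not a Cauchy--Riemann argument.
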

Now following the concept of Viterbo \cite{viterbo} and Oh \cite{oh}, we are tempted to define spectral invariants associated to $\Theta$ on $U$ as follows. Denote by $C_*(\tilde U_{\xi})$ the simplicial or cellular chain complex on $\tilde U_{\xi}$, then the Novikov complex $C_*$, generated by the critical points of $\xi$ on $U$ over ${\bf Nov}(\pi)$ is represented as $C_*={\bf Nov}(\pi)\otimes_{\mathbb{Z}[\pi]}C_*(\tilde U_{\xi})$. Let $\Theta_{\xi}:\tilde U_{\xi} \rightarrow \mathbb{R}$ be a primitive of $\xi$ on $\tilde U_{\xi}$. For $\alpha \in C_*$, represent $\alpha =\sum_{i=1}^{\infty}n_{[p,g]} t^{<\xi,g>}$, where $p$ is a critical point of $\Theta$, $g \in \pi$ and $<\xi,g> \in \mathbb{R}$ is the period mapping. We define the level $\lambda_{\xi}(\alpha)$ of $\alpha \in C_*$ as
\[
\lambda_{\xi}(\alpha)=\max_{[p,g]}\{\Theta_{\xi}([p,g]):\ n_{[p,g]}\neq 0\}
\]
Note that $\Theta_{\xi}([p,g])=\Theta(p)+<\xi,g>$ by the definition of the covering $\tilde U_{\xi}$. $\lambda_{\xi}$ defines a filtration on $C_*$ by considering $C_*^\lambda$ as the span of all chains $\alpha$ so that $\lambda_{\xi}(\alpha)\leq\lambda$. There is a natural inclusion $i_\lambda:C_*^\lambda\rightarrow C_*$ and an associated map on $H_*(\tilde U_{\xi}, \mathbb{Z})$. Then we define for any $a \in H_*(\tilde U, \mathbb{Z})$:
\[
\rho(H, a)=\inf_{\alpha; (i_\lambda)[\alpha]=a} \lambda_{\xi}(\alpha).
\]
Note that for $\rho(H, a)$ be finite, necessarily $a \neq 0$, so unless we guarantee the existence of some non-zero homology class $a$ in $H_*(\tilde U_{\xi}, \mathbb{Z})$, we cannot prove the finiteness of $\rho(H, a)$. However, we will prove in the second article of this series the following finiteness, spectrality and $C_0$-continuity-property, further investigations and applications of this spectral invariant are postponed to subsequent publications.
\begin{theorem}
Assume there is a non-zero, non-torsion element in $H_*(\tilde U_{\xi}, \mathbb{Z})$ being a module over ${\bf Nov}(\pi)$. Then $\rho(H, a)$ is finite and a critical value of $\Theta_{\xi}$ for any $0\neq a \in H_*(\tilde U_{\xi}, \mathbb{Z})$. Furthermore, if $H$ and $F$ are two (time-dependent) Hamiltonian functions, then
\[
|\rho(H, a)-\rho(F, a)|\leq ||H-F||,
\]
where $||\cdot||$ is Hofer's pseudo-norm on $C_0(T^*N\times [0,1])$. I.e., $\rho_a$ mapping $H\mapsto \rho(H, a)$ is $C_0$-continuous.
\end{theorem}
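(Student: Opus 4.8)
We outline the argument, which follows the scheme of Viterbo \cite{viterbo} and Oh \cite{oh} for spectral numbers, transported to the Morse--Novikov setting of Novikov, Farber \cite{farber} and Bravermann \cite{bravermann}. The plan is to fix once and for all the period class $\xi\in H^1(U,\mathbb{Z})$: because $\Theta$ depends continuously on $H$ through the Poincar\'e--Cartan integrals $\lambda_i$ (cf. the discussion preceding Theorem \ref{theorem1}) and $[\Theta^*(dz/z)]$ is integral, hence locally constant, $\xi$ is constant along any path in the connected space of admissible Hamiltonians; thus the covering $\tilde U_\xi$, its chain complex and $H_*(\tilde U_\xi,\mathbb{Z})$ do not depend on $H$. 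One then uses the standard identification of the filtered Novikov complex $C_*^\lambda$ of $\Theta_\xi$ with the filtration of $\tilde U_\xi$ by sublevel sets $\{\Theta_\xi\leq\lambda\}$; this is legitimate because the critical points of $\Theta$ are nondegenerate (the fixed points of $\tilde H$ are, by hypothesis, and hence so are the critical points of $\Theta$ by Theorem \ref{theorem1} and a local computation), finite in number in $U$, and because $\Theta$ has been put into Bravermann normal form in a collar of $\partial U$, which makes the sublevel sets of finite type and the gradient flow of $\Theta_\xi$ complete on them.

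\emph{Finiteness.} For a Novikov chain $\alpha=\sum n_{[p,g]}t^{\langle\xi,g\rangle}$ the defining condition of $\mathbf{Nov}(\pi)$ bounds $\{\langle\xi,g\rangle:n_{[p,g]}\neq 0\}$ from above, and there are finitely many critical points $p$, so $\lambda_\xi(\alpha)=\max_{[p,g]}\{\Theta(p)+\langle\xi,g\rangle:n_{[p,g]}\neq 0\}$ is a finite real number; since the sublevel complexes exhaust $C_*$, this gives $\rho(H,a)<\infty$. For the lower bound $\rho(H,a)>-\infty$ one invokes the non-torsion hypothesis: a class admitting representatives of arbitrarily negative level would lie in $\bigcap_\lambda\operatorname{im}\bigl(H_*(C_*^\lambda)\to H_*(\tilde U_\xi)\bigr)$, which vanishes on the non-torsion part by the Archimedean property of the Novikov valuation on $\mathbf{Nov}(\pi)$; the standing assumption guarantees this part is non-trivial, and a short reduction then covers the torsion case as well, so $\rho(H,a)$ is finite for every $0\neq a$.

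\emph{Spectrality.} Given finiteness, the next step is to show that the infimum defining $\rho(H,a)$ is attained. This is the usual spectrality argument for Novikov complexes: completeness of $\mathbf{Nov}(\pi)$ with respect to its valuation lets one replace a level-minimizing sequence of representatives of $a$ by a Cauchy sequence converging to an honest cycle $\alpha$ with $\lambda_\xi(\alpha)=\rho(H,a)$. Since $\lambda_\xi(\alpha)$ is by construction a value of $\Theta_\xi$ at a critical point, $\rho(H,a)$ is a critical value of $\Theta_\xi$. (In the case of rational period one may argue more directly by deforming $\{\Theta_\xi\leq\lambda\}$ along $-\nabla\Theta_\xi$ across an interval free of critical values, which is admissible near $\partial U$ because $|d\Theta|\to\infty$ there.)

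\emph{The Hofer estimate, and the main obstacle.} The crux is the pointwise bound
\[
\bigl\|\Theta^H_\xi-\Theta^F_\xi\bigr\|_{C^0(\tilde U_\xi)}\;\leq\;\int_0^1\Bigl(\max_x(H_t-F_t)-\min_x(H_t-F_t)\Bigr)\,dt\;=\;\|H-F\|,
\]
the primitives being normalized to agree outside a compact set (possible since $H=F=|p|^2$ at infinity, so $H-F$ has compact support). Granting it, the sublevel sets interleave,
\[
\{\Theta^F_\xi\leq\lambda-\|H-F\|\}\ \subset\ \{\Theta^H_\xi\leq\lambda\}\ \subset\ \{\Theta^F_\xi\leq\lambda+\|H-F\|\},
\]
so, by functoriality of inclusions, a class representable below level $\lambda$ for $H$ is representable below level $\lambda+\|H-F\|$ for $F$ and conversely; taking the infimum over representatives of $a$ yields $\rho(H,a)\leq\rho(F,a)+\|H-F\|$ together with the reverse inequality, hence $|\rho(H,a)-\rho(F,a)|\leq\|H-F\|$, and $C^0$-continuity of $\rho_a$ follows since Hofer's pseudo-norm on $C_0(T^*N\times[0,1])$ is dominated by twice the $C^0$-oscillation. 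Proving the displayed bound is the step I expect to be the genuine obstacle: one must show, from the explicit construction of $\Theta$ in \cite{kleinham}, that the dependence of the matrix coefficients $\langle\psi_{i,\lambda_i,x},e_\mathbb{Z}\rangle$ on the Hamiltonian enters, to leading order, only through the central phase $e^{i\lambda_i}$ with $\lambda_i$ the integral of $\alpha-H_t\,dt$ along the rays $\gamma^i_x$, so that $\Theta^H_\xi-\Theta^F_\xi=\sum_i(\lambda^H_i-\lambda^F_i)+o(1)$ and $\lambda^H_i-\lambda^F_i=\int_{\gamma^i_x}(F_t-H_t)\,dt+o(1)$; and one must control the error coming from the deformation of the rays and of the branches of the spectral Lagrangian, in particular near $\partial U$ -- which is exactly where the hypotheses $|d\Theta|\to\infty$ and the collar modification are used. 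This estimate, with the full bookkeeping of the Novikov boundary terms, is carried out in \cite{kleinham}.
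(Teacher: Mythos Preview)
The paper does not actually prove this theorem: immediately before stating it, the author writes ``we will prove in the second article of this series the following finiteness, spectrality and $C_0$-continuity-property,'' and after the statement remarks only that the proof ``leans very closely to the existing proofs of Viterbo and Oh in their respective contexts.'' There is therefore no proof in the present paper to compare your attempt against.

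That said, your outline is in the spirit of what the author promises: the Viterbo--Oh minimax scheme transported to the Novikov/Morse setting, with finiteness and spectrality handled by the valuation on $\mathbf{Nov}(\pi)$ and the Hofer estimate reduced to a $C^0$-bound on $\Theta^H_\xi-\Theta^F_\xi$. You are right to flag the last step as the genuine obstacle and to defer its verification to \cite{kleinham}, since the dependence of the matrix coefficients $\langle\psi_{i,\lambda_i,x},e_{\mathbb Z}\rangle$ on $H$ is not made explicit in the present paper. One caution: your argument that $\xi$ is independent of $H$ relies on integrality of $[\Theta^*(dz/z)]$ and connectedness of the space of admissible Hamiltonians, but the paper does not assert that $\xi$ is constant in $H$, and the homology module $H_*(\tilde U_\xi,\mathbb Z)$ appearing in the hypothesis may in principle vary; you should either justify this more carefully or, as the author presumably does in \cite{kleinham}, work along a fixed homotopy $H_s$ and track the induced isomorphisms.
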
 
Note that the construction of such a spectral invariant for a Hamiltonian system on a general cotangent bundle $T^*N$ here goes (potentially) beyond the reach of Viterbo's finite dimensional methods in \cite{viterbo}, which are in the Hamiltonian case only applicable for $T^*N=\mathbb{R}^{2n}$. The proof of the above finiteness and $C_0$-continuity property leans very closely to the existing proofs of Viterbo and Oh in their respective contexts. This is possible since our 'generating function' $\Theta$ can be interpreted as a 'crude version' of Chaperon's method of broken geodesics resp. Conley and Zehnder's proof of the Arnol'd conjecture for flat tori. However, we want to stress that the main objective of this paper was not to give sharper lower bounds for the existence of Hamiltonian fixed points on cotangent bundles, but to show that the notion of Frobenius stuctures and fundamental questions of symplectic topology are very intimately connected. Interpreting $\Theta$ at least for the case of the torus $M=T^*T^n$ as assuming 'special values' of a certain automorphic function following Mumford's remarks \cite{mumford}, the connection given in Theorem \ref{theorem1} between the spectral cover of a Frobenius structure associated to the vector bundle $E$ and the critical points of $\Theta$ should have an interpretation in the realms of the Langlands program as giving some sort of 'characteristic zero' analogy for the correspondence between Galois representations and automorphic representations. In especially, the relation between the two complex line bundles $E$ and $\mathcal{L}_{\xi}$ above deserves a closer examination. To both sides, the 'Galois representation side' (the action of the Hamiltonian flow) and the 'automorphic side' (the gradient like-flow of $\Theta$) one can associate a dynamical zeta-function (cf. Hutchings \cite{hutchings}), both should be in a sense 'dual' to another (see also \cite{deligneflicker}).\\ 
We finally formulate a conjecture which connects the above spectral invariants (if nontrivial) with the 'eigenvalues' of the covariant derivative of the Euler vector field $X_E$ associated to the Frobenius structure $\star:TU \rightarrow End(E)$ over $U$ for a non-degenerate Hamiltonian $H$ on $M$. Note that 'eigenvalues' we call here (compare Proposition \ref{spectrum}) the evaluation of the closed part (via Hodge decomposition) of the one form with values in $End(E)$ associated to $\nabla X_E$ on a set of generators of $H_1(U, \mathbb{Z})$, this definition is expected to coincide with the usual definition in the case of Frobenius structures associated to the miniversal deformation of an isolated singularity (\cite{kleinlag}). The non-triviality of such a closed part follows once one assumes $\xi \in H^1(U, \mathbb{R})$ is non-trivial and $H^*(M, \mathbb{C})$ is {\it formal}, that is all higher order cohomology operations vanish. Note further that, at least in the 'flat' case (cf. Definition \ref{frobenius}) our construction of $\star$ should associate a 'variation of Hodge structure' to a given Hamiltonian $H$ on a cotangent bundle by the common scheme (cf. \cite{fernan}) of interpreting Frobenius manifolds in terms of 'variations of Hodge structure' and vice versa. On the other hand, our generating function $\Theta$ should be linked to a 'Gromov-Witten'-type theory and its variation of Hodge structures by selecting topologically 'relevant' coherent subbundles of $i^*\mathcal{Q}$ over $M$ by a Thom-isomorphism and thus defining a Frobenius structure on $H^*(M)$ (cf. a subsequent publication). In any case, we conjecture here, complementing Theorem \ref{theorem1}:
\begin{conj}
The 'eigenvalues' (in the above sense) of $\nabla X_E$ over $U$, that is the spectrum of the Frobenius structure $\star:TU \rightarrow End(E)$ (that is the spectral numbers of the variation of Hodge structures associated to $H$) coincide generically (after eventual affine scaling) with the above spectral numbers $\rho(H, a)$ of $H$, where $a$ ranges over all elements $a \in H_*(\tilde U_{\xi}, \mathbb{Z})$.
\end{conj}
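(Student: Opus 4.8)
The plan is to bring both quantities into the common form of \emph{period data of the class} $\xi=\Theta^{*}(\tfrac{dz}{z})\in H^{1}(U,\mathbb{R})$, modulo one global affine normalisation, and then to invoke Theorem~\ref{theorem1} together with the spectrality statement proved in \cite{kleinham} to see that the two sides parametrise the same set. Concretely, the Frobenius ``spectrum'' will be read off the periods of the closed part of $\nabla X_E$, which the structure-connection equations identify with a multiple of $\xi$, while the numbers $\rho(H,a)$ will be read off the critical values of a primitive of $\xi$, which Theorem~\ref{theorem1} ties to the fixed points of $\phi_H$; the two descriptions then differ only by the affine scaling.

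First I would make the Frobenius side explicit. By the discussion around Corollary~\ref{curvature}, in a frame of $E$ built from the coherent states $\psi_i$ the multiplication $\star$ is pointwise the creation operator of the harmonic oscillator; combined with the flatness of the first structure connection and the local Schrödinger equation this forces the connection one-form of $E$ in that frame to be of the shape $d\log\Theta+(\text{exact}+\text{co-exact})$. The Euler field $X_E$ is pinned down (Definition~\ref{frobenius}) by a homogeneity relation of the form $\mathcal{L}_{X_E}\star=\star$ up to the unit, so $\nabla X_E$ acting on $E$ differs from multiplication by the potential of $\star$ only by a section that becomes co-exact after Hodge decomposition on $U$ --- here the hypothesis $|d\Theta|\to\infty$ near $\partial U$ is what makes the decomposition and the subsequent period pairing well defined. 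Consequently the closed part (via Hodge) of the $\mathrm{End}(E)$-valued one-form attached to $\nabla X_E$ is cohomologous to $c\,\xi$ for a single constant $c\in\mathbb{C}^{*}$, possibly plus a contribution supported off the span of $\xi$ when $b_{1}(U)>1$; evaluating on a chosen set of generators $g_j$ of $H_{1}(U,\mathbb{Z})$, i.e. forming the ``eigenvalues'' of Proposition~\ref{spectrum}, returns the numbers $c\,\langle\xi,g_j\rangle=c\,\mathrm{Per}_{\xi}(g_j)$. The non-vanishing of this closed part is precisely the formality hypothesis invoked before the conjecture.

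Next I would treat the variational side. By the spectrality theorem of \cite{kleinham}, for every non-zero non-torsion $a\in H_{*}(\tilde U_{\xi},\mathbb{Z})$ the number $\rho(H,a)$ is a critical value of the primitive $\Theta_{\xi}:\tilde U_{\xi}\to\mathbb{R}$, and $\Theta_{\xi}([p,g])=\Theta(p)+\langle\xi,g\rangle$ with $p\in\mathrm{Crit}(\Theta)$; by Theorem~\ref{theorem1} the set $\mathrm{Crit}(\Theta)$ is in bijection with $\mathrm{Fix}(\phi_H)$ and with the zeros of the spectral Lagrangian. A minimax-duality argument --- comparing $\rho(H,a)$ with $-\rho(\bar H,a^{\vee})$ on the dual class and using that the Morse--Novikov complex of $\Theta_{\xi}$ is freely generated over ${\bf Nov}(\pi)$ by $\mathrm{Crit}(\Theta)$ --- shows that, as $a$ runs over all of $H_{*}(\tilde U_{\xi},\mathbb{Z})$, the collection $\{\rho(H,a)\}_a$ exhausts, modulo translation by $\mathrm{Per}_{\xi}(\pi)$, the full set of critical values $\{\Theta(p_i)\}$ and their $\mathrm{Per}_{\xi}$-shifts. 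Matching with the previous paragraph: the affine map $t\mapsto c\,t$, followed by the additive normalisation fixed by the choice of base point and of the primitive $\Theta_{\xi}$, carries $\{c\,\mathrm{Per}_{\xi}(g_j)\}$ onto this set, which is the asserted coincidence.

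The hard part will be the identification in the second paragraph, namely that the closed component of $\nabla X_E$ is exactly a scalar multiple of $\xi$: this needs a genuinely explicit normal form for the first structure connection in the coherent-state frame --- the place where the creation-operator picture and the local Schrödinger equation must be used in full strength, not merely invoked --- together with honest control of the Hodge theory on the non-compact $U$. A second, more structural obstruction hides behind the word ``generically'': one must show that for a residual set of Hamiltonians the values $\Theta(p_i)$ are pairwise distinct modulo $\mathrm{Per}_{\xi}(\pi)$ and that each is \emph{detected} by some minimax class, i.e. that the torsion numbers $q_i(\xi)$ do not swallow critical values invisibly and that the Morse--Novikov complex of $\Theta_{\xi}$ is, generically, ``complete'' in the sense that every generator carries a cycle or a boundary realising its level. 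I expect it is this torsion-versus-genericity issue, rather than the cohomological identification, that forces ``generically'' and ``after eventual affine scaling'' to remain in the statement.
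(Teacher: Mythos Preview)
The statement you are attempting to prove is a \emph{conjecture} in the paper, not a theorem: the paper offers no proof whatsoever, only the remark that together with Theorem~\ref{theorem1} it should be read as an analogue of the Hecke eigenvalue/Frobenius eigenvalue correspondence in the geometric Langlands program. There is therefore nothing in the paper to compare your argument against; the author explicitly leaves this open and defers any further analysis to \cite{kleinlag} and subsequent work.

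As for whether your proposal stands on its own as a proof: it does not, and you yourself flag this. The crucial step---that the closed part of $\nabla X_E$ is cohomologous to a scalar multiple of $\xi$---is asserted on the strength of the phrase ``forces the connection one-form \dots\ to be of the shape $d\log\Theta+(\text{exact}+\text{co-exact})$'', but nothing in Definition~\ref{frobenius}, Proposition~\ref{spectrum}, or Corollary~\ref{curvature} delivers this normal form; Proposition~\ref{spectrum} only extracts a nontrivial closed part under formality, without identifying its cohomology class with $[\xi]$ up to a scalar. Likewise, the claim that the $\rho(H,a)$ exhaust all critical values of $\Theta_\xi$ modulo periods is a genuine assertion about the Morse--Novikov complex that the spectrality theorem of \cite{kleinham} does not supply (spectrality says each $\rho(H,a)$ \emph{is} a critical value, not that every critical value is hit). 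Your final paragraph correctly isolates both of these as the hard parts; what you have written is a coherent strategy memo for attacking the conjecture, not a proof of it, and the paper's author appears to be in the same position.
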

Note that together with Theorem \ref{theorem1} and interpreting our function $\Theta$ as the kernel of an appropriate integral operator and invoking a related trace formula, this conjecture should be interpreted as an analogon of the (conjectural) Hecke eigenvalue/Frobenius eigenvalue correspondence in the (geometric) Langlands program, an analogous result will be examined in  (\cite{kleinlag}).\\
We want to thank the IHES at Bures sur Yvette, where parts of this research was done, for support and kind hospitality. Furthermore, we are in gratitude to Svatopluk Krysl for helpful remarks on an early draft of this paper.

\tableofcontents
 
\section{Symplectic Clifford algebra, Lagrangian relations and Gaussians} \label{lagcoh}

In this section, we will essentially review certain results on Lagrangian relations, the symplectic Clifford algebra and Gaussians \cite{hoermander}, \cite{nerethin}, \cite{habermann} which will suffice to describe the 'semi-simple' Frobenius structures appearing in this article. That semi-simple Frobenius stuctures are in a specific sense characterized by Gaussians or 'coherent states' will be discusssed in \cite{kleinlag}. We will reformulate all results in the language of certain (sub-Lie algebras of) the symplectic Clifford algebra, to be defined now.
\subsection{Symplectic Clifford algebra} 
Let $V$ be a real vector space, $\mathcal{T}(V)$ its tensor algebra and $\omega$ an antisymmetric, non-degenerated bilinear form on $V$. Let $\mathcal{I}(\omega)$ the two sided ideal spanned by 
\begin{equation}\label{comm1}
\{ x\otimes y  -  y \otimes x  -  \omega(x,y) : x,y \in V \} \subset  \mathcal{T}(V)
\end{equation}
Then $\textbf{sCl}(V,\omega)=\mathcal{T}(V)/ \mathcal{I}(\omega)$ is an associative
algebra with over $\mathbb{R}$ mit identity, the symplectic Clifford algebra of $(V,\omega)$. Let $j: \mathcal{T}(V) \rightarrow 
\textbf{sCl}(V,\omega)$ the canonical projection and $i:V \hookrightarrow \mathcal{T}(V)$ the natural embedding of $V$ into its tensor algebra, then the linear mapping $\kappa = j\circ i$ satisfies
\begin{equation}\label{symrel}
\kappa(x)\cdot \kappa(y)  -  \kappa(y)\cdot \kappa(x)  =  \omega(x,y) \cdot 1
\end{equation}
for all $x,y \in V$. Since $\kappa$ is injective, we will regard $V$ as a linear subspace of $\textbf{sCl}(V,\omega)$ in the following and suppress $\kappa$.\\
Let $\textbf{sCl}(\mathbb{R}^{2n}):=\textbf{sCl} (\mathbb{R}^{2n},-\omega_{0})$, where $\omega_{0}$ is the symplectic standard strcuture on $\mathbb{R}^{2n}$. $\textbf{sCl}(\mathbb{R}^{2n})$ becomes, equipped with the commutator, an infinite dimensional  real Lie algebra. Let $a_{1},\ldots, a_{n}, b_{1}, \ldots, b_{n}$ be the elements of the standard basis in $\mathbb{R}^{2n}$, so that
\begin{equation}\label{standardb}
\omega_{0}(a_{i},b_{j}) =  \delta_{ij}, \quad \omega_{0}(a_{i},a_{j})=0, \quad
\omega_{0}(b_{i},b_{j})=0 \quad \textrm{for} \quad i,j = 1,\ldots,n.
\end{equation}
We will in the following look at two sub-algebras of $\textbf{sCl}(\mathbb{R}^{2n})$. The first is the sub-Lie algebra of polynomials in $a_{1},\ldots, a_{n}, b_{1}, \ldots, b_{n}$ of degree $\leq 1$ in $\textbf{sCl}(\mathbb{R}^{2n})$, which defines the Heisenberg-algebra $\mathfrak{h}=\mathbb{R}^{2n} \oplus \mathbb{R}$. For the second, observe that the symmetric homogeneous polynomials of degree $2$ define a sub-Lie algebra of $\textbf{sCl}(\mathbb{R}^{2n})$, which we will call $\mathfrak{a}$ henceforth. Note that $\mathfrak{a}\subset \textbf{sCl}(\mathbb{R}^{2n})$ acts linearly on $\mathbb{R}^{2n}$ by setting 
\[
ad(a)=[a,x],\ a \in \mathfrak{a},\ x \in \mathbb{R}^{2n},
\]
as one can directly verify using the relations (\ref{symrel}), further one has for $x \in
\mathfrak{a}$ und $y,z \in \mathbb{R}^{2n}$ 
\[
\omega_{0}([x,y],z)+\omega_{0}(y,[x,z]) = 0 \,
\]
thus we have a linear map $ad:\mathfrak{a}\rightarrow \mathfrak{sp}(2n,\mathbb{R})$, where $\mathfrak{sp}(2n,\mathbb{R})$ denotes the Lie algebra of the symplectic group $Sp(2n, \R)$, and this map is in fact a Lie algebra- isomorphism, that is we have the following. Set for $B_{jk}$ a $n \times n$-matrix being $1$ at the $jk$-th position ($j$-th line, $k$-th column) and else $0$. Then the matrices $X_{jk}$ with $1 \leq j,k \leq n$, $Y_{jk}$ and $Z_{jk}$ mit $1 \leq j \leq k \leq n$ furnish a basis of the Lie-Algebra $\mathfrak{sp}(2n,\mathbb{R})$: 
\begin{align}
X_{jk} \ =& \ 
\begin{pmatrix}
B_{jk}&0 \\ 0 &-B_{kj}
\end{pmatrix} \nonumber \\[6pt]
Y_{jk} \ =& \
\begin{pmatrix}
0&B_{jk}+B_{kj} \\ 0&0
\end{pmatrix} \nonumber \\[6pt]
Z_{jk} \ =& \ 
\begin{pmatrix}
0&0 \\ B_{jk} + B_{kj}&0
\end{pmatrix} \quad .\nonumber 
\end{align}
\begin{lemma}[\cite{habermann}]\label{liesymp}
The polynomials $a_{j} \cdot a_{k}$ mit $1 \leq j \leq k \leq n$,
$b_{j} \cdot b_{k}$ mit $1 \leq j \leq k \leq n $ und $a_{j} \cdot b_{k}+b_{k}
\cdot a_{j}$ mit $1 \leq j,k \leq n$ span a basis of the Lie algebra $\mathfrak{a}$. Furthermore, the linear map $ad:\mathfrak{a}\rightarrow \mathfrak{sp}(2n,\mathbb{R})$ is a Lie algebra isomorphism, and we have
\begin{align}
ad(a_{j}\cdot a_{k}) \ &= \ -Y_{jk} \\
ad(b_{j}\cdot b_{k}) \ &= \ Z_{jk} \\
ad(a_{j}\cdot b_{k} + b_{k}\cdot a_{j}) \ &= \ 2X_{jk} \ .
\end{align}
\end{lemma}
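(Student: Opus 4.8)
The plan is to verify three things in turn --- that the listed quadratics form a vector-space basis of $\mathfrak{a}$, that $ad$ takes values in $\mathfrak{sp}(2n,\mathbb{R})$ and is a Lie algebra homomorphism, and that it carries this basis to the displayed basis $\{X_{jk},Y_{jk},Z_{jk}\}$ of $\mathfrak{sp}(2n,\mathbb{R})$ --- and then to read off the asserted isomorphism, together with the three formulas, from these facts.

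First I would observe that every symmetric homogeneous degree-$2$ element of $\textbf{sCl}(\mathbb{R}^{2n})$ is a symmetrization $\tfrac12(vw+wv)$ with $v,w\in\mathbb{R}^{2n}$; expanding $v,w$ in the basis $a_1,\dots,a_n,b_1,\dots,b_n$ and using $a_ja_k=a_ka_j$, $b_jb_k=b_kb_j$ (which follow from (\ref{symrel}), since $\omega_0$ vanishes on the $a$'s and on the $b$'s) exhibits $\mathfrak{a}$ as the span of $\{a_ja_k:j\le k\}\cup\{b_jb_k:j\le k\}\cup\{a_jb_k+b_ka_j:1\le j,k\le n\}$. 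Linear independence then follows from the Poincar\'e--Birkhoff--Witt normal form for $\textbf{sCl}(\mathbb{R}^{2n})$ (ordered monomials $a^\alpha b^\beta$ are a vector-space basis): rewriting $a_jb_k+b_ka_j=2a_jb_k+\delta_{jk}\cdot 1$, the three families have pairwise distinct leading monomials, so no nontrivial relation among them is possible. In particular $\dim\mathfrak{a}=2\binom{n+1}{2}+n^2=2n^2+n=\dim\mathfrak{sp}(2n,\mathbb{R})$, although I shall not actually need the dimension count.

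Next I would compute $ad(p)=[p,\cdot]$ on the generators $a_l,b_l$ for each $p$ in the three families, using the Leibniz identity $[xy,z]=x[y,z]+[x,z]y$ and the single relation $[v,w]=-\omega_0(v,w)\cdot 1$ (the minus sign coming from $\textbf{sCl}(\mathbb{R}^{2n})$ being built from $-\omega_0$). For example $[a_ja_k,a_l]=0$ and $[a_ja_k,b_l]=-\delta_{kl}a_j-\delta_{jl}a_k$, which in the $(a\text{-block},\,b\text{-block})$ decomposition of $\mathbb{R}^{2n}$ is precisely the action of $-Y_{jk}$; likewise one finds $ad(b_jb_k)=Z_{jk}$ and $ad(a_jb_k+b_ka_j)=2X_{jk}$. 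These are short computations; the only things to watch are the block bookkeeping and the $\pm\omega_0$ convention. These same computations show that $ad(p)$ maps $\mathbb{R}^{2n}$ into itself with no constant term, so $ad$ is well defined on $\mathfrak{a}$ with values in $\End(\mathbb{R}^{2n})$; that the image lies in $\mathfrak{sp}(2n,\mathbb{R})$, i.e. $\omega_0([p,v],w)+\omega_0(v,[p,w])=0$, then follows from the Jacobi identity in $\textbf{sCl}(\mathbb{R}^{2n})$ applied to $p,v,w$, since $[v,w]=-\omega_0(v,w)\cdot 1$ is central and hence $[p,[v,w]]=0$.

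Finally I would assemble the statement. That $ad:\mathfrak{a}\to\mathfrak{sp}(2n,\mathbb{R})$ is a homomorphism of Lie algebras is automatic, being the restriction of the adjoint representation (Jacobi identity again). For injectivity: an element of $\ker(ad)$ commutes with all of $\mathbb{R}^{2n}$, hence with all of $\textbf{sCl}(\mathbb{R}^{2n})$, hence lies in its centre $\mathbb{R}\cdot 1$, which meets $\mathfrak{a}$ only in $0$; alternatively, the images $-Y_{jk},Z_{jk},2X_{jk}$ of the basis vectors are manifestly linearly independent. For surjectivity: those same images are, up to nonzero scalars, exactly the displayed basis of $\mathfrak{sp}(2n,\mathbb{R})$. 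Hence $ad$ is a bijective homomorphism, i.e. a Lie algebra isomorphism, and the three displayed formulas are exactly what the computation of the previous paragraph produced. As a by-product this re-confirms that $\mathfrak{a}$ is closed under the bracket, since $[p,q]$ is the unique preimage under the injective map $ad$ of $[ad\,p,ad\,q]\in\mathfrak{sp}(2n,\mathbb{R})$. I expect no real obstacle here: the content is entirely in the explicit bracket computations, and the only point worth spelling out carefully is the Jacobi-identity argument forcing $ad(\mathfrak{a})\subseteq\mathfrak{sp}(2n,\mathbb{R})$, together with keeping the $-\omega_0$ sign consistent throughout.
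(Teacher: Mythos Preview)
Your proposal is correct and complete. Note that the paper itself does not supply a proof of this lemma at all --- it is simply quoted from \cite{habermann} --- so there is nothing to compare against beyond observing that your argument is the standard direct one: verify the basis of $\mathfrak{a}$ via PBW, compute the brackets $[p,a_l]$ and $[p,b_l]$ for each quadratic generator $p$ using Leibniz and the relation $[v,w]=-\omega_0(v,w)\cdot 1$, match against the matrix action of $X_{jk},Y_{jk},Z_{jk}$, and read off bijectivity. The paper does state (just before the lemma) the infinitesimal-invariance identity $\omega_0([x,y],z)+\omega_0(y,[x,z])=0$ as a direct observation; your Jacobi-identity derivation of the same fact is a clean way to see it without computing. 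Everything checks, including the sign bookkeeping coming from the $-\omega_0$ convention.
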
\label{symm}
It is obvious that the defining relations of $\mathfrak{h}\subset\textbf{sCl}(\mathbb{R}^{2n})$ reproduce the quantum mechanical
'Heisenberg commutator relations', thus we have a representation of  $\mathfrak{h}=\mathbb{R}^{2n} \oplus \mathbb{R}$ over
the Schwartz-space $\mathcal{S}(\mathbb{R}^n)\subset L^{2}(\mathbb{R}^n)$ as
\begin{equation}\label{cliffbig}
\begin{split}
1 \in \mathbb{R} \ \quad &\mapsto \quad i \\
a_{j} \in \mathbb{R}^{2n} \quad &\mapsto \quad ix_{j} \\
b_{j} \in \mathbb{R}^{2n} \quad &\mapsto \quad \frac{\partial}{\partial x_{j}}
\qquad \textrm{for} \quad  j = 1, \dots ,n. 
\end{split}
\end{equation}
Here, $i$, $ix_{j}$ as well as $\frac{\partial}{\partial x_{j}}$ act as unbouded operators on the dense domain $\mathcal{S}(\mathbb{R}^n)$ in the Hilbert space $L^{2}(\mathbb{R}^{n})$. Denoting the restriction of the above map to 
$\mathbb{R}^{2n}$ by $\sigma$, we get 'symplectic Clifford multiplication':
\begin{Def}\label{defclif}
Symplectic Clifford multiplication is a map
\[
\begin{split}
\mu: \mathbb{R}^{2n} \times \mathcal{S}(\mathbb{R}^n) \ &\rightarrow \
\mathcal{S}(\mathbb{R}^n) \\
(v,f) \ &\mapsto \ v \cdot f := \mu (v,f) = \sigma (v)f . 
\end{split}
\]
\end{Def}
Indeed, by direct calculation one then concludes:
\begin{folg}\label{heisen} For $v,w \in \mathbb{R}^{2n}$ und $f \in
\mathcal{S}(\mathbb{R}^n)$ we have \begin{equation}\label{cliff}
v \cdot  w \cdot f - w \cdot v \cdot f = -i\omega_{0}(v,w)f. 
\end{equation}
\end{folg}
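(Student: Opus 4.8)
The statement is a direct verification using the explicit form of the operators $\sigma(v)$ in (\ref{cliffbig}), and the plan is to reduce to the standard basis of $\R^{2n}$ and then check a short list of commutators. Both sides of (\ref{cliff}) are $\R$-bilinear in $v$ and $w$ (the map $v\mapsto v\cdot f=\sigma(v)f$ being the restriction of a linear map, and $\omega_0$ being bilinear), so it suffices to prove the identity when $v$ and $w$ each range over the basis $a_1,\dots,a_n,b_1,\dots,b_n$. Under $\sigma$ one has $\sigma(a_j)=ix_j$ (multiplication by $ix_j$) and $\sigma(b_j)=\partial/\partial x_j$, and both operators preserve $\mathcal{S}(\R^n)$, so all compositions occurring below are well defined on the Schwartz space.

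I would then dispatch the three cases. For $v=a_j$, $w=a_k$ the left side of (\ref{cliff}) is the commutator of the two multiplication operators $ix_j$ and $ix_k$, which is zero, in agreement with $-i\omega_0(a_j,a_k)f=0$ by (\ref{standardb}). For $v=b_j$, $w=b_k$ it is $[\partial/\partial x_j,\partial/\partial x_k]f=0$ by equality of mixed partials, again matching $-i\omega_0(b_j,b_k)f=0$. The only case carrying content is $v=a_j$, $w=b_k$, where
\[
a_j\cdot b_k\cdot f-b_k\cdot a_j\cdot f=ix_j\,\frac{\partial f}{\partial x_k}-\frac{\partial}{\partial x_k}\!\bigl(ix_j f\bigr)=-\,i\,\delta_{jk}\,f=-i\,\omega_0(a_j,b_k)\,f,
\]
using (\ref{standardb}) in the last step. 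Bilinear extension in $v$ and $w$ then yields (\ref{cliff}) for all $v,w\in\R^{2n}$ and $f\in\mathcal{S}(\R^n)$.

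Conceptually, this is nothing but the statement that $\sigma$ respects the bracket: the relation (\ref{symrel}) for $\textbf{sCl}(\R^{2n})=\textbf{sCl}(\R^{2n},-\omega_0)$ says that for $v,w\in\R^{2n}\subset\mathfrak{h}$ the symplectic Clifford commutator $v\cdot w-w\cdot v$ equals the central element $-\omega_0(v,w)\cdot 1$, and (\ref{cliffbig}) sends $1\mapsto i$; applying $\sigma$ turns this into $-i\,\omega_0(v,w)\cdot\id$. There is no genuine obstacle here; the only points requiring care are the sign coming from the convention $\textbf{sCl}(\R^{2n})=\textbf{sCl}(\R^{2n},-\omega_0)$, and the fact that $\sigma$ is a priori only prescribed on $\R^{2n}$, so that one must observe that the commutator of two such operators already lies in the scalars — which is precisely the place the factor $i$ enters.
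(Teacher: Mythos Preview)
Your proof is correct and is precisely the ``direct calculation'' the paper alludes to but does not spell out: reduce by bilinearity to the standard basis vectors and check the three commutators $[ix_j,ix_k]$, $[\partial_{x_j},\partial_{x_k}]$, $[ix_j,\partial_{x_k}]$ against the values of $\omega_0$ in (\ref{standardb}). Nothing to add.
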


\subsection{Heisenberg group and metaplectic representation}
Via the exponential map, we can consider the simply connected Lie group associated to $\mathfrak{h}$ and denote it by $H_n$. Then the relations noted in (\ref{heisen}) imply that if writing $H_{n}=\mathbb{R}^{2n} \times \mathbb{R}$ we have 
\[
(v,t)\cdot(w,s)=(v+w,t+s+\frac{1}{2} \omega_{0}(v,w)), \ (v,t),(w,s) \in 
H_{n}=\mathbb{R}^{2n} \times \mathbb{R}.
\] 
We call $H_n$ the $2n+1$-dimensional Heisenberg-group. The theorem of von Stone-Neumann states that there exists up to unitary
equivalence a unique irreducible unitary representation $(\pi,L^{2}(\mathbb{R}^n))$ of $H_{n}$ satisfying
\begin{equation}\label{eig} 
\pi(0,t)=e^{it}id_{L^{2}(\mathbb{R}^n)}.
\end{equation}
Indeed (cf. \cite{lionvergne}) we have for $(v,t) =
((x,y),t) \in  \mathbb{R}^{2n} \times \mathbb{R}$ an explicit irreducible
unitary representation $(\pi,L^{2}(\mathbb{R}^n))$ of $H_n$ satisfying (\ref{eig}) which is given by
\begin{equation}\label{expl}
\left( \pi ((x,y),t)f\right)(z)=e^{i(t+\langle x,z-\frac{1}{2}y
\rangle)}f(z-y) \quad \textrm{for} \ f \in L^{2}(\mathbb{R}^{n}), \ z \in
\mathbb{R}^{n}. \end{equation}
Since it is very illustrative of the implicit presence of 'Lagrangian relations' in our context, we recall the construction of $(\pi,L^{2}(\mathbb{R}^n))$ in {\it loc. cit}. For this observe that for a Lagrangian subspace $L$ of $(\R^{2n}, \omega_{0})$, the group $\mathcal{L}=(L, \mathbb{R}\cdot 1)$ is an abelian subgroup of $H_n$ since $\R\cdot 1$ is the center of $H_n$.
Furthermore, 
\[
f(v,t)=e^{it},\ (v,t) \in \mathcal{L},
\]
is a character on $\mathcal{L}$. Now choosing a Lagrangian decomposition $L\oplus L'=\R^{2n}$ we get an invariant measure on $H_n/\mathcal{L}$ by identifying the latter with $L'$ and using the Euclidean measure on the latter. These ingredients finally define $(\pi,L^{2}(\mathbb{R}^n))$ by the well-known (cf. \cite{lionvergne}) construction of induced representations 
\[
\pi=\pi(L):= {\rm Ind}\uparrow_\mathcal{L}^{H_n}f
\]
and by identifying $L^2$-spaces on $H_n/\mathcal{L}$, $L'$ and $\mathbb{R}^n$, respectively. Recall that $\pi(L)$ consists of the completion of the continuous functions $g$ on $H_n$ satisfying $g(x+l)=f(l)^{-1}g(x), \ l \in \mathcal{L}, \ x \in H_n$ and being square integrable w.r.t. the above measure on $H_n/\mathcal{L}$. Thus we want to stress that, for a given choice of character $f$, the set of unitarily equivalent representations of the Heisenberg group are essentially parameterized by Lagrangian splittings of the form $L\oplus L'=\R^{2n}$, or special cases of {\it Lagrangian relations}. We mention that $\pi$ also reproduces our choice of representation of $\mathfrak{h}$ (restricted to $\R^{2n})$, namely $\sigma$:
\begin{equation}\label{cliffordheis}
d\pi(v)=\sigma(v), \ v \in \R^{2n},
\end{equation}
while of course $d\pi(1)=i$, as in (\ref{cliffbig}). We have an action of $Sp(2n, \R)$ on $H_n$:
\[
\begin{split}
Sp(2n,\mathbb{R}) \times H_{n} \ &\rightarrow \ H_{n} \\
(g,(v,t)) \ &\mapsto \ (gv,t)
\end{split}
\]
Note that $\pi^{g}(v,t)=\pi(gv,t)$ defines an irreducible unitary representation of $H_{n}$ s.t. $\pi^{g}(0,t) =e^{it}id_{L^{2}(\mathbb{R}^n)}$ (amounting to a change of $L'$ above under $g \in Sp(2n,\mathbb{R})$), thus by the above there exists a family of unitary operators $U(g):L^{2}(\mathbb{R}^n) \rightarrow L^{2}(\mathbb{R}^n)$ so that
\[
\pi^{g}=U(g)\circ \pi \circ U(g)^{-1},
\]
and $U(g)$ ist uniquely determined up to multiplication by a complex constant of modulus $1$. By Shale and Weil, $g \in Sp(2n,\mathbb{R}) \mapsto U(g) \in U(L^{2}(\mathbb{R}^{n}))$ defines a projective unitary representation of
$Sp(2n,\mathbb{R})$ lifting to a representation $L:Mp(2n,\mathbb{R}) \rightarrow U(L^{2}(\mathbb{R}^{n}))$ of the (up to isomorphism unique, since $\pi_1({Sp}(2n, \R))=\mathbb{Z}$) connected two-fold covering $\rho:Mp(2n,\R) \rightarrow Sp(2n,\R)$
\[
1 \quad \rightarrow \quad \mathbb{Z}_{2} \quad \rightarrow \quad
Mp(2n,\R) \quad \xrightarrow{\rho}  \quad Sp(2n,\R) \quad
\rightarrow \quad 1,
\]
sarisyfing
\begin{equation}\label{heiequiv}
\pi(\rho(g)h)=L(g)\pi(h)L(g)^{-1} \quad \textrm{for} \ h \in H_{n}, \ g \in Mp(2n,\mathbb{R}).
\end{equation}
The representation $L$ has the following explicit construction on the elements of three generating subgroups of $Mp(2n,\mathbb{R})$, as follows:
\begin{enumerate}
\item Let $g(A)=(det(A)^{\frac{1}{2}},\left(\begin{smallmatrix}A&0
\\ 0&(A^{t})^{-1}\end{smallmatrix} \right))$ where $A \in GL(n,\mathbb{R})$. To fix a root of $det(A)$ defines $g(A)$ as an element in $Mp(2n, \R)$ and we have
\begin{equation}\label{metalinear}
(L(g(A))f)(x)= det(A)^{\frac{1}{2}}f(A^{t}x), \ f \in L^2(\mathbb{R}^n).
\end{equation}
\item Let $B \in M(n,\mathbb{R})$ s.t. $B^{t} = B$, set $t(B)
= \left(\begin{smallmatrix}1&B \\0&1\end{smallmatrix} \right) \in
Sp(2n)$, then the set of these matrices is simply-connected. So $t(B)$ can be considered an element of
$Mp(2n)$, with $t(0)$ being the identity in $Mp(2n)$. Then one has
\begin{equation}\label{meta2}
(L(t(B))f)(x) = e^{-\frac{i}{2}\langle Bx,x\rangle}f(x).
\end{equation}
\item Fixing the root $i^{\frac{1}{2}}$, we can consider
$\tilde \sigma=(i^{\frac{1}{2}},\left(\begin{smallmatrix}0&-1
\\1&0\end{smallmatrix}\right))$ as an element of $Mp(2n)$. Then
\begin{equation}\label{fourier}
(L(\tilde \sigma)f)(x)=(\frac{i}{2\pi})^{\frac{n}{2}}\int_{\mathbb{R}^{n}}e^{i\langle
x,y\rangle}f(y)dy,
\end{equation}
so $L(\tilde \sigma)= i^{\frac{n}{2}}F^{-1}$, where $F$ is the usual Fourier transform.
\end{enumerate}
Inspecting these formulas it is obvious that the metaplectic group $Mp(2n,\R)$ acts bijectively and unitarily on the Schwartz space $\mathcal{S}(\mathbb{R}^{n})$, so its closure extends to $\mathcal{U}(L^2(\mathbb{R}^n))$. We fix the $2$-fold covering $\rho: Mp(2n,\mathbb{R}) \rightarrow Sp(2n,\mathbb{R})$ by demanding 
\[
\rho_{\ast}=ad:\mathfrak{mp}(2n,\mathbb{R}) \rightarrow
\mathfrak{sp}(2n,\mathbb{R})  
\]
to be exactly the algebra-isomorphism $ad$ of Lemma \ref{liesymp}. Since both groups in question are connected, $\rho$ is correctly defined. While by $\cite{wallach1}$ the mapping $L:Mp(2n,\mathbb{R}) \rightarrow U(L^{2}(\mathbb{R}^{n}))$ is not differentiable, we define the notion of a differential of $L$ as follows using the set of 'smooth vectors'. Let $f \in
\mathcal{S}(\mathbb{R}^{n})$. Then $\mathcal{L}^{f}:\mathfrak{mp}(2n,\mathbb{R}) \rightarrow
L^{2}(\mathbb{R}^{n})$ given by
\[
\mathcal{L}^{f}(X)=L(exp(X))f
\]
is (again \cite{wallach1}) a differentiable mapping with image $\mathcal{S}(\mathbb{R}^{n})$. Thus we set $L_{\ast}: \mathfrak{mp}(2n,\mathbb{R}) \mapsto \mathfrak{u}(\mathcal{S}(\mathbb{R}^{n}))$ as
\[
L_{\ast}(X)f=d\mathcal{L}^{f}(X)=\frac{d}{dt}L(exp(tX))f_{\vert t=0}.
\]
We finally have the following. 
\begin{prop}\label{diffmp}
Let $S \in Sp(2n,\mathbb{R})$ and $\hat S \in Mp(2n,\mathbb{R})$ so that $\rho(\hat S)= S$. Then for any $u,v \in \R^n$
\[
(\sigma(Su)+\sigma(Sv))L(\hat S)f=L(\hat S)(\sigma(u)+\sigma(v))f, \ f \in \mathcal{S}(\mathbb{R}^n).
\]
Let $f \in \mathcal{S}(\mathbb{R}^{n})$, then we have for $L_{\ast}: \mathfrak{mp}(2n,\mathbb{R}) \mapsto
\mathfrak{u}(\mathcal{S}(\mathbb{R}^{n}))$:
\begin{equation}
\begin{split}
L_{\ast}(a_{j}\cdot a_{k})(f) \ &= \ ix_{j}x_{k}f \ = \ -ia_{j}\cdot
a_{k}\cdot f \\ 
L_{\ast}(b_{j}\cdot b_{k})(f) \ &= \
-i\frac{\partial^{2}}{\partial x_{j}\partial x_{k}}f \ = \ -ib_{j}\cdot
b_{k}\cdot f \\
L_{\ast}(a_{j}\cdot b_{k} + b_{k}\cdot a_{j})(f) \ &= \
\left(x_{j}\frac{\partial}{\partial x_{k}}+\frac{\partial}{\partial
x_{k}}x_{j}\right)f \ = \  -i(a_{j}\cdot b_{k} + b_{k}\cdot a_{j})\cdot f.
\end{split} 
\end{equation}
\end{prop}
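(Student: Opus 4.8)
The plan is to prove Proposition \ref{diffmp} in two stages, corresponding to its two assertions. The first assertion — the intertwining identity $(\sigma(Su)+\sigma(Sv))L(\hat S)f=L(\hat S)(\sigma(u)+\sigma(v))f$ — is essentially a restatement of equation (\ref{heiequiv}). First I would recall that (\ref{cliffordheis}) identifies $\sigma(v)=d\pi(v)$ for $v\in\R^{2n}$, so that differentiating (\ref{heiequiv}) in the Heisenberg direction $v$ at the identity, using that the $Sp(2n,\R)$-action on $H_n$ is $(g,(v,t))\mapsto(gv,t)$ and hence sends the one-parameter subgroup $s\mapsto(sv,0)$ to $s\mapsto(sSv,0)$, gives $\sigma(Sv)L(\hat S)=L(\hat S)\sigma(v)$ on smooth vectors. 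Here one must be a little careful since $L$ is not differentiable as a map to $U(L^2)$, but by \cite{wallach1} it is differentiable on the dense domain $\mathcal{S}(\R^n)$ of smooth vectors, and $\pi$ restricted to $\mathcal{S}(\R^n)$ is differentiable with $d\pi=\sigma$ there; this legitimizes the differentiation. Summing the identity for $u$ and $v$ yields the first claim (the sum is written only because it is the combination relevant to the later Frobenius multiplication $\star$).

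For the second assertion I would compute $L_*(X)$ for the three types of generators $X\in\mathfrak{a}$ of Lemma \ref{liesymp} by exponentiating into the corresponding one-parameter subgroups of $Mp(2n,\R)$ and using the explicit formulas (\ref{metalinear}), (\ref{meta2}), (\ref{fourier}). Concretely: for $X=a_j\cdot b_k+b_k\cdot a_j$, Lemma \ref{liesymp} gives $ad(X)=2X_{jk}$, whose exponential is the block-diagonal element $g(e^{tB_{jk}})$ up to the factor $2$; applying (\ref{metalinear}) and differentiating at $t=0$ produces the operator $x_j\tfrac{\partial}{\partial x_k}+\tfrac{\partial}{\partial x_k}x_j$ (the symmetric combination coming from the $\tfrac12\log\det$ factor in $g(A)$). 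For $X=b_j\cdot b_k$, $ad(X)=Z_{jk}$ exponentiates into the lower-triangular unipotent subgroup, which is conjugate under $\tilde\sigma$ to the subgroup $t(B)$ of (\ref{meta2}); using $L(\tilde\sigma)=i^{n/2}F^{-1}$ from (\ref{fourier}) and the fact that Fourier transform turns multiplication by $-\tfrac{i}{2}\langle Bx,x\rangle$ into the differential operator, differentiating gives $-i\,\partial^2/\partial x_j\partial x_k$. For $X=a_j\cdot a_k$, $ad(X)=-Y_{jk}$ exponentiates directly into the subgroup $t(B)$ with $B=-(B_{jk}+B_{kj})$ rescaled, and (\ref{meta2}) differentiated at $t=0$ gives multiplication by $ix_jx_k$. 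Finally I would check that each of these equals $-i$ times the corresponding symplectic Clifford product acting on $f$, which is immediate from the representation (\ref{cliffbig}): $a_j\cdot a_k\cdot f=(ix_j)(ix_k)f=-x_jx_kf$, $b_j\cdot b_k\cdot f=\partial^2 f/\partial x_j\partial x_k$, and $(a_j\cdot b_k+b_k\cdot a_j)\cdot f=(ix_j\partial_{x_k}+\partial_{x_k}ix_j)f$, so multiplying by $-i$ matches the three computed expressions.

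I expect the main obstacle to be bookkeeping rather than conceptual: getting the normalization constants right (the factors of $\tfrac12$ and $2$ relating $ad(X)$ to the $X_{jk},Y_{jk},Z_{jk}$ basis, the choice of square root branches entering $g(A)$ and $\tilde\sigma$, and the $(i/2\pi)^{n/2}$ in the Fourier normalization) so that the $-i$ in front comes out uniformly in all three lines. A secondary subtlety is the $b_j\cdot b_k$ case, where one does not have a direct closed-form one-parameter subgroup in the given generators and must pass through the conjugation $Z_{jk}=\tilde\sigma\,(-Y_{kj})\,\tilde\sigma^{-1}$ (or the analogous relation at group level), so that $L_*(b_j\cdot b_k)=L(\tilde\sigma)\,L_*(a_k\cdot a_j)\,L(\tilde\sigma)^{-1}$; here one invokes the already-established intertwining together with the conjugation formula for $F$, and the identity $F^{-1}(ix_jx_k)F=-i\partial^2/\partial x_j\partial x_k$ closes the argument. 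Once the constants are fixed the proof is a direct differentiation, with no analytic difficulty beyond the already-cited \cite{wallach1} statement that everything stays inside $\mathcal{S}(\R^n)$.
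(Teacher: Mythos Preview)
Your proposal is correct and follows essentially the same approach as the paper: the paper likewise proves the first assertion by differentiating (\ref{heiequiv}) (its additional remark that ``$\omega_0|W=0$'' appears to be extraneous, since as you note the single-vector intertwining $\sigma(Sv)L(\hat S)=L(\hat S)\sigma(v)$ already follows directly), and for the second assertion the paper simply says it is a direct computation and refers to \cite{habermann}. Your sketch of the second part via the explicit one-parameter subgroups (\ref{metalinear})--(\ref{fourier}) is exactly how that computation goes, and is in fact more detailed than what the paper provides.
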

\begin{proof}
The first assertion is proven by differentiating (\ref{heiequiv}) and using the fact that $\omega_0|W=0$. The second assertion is a direct computation and can be found in \cite{habermann}.
\end{proof}

\subsection{Coherent states, positive Lagrangians and commutative algebras}\label{coherent}
Consider again a real symplectic vectorspace $(V,\omega)$ of dimension $2n$ and let $\omega_{\mathbb{C}}$ be the complex bilinear extension of $\omega$ to the complexification $V^{\mathbb{C}}$. Then it is well-known (cf. \cite{mumford}) that the following data are equivalent
\begin{enumerate}
\item a complex structure $J$ on $V$ being compatible with $\omega$, that is $\omega(Jx,Jy)=\omega(x,y)$ for all $x,y\in V$ and $\omega(x,Jx)>0$ for all $x \in V, x\neq 0$.
\item a complex structure $J$ and a positive definite Hermitian form $H$ on $V$ such that ${\rm Im}(H)=\omega$.
\item a totally complex subspace $L\subset V^{\mathbb{C}}$ of (complex) dimension $n$ so that $\omega_{\mathbb{C}}$ vanishes on $L$ and $i\omega_{\mathbb{C}}(x,\overline x)>0$ for all $x \in L$.
\end{enumerate}
Any of these data defines a point in the Siegel space $\mathfrak{h}_V$, i.e. choosing a symplectic basis $e_1,\dots, e_n, f_1, \dots, f_n$ for $\omega$ as above, we get from $L$ a $n\times n$ complex symmetric matrix $T$ so that ${\rm Im}(T)$ is positive definite by requiring $e_i- \sum_jT_{ij}f_j \in L$ (note that $T\in \mathfrak{h}_V$ implies that $T$ invertible). On the other hand, given $J$ as in (1.), $H$ is defined as
\[
H(x,y)=\omega(x,Jy)+i\omega(x,y), \ x,y \in V,
\]
and $L$ is given by the image of the map 
\[
\alpha_J: V \rightarrow V^{\mathbb{C}}, \ \alpha_J(x)=x-iJx.
\]
$Sp(V,\omega)$, the symplectic group, acts on the set of compatible complex structures $\mathcal{J}_\omega\simeq \mathfrak{h}_V$ by conjugation
\[
Sp(V,\omega) \times  \mathcal{J}_\omega \rightarrow \mathcal{J}_\omega,\quad (g,J)\mapsto gJg^{-1}, 
\]
so $\mathcal{J}_\omega\simeq Sp(V,\omega)/U(V, \omega)$, where $U(V, \omega)$ is the unitary group, while the corresponding action of $Sp(V,\omega)$ on $\mathfrak{h}_V$ is given by
\[
(g, T) \mapsto (DT-C)(-BT+A)^{-1}, \quad g=\begin{pmatrix}A & B \\ C & D\end{pmatrix}.
\]
Let now be again $(V, \omega)=(\mathbb{R}^{2n}, \omega_0)$. Fix one $T \in \mathfrak{h}_V$ and consider the function $f_T=e^{\pi i <x, Tx>} \in L^2(\R^n)$, where $<\cdot, \cdot>$ denotes the standard scalar product. Let $J=J_T \in \mathcal{J}_{\omega_0}$ be the element corresponding to $T$ relative to the symplectic standard basis $a_1,\dots, a_n, b_1, \dots, b_n$ in $(\ref{standardb})$ which we will fix henceforth. Then the Lagrangian $L_T\subset V^{\mathbb{C}}$ associated to $T$ is given by the span of $a_i- \sum_jT_{ij}b_j, i \in \{1,\dots,n\}$. We will frequently need the following result:
\begin{theorem}[\cite{mumford}]\label{mumford}
The subspace $\mathbb{C}\cdot f_{T^{-1}}$ is the subspace annihilated by $\sigma\circ \alpha_{J_T}$. Let $g=\left(\begin{smallmatrix}A&B\\C&D \end{smallmatrix}\right) \in Sp(2n, \R)$ and $\hat g \in Mp(2n, \R)$ so that $\rho(\hat g)=g$. Then 
\[
L(\hat g)f_T=c(g,T)f_{g(T)},
\]
where $c(g,T) \in \mathbb{C}^*$ is an appropriate branch of the holomorphic function $[det(-BT+A])^{1/2}]$ on $\mathfrak{h}_V$.
\end{theorem}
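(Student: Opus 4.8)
The plan is to prove the two assertions separately: the first by a direct computation inside the symplectic Clifford algebra, the second by reducing to the three generating subgroups of $Mp(2n,\R)$ whose metaplectic action was written out explicitly in (\ref{metalinear}), (\ref{meta2}) and (\ref{fourier}); the intertwining property of Proposition \ref{diffmp} also furnishes a shorter, generator-free route to the transformation law up to a scalar.

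For the first assertion, note that $\sigma\circ\alpha_{J_T}$ has $\C$-linear image spanning $\sigma(L_T)$, where $L_T=\operatorname{span}\{a_i-\sum_j T_{ij}b_j:1\le i\le n\}$; hence the subspace it annihilates is the joint kernel on $\mathcal{S}(\R^n)$ of the first order operators $D_i=\sigma(a_i-\sum_j T_{ij}b_j)=ix_i-\sum_j T_{ij}\partial_{x_j}$. First I would check that the $D_i$ pairwise commute: by the defining relations (\ref{symrel}) (equivalently Corollary \ref{heisen}), $[D_i,D_k]$ is proportional to $\omega_0(a_i-\sum_j T_{ij}b_j,\,a_k-\sum_l T_{kl}b_l)=T_{ik}-T_{ki}$, which vanishes precisely because $T$ is symmetric, i.e. because $L_T$ is isotropic for $\omega_\C$. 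Since $T\in\mathfrak{h}_V$ is invertible, the system $D_if=0$, $1\le i\le n$, is equivalent to $\partial_{x_j}\log f=i\,(T^{-1}x)_j$, a closed system by symmetry of $T^{-1}$, whose solution space is the single complex line spanned by the Gaussian whose exponent is the quadratic form with Hessian proportional to $T^{-1}$; this is $\C\cdot f_{T^{-1}}$, which lies in $\mathcal{S}(\R^n)\subset L^2(\R^n)$ because $T\in\mathfrak{h}_V$. This proves the first assertion.

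For the transformation law, recall that $Sp(2n,\R)$, hence $Mp(2n,\R)$, is generated by the subgroups $\{g(A):A\in GL(n,\R)\}$, $\{t(B):B=B^t\}$ and the element $\tilde\sigma$. On each generator one applies the corresponding explicit formula and finds that $L(\hat g)f_T$ is, up to a constant $c(g,T)$, again a Gaussian $f_{g(T)}$ whose period matrix is the $g$-transform of $T$: for $g(A)$, formula (\ref{metalinear}) gives $L(g(A))f_T(x)=\det(A)^{\frac12}f_T(A^tx)$, a linear change of variables in the exponent; for $t(B)$, formula (\ref{meta2}) shifts the exponent's quadratic form by a multiple of $B$; and for $\tilde\sigma$, formula (\ref{fourier}) reduces everything to the classical Gaussian (Fresnel) integral $\int_{\R^n}e^{i\langle x,y\rangle}f_T(y)\,dy$. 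In each case $c(g,T)$ is exactly the branch of $[\det(-BT+A)]^{\frac12}$ picked out by the root choices in (\ref{metalinear})--(\ref{fourier}). To pass to a general $\hat g\in Mp(2n,\R)$ I would use that $c$ inherits from the associativity of $L$ and of the $Sp$-action on $\mathfrak{h}_V$ the cocycle identity $c(g_1g_2,T)=c(g_1,g_2(T))\,c(g_2,T)$, that the prescribed branch of $[\det(-BT+A)]^{\frac12}$ satisfies the same identity, and that the two agree on the generators; a complementary check of the modulus is provided by unitarity of $L(\hat g)$ together with $\|f_S\|_{L^2}^2=\operatorname{const}\cdot\det(\operatorname{Im}S)^{-\frac12}$, which forces $|c(g,T)|=|\det(-BT+A)|^{\frac12}$. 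Finally $c(g,\cdot)$ is locally a product of such determinant factors, hence nowhere vanishing and holomorphic on $\mathfrak{h}_V$, so by simple connectedness of $\mathfrak{h}_V$ it is a well defined global branch of $[\det(-BT+A)]^{\frac12}$. Alternatively one can avoid the case analysis: complex-linearly extending $\sigma$ and differentiating (\ref{heiequiv}) as in Proposition \ref{diffmp} gives $\sigma(gw)L(\hat g)=L(\hat g)\sigma(w)$ for $w\in V^\C$, so $L(\hat g)f_{T^{-1}}$ is annihilated by $\sigma$ on the positive Lagrangian $g_\C L_T$ and hence, by the first assertion, is a nonzero scalar multiple of the corresponding Gaussian; only the scalar then remains, and it is pinned down by the norm identity and the generator formulas above.

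The step I expect to be the main obstacle is the square-root bookkeeping in the last part: ensuring that the fixed roots $\det(A)^{\frac12}$ in (\ref{metalinear}), the trivial factor in (\ref{meta2}) and $i^{n/2}$ in (\ref{fourier}) compose consistently, i.e. that the $\Z_2$-ambiguity of the lift $\hat g$ is exactly matched by the sign of $[\det(-BT+A)]^{\frac12}$. This is precisely where the passage from $Sp(2n,\R)$ to its metaplectic double cover is essential; it is handled by verifying the cocycle identity on the generators --- so that no sign is lost when a word in them is reassembled --- rather than by a continuity argument, using that $L$ is a genuine, not merely projective, representation of $Mp(2n,\R)$.
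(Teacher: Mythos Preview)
Your proof is correct and considerably more self-contained than the paper's, which for both assertions largely defers to Mumford's \emph{Tata III}: the paper disposes of the first claim by citing Theorem 2.2 there (after matching the two descriptions of $L_T$, via $J_T$ and via the matrix $T$), and of the second by citing Theorem 8.3 without further argument. You instead carry out the ODE computation directly for the annihilator, and for the transformation law you give the standard generator-by-generator verification together with the cocycle argument, plus the slick intertwining shortcut via Proposition \ref{diffmp}. One small point the paper's proof does make explicit and you assume: that the $\C$-span of $\alpha_{J_T}(V)=\{x-iJ_Tx\}$ coincides with the span of $\{a_i-\sum_jT_{ij}b_j\}$ --- this is exactly the dictionary between the complex-structure and Siegel-matrix descriptions of the positive Lagrangian $L_T$, and is worth a sentence since the statement is phrased in terms of $J_T$ while your computation uses $T$. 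Your concern about the square-root bookkeeping is well placed; that is indeed the only delicate step, and your cocycle-on-generators strategy is the right way to handle it.
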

\begin{proof}
Note that since ${\rm Im}(T)$ is positive definite, we can solve $y=Tx$ for $x$. Then $L_T$, the locus of $\alpha_{J_T}(x), x \in V^{\C}$ is by the above given by the (complex) span of the
\[
a_i- \sum_jT_{ij}b_j= a_i- \sum_jT_{ij}J_0a_j, \ i \in \{1,\dots,n\},
\]
where $J_0:V\rightarrow V$ is the standard complex structure $J_0=\left(\begin{smallmatrix}0&1\\-1&0 \end{smallmatrix}\right)$. Since $L_T$ is given equivalently by the locus
\[
x - iJ_Tx,\quad x \in V,
\]
the annihilator of $f_{T^{-1}}$ under $\sigma$ is exactly $L_T$ by \cite{mumford}, Theorem 2.2. So $f_{T^{-1}}$ is annihilated by $\sigma\circ \alpha_{J_T}$ (note our convention for $a_i, b_i$ in (\ref{cliffbig})). The second assertion is Theorem 8.3 in {\it loc. cit}.
\end{proof}
The next statement is a simple consequence of the first part of the above theorem, still it lies at the heart of this paper.
\begin{lemma}\label{eigenvalues}
Let $T \in \mathfrak{h}_V$, $J_T \in \mathcal{J}_{\omega_0}$ the associated complex structure, $h=(h_1, h_2) \in \R^{2n}$. Let $f_{h, T}=\pi((h_1,h_2),0)f_{T^{-1}}$. Then
\[
(\sigma\circ \alpha_{J_T})(a_j) f_{h, T}= ((h_2)_j+\sum_{i}T_{ji}(h_1)_i) f_{h, T},\quad (\sigma\circ \alpha_{J_T})(b_j) f_{h, T}= i((h_2)_j+\sum_{i}T_{ji}(h_1)_i) f_{h, T} 
\]
for $j \in \{1,\dots, n\}$. In especially, for $T=iI$, so $J_T=J_0$, we conclude that the eigenvalues of $\sigma\circ \alpha_{J_0}$ acting on $f_{h, iI}$ constitute the set $\{((h_2)_j+i(h_1)_j),(i(h_2)_j-(h_1)_j)\}_{j=1}^n$.
\end{lemma}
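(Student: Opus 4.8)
The plan is to reduce everything to the first assertion of Theorem \ref{mumford}, which tells us that $\mathbb{C}\cdot f_{T^{-1}}$ is precisely the kernel of $\sigma\circ\alpha_{J_T}$; i.e.\ $(\sigma\circ\alpha_{J_T})(a_j)f_{T^{-1}}=0$ and $(\sigma\circ\alpha_{J_T})(b_j)f_{T^{-1}}=0$ for all $j$. So the whole content of the lemma is to track how these annihilation relations transform under the Heisenberg translation $\pi((h_1,h_2),0)$. First I would compute the commutators $[\sigma(\alpha_{J_T}(a_j)),\pi((h_1,h_2),0)]$ and likewise for $b_j$ inside the operator algebra on $\mathcal{S}(\mathbb{R}^n)$. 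Since $\sigma=d\pi|_{\mathbb{R}^{2n}}$ by (\ref{cliffordheis}), and the Heisenberg commutation relation from Corollary \ref{heisen} gives $[\sigma(v),\sigma(w)]=-i\omega_0(v,w)\,\mathrm{id}$ (a scalar!), the operator $\sigma(\alpha_{J_T}(a_j))$ does not literally commute with $\pi((h_1,h_2),0)$, but it commutes up to an additive scalar; equivalently $\pi((h_1,h_2),0)$ conjugates $\sigma(v)$ into $\sigma(v)+c\cdot\mathrm{id}$ for an explicit constant $c$ depending on $\omega_0(v,(h_1,h_2))$.

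Concretely, I would write $\alpha_{J_T}(a_j)=a_j-iJ_Ta_j$, and using $a_i-\sum_kT_{ik}b_k\in L_T$ (the form of $L_T$ recalled in the proof of Theorem \ref{mumford}) identify $\alpha_{J_T}(a_j)$, up to normalization, with $a_j-\sum_k T_{jk}b_k$ as an element of $V^{\mathbb{C}}$ (using symmetry of $T$). Then $\sigma(\alpha_{J_T}(a_j))=\sigma(a_j)-\sum_k T_{jk}\sigma(b_k)$, and from (\ref{expl}) one reads off how $\pi((h_1,h_2),0)$ intertwines $\sigma(a_j)=ix_j$ and $\sigma(b_j)=\partial/\partial x_j$: translation by $h_2$ in the argument shifts $\partial_{x_j}$ trivially but shifts the multiplication operator, while the phase factor $e^{i\langle h_1,z-\frac12 h_2\rangle}$ produces, upon conjugating $\partial_{x_j}$, an extra additive term $i(h_1)_j$. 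Collecting these, $\pi((h_1,h_2),0)^{-1}\sigma(\alpha_{J_T}(a_j))\pi((h_1,h_2),0)=\sigma(\alpha_{J_T}(a_j))+\big((h_2)_j+\sum_i T_{ji}(h_1)_i\big)\mathrm{id}$ (with the right sign conventions), and applying both sides to $f_{T^{-1}}$ and using the annihilation relation gives exactly $(\sigma\circ\alpha_{J_T})(a_j)f_{h,T}=\big((h_2)_j+\sum_i T_{ji}(h_1)_i\big)f_{h,T}$. The statement for $b_j$ follows the same way, noting $\alpha_{J_T}(b_j)=b_j-iJ_Tb_j$ and that $J_T$ applied to the relevant combination introduces the factor $i$, so the eigenvalue is $i$ times the one for $a_j$.

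The specialization $T=iI$: then $T^{-1}=-iI$, $J_T=J_0$, and $\sum_i T_{ji}(h_1)_i=i(h_1)_j$, so the $a_j$-eigenvalue on $f_{h,iI}$ is $(h_2)_j+i(h_1)_j$ and the $b_j$-eigenvalue is $i\big((h_2)_j+i(h_1)_j\big)=i(h_2)_j-(h_1)_j$. Ranging over $j=1,\dots,n$ gives the asserted set $\{(h_2)_j+i(h_1)_j,\ i(h_2)_j-(h_1)_j\}_{j=1}^n$.

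The main obstacle I anticipate is purely bookkeeping: getting the signs and factors of $i$ right simultaneously in three places — the sign convention $\mathbf{sCl}(\mathbb{R}^{2n},-\omega_0)$ (note the minus) entering Corollary \ref{heisen}, the convention $a_j\mapsto ix_j$, $b_j\mapsto\partial_{x_j}$ in (\ref{cliffbig}), and the explicit cocycle phase in the Lionâ€“Vergne model (\ref{expl}). One must also be careful that $\alpha_{J_T}(a_j)$ genuinely lies in $L_T$ and equals the stated combination only after using that $T$ is symmetric; a clean way to avoid a false step is to verify the identity first at $T=iI$ by direct computation with $f=e^{-\pi|x|^2}$ (where $\sigma(\alpha_{J_0}(b_j))f_{-iI}=(\partial_{x_j}-ix_j)e^{-\pi|x|^2}$ visibly vanishes), and then transport the general case via Theorem \ref{mumford}'s statement that $L(\hat g)f_T=c(g,T)f_{g(T)}$ together with the $Sp$-equivariance $(\sigma(Sv))L(\hat S)=L(\hat S)\sigma(v)$ of Proposition \ref{diffmp}. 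That last route is in fact the slickest: pick $g\in Sp(2n,\mathbb{R})$ with $g(iI)=T^{-1}$, conjugate the $T=iI$ relation by $L(\hat g)$, and absorb the Heisenberg translation using (\ref{heiequiv}). Either way the only real work is tracking scalars.
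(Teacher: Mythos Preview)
Your proposal is correct and follows essentially the same route as the paper's proof: both identify $\alpha_{J_T}(a_j)$ with $a_j-\sum_k T_{jk}b_k$ as an element of $L_T$, invoke Theorem \ref{mumford} for the annihilation $\sigma(\alpha_{J_T}(a_j))f_{T^{-1}}=0$, and then push this relation through the Heisenberg translation $\pi((h_1,h_2),0)$ to extract the eigenvalue from the resulting central (phase) term. The only cosmetic difference is that the paper writes $\sigma(v)=\frac{d}{dt}\pi(tv,0)$ and commutes at the group level (so the eigenvalue appears as the derivative of $e^{i\omega_0(tv,(h_1,h_2))}$), whereas you phrase the same step as the operator conjugation $\pi(h,0)^{-1}\sigma(v)\pi(h,0)=\sigma(v)+c\cdot\mathrm{id}$; these are equivalent by differentiating the group identity at $t=0$.
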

\begin{proof}
First note that since the sets $\{u_i=a_i - iJ_Ta_i\}$ and $\{w_i=a_i- \sum_jT_{ij}J_0a_j\}$ with $i\in \{1,\dots,n\}$ both span $L_T$ (over $\C$) and since the real span of the $a_i$ is (real) Lagrangian, the expressions $iJ_Ta_i$ and $\sum_jT_{ij}J_0a_j$ actually coincide, since otherwise, we could produce real linear combinations of the $b_i$ from (complex) linear combinations of $u_i$ and $w_i$ which contradicts the fact that $L_T$ is totally complex. We consider the complexification of the Lie algebra $\mathfrak{h}_n$ of $H_{n}=\mathbb{R}^{2n} \times \mathbb{R}$ and the corresponding extension of $\pi_*: \mathfrak{h}_n\rightarrow {\rm End}(\mathcal{S}(\mathbb{R}^{n}))$. Then the claims follow from the following elementary calculation:
\[
\begin{split}
(\sigma\circ \alpha_{J_T})(a_j) f_{h, T}&=\frac{d}{dt}|_{t=1}\left(\pi(ta_j -t\sum_iT_{ji}b_i, 0)f_{h, T}\right)\\
&=\frac{d}{dt}|_{t=1}\left(\pi(ta_j -t\sum_iT_{ji}b_i, 0)\pi((h_1,h_2),0)f_{T^{-1}}\right)\\
&=\frac{d}{dt}|_{t=1}\left(\pi((h_1,h_2),0)\pi(ta_j -t\sum_iT_{ji}b_i, \omega_0(ta_j -t\sum_iT_{ji}b_i,(h_1,h_2))f_{T^{-1}}\right)\\
&= \pi((h_1,h_2),0)\left(\frac{d}{dt}|_{t=1}e^{i\omega_0(ta_j -t\sum_iT_{ji}b_i,(h_1,h_2))}f_{T^{-1}}\right)\\ &+(\pi((h_1,h_2),0)\pi\left(a_j -\sum_iT_{ji}b_i, \omega_0(a_j -\sum_iT_{ji}b_i, (h_1,h_2))\right)\sigma(a_j-\sum_iT_{ji}b_i)f_{T^{-1}}.
\end{split}
\]
by Theorem \ref{mumford}, the latter summand is zero, thus
\[
\begin{split}
(\sigma\circ \alpha_{J_T})(a_j) f_{h, T}&=\left(\frac{d}{dt}|_{t=1}e^{i\omega_0(ta_j -t\sum_iT_{ji}b_i,(h_1,h_2))}\pi((h_1,h_2),0) f_{T^{-1}}\right)\\
&=((h_2)_j+\sum_{i}T_{ji}(h_1)_i)f_{h, T}.
\end{split}
\]
The case $(\sigma\circ \alpha_{J_T})(b_j)$ acting on $f_{h, T}$ is derived in complete analogy.
\end{proof}
Before beginning to state the above in terms of representations of commutative algebras, we give an immediate corollary of the lemma which illustrates a certain reciprocity of information contained in the vectors $f_{h, T}$ resp. the (commuting set of) operators acting on them. For this, note that a pair consisting of a vector $h=(h_1, h_2) \in \R^{2n}$ so that $(h_1)_j>0, j\in \{1,\dots,n\}$ defines an element $T_h\in \mathfrak{h}_V$  by setting
\begin{equation}\label{recip}
T_h={\rm diag}((h_2)_1,\dots, (h_2)_n) + i\cdot {\rm diag}((h_1)_1,\dots, (h_1)_n)
\end{equation}
where ${\rm diag}(\dots)$ denotes the $n\times n$-matrix with the given entries on the diagonal and $0$ otherwise. By positivity of the entries of $h_1$, $T_h \in \mathfrak{h}_V$. Then we have:
\begin{folg}
For $h=(h_1,h_2) \in R^{2n}$ with $(h_1)_j>0, j\in \{1,\dots,n\}$ let $T_h \in \mathfrak{h}_V$ as in (\ref{recip}). Set $(\tilde h)=(\tilde h_1, \tilde h_2)$ where $\tilde h_1=(1,\dots,1)\in \R^n$ and $\tilde h_2=(0,\dots,0)\in \R^n$. Then we have
\[
(\sigma\circ \alpha_{J_{T_{h}}})(a_j) f_{\tilde h, T_{h}}= ((h_2)_j+i(h_1)_j f_{\tilde h,T_{h}},\quad (\sigma\circ \alpha_{J_{T_{h}}})(b_j) f_{\tilde h, T_{h}}= i((h_2)_j+i(h_1)_j f_{\tilde h,T_{h}}
\]
Note the eigenvalues of $\sigma\circ \alpha_{J_{T_{h}}}$ acting on $f_{\tilde h, T_{h}}$ thus coincide with the eigenvalues of $\sigma\circ \alpha_{J_{iI}}$ acting on $f_{h, iI}$ in Lemma \ref{eigenvalues}.
\end{folg}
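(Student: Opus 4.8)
The plan is to obtain everything as a direct specialization of Lemma~\ref{eigenvalues}, with essentially no new computation. First I would record why the hypothesis enters: since $(h_1)_j>0$ for all $j$, the imaginary part ${\rm Im}(T_h)={\rm diag}((h_1)_1,\dots,(h_1)_n)$ is positive definite, so $T_h$ is a genuine point of the Siegel space $\mathfrak{h}_V$; consequently $f_{T_h^{-1}}$, the complex structure $J_{T_h}$ and the map $\alpha_{J_{T_h}}$ are all defined, and $f_{\tilde h, T_h}=\pi((\tilde h_1,\tilde h_2),0)f_{T_h^{-1}}$ makes sense. This positivity is the only place the assumption on $h_1$ is used; the translation vector $\tilde h$ is otherwise arbitrary, so it is admissible in Lemma~\ref{eigenvalues}.

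Next I would apply Lemma~\ref{eigenvalues} verbatim, taking the matrix ``$T$'' in that statement to be $T_h$ and the translation vector ``$h=(h_1,h_2)$'' there to be $\tilde h=(\tilde h_1,\tilde h_2)$ with $\tilde h_1=(1,\dots,1)$ and $\tilde h_2=(0,\dots,0)$. The lemma then gives
\[
(\sigma\circ\alpha_{J_{T_h}})(a_j)\,f_{\tilde h, T_h}=\Big((\tilde h_2)_j+\sum_i (T_h)_{ji}(\tilde h_1)_i\Big) f_{\tilde h, T_h}=\Big(\sum_i (T_h)_{ji}\Big) f_{\tilde h, T_h},
\]
and, since $T_h$ is diagonal, the sum collapses to $(T_h)_{jj}=(h_2)_j+i(h_1)_j$. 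The statement for $b_j$ follows the same way, carrying the extra factor $i$ exactly as in the lemma. This proves the two displayed eigenvalue equations in the corollary.

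Finally, for the asserted coincidence of spectra: in Lemma~\ref{eigenvalues} with $T=iI$ the eigenvalues of $\sigma\circ\alpha_{J_0}$ on $f_{h,iI}$ were found to be $\{(h_2)_j+i(h_1)_j,\ i(h_2)_j-(h_1)_j\}_{j=1}^n$. Observing $i(h_2)_j-(h_1)_j=i\big((h_2)_j+i(h_1)_j\big)$, this multiset equals $\{\mu_j,\ i\mu_j\}_{j=1}^n$ with $\mu_j:=(h_2)_j+i(h_1)_j$; reading off the $a_j$- and $b_j$-eigenvalues just computed, $\sigma\circ\alpha_{J_{T_h}}$ acting on $f_{\tilde h, T_h}$ has exactly the same eigenvalues $\{\mu_j,\ i\mu_j\}_{j=1}^n$, whence the claim. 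I do not expect any genuine obstacle here; the only point deserving a sentence of care is the ``reciprocity'' being advertised — in the lemma the information of $h$ sits in the Heisenberg translation and the complex structure is fixed ($J_0$), whereas here the translation is the fixed vector $\tilde h$ and $h$ is instead encoded in the complex structure $J_{T_h}$ — so one should make explicit that interchanging these two roles leaves the eigenvalue multiset untouched, which is precisely the content of the diagonal collapse $\sum_i (T_h)_{ji}=(h_2)_j+i(h_1)_j$.
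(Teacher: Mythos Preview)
Your proof is correct and follows essentially the same approach as the paper's: both apply Lemma~\ref{eigenvalues} with $T=T_h$ and translation vector $\tilde h$, and the key computation reduces to the diagonal collapse $\sum_i (T_h)_{ji}(\tilde h_1)_i=(T_h)_{jj}=(h_2)_j+i(h_1)_j$. The paper states this in one line while you spell out the role of the positivity hypothesis and the spectral comparison more explicitly, but the argument is identical.
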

\begin{proof}
The proof is immediate by plugging in the definitions and using the fact that with $T_h$ as in (\ref{recip}) we have
\[
\sum_{i}(T_h)_{ji}(\tilde h_1)_i=(h_2)_j+i(h_1)_j.
\]
\end{proof}
The rationale of this is, that at least for positive vectors $h_1$ in the tuple $(h_1,h_2)$, the information contained in such a tuple can always be 'shifted' to the parameter space given by positive Lagrangians resp. the Siegel space. We now interpret the above Lemma in terms of representations for certain commutative algebras. \\

Let $(V=\R^{2n}, \omega_0)$ be as above, $J_0$ the standard complex structure, $T \in \mathfrak{h}_V$, $J_T$ be the associated complex structure. Let $a_1,\dots, a_n, b_1, \dots, b_n$ be the symplectic standard basis, $V=L_0\oplus L_1$ the associated Lagrangian direct sum decomposition, that is, $L_0={\rm span}\{a_1,\dots, a_n\}$, $L_1={\rm span}\{b_1,\dots, b_n\}$. Denote by $\mathcal{A}_1(V)$ the associative subalgebra of $\textbf{sCl}(V,\omega_0)$ generated (as a subalgebra over $\R$) by the elements of $L_0$. Since $\omega_0|L_0=0$ we have with the two-sided ideal $\mathcal{I}_1(L_0)=\{ x\otimes y  -  y \otimes x : x,y \in L_0 \} \subset  \mathcal{T}(L_0)$ that
\[ 
\mathcal{A}_1(V)\simeq \mathcal{T}(L_0)/ \mathcal{I}_1= {\rm Sym}^*(L_0).
\]
On the other hand, consider ${\rm Sym}^*(V)$ as an algebra over $\R$ and consider the two-sided ideal in ${\rm Sym}^*(V)$ defined by 
\[
\mathcal{I}_2=\{ x\otimes y  +  J_0y \otimes J_0x : x,y \in L_0\}+\mathcal{I}_1(V),
\]
with $\mathcal{I}_1(V)$ the ideal generated by the commutators in $\mathcal{T}(V)$. Then $\mathcal{A}_2(V)={\rm Sym}^*(V)/\mathcal{I}_2$ is again a commutative, associative, but non-free $\R$-algebra. We have the identifications $\mathcal{A}_1(V)\simeq \R[x_1,\dots,x_n]$ and $\mathcal{A}_2(V)\simeq \R[x_1,\dots, x_n, ix_1,\dots,ix_n]$. In the latter, $x_j$ and $ix_j$ are interpreted as independent variables while we have the relation $x_jx_k=-(ix_j)(ix_k), j,k \in \{1,\dots,n\}$. Both algebras can be represented in 'rotated' form again as subalgebras in the (complexification of) the symplectic Clifford algebra and these representations will actually give rise to the irreducible one-dimensional representations we need to define 'Frobenius structures'.\\

Let $T \in \mathfrak{h}_V$, $J_T$ be the associated complex structure. Let $\textbf{sCl}_\mathbb{C}(V,\omega_0)=\textbf{sCl}(V,\omega_0)\otimes_{\R}\C$ be the complexification of $\textbf{sCl}(V,\omega_0)$. Let $\mathcal{A}_1(V, J_T)$ be generated as an $\R$-subalgebra of $\textbf{sCl}_{\C}(V,\omega_0)$ by the set
\[
\mathcal{L}_T=\{a_1 - iJ_Ta_1,\dots, a_n - iJ_Ta_n\} \subset \textbf{sCl}_{\C}(V,\omega_0),
\]
thus $\mathcal{A}_1(V, J_T)$ is the smallest subalgebra of $\textbf{sCl}(V,\omega_0)$ containing all real linear combinations and tensor products of elements of $\mathcal{L}_T$ (the latter is just $L_T$, considered as subspace in $V^{\C}$). Note that since $\omega_{\mathbb{C}}|L_T=0$ we have that the ideal generated by the relation $\mathcal{I}(\omega)$ in (\ref{comm1}), restricted to  $L_T$, is just $\mathcal{I}_1(L_T)$. Thus $\mathcal{A}_1(V, J_T)$ is a commutative $\R$-sub-algebra of $\textbf{sCl}_{\C}(V,\omega_0)$. Analogously, define $\mathcal{A}_2(V, J_T)$ as the $\R$-subalgebra of $\textbf{sCl}_\mathbb{C}(V,\omega_0)$ generated in $\textbf{sCl}_\mathbb{C}(V,\omega_0)$ over $\R$ by the set
\[
\mathcal{W}_T=\{a_1 - iJ_Ta_1,\dots, a_n - iJ_Ta_n, b_1 - iJ_Tb_1,\dots, b_n - iJ_Tb_n\} \subset \textbf{sCl}_{\C}(V,\omega_0).
\]
Note that for $\mathcal{A}_2(V, J_T)$ its commutativity again follows since $\alpha_{J_T}(V)=L_T$ and $L_T$ is Lagrangian w.r.t. $\omega_{\C}$. Thus we have the following proposition:
\begin{prop}\label{algebras}
For any $T \in \mathfrak{h}_V$, the algebras $\mathcal{A}_1(V)= \R[x_1,\dots,x_n]$ and $\mathcal{A}_1(V, J_T)$ are isomorphic as $\R$-algebras. Furthermore, for any $T \in \mathfrak{h}_V$, the $\R$-algebras $\mathcal{A}_2(V)= \R[x_1,\dots, x_n, ix_1,\dots,ix_n]$ and $\mathcal{A}_2(V, J_T)$ are isomorphic. Put another way, $\mathcal{A}_1(V, J_T),\ T \in \mathfrak{h}_V$ resp. $\mathcal{A}_2(V, J_T),\ T \in \mathfrak{h}_V$ can be considered as a set of mutually equivalent repesentations of $\R[x_1,\dots,x_n]$ resp. $\R[x_1,\dots, x_n, ix_1,\dots,ix_n]$ on sub-algebras of $\textbf{sCl}_\mathbb{C}(V,\omega_0)$.
\end{prop}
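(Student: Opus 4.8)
The plan is to exhibit explicit $\R$-algebra isomorphisms in both cases by analyzing the defining relations of the source and target algebras, using the fact that $\omega_\C$ vanishes on $L_T = \alpha_{J_T}(V)$, which was established in the setup above.

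First, for the statement about $\mathcal{A}_1$: I would set $u_j = a_j - iJ_Ta_j \in \mathcal{L}_T$ for $j = 1,\dots,n$, so that by definition $\mathcal{A}_1(V,J_T)$ is the $\R$-subalgebra of $\textbf{sCl}_\C(V,\omega_0)$ generated by $u_1,\dots,u_n$. Since the $u_j$ span $L_T$ over $\C$ and $\omega_\C|_{L_T} = 0$, relation (\ref{symrel}) (extended $\C$-linearly to $\textbf{sCl}_\C$) gives $u_j u_k - u_k u_j = \omega_\C(\alpha_{J_T}(a_j),\alpha_{J_T}(a_k))\cdot 1 = 0$, so the $u_j$ pairwise commute and $\mathcal{A}_1(V,J_T)$ is commutative. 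The universal property of the polynomial ring then yields a surjective $\R$-algebra homomorphism $\R[x_1,\dots,x_n] \to \mathcal{A}_1(V,J_T)$ sending $x_j \mapsto u_j$; one must check it is injective, i.e. that the $u_j$ satisfy no extra polynomial relations. For this I would use the Schrödinger/metaplectic representation: by Theorem \ref{mumford}, the vector $f_{T^{-1}}$ is annihilated by $\sigma\circ\alpha_{J_T}$, while by Lemma \ref{eigenvalues} the Heisenberg translates $f_{h,T} = \pi((h_1,h_2),0)f_{T^{-1}}$ are simultaneous eigenvectors of $\sigma(u_j)$ with eigenvalue $(h_2)_j + \sum_i T_{ji}(h_1)_i$. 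As $h$ ranges over $\R^{2n}$ these eigenvalue tuples range over all of $\C^n$ (since $\mathrm{Im}(T)$ is positive definite, $T$ is invertible, so the map $h_1 \mapsto T h_1$ is surjective onto $\C^n$ even before adding $h_2$), hence no nonzero polynomial can vanish on all of them; thus any polynomial in the $u_j$ mapping to $0$ in $\textbf{sCl}_\C$ acts as $0$ on all $f_{h,T}$ and therefore must be the zero polynomial. This proves injectivity and hence the first isomorphism.

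Second, for $\mathcal{A}_2$: write $u_j = a_j - iJ_Tb_j$ abusively — more precisely set $u_j = a_j - iJ_Ta_j$ and $v_j = b_j - iJ_Tb_j$, the generators in $\mathcal{W}_T$. Again $\alpha_{J_T}(V) = L_T$ is $\omega_\C$-Lagrangian, so all of $u_1,\dots,u_n,v_1,\dots,v_n$ pairwise commute by (\ref{symrel}), giving commutativity of $\mathcal{A}_2(V,J_T)$. Now I would identify the relations: using $iJ_Ta_j = \sum_i T_{ij} J_0 a_j = \sum_i T_{ij} b_i$ (the identity $iJ_T a_i = \sum_j T_{ij}J_0 a_j$ proved at the start of the proof of Lemma \ref{eigenvalues}) and the analogous expression for $iJ_T b_j$, one expresses $u_j, v_j$ in terms of $a_j, b_j$ and checks that, modulo the symplectic commutator relations, the only relations among them are the "quadratic" ones $u_j u_k = -(iu_j)(iu_k)$-type identities coming from $\mathcal{I}_2$, matching exactly the relations defining $\mathcal{A}_2(V) = \R[x_1,\dots,x_n,ix_1,\dots,ix_n]$. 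Concretely, one builds the map $\mathcal{A}_2(V) \to \mathcal{A}_2(V,J_T)$ on generators and checks it respects $\mathcal{I}_2$; for injectivity one again passes to the representation on $\mathcal{S}(\R^n)$, where Lemma \ref{eigenvalues} shows $\sigma(u_j)$ and $\sigma(v_j)$ act on $f_{h,T}$ by scalars $c_j(h)$ and $ic_j(h)$ respectively, and these pairs realize the generic point of $\mathrm{Spec}\,\mathcal{A}_2(V)$, so no nonzero element of $\mathcal{A}_2(V)$ maps to $0$.

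The main obstacle is the injectivity in both cases — equivalently, showing the generic semisimplicity of the representation so that a polynomial relation among the symplectic-Clifford elements forces the corresponding relation among the abstract generators. The clean way around it is precisely the eigenvalue computation of Lemma \ref{eigenvalues}: since the common eigenvalue tuples $\{(h_2)_j + \sum_i T_{ji}(h_1)_i\}_j$ sweep out a Zariski-dense (in fact all of) $\C^n$ as $h$ varies, the family $\{f_{h,T}\}$ is "big enough" to detect any nonzero polynomial, which is exactly what injectivity requires. For $\mathcal{A}_2$ there is the additional bookkeeping of verifying that the relation $x_jx_k = -(ix_j)(ix_k)$ is the \emph{only} relation — this follows because $\textbf{sCl}_\C$ has no zero divisors in the relevant subalgebra picture once one knows the representation is faithful on it, so again the eigenvalue argument closes the gap.
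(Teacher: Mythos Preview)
Your proof is correct, but it takes a genuinely different route from the paper's. The paper's proof is essentially a one-liner: it defines $\phi_T$ on generators by $a_i \mapsto \alpha_{J_T}(a_i)$ (and $b_i \mapsto \alpha_{J_T}(b_i)$ for $\mathcal{A}_2$) and asserts that this is ``clearly bijective,'' implicitly relying on the fact that $\alpha_{J_T}:V\to V^\C$ is an $\R$-linear isomorphism onto $L_T$ together with the PBW-type filtration on $\textbf{sCl}_\C(V,\omega_0)$ (so that $\R$-linearly independent commuting elements of $V^\C$ generate a free commutative subalgebra). You instead supply a representation-theoretic argument for injectivity: using Lemma~\ref{eigenvalues}, the operators $\sigma(u_j)$ (and $\sigma(v_j)$) are simultaneously diagonalized on the coherent states $f_{h,T}$ with eigenvalue tuples that sweep out all of $\C^n$ as $h$ varies, so no nonzero polynomial relation can hold. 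This buys you an explicit, self-contained verification that does not appeal to the Weyl-algebra structure, and it dovetails directly with the way these algebras are later used (as commuting families of operators on $\mathcal{S}(\R^n)$). The paper's approach is shorter but leaves the reader to fill in the PBW step. One small remark on your $\mathcal{A}_2$ case: you hand-wave the verification that $\phi_T$ respects $\mathcal{I}_2$; note that this too falls out of your eigenvalue method, since Lemma~\ref{eigenvalues} gives $\sigma(v_j) = i\,\sigma(u_j)$ on every $f_{h,T}$, hence $\sigma(u_ju_k + v_jv_k)=0$ on an overcomplete family, and faithfulness of $\sigma$ on $\textbf{sCl}_\C$ then forces $u_ju_k + v_jv_k = 0$ in the algebra.
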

\begin{proof}
The homomorphism $\phi_T:\mathcal{A}_1(V) \rightarrow \mathcal{A}_1(V, J_T)$ is on the generating elements $a_i$ just given by the $\R$-linear map $\alpha_{J_T}(a_i)$, the same homomorphism, extended to the $b_i$, gives $\phi_T:\mathcal{A}_2(V) \rightarrow \mathcal{A}_2(V, J_T)$ and these homomorphisms are clearly bijective. The composition $\phi_T\circ \phi^{-1}_{T'}$ then intertwines the corresponding representations for two given $T,T'\in \mathfrak{h}_V$.
\end{proof}
From Lemma \ref{eigenvalues} and Proposition \ref{algebras} it is now clear that the pairs $(\C\cdot f_{h, T}, \mathcal{A}_1(V, J_T))$ resp. $(\C \cdot f_{h, T}, \mathcal{A}_2(V, J_T))$ for $T \in \mathfrak{h}, h \in \R^{2n}$ together with symplectic Clifford multiplication considered as a map
\[
\sigma:\mathcal{A}_{1,2}(V, J_T) \rightarrow {\rm End}(\C \cdot f_{h, T}),
\]
define irreducible (necessarily one-dimensional) representations denoted by $\kappa^{1,2}_{h,T}$ respectively, of the algebras $\mathcal{A}_1(V)$ resp. $\mathcal{A}_2(V)$. It remains to identify which of these are equivalent. For this, consider the following semi-direct product $G=H_n\times_\rho Mp(2n, \R)$, that is for $(h_i, t_i) \in H_n, g_i \in Mp(2n, \R), i=1,2$ we have the composition (note that this differs from the usual definition since we will consider $G$ acting on the right on diverse objects in what follows)
\begin{equation}\label{semid}
(h_1, t_1, g_1)\cdot (h_2,t_2, g_2)=(h_2+\rho(g_2)^{-1}(h_1), t_1+t_2+\frac{1}{2}\omega_0(\rho(g_2)^{-1}(h_1), h_2)), g_1g_2).
\end{equation}
Consider the subgroups $G_U=H_n\times_\rho \hat U(n)\subset G$, $G_0=\{(0,0),\R\}\times_\rho Mp(2n,\R)$ where $\hat U(n)=\rho^{-1}(U(n))$ and $U(n)=Sp(2n)\cap O(2n)$. Consider now the sets $\mathcal{A}_{1,2}=\{(\C\cdot f_{h, T}, \mathcal{A}_{1,2}(V, J_T)), \ T\in \mathfrak{h}, h \in \R^{2n}\}$ of complex lines and commutative algebras. We define maps
\begin{equation}\label{action1}
\begin{split}
\mu_{1,2}:G \times \mathcal{A}_{1,2}\rightarrow \mathcal{A}_{1,2},\ &(h,t,g)\cdot (\C\cdot f_{h_0, T}, \mathcal{A}_{1,2}(V, J_T))= (\C\cdot f_{h+\rho(g^{-1})h_0, T.g}, \mathcal{A}_{1,2}(V, J_{T.g})),\\
 &h,h_0\in \R^{2n},\ t \in \R,\ g \in Mp(2n),
\end{split}
\end{equation}
where $T.g$ indicates $g^{-1}.T^{-1}$. We have induced actions of $G$ on the set $\mathcal{K}_{1,2}=\{\kappa^{1,2}_{h,T},\ T\in \mathfrak{h}, h \in \R^{2n}\}$. Note that $\mu_{1,2}$ are smooth (i.e. continuous) actions of $G$ on the set of complex lines and algebras $\mathcal{A}_{1,2}$ in the sense that for any pair $(\C\cdot f_{h_0, T}, \mathcal{A}_{1,2}(V, J_T)) \in \mathcal{A}_{1,2}$, the map $(h,t,g) \mapsto \mu_{1,2}((h,t,g),(\C\cdot f_{h_0, T}, \mathcal{A}_{1,2}(V, J_T)))$ is smooth (continuous) as a map from $G$ to $\mathcal{A}_{1,2}(V, J_T)$. For the following, note that $T$ appears in $f_{h, T}=\pi((h_1,h_2),0)f_{T^{-1}}$ with negative power which is why we have to resort to right actions to define the action of $G$ on the set $\{\C\cdot f_{h_0,T}\}, T\in \mathfrak{h}, h \in \R^{2n}$.
\begin{prop}\label{class}
$\mu_{1,2}$ define transitive $G$-actions on the {\it sets} $\mathcal{A}_{1,2}$ whose action on the first coordinate of $\mathcal{A}_{1,2}$ equals the right action
\[
\tilde \mu: G \times {\rm pr}_1\circ \mathcal{A}_{1,2}\rightarrow {\rm pr}_1\circ \mathcal{A}_{1,2}, \ ((h,t,g),\C\cdot f_{h_0,T}) \mapsto \C \cdot \pi((h,t))L(g^{-1})f_{h_0,T}.
\]
The isotropy group of this action at a given point of $\mathcal{A}_{1,2}$ is isomorphic (conjugated) to $G_0\cap G_U$. On the other hand, the irreducible {\it representations} $\kappa^{1,2}_{h_1,T_1}$ and $\kappa^{1,2}_{h_0,T_0}$ are equivalent (as pairs of algebras and representation) if and only if there exists $\hat g \in \hat g_0G_0\hat g_0^{-1},\ \hat g_0 \in G$ so that $\hat g \cdot \kappa^{1,2}_{h_1,T_1}= \kappa^{1,2}_{h_0,T_0}$.
\end{prop}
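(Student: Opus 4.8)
The plan is to dispatch the three assertions in order; the first two are direct computations with the material above, while the real issue is the third, where one must first settle what ``equivalence of pairs'' should mean and then see that it is not trivialized by Heisenberg translations.

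\emph{First assertion.} I would verify that $\mu_{1,2}$ is a right $G$-action by comparing two successive applications with the composition law (\ref{semid}). The two structural inputs are the Heisenberg intertwining relation (\ref{heiequiv}), $\pi(\rho(g)h)=L(g)\pi(h)L(g)^{-1}$, and Mumford's transformation formula from Theorem \ref{mumford}, $L(\hat g)f_T=c(g,T)f_{g(T)}$ with $c(g,T)\in\mathbb{C}^*$. Feeding these into $f_{h_0,T}=\pi(h_0,0)f_{T^{-1}}$ shows that $\pi((h,t))L(g^{-1})f_{h_0,T}$ is a nonzero scalar multiple of $f_{h+\rho(g^{-1})h_0,\,T.g}$, so the induced action on the first coordinate ${\rm pr}_1\circ\mathcal{A}_{1,2}$ is exactly $\tilde\mu$ and the cocycle $\tfrac12\omega_0(\rho(g_2)^{-1}h_1,h_2)$ in (\ref{semid}) is precisely what makes this translation--metaplectic composite associative (this is where the right-action convention, forced by $T$ entering $f_{h,T}$ only through $T^{-1}$, pays off). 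The algebra coordinate is carried along consistently since $\mathcal{A}_{1,2}(V,J_{T.g})$ depends on $T.g$ only through $J_{T.g}$ and $T\mapsto J_T$ is a bijection $\mathfrak{h}_V\to\mathcal{J}_{\omega_0}$. Transitivity is then immediate from the transitivity of $Sp(2n,\mathbb{R})$, hence of $Mp(2n,\mathbb{R})$, on the Siegel space $\mathfrak{h}_V\simeq Sp(2n,\mathbb{R})/U(n)$, together with the transitivity of Heisenberg translations in $h_0\in\mathbb{R}^{2n}$.

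\emph{Second assertion.} Here the key preliminary remark is that on the set $\mathcal{A}_{1,2}$ the algebra coordinate is a function of the line coordinate, because a coherent state recovers ``its'' complex structure: $\{w\in V^{\mathbb{C}}:\sigma(w)\text{ has }\mathbb{C}\cdot f_{h_0,T_0}\text{ as an eigenline}\}=L_{T_0}$ (Lemma \ref{eigenvalues} for $\supseteq$; the splitting $V^{\mathbb{C}}=L_{T_0}\oplus\overline{L_{T_0}}$, valid since $L_{T_0}$ is totally complex, together with the vacuum property of $f_{T_0^{-1}}$ for $\subseteq$), and $L_{T_0}$ determines $J_{T_0}$. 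Hence $((h,t),g)$ fixes a pair $P_0=(\mathbb{C}\cdot f_{h_0,T_0},\mathcal{A}_{1,2}(V,J_{T_0}))$ iff it fixes the line, i.e.\ $\pi((h,t))L(g^{-1})f_{h_0,T_0}\in\mathbb{C}\cdot f_{h_0,T_0}$. Since $L(g^{-1})$ transports the recovered complex structure by conjugation with $\rho(g^{-1})$ (differentiate (\ref{heiequiv})) while $\pi((h,t))$ leaves it fixed, the condition first forces $\rho(g^{-1})$ to commute with $J_{T_0}$, i.e.\ $g\in\hat k_0\,\hat U(n)\,\hat k_0^{-1}$, where $\hat U(n)=\rho^{-1}(U(n))$ and $\rho(\hat k_0)$ carries $J_0$ to $J_{T_0}$ (using $\mathcal{J}_{\omega_0}\simeq Sp(2n,\mathbb{R})/U(n)$). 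Given this, (\ref{heiequiv}) and Theorem \ref{mumford} give $L(g^{-1})f_{h_0,T_0}\in\mathbb{C}^*\cdot f_{\rho(g^{-1})h_0,T_0}$, and since by (\ref{expl}) a nonzero translate $\pi((w,0))f_{h,T_0}$, $0\neq w\in\mathbb{R}^{2n}$, never lies on $\mathbb{C}\cdot f_{h,T_0}$, the leftover condition is $h=(1-\rho(g^{-1}))h_0$. Conjugating the stabilizer by $((h_0,0),\hat k_0)\in G$ straightens $\hat k_0\hat U(n)\hat k_0^{-1}$ back to $\hat U(n)$ and absorbs $h=(1-\rho(g^{-1}))h_0$ into $h=0$; what remains is $\{((0,t),g):t\in\mathbb{R},\,g\in\hat U(n)\}=G_0\cap G_U$, as claimed.

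\emph{Third assertion.} I would first fix the notion: $\kappa^{1,2}_{h_1,T_1}$ and $\kappa^{1,2}_{h_0,T_0}$ are \emph{equivalent} if there is an isomorphism of the represented subalgebras $\mathcal{A}_{1,2}(V,J_{T_1})\to\mathcal{A}_{1,2}(V,J_{T_0})$ of $\textbf{sCl}_{\C}(V,\omega_0)$ induced by a conjugation coming from $G$, together with an intertwiner of the (one-dimensional) representation spaces. The decisive point — and the one that keeps the statement from being vacuous, since abstract isomorphisms of $\mathcal{A}_{1,2}(V)$ would identify \emph{all} one-dimensional representations — is that among the conjugations realized by $G$ only those by $L(g)$, $g\in Mp(2n,\mathbb{R})$, preserve the class $\{\mathcal{A}_{1,2}(V,J_T):T\in\mathfrak{h}_V\}$: conjugation by $L(g)$ sends $\sigma(w)\mapsto\sigma(\rho(g)w)$ (cf.\ (\ref{heiequiv}) and Proposition \ref{diffmp}), hence the generating subspace inside $L_{T_1}$ to another one, introducing no constant term, whereas conjugation by a noncentral Heisenberg translation $\pi((h,0))$ sends $\sigma(w)\mapsto\sigma(w)+i\,\omega_0(h,w)\,\mathrm{id}$ and so lands outside the constant-term-free subalgebras $\mathcal{A}_{1,2}(V,J_T)$, and is therefore not an admissible equivalence. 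As the representation spaces are lines, the intertwiner is a scalar and equivalence amounts to matching the eigenvalue characters; combining Lemma \ref{eigenvalues} (the generator attached to $a_j$ acts on $f_{h,T}$ by $(h_2)_j+\sum_iT_{ji}(h_1)_i$) with Theorem \ref{mumford} then shows that $\kappa^{1,2}_{h_1,T_1}$ and $\kappa^{1,2}_{h_0,T_0}$ are equivalent precisely when $((0,0),g)\cdot\kappa^{1,2}_{h_1,T_1}=\kappa^{1,2}_{h_0,T_0}$ for some $g\in Mp(2n,\mathbb{R})$, i.e.\ precisely when the two lie in a common $G_0$-orbit. Rephrasing this relative to an arbitrary base point — which replaces $G_0$ by a conjugate $\hat g_0G_0\hat g_0^{-1}$, $\hat g_0\in G$, and swallows the isotropy $G_0\cap G_U$ of the second assertion into $\hat g_0(G_0\cap G_U)\hat g_0^{-1}\subset\hat g_0G_0\hat g_0^{-1}$ — yields exactly the stated criterion. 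I expect this conceptual step to be the main obstacle: isolating the correct equivalence relation and showing, via the constant-term computation, that it is realized only by the metaplectic subgroup, so that the ``only if'' direction does not collapse; by contrast the remaining bookkeeping — reconciling the convention $T.g=g^{-1}.T^{-1}$ with Mumford's left-action formula and tracking the cocycles in (\ref{semid}) — is routine.
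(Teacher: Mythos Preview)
Your argument is correct and uses the same core ingredients as the paper: the intertwining relation (\ref{heiequiv}), Theorem \ref{mumford}, and Lemma \ref{eigenvalues}. The first two assertions are handled essentially identically, though you supply considerably more detail for the isotropy computation than the paper, which simply reads it off ``from the explicit formula for $\tilde\mu$'' at the basepoint $(0,iI)$.

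For the third assertion your route diverges somewhat. The paper argues orbit-theoretically: it observes that elements $(0,g)\in G_0$ act as intertwiners on $\mathcal{K}_{1,2}$, checks that the $G$-action descends to $\mathcal{K}_{1,2}/\!\sim$, and then computes the isotropy of this quotient action at the class of $(0,iI)$ by showing that if $\kappa^{1,2}_{h,T}\sim\kappa^{1,2}_{0,iI}$ then the eigenvalue formula of Lemma \ref{eigenvalues} forces $h=0$ (via invertibility of ${\rm Im}(T)$). You instead isolate what ``equivalence of pairs'' must mean and prove directly that only $G_0$-conjugations preserve the class $\{\mathcal{A}_{1,2}(V,J_T)\}$, by the constant-term computation $\pi((h,0))\sigma(w)\pi((h,0))^{-1}=\sigma(w)+i\,\omega_0(h,w)\,\mathrm{id}$. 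This step is genuinely illuminating and is not made explicit in the paper; it explains \emph{why} Heisenberg translations cannot realize equivalences, whereas the paper's isotropy calculation establishes the same fact somewhat opaquely. Conversely, the paper's orbit framework makes the passage from the basepoint $(0,iI)$ to an arbitrary basepoint (hence the conjugate $\hat g_0G_0\hat g_0^{-1}$) entirely mechanical, which in your version is compressed into the final sentence. Both arguments are complete; yours trades a short orbit computation for a cleaner conceptual picture of the equivalence relation.
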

\begin{proof}
We first prove that if $(h_1, g_1), (h_2, g_2)\in G$ (we suppress the real number $t$ in the following since it has no effect when dealing with the action of $G$ on representations) then if $T=iI$ and $h_0=0$
\begin{equation}\label{compos}
f_{0, iT}.\tilde \mu(h_1, g_1).\tilde \mu(h_2, g_2)=\pi(h_2+\rho(g_2^{-1})h_1)L((g_1g_2))^{-1}f_{0, iI}=f_{h_2+\rho(g_2^{-1})h_1, iI.(g_1g_2)}.
\end{equation}
where the action on the left hand side is $\tilde \mu$. For the second equality we used the definition of $f_{h_0,T}, T\in \mathfrak{h}, h \in \R^{2n}$ and Theorem \ref{mumford}. So the first equality is to be shown. We have
\begin{equation}\label{semirep}
\begin{split}
f_{0, iI}.\tilde \mu(h_1, g_1).\tilde \mu(h_2, g_2)&=\pi(h_2, 0)L(g_2^{-1})\pi(h_1, 0)L(g_1^{-1}) f_{0, iI}\\
&=\pi(h_2+\rho(g_2^{-1})(h_1), 0)L(g_2^{-1})L(g_1^{-1})f_{0, iI}\\
&=\pi(h_2+\rho(g_2^{-1})(h_1), 0)L((g_1g_2))^{-1}f_{0, iI}.
\end{split}
\end{equation}
Thus $\tilde \mu$ gives the action of (\ref{action1}) on $\mathcal{A}_{1,2}$, restricted to the first coordinate. We leave transitivity to the reader. From the explicit formula for $\tilde \mu$, we see that the isotropy group of $(f_{h_0,iI}, \mathcal{A}_{1,2}(V, J_{iI}))$ is $G_0\cap G_U$. It remains to show that if two elements $\kappa^{1,2}_{h,T}, \  \kappa^{1,2}_{h_0,T_0}\in \mathcal{K}_{1,2}$ are equivalent, then they differ by an appropriate element of $\hat g \in \hat g_0G_0 \hat g_0^{-1}$ for some $\hat g_0 \in G$, that is $\hat g \cdot \kappa^{1,2}_{h_1,T_1}= \kappa^{1,2}_{h_0,T_0}$. For this note that elements of the form $(0, g) \in G_U$ act by $\tilde \mu$ as invertible intertwining operators on the set of pairs $\mathcal{A}_{1,2}$ resp. the set of representations $\mathcal{K}_{1,2}$. This follows directly from the definition of $\tilde \mu$ resp. (\ref{compos}). Furthermore one checks by direct calculation that if $\kappa^{1,2}_{h,T_1}$ and $\kappa^{1,2}_{\tilde h,T_2}$ are equivalent as pairs of algebras and representations then $\mu(\hat h, t,g).\kappa^{1,2}_{h,T_1}$ is equivalent to $\mu(\hat h,t, g).\kappa^{1,2}_{\tilde h,T_2}$ for any $(\hat h,t, g) \in G$. Thus $G$ acts transitively on the set $\mathcal{K}_{1,2}/\sim$ where $\sim$ denotes the equivalence relation induced by identifying equivalent pairs of algebras and representations in $\mathcal{K}_{1,2}$. We claim that the isotropy group of this action at $(0,iI)$ is $G_0$. Thus it suffices to check that if $\kappa^{1,2}_{h,T}$ for $T \in \mathfrak{h}, h \in \R^{2n}$ is equivalent to $\kappa^{1,2}_{0,iI}\in \mathcal{K}_{1,2}$, then $h=0$. We check the case $\mathcal{K}_{1}$. By (\ref{compos}) we have to show that if 
\[
\sigma\circ \alpha_{J_T}(a_i)f_{h, T}=0
\]
for all $a_i$, then $h=(h_1,h_2)=0$. But
\[
\sigma\circ \alpha_{J_T}(a_i)f_{h, T}=((h_2)_j+\sum_{i}T_{ji}(h_1)_i) f_{h, T}.
\]
by the invertibility of $Im(T)$, we then infer $h_1=0$. But then it also follows that $h_2=0$. The case $\mathcal{K}_{2}$ is proven analogously.
\end{proof}
Considering for a fixed $T \in \mathfrak{h}$, $\sigma_T=\sigma\circ \alpha_{J_T}$ as giving an algebra homomorphism $\sigma_T:\mathcal{A}_{1,2}(V)\rightarrow {\rm End}(\mathcal{S}(\R^n))$, thus a representation of $\mathcal{A}_{1,2}(V)$ on the set of smooth vectors of $L$, we arrive at the following
\begin{folg}\label{equivalence}
The set of equivalence classes of irreducible subrepresentations of ${\rm Im}(\sigma_T)$ on $\mathcal{S}(\R^n)$, where $T \in \mathfrak{h}$ is fixed, is isomorphic to $G/G_0$, to be more precise it is explicitly given by the $G/G_0$-orbit of $\mu$ through $(\C \cdot f_{0, T}, \mathcal{A}_{1,2}(V, J_T))$ in $\mathcal{K}_{1,2}$. On the other hand the set of all $\sigma_T, \ T\in \mathfrak{h}$ and their corresponding set of irreducible representations on $\mathcal{S}(\R^n)$ is isomorphic to $G/G_0\cap G_U$ by the same identifications.
\begin{proof}
By Proposition \ref{class}, for fixed $T$, the $G/G_0$-orbit of $\mu$ through $(\C \cdot f_{0, T}, \mathcal{A}_{1,2}(V, J_T))$ in $\mathcal{K}_{1,2}$ is contained in the set of irreducible representations of $\sigma_T(\mathcal{A}_{1,2}(V))$ on $\mathcal{S}(\R^n)$. Now let $\C\cdot f, f \in \mathcal{S}(\R^n)$ define an irreducible representation of the subalgebra $\sigma_T(\mathcal{A}_{1}(V))\subset {\rm End}(\mathcal{S}(\R^n))$, that is
\[
\sigma_T(a_i)f= \lambda_i f,
\]
for some set $\lambda_i \in \C, i=1,\dots, n$. Then by using induction on $n$ and the Cauchy-Kovalevskaya Theorem, we see that $f$ is uniquely determined, hence the assertion. The case $\mathcal{A}_2(V)$ is similar.
\end{proof}
\end{folg}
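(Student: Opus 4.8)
The plan is to prove the first assertion by a two-sided inclusion, and to obtain the second one almost for free from Proposition~\ref{class}. One inclusion is immediate. Fix $T\in\mathfrak{h}$. By Lemma~\ref{eigenvalues} each line $\C\cdot f_{h,T}\subset\mathcal{S}(\R^n)$, $h\in\R^{2n}$, is a common eigenline of the commuting operators $\sigma_T(a_j)$ (and, in the $\mathcal{A}_2$-case, also the $\sigma_T(b_j)$), hence carries an irreducible one-dimensional subrepresentation $\kappa^{1,2}_{h,T}$ of ${\rm Im}(\sigma_T)$; moreover, by Proposition~\ref{class}, the $\mu$-orbit through $(\C\cdot f_{0,T},\mathcal{A}_{1,2}(V,J_T))$, read modulo equivalence of pairs, realizes the bijection $G/G_0\cong\mathcal{K}_{1,2}/\!\sim$, all of whose classes are therefore classes of irreducible subrepresentations of ${\rm Im}(\sigma_T)$. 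So the whole content is the \emph{reverse} inclusion: every irreducible subrepresentation of ${\rm Im}(\sigma_T)$ is one of the $\kappa^{1,2}_{h,T}$.

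To prove the reverse inclusion I would argue as follows. Let $\C\cdot f$, $f\in\mathcal{S}(\R^n)$, be an irreducible subrepresentation; one-dimensionality forces $\sigma_T(a_j)f=\lambda_j f$ for all $j$ and suitable $\lambda_j\in\C$. Using $\sigma(a_j)=ix_j$, $\sigma(b_j)=\partial/\partial x_j$ and $\alpha_{J_T}(a_j)=a_j-\sum_k T_{jk}b_k$ (exactly as in the proof of Theorem~\ref{mumford}), this reads $\sum_k T_{jk}\,\partial_k f=(ix_j-\lambda_j)f$. Because ${\rm Im}(T)$ is positive definite, $T$ is invertible, so the system decouples into $\partial_k f=\ell_k(x)f$ with $\ell_k$ affine-linear in $x$ and $\partial_m\ell_k=i(T^{-1})_{km}$, symmetric in $k,m$ since $T$ is symmetric; hence $\sum_k\ell_k\,dx_k$ is closed, $\log f$ is a quadratic polynomial, and $f$ is uniquely determined up to a multiplicative constant. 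Positive-definiteness of ${\rm Im}(T)$ is again what makes this Gaussian lie in $\mathcal{S}(\R^n)$, and comparing with Lemma~\ref{eigenvalues} (or with the first part of Theorem~\ref{mumford}) identifies $\C\cdot f$ with $\C\cdot f_{h,T}$ for the unique $h=(h_1,h_2)$ solving $(h_2)_j+\sum_k T_{jk}(h_1)_k=\lambda_j$ (unique because ${\rm Im}(T)$ is invertible). Equivalently, one can establish this uniqueness by induction on $n$, peeling off one variable at a time and invoking Cauchy--Kovalevskaya, as the statement suggests. The $\mathcal{A}_2$-case is the same: $\alpha_{J_T}(V)=L_T$ is $n$-dimensional and already spanned by the $\alpha_{J_T}(a_k)$, so each $\alpha_{J_T}(b_j)$ is a $\C$-linear combination of them and the additional equations $\sigma_T(b_j)f=\mu_j f$ impose no new constraint on $f$.

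With the reverse inclusion in hand, the first assertion follows: for fixed $T$ the irreducible subrepresentations of ${\rm Im}(\sigma_T)$ are exactly the $\kappa^{1,2}_{h,T}$, $h\in\R^{2n}$, two of which are equivalent iff they have the same character iff $h=h'$, so their equivalence classes biject with $\R^{2n}$, which is precisely the identification with $G/G_0$ furnished by the $\mu$-orbit of Proposition~\ref{class}. For the second assertion no further work is needed: Proposition~\ref{class} gives a transitive $G$-action $\mu_{1,2}$ on the set $\mathcal{A}_{1,2}$ of pairs $(\C\cdot f_{h,T},\mathcal{A}_{1,2}(V,J_T))$ with isotropy conjugate to $G_0\cap G_U$, whence $\mathcal{A}_{1,2}\cong G/(G_0\cap G_U)$ as $G$-sets; by the reverse inclusion, $\mathcal{A}_{1,2}$ is in bijection with the family of all $\sigma_T$, $T\in\mathfrak{h}$, together with their irreducible subrepresentations on $\mathcal{S}(\R^n)$ (now remembering $T$, i.e.\ not passing to abstract equivalence), giving the claimed $G/(G_0\cap G_U)$.

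The step I expect to be the main obstacle is the reverse inclusion, and specifically the uniqueness-up-to-scalar of the solution of the eigenvalue system in the Schwartz category: Cauchy--Kovalevskaya only yields a local holomorphic solution, and one must check that a genuine global Schwartz solution of this overdetermined first-order system is precisely the Gaussian. The positive-definiteness of ${\rm Im}(T)$ is doing double duty here --- it decouples the system into an integrable one $\partial_k f=\ell_k f$, and it forces the resulting Gaussian to decay, i.e.\ to lie in $\mathcal{S}(\R^n)$ --- while keeping the normalizations and signs straight so that the Gaussian produced is literally $f_{h,T}$ is then bookkeeping already subsumed in Theorem~\ref{mumford}.
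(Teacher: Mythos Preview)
Your proposal is correct and follows essentially the same approach as the paper: both obtain one inclusion from Proposition~\ref{class} and the reverse by showing that a Schwartz common eigenvector of the $\sigma_T(a_j)$ is unique up to scalar. Your explicit integration of the first-order system (using invertibility and symmetry of $T$ to produce an integrable system $\partial_k f=\ell_k f$) is a more detailed version of what the paper compresses into ``induction on $n$ and the Cauchy--Kovalevskaya Theorem'', a route you also mention; the treatment of $\mathcal{A}_2$ and the second assertion via Proposition~\ref{class} likewise match.
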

We finally note that the generating elements of $\sigma_T(\mathcal{A}_{1,2}(V))\subset {\rm End}(\mathcal{S}(\R^n)), T \in \mathfrak{h}$ can be recovered as a subset of a natural representation of the Lie algebra $\mathfrak{g}$ of $G$. Recall (\cite{sternbergwolf}) that $\mathfrak{g}$ can be desribed as the sum $\mathfrak{g}=\mathfrak{sp}(2n,\mathbb{R})+ \mathfrak{h}_n$, where $\mathfrak{sp}(2n,\mathbb{R})$ and its Lie bracket are decribed in Section \ref{lagcoh} and $\mathfrak{h}_n$ is just the vectorspace $V+\R$ with the Lie bracket
\[
[(v,s),(w,t)]=(0, \omega_0(v,w)),\ v,w \in V, \ s,t \in \R,
\]
while on $\mathfrak{g}$, we have
\[
[(a,v,s),(b,w,t)]=([a,b], aw-bv, \omega_0(v,w)), a,b \in \mathfrak{sp}(2n,\mathbb{R}), \ v,w \in V, \ s,t \in \R,
\]
where $a \in \mathfrak{sp}(2n,\mathbb{R})$ here acts on $v \in V$ by $av=ad(a)(v)$ as in Section \ref{lagcoh}. We then claim that the following assignment $\kappa_T: \mathfrak{g}\rightarrow  {\rm End}(\mathcal{S}(\R^n))$ gives a Lie algebra representation of $\mathfrak{g}$ on $\mathcal{S}(\R^n)$ (compare \cite{sternbergwolf}):
\[
\begin{split}
(a_i,0)\mapsto  \hat \sigma_T(a_i):=\sigma\circ (a_i - iJ_Ta_i),& \quad (0,b_i)\mapsto  \hat \sigma_T(b_i):=\sigma\circ (a_i + iJ_Ta_i), \\
\ u\cdot v+v\cdot u\in \mathfrak{a}\simeq \mathfrak{sp}(2n,\mathbb{R})&\ \mapsto\ \hat \sigma_T(u)\cdot\hat \sigma_T(v)+\hat \sigma_T(v)\cdot\hat \sigma_T(u),
\end{split}
\]
where we identified $\mathfrak{sp}(2n,\mathbb{R})$ with the algebra $\mathfrak{a}$ of symmetric homogeneous polynomials of order two in $\textbf{sCl}(\mathbb{R}^{2n})$ as in Lemma \ref{symm} and we defined $\hat \sigma_T(v), v \in \R^n$ by extending linearly. Notice that the embeddings $\alpha^\pm_J=Id\pm iJ:V\rightarrow V^{\C}$, considered as $\R$-isomorphisms onto its image, define isomorphisms 
\[
\Phi^\pm_T: {\rm Im}(\alpha^\pm_J)\rightarrow {\rm Im}(\alpha^\pm_{J_0}), \ \Phi_T=\alpha^\pm_{J_0}\circ (\alpha^\pm_{J})^{-1}|{\rm Im}(\alpha^\pm_J)
\] 
defining an endomorphism $\Phi^\pm_T:V^{\C}\rightarrow V^{\C}$ which maps to a Lie algebra isomorphism $\Phi_T:\hat \sigma_{T_0}(\mathfrak{a})\rightarrow \hat \sigma_{T}(\mathfrak{a})$ via $\sigma$, which we denote also by $\Phi_T$. We then claim:
\begin{lemma}\label{fock}
The assignment $\kappa_T: \mathfrak{g}\rightarrow  {\rm End}(\mathcal{S}(\R^n))$ defines a Lie algebra representation of $\mathfrak{g}$ on $\mathcal{S}(\R^n)$ so that we have the equality $\kappa_T(a_i)= \sigma_T(a_i), i=1, \dots, n$. Furthermore there is an invertible Lie algebra endomorphism (the one defined above) $\Phi_T:\mathfrak{sp}(2n,\mathbb{R})\rightarrow \mathfrak{sp}(2n,\mathbb{R})$ so that
\[
\kappa_T|\mathfrak{sp}(2n,\mathbb{R})=\Phi_T\circ  L_{\ast}
\]
where $L_{\ast}: \mathfrak{mp}(2n,\mathbb{R}) \mapsto \mathfrak{u}(\mathcal{S}(\mathbb{R}^{n}))$ is as given by Proposition \ref{diffmp}.
\end{lemma}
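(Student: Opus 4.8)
The plan is to realise $\kappa_T$ as a composite $\kappa_T=\sigma\circ\lambda_T$, where $\lambda_T\colon\mathfrak{g}\to\textbf{sCl}_{\C}(V,\omega_0)$ is the $\R$-linear map carrying the Heisenberg generators $a_i,b_i$ to the $J_T$-twisted vectors $a_i-iJ_Ta_i$ and $a_i+iJ_Ta_i$ of $V^\C\subset\textbf{sCl}_\C(V,\omega_0)$, carrying the centre of $\mathfrak{h}_n$ to $1$, and carrying each quadratic $u\cdot v+v\cdot u\in\mathfrak{a}\simeq\mathfrak{sp}(2n,\R)$ to the symmetrised quadratic built from those twisted vectors; here $\sigma$ is extended multiplicatively from $V$ to an algebra — hence Lie-algebra — homomorphism $\textbf{sCl}_\C(V,\omega_0)\to\mathrm{End}(\mathcal{S}(\R^n))$, the extension being legitimate by Corollary \ref{heisen}. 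With this reformulation the first assertion reduces to the statement that $\lambda_T$ respects brackets, and the remaining two assertions become bookkeeping once the bracket structure of $\mathrm{Im}\,\lambda_T$ is understood.

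Before that I would settle the two explicit identities. The equality $\kappa_T(a_i)=\sigma_T(a_i)$ is immediate, since $\kappa_T(a_i)=\sigma\circ(a_i-iJ_Ta_i)=\sigma\circ\alpha_{J_T}(a_i)=\sigma_T(a_i)$ by the very definition of $\sigma_T$. For the comparison with $L_\ast$, Proposition \ref{diffmp} already evaluates $L_\ast$ on the quadratic generators $a_ja_k$, $b_jb_k$, $a_jb_k+b_ka_j$ of $\mathfrak{a}\simeq\mathfrak{sp}(2n,\R)$ as $-i$ times the corresponding Clifford quadratic acting on $\mathcal{S}(\R^n)$, whereas $\kappa_T$ sends these same generators to $\sigma$ of the symmetrised quadratics in the twisted vectors. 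The passage from one list of quadratics to the other is induced by a single $\R$-linear automorphism of $V^\C$ — exactly the map $\Phi_T$ defined just before the statement, which replaces $J_T$ by $J_0$ in the twisted vectors — and this automorphism induces in turn a Lie-algebra automorphism of $\mathfrak{a}\simeq\mathfrak{sp}(2n,\R)$; this is precisely the asserted relation $\kappa_T|\mathfrak{sp}(2n,\R)=\Phi_T\circ L_\ast$. Invertibility of $\Phi_T$ follows from the decomposition $V^\C=L_T\oplus\overline{L_T}$ together with the fact that $\Phi_T$ is assembled from the $\R$-isomorphisms $\alpha^\pm_J$ onto their images.

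The core step is the homomorphism property of $\lambda_T$, which I would prove by reducing to the untwisted case $T=iI$. The structural input is Theorem \ref{mumford} and the discussion of positive Lagrangians in Section \ref{coherent}: $L_T=\mathrm{Im}\,\alpha_{J_T}$ and $\overline{L_T}$ are transverse Lagrangian subspaces of $(V^\C,\omega_{0,\C})$ with non-degenerate mutual $\omega_{0,\C}$-pairing. Hence the $2n$ twisted vectors $a_i\pm iJ_Ta_i$ form, after the rescaling that normalises this pairing, a symplectic basis of $(V^\C,\omega_{0,\C})$. Applying Lemma \ref{liesymp} verbatim in that rescaled basis shows that the symmetrised quadratics in the twisted vectors span a Lie subalgebra of $\textbf{sCl}_\C$ isomorphic to $\mathfrak{sp}(2n,\R)$, acting on the span of the twisted vectors through $ad$ just as $\mathfrak{sp}(2n,\R)$ acts on $V$; at the same time, isotropy of $L_T$ and $\overline{L_T}$ gives $[a_i-iJ_Ta_i,a_j-iJ_Ta_j]=0=[a_i+iJ_Ta_i,a_j+iJ_Ta_j]$ and $[a_i-iJ_Ta_i,a_j+iJ_Ta_j]\in\R\cdot 1$. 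These are precisely the defining relations of $\mathfrak{g}=\mathfrak{sp}(2n,\R)\ltimes\mathfrak{h}_n$, so $\lambda_T$ is a homomorphism; equivalently, one transports the classical oscillator realisation of $\mathfrak{g}$ inside $\textbf{sCl}$ (the case $T=iI$, cf. \cite{sternbergwolf}) through the isomorphism $\Phi_T$.

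The step I expect to be the main obstacle is the ``rescaling'' invoked above. For $T\neq iI$ the twisted frame $\{a_i\pm iJ_Ta_i\}$ is not $\omega_{0,\C}$-orthonormal on the nose: up to the factor $2i$, the pairing of $a_i-iJ_Ta_i$ against $a_j+iJ_Ta_j$ is the Gram matrix of the Riemannian metric $g(\cdot,\cdot)=\omega_0(\cdot,J_T\cdot)$ on $\mathrm{span}\{a_1,\dots,a_n\}$, a positive-definite symmetric matrix that is in general not a multiple of the identity. One must therefore insert the correct normalising factor (the inverse of this Gram matrix on the ``creation'' side) before the relations of $\mathfrak{g}$ hold literally and before $\Phi_T$ becomes conformally symplectic. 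Checking that this normalisation is simultaneously compatible with all three bullet relations — in particular that the rescaling forced by the quadratic part agrees with the rescaling of the Heisenberg centre, and that $\Phi_T$ then genuinely lands inside the real form $\mathfrak{sp}(2n,\R)$ — is the only genuinely non-formal point; granting it, everything else is a routine bracket computation with Corollary \ref{heisen}, Lemma \ref{liesymp} and Proposition \ref{diffmp}.
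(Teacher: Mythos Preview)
Your approach is essentially the same as the paper's, which simply reads: ``The result is a direct calculation based on the formulas in Proposition~\ref{diffmp} (see also the analogous calculation in \cite{sternbergwolf}, Lemma 4.8).'' You have merely unpacked what that direct calculation consists of --- factoring through the Clifford algebra, transporting the standard oscillator realisation along $\Phi_T$, and invoking Lemma~\ref{liesymp} and Proposition~\ref{diffmp} --- and your identification of the Gram-matrix/normalisation issue as the only genuinely non-formal point is apt and goes beyond what the paper makes explicit.
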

\begin{proof}
The result is a direct calculation based on the formulas in Proposition \ref{diffmp} (see also the analogous calculation in \cite{sternbergwolf}, Lemma 4.8).
\end{proof}
In the following we aim to generalize the above slightly and classify the equivalence classes of (finite-dimensional) indecomposable, non-irreducible sub-representations of $\mathcal{A}_{1,2}(V, J_T)$, acting by $\sigma$ on the set of smooth vectors of $L$, $\mathcal{S}(\R^n)$ for any $T\in \mathfrak{h}$. For this we have to consider the 'higher eigenmodes' of the harmonic oscillator. Consider the one-parameter-subgroup $t \mapsto exp(ta)$ of $Mp(2n,\mathbb{R})$, determined by the element
\[
a\ = \ \frac{1}{2}\sum_{i=1}^{n}(a_{i} \cdot a_{i} + b_{i} \cdot b_{i}) \in \mathfrak{mp}(2n,\mathbb{R})
\]
Then its $L^{2}(\mathbb{R}^{n})$-representation $F_{t}=L(exp(ta))$ factorizes as a torus-representation into a countable number of irreducible representations:
\begin{lemma}[\cite{habermann})]
The representation $F_{(\cdot)}:\mathbb{R} \rightarrow U(L^{2}(\mathbb{R}^{n}))$ factorizes to a torus representation on
$L^{2}(\mathbb{R}^{n})$ that induces a decomposition $L^{2}(\mathbb{R}^{n})=\oplus_{k=0}^\infty Q_{k},\ k \in \mathbb{N}_{0}$ into irreducible subrepresentations of $F$. $Q_k$ is given by the eigenspaces of the $n$-dimensional harmonic oscillator $H$ and we have for $h_{k} \in Q_{k}$ 
\[ 
F_{t}h_{k}\ = \ e^{\frac{it\mu_{k}}{2}}h_{k}, \quad t \in \mathbb{R}, 
\] 
where $\mu_{k}$ is the eigenvalue of $H$ corresponding to $h_{k}$.
\end{lemma}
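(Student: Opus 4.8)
The plan is to identify the generator of the one-parameter group $F$ with the harmonic-oscillator Hamiltonian and then read off the whole statement from the latter's classical spectral decomposition.

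First I would compute $L_{\ast}(a)$. Writing $a=\half\sum_{i=1}^n(a_i\cdot a_i+b_i\cdot b_i)$ and using linearity of $L_{\ast}$ together with the formulas $L_{\ast}(a_i\cdot a_i)f=ix_i^2f$ and $L_{\ast}(b_i\cdot b_i)f=-i\,\partial^2 f/\partial x_i^2$ of Proposition \ref{diffmp}, one obtains
\[
L_{\ast}(a)f=\frac{i}{2}\sum_{i=1}^n\Bigl(x_i^2-\frac{\partial^2}{\partial x_i^2}\Bigr)f=\frac{i}{2}\,Hf,\qquad f\in\mathcal{S}(\R^n),
\]
where $H=\sum_i\bigl(x_i^2-\partial^2/\partial x_i^2\bigr)$ is the $n$-dimensional harmonic oscillator, normalized so that the exponent below comes out as $e^{it\mu_k/2}$. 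This is the computation recorded in \cite{habermann}, \cite{sternbergwolf}; consistently, $ad(a)=\bigl(\begin{smallmatrix}0&-I\\ I&0\end{smallmatrix}\bigr)\in\mathfrak{sp}(2n,\R)$ by Lemma \ref{liesymp}, a generator of a diagonal rotation subgroup of $Sp(2n,\R)$ of period $2\pi$.

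Next I would invoke the standard spectral analysis of $H$: it is essentially self-adjoint on the dense domain $\mathcal{S}(\R^n)$ with purely discrete spectrum, its eigenfunctions being the products of Hermite functions; the eigenvalue on a product of total degree $k$ is $\mu_k=2k+n$, each $\mu_k$-eigenspace $Q_k$ is finite-dimensional and contained in $\mathcal{S}(\R^n)$, and $L^2(\R^n)$ is the Hilbert-space direct sum of the $Q_k$, $k\in\N_0$. To transfer this to the group $F$, recall that $t\mapsto F_t=L(\exp(ta))$ is a strongly continuous one-parameter unitary group on $L^2(\R^n)$ by the Stone--von Neumann / Shale--Weil construction of $L$, and that by the differentiability statement preceding Proposition \ref{diffmp} it preserves $\mathcal{S}(\R^n)$, on which its generator restricts to $L_{\ast}(a)=\tfrac i2 H$. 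Since a dense subspace that is invariant under a strongly continuous unitary group and contained in the domain of its generator is automatically a core for that generator, the generator of $F$ is the closure of $\tfrac i2 H|_{\mathcal{S}(\R^n)}$; hence $F_t$ preserves each $Q_k$ and acts on it by the scalar $e^{it\mu_k/2}$, which is the asserted eigenvalue formula. Finally, since the $\mu_k/2=k+\tfrac n2$ form an arithmetic progression, $F_{t+4\pi}=F_t$ for all $t$, so $F$ descends to a unitary representation of the circle $\R/4\pi\Z$ (already of $\R/2\pi\Z$ when $n$ is even); its isotypic components are exactly the $Q_k$ --- one-dimensional and hence literally irreducible when $n=1$, and for general $n$ obtained by grouping, according to total oscillator degree, the one-dimensional weight spaces of the $n$-torus generated by the commuting elements $\half(a_i\cdot a_i+b_i\cdot b_i)$.

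The only genuinely non-formal point is this last transfer step --- upgrading the identity of unbounded operators $L_{\ast}(a)=\tfrac i2 H$ on $\mathcal{S}(\R^n)$ to the identity $F_t=e^{itH/2}$ of unitary groups on all of $L^2(\R^n)$; everything else is the algebra of Proposition \ref{diffmp} and the textbook harmonic-oscillator bookkeeping, which is why the statement can simply be attributed to \cite{habermann}.
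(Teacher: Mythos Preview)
Your proof is correct and follows essentially the same route as the paper: compute $L_{\ast}(a)=\tfrac{i}{2}H$ via Proposition \ref{diffmp}, then read off the decomposition from the Hermite eigenbasis of $H$ with eigenvalues $\mu_k=2k+n$. If anything, you are more careful than the paper, which records the generator computation and the eigenfunctions but leaves the Stone-type passage from $L_{\ast}(a)$ to $F_t=e^{itH/2}$ and the explicit period of the torus factorisation implicit.
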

\begin{proof}
We reproduce the proof here since we will need the notation in the following. For $u \in \mathcal{S}(\mathbb{R}^{n})$ we have
\[
\begin{split}
\frac{d}{dt}F_{t}u_{\vert t=0}\ &= \ \frac{d}{dt}L(exp(ta))u_{\vert t=0}\
=\ L_{*}(a)u = \ L_{*}(\frac{1}{2}\sum_{j=1}^{n}(a_{j} \cdot a_{j} + b_{j} \cdot b_{j}))u
\\
&= \ \frac{1}{2}\sum_{j=1}^{n}(ix_{j}^{2}-i\frac{\partial^{2}}{\partial
x_{j}^{2}})u =\ \frac{1}{2} iHu.
\end{split}
\]
Where $H$ is for $u \in\mathbb{R}^{n}$ given by
\[
Hu = \sum_{j=1}^{n}(x_{j}^{2}u-\frac{\partial^{2}u}{\partial x_{j}^{2}})=\Delta
u+\vert x\vert^{2}u.
\] 
Its eigenfunctions are of the form
\[
h_{k_{1}\dots k_{n}}(x) =  \prod_{l=1}^{n}h_{k_{l}}(x_{l})
\]
for $k_{1},\dots,k_{n}\in \N_0$, $h_{k_{l}}$ being the eigenfunctions of the $1$-dimensional hamonic oscillator to the eigenvalue  $\mu_{k_{l}}=2k_{l}+1$ sind. Thus the eigenvalue corresponding to $h_{k_{1}\dots k_{n}}$ is
\[
\mu_{k}=\sum_{j=1}^{n}\mu_{k_{j}}=2k+n \qquad \textrm{where} \
k=\sum_{j=1}^{n}k_{j}.  
\]
The dimension $M_{n}^{k}$ of the eigenspace associated to $\mu_{k}$ thus equals the number of ordered $n$-tuples of non-negative integers $k_{j}$ so that $k=\sum_{j=1}^{n}k_{j}$. We have (siehe \cite{habermann}) 
\[
M_{n}^{k}=\left(\begin{matrix}n+k-1 \\k \end{matrix}\right).
\]
We thus can define $Q_{k}\subset L^{2}(\mathbb{R}^{n})$ by
\[
Q_{k}\ = \ \{u \in L^{2}(\mathbb{R}^{n}): F_{t}u =
e^{\frac{it\mu_{k}}{2}}u,\ \textrm{for\ all}\ t \in \mathbb{R} \}
\]
thus coinciding with the eigenspaces of $H$.
\end{proof}
Note that this of course gives in essence a proof of the splitting theorem reproduced below in Proposition \ref{decomp}.
Consider now for $g \in Mp(2n,\mathbb{R})$ the element $a^g=Ad(g^{-1})(a) \in \mathfrak{mp}(2n,\mathbb{R})$. Then it is obvious that $h_{k_{1}\dots k_{n}}^g=L(g^{-1})h_{k_{1}\dots k_{n}},\ h_{k_{1}\dots k_{n}} \in Q_k, \ (k_{1}\dots k_{n}) \in \N_0^n$ gives the eigenfunctions to the eigenvaluae $\mu_k$ of $L_*(a^g)=\frac{d}{dt}L(exp(ta))u_{\vert t=0} \in \mathfrak{u}(\mathcal{S}(\mathbb{R}^{n}))$ acting on $\mathcal{S}(\R^n)$, their set denoted as $Q_k^g$. Since $\rho_*(a)=-J$ we see that $ad(b)(a)=0$ for any $b \in \mathfrak{u}(n)$, where $\mathfrak{u}(n)$ is the Lie Algebra of $\hat U(n)$, thus also $Ad(g)(a)=a$ for all $g \in \hat U(n)$ and $h_{k_{1}\dots k_{n}}^g \in Q_{k}, \ (k_{1}\dots k_{n}) \in \N_0^n$ for all $g \in \hat U(n), \ (k_{1}\dots k_{n}) \in \N_0^n$ and thus $Q_k=Q_k^g,\ g \in \hat U(n)$. Note that the decomposition $h_{k_{1}\dots k_{n}}(x)=\prod_{l=1}^{n}h_{k_{l}}(x_{l})\in Q_k$ is not in general preserved by the action of $g \in \hat U(n)$ by $L(g^{-1})$ on $Q_k$. With these precautions we define in the following for $(h,t, g) \in G, \ (k_{1}\dots k_{n}) \in \N_0^n$ and $T=T_0.g\in \mathfrak{h}$, $T_0=iI$, $f_{h, g, {k_{1}\dots k_{n}}}=\pi((h_1,h_2),0)h^g_{k_{1}\dots k_{n}}$, where we write shortly $k={k_{1}\dots k_{n}}$ where this causes no confusion, thus $f_{h, g, k}=f_{h, g, {k_{1}\dots k_{n}}}$, while we will keep the previous notation for $\mathcal{A}_{1,2}(V, J_T)$ introduced above Proposition \ref{algebras}, note that then $f_{h,T}=f_{h, g, k}$. We then define
\begin{equation}\label{indecomp}
\sigma:\mathcal{A}_{1,2}(V, J_T) \rightarrow {\rm End}(\oplus_{k=0}^N\C \cdot f_{h, g, k}),\quad \ T=T_0.g \in \mathfrak{h},\ h \in \R^{2n},\ g \in Mp(2n,\mathbb{R}),
\end{equation}
where the summation is again over multi-indices $k=(k_{1}\dots k_{n}), k_i \in \N_0$, so that $\sum_{j=1}^{n}k_{j}\leq N\in \N_0$, this mapping is well-defined giving representations $\kappa^{1,2}_{h,g,N}$ and we get an analogue of Proposition \ref{class}. Note that we treat $\mathcal{F}_{h, g, N}:=\oplus_{k=0}^N \C \cdot f_{h, g, k},\ N\in \N_0$ here as a direct sum of $M^{N}_{n}$-dimensional vectorspaces, not as a subspace of $L^{2}(\mathbb{R}^{n})$. In further analogy to the above, consider now the sets $\mathcal{A}^N_{1,2}=\{(\mathcal{F}_{h, g, N}, \mathcal{A}_{1,2}(V, J_T)), \ T=T_0.g\in \mathfrak{h},\ g \in Mp(2n,\mathbb{R}), \ h \in \R^{2n},\ N\in \N_0\}$ of finite sums of complex vector spaces and commutative algebras ($T_0=iI \in \mathfrak{h}$ here and in the following). We define maps
\begin{equation}\label{action1N}
\begin{split}
\mu^N_{1,2}:G \times \mathcal{A}^N_{1,2}\rightarrow \mathcal{A}^N_{1,2},\ &(h,t,g)\cdot (\mathcal{F}_{h_0, g_0, N}, \mathcal{A}_{1,2}(V, J_T))= (\mathcal{F}_{h+\rho(g^{-1})h_0, T.g, N}, \mathcal{A}_{1,2}(V, J_{T.g})),\\
 &h,h_0\in \R^{2n},\ t \in \R,\ g_0, g \in Mp(2n),\ N\in \N_0,\ T=T_0.g_0.
\end{split}
\end{equation}
We have induced actions of $G$ on the associated set $\mathcal{K}^N_{1,2}=\{\kappa^{1,2}_{h,g, N},\ g \in Mp(2n,\mathbb{R}), h \in \R^{2n}, N\in \N_0\}$ of representations of algebras $\mathcal{A}_1(V)$ resp. $\mathcal{A}_2(V)$. For $N \in N_0$, let $K_{N}$ be the set of $n$-tuples $(k_{1}\dots k_{n}) \in N_0^n$ so that $\sum_{i=1^n}k_{i}\leq N$. Note that $\N_0^n$ comes with a natural partial order $\leq$. Recall that a chain in $\N_0^n$ is a sequence of elements $k_1\leq k_2\dots$. We will say a subset $K\subset K_N$ is chain-incident to $0 \in \N_0^n$ if with any $k \in K$, $K$ contains all chains in $K_N$ which have $k$ as a maximal element. Let $\mathcal{P}_0(K_N)$ be the set of all subsets $K\subset K_N$ which are chain incident to $0$ in the above sense.
\begin{prop}\label{classN}
$\mu^N_{1,2}$ define transitive $G$-actions on the {\it sets} $\mathcal{A}^N_{1,2}$ whose action on the first coordinate of $\mathcal{A}^N_{1,2}$ equals for any $N\in \N_0$ the right action
\[
\tilde \mu^N: G \times {\rm pr}_1\circ\mathcal{A}^N_{1,2}\rightarrow {\rm pr}_1\circ  \mathcal{A}^N_{1,2}, \ ((h,t,g),\mathcal{F}_{h_0, g_0, N}) \mapsto \oplus_{k=0}^N\C \cdot \pi((h,t))L(g^{-1}) f_{h_0,g_0, k}.
\]
The isotropy group of this action, here defined as the set of $(h,t,g) \in G$ that fixes $\mathcal{A}_{1,2}(V, J_T)$ as a subset of ${\rm End}(\mathcal{S}(\R^n))$ at a given point of $\mathcal{A}^N_{1,2}$ and preserves $Q_k\subset L^{2}(\mathbb{R}^{n}),\ k=1, \dots, N$ is isomorphic (conjugated) to $G_0\cap G_U$. On the other hand, the  equivalence classes of indecomposable {\it sub-representations} of pairs $\mathcal{K}^N_{1,2}$ are distinguished by the elements $\mathcal{P}_0(K_N)\times G/G_0$ in the sense that if $K_1,K_2 \in \mathcal{P}_0(K_N)$ then $(\oplus_{k\in K_1} \C \cdot f_{h_{0}, g, k}, \mathcal{A}_{1,2}(V, J_{T_0.g}))\subset \kappa^{1,2}_{h_0,g,N}$ and $(\oplus_{k\in K_2} \C \cdot f_{h_{1}, g_1, k}, \mathcal{A}_{1,2}(V, J_{T_0.g_1}))\subset  \kappa^{1,2}_{h_1,g_1,N}$ for $N>0$ are equivalent (as pairs of algebras and representations) if and only if $K_1=K_2$ and there exists $\hat g \in \hat g_0G_0\hat g_0^{-1},\ \hat g_0 \in G$ so that $\hat g \cdot \kappa^{1,2}_{h_1,T_1,N}= \kappa^{1,2}_{h_0,T_0, N}$ while in the case $N=0$ we get exactly the irreducible case of Proposition \ref{class}.
\end{prop}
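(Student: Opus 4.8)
The plan is to bootstrap from Proposition \ref{class} by adding one structural ingredient: the ladder (creation/annihilation) action of the generators of $\mathcal{A}_{1,2}(V,J_T)$ on the harmonic-oscillator eigenmodes, transported from the standard pair $(T_0,e)=(iI,e)$ to a general $(T_0.g,g)$ by the intertwiner $L(g)$ of Theorem \ref{mumford} and Proposition \ref{diffmp}. First I would prove transitivity of $\mu^N_{1,2}$ on $\mathcal{A}^N_{1,2}$ together with the formula for $\tilde\mu^N$ on the first coordinate. The computation (\ref{semirep})--(\ref{compos}) in the proof of Proposition \ref{class} never used that $f_{0,iI}$ was the Gaussian; it used only the Heisenberg composition law, the intertwining relation (\ref{heiequiv}), and the fact that $L$ is a genuine representation of $Mp(2n,\R)$. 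Running it verbatim with $f_{0,iI}$ replaced throughout by $h^{g}_{k_1\dots k_n}=L(g^{-1})h_{k_1\dots k_n}$, and recording that $L(g^{-1})$ carries $Q_k$ onto $Q_k^{g}$, yields at once the asserted formula for $\tilde\mu^N$ on $\mathrm{pr}_1\circ\mathcal{A}^N_{1,2}$ and the transitivity of $\mu^N_{1,2}$ (the standard point $(\mathcal{F}_{0,e,N},\mathcal{A}_{1,2}(V,J_{iI}))$ being moved to an arbitrary one exactly as there).

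The heart of the argument is a module description, to be proved in three layers. In the coherent-state basis $\{f_{h,g,k}\}_{k\in K_N}$ of $\mathcal{F}_{h,g,N}$, the generator $\sigma\circ\alpha_{J_T}(a_j)$ acts as the scalar $((h_2)_j+\sum_i T_{ji}(h_1)_i)$ plus the operator lowering the $j$-th multi-index entry by one (annihilating $f_{h,g,k}$ when $k_j=0$); moreover $\sigma\circ\alpha_{J_T}(b_j)$ is a $\C$-linear combination of the $\sigma\circ\alpha_{J_T}(a_i)$, since $\alpha_{J_T}(V)=L_T=\mathrm{span}_{\C}\{\alpha_{J_T}(a_i)\}$. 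For $h=0$, $g=e$, $T=T_0=iI$ this is the classical ladder relation, $\sigma\circ\alpha_{J_0}(a_j)$ being a fixed multiple of $x_j\mp\partial_{x_j}$ acting on the $h_{k_1\dots k_n}$; for general $g$ (hence $T=T_0.g$), still with $h=0$, it follows by conjugating with $L(g)$ and invoking Proposition \ref{diffmp} and Theorem \ref{mumford}, which intertwine $\alpha_{J_0}$ with $\alpha_{J_{T_0.g}}$ and $Q_k$ with $Q_k^g$; and for general $h$ it follows by conjugating with $\pi((h_1,h_2),0)$ as in the proof of Lemma \ref{eigenvalues}, which contributes precisely the extra scalar. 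Since lowering never increases $\sum_j k_j$, the subspace $\mathcal{F}_{h,g,N}$ is stable, so (\ref{indecomp}) is well defined; and $\bigoplus_{k\in K}\C f_{h,g,k}$ is $\mathcal{A}_{1,2}(V,J_T)$-stable exactly when $K$ is closed under all $n$ lowerings, i.e. a down-set of $K_N$, which, unwinding the definition of \emph{chain-incident to} $0$, is precisely the condition $K\in\mathcal{P}_0(K_N)$. The isotropy statement is then immediate: at the standard point the Heisenberg part must vanish since $\pi(h)$ preserves $Q_0\oplus\dots\oplus Q_N$ only for $h=0$, while fixing the algebra and each $Q_k$ forces $g\in\hat U(n)$ (using $Ad(\hat U(n))a=a$), so the isotropy group is $\{(0,0),\R\}\times_\rho\hat U(n)=G_0\cap G_U$, a conjugate of it at a general point.

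For indecomposability, the ladder description shows that $\C f_{h,g,0}$ is the unique irreducible $\mathcal{A}_{1,2}(V,J_T)$-submodule (socle) of every $\bigoplus_{k\in K}\C f_{h,g,k}$, $\emptyset\ne K\in\mathcal{P}_0(K_N)$: applying a suitable monomial in the lowering operators to any nonzero vector lands in $\C f_{h,g,0}$, so every nonzero submodule contains it; hence the module is indecomposable. For the equivalence criterion I would argue as in the last paragraph of the proof of Proposition \ref{class}, with two additions. First, the index set $K$ (up to the residual permutation symmetry on $\N_0^n$) is an isomorphism invariant of the pair: the lattice of submodules is the lattice of down-sets of $K$, which recovers $K$ as a finite poset by Birkhoff's representation theorem, and the $n$ distinguished commuting generators $\sigma\circ\alpha_{J_T}(a_j)$, whose nilpotent parts are the coordinate-lowerings, then recover $K$ as a subset of $\N_0^n$; this gives the necessity of $K_1=K_2$. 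Second, once $K_1=K_2=K$, an equivalence of the pairs is induced, by the transitivity just proved, by an element of $G$ carrying one presentation to the other, and the standard-position argument — now also using that $\hat U(n)$ preserves each $Q_k$ while $\pi(h)$ fixes $Q_0\oplus\dots\oplus Q_N$ only for $h=0$ (re-running Lemma \ref{eigenvalues} with $\mathrm{Im}(T)$ invertible to force $h=0$) — identifies the stabiliser of the standard point for the induced $G$-action on $\mathcal{K}^N_{1,2}/\sim$ with $G_0$. Hence two such subrepresentations are equivalent iff $K_1=K_2$ and $\kappa^{1,2}_{h_1,T_1,N}$, $\kappa^{1,2}_{h_0,T_0,N}$ lie in one orbit of a conjugate $\hat g_0G_0\hat g_0^{-1}$, which is the claim; for $N=0$ one has $K_0=\{0\}$, $\mathcal{P}_0(K_0)=\{\{0\}\}$, $\mathcal{F}_{h,g,0}=\C f_{h,g,0}$, and everything degenerates to Proposition \ref{class}.

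The step I expect to be hardest is showing that $K$ is a genuine invariant — that the staircase modules attached to distinct down-sets of $\N_0^n$ are really distinguished by their module structure, and not accidentally identified by some algebra isomorphism — together with the bookkeeping separating the $\mathcal{A}_1$- and $\mathcal{A}_2$-cases: $\mathcal{A}_2(V,J_T)$ is strictly larger over $\R$, so its submodules are honest $\C$-subspaces and its lowerings may be pre-composed with multiplication by $i$, which alters the submodule lattice. A secondary difficulty is carrying out the transport of the ladder relations from $J_0$ to a general $J_T$ \emph{compatibly} with the $G$-action, so that the transitivity, isotropy, and equivalence parts of the statement dovetail.
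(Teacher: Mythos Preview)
Your proposal is correct and, for the transitivity/isotropy computation and the key formula
\[
\sigma\circ\alpha_{J_T}(a_j)f_{h,g,k}=\bigl((h_2)_j+\textstyle\sum_i T_{ji}(h_1)_i\bigr)f_{h,g,k}+f_{h,g,k-1},
\]
it coincides with the paper's argument: both derive the ladder relation first at $(h,g)=(0,e)$ from the commutator $[a_j-iJ_0a_j,a]=2(a_j-iJ_0a_j)$, then transport it by $L(g)$ and $\pi(h)$, and both read off $h=0$ from invertibility of $\mathrm{Im}(T)$ exactly as in Proposition~\ref{class}.

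Where you diverge is in the treatment of indecomposability and of the invariance of $K$. The paper does not argue via the socle and Birkhoff duality; instead it reduces each generator to a single Jordan block $J_{\lambda,n}$ on $\C^n$ (the normal form of indecomposable $\C[x]$-modules) and then invokes the Cauchy--Kowalewskaja theorem, as in Corollary~\ref{equivalence}, to show that \emph{all} indecomposable submodules of $\sigma_T(\mathcal{A}_{1,2}(V))$ in $\mathcal{S}(\R^n)$ are necessarily of the staircase form. Your socle argument shows indecomposability cleanly and your Birkhoff argument makes the invariance of $K$ explicit, which the paper leaves implicit in the phrase ``in complete analogy''; conversely, the paper's Cauchy--Kowalewskaja step buys a slightly stronger statement (completeness of the list inside $\mathcal{S}(\R^n)$, not just inside $\mathcal{F}_{h,g,N}$) that you do not address. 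Both routes are sound; your worry about the $\mathcal{A}_1$ versus $\mathcal{A}_2$ bookkeeping is well placed but in practice harmless, since $\alpha_{J_T}(b_j)\in\mathrm{span}_{\C}\{\alpha_{J_T}(a_i)\}$ forces the submodule lattices to agree.
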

\begin{proof}
To show that the assignment (\ref{indecomp}) defines a representation of $\mathcal{A}_{1,2}(V, J_T)$ on $\mathcal{F}_{h_0, T, N}$ consider the well-known commutation relation $[a_i-iJ_0a_i, a]=2(a_i-iJ_0a_i),\  i\in \{1,\dots,n\}$ resp. $[a_i-iJ_Ta_i, a^g]=2(a_i-iJ_Ta_i),\  i\in \{1,\dots,n\}$ for $T=T_0.g$, thus $a(a_i-iJ_0a_i)h_{k_{1}\dots k_{n}}=(\mu_k-2)(a_i-iJ_0a_i)h_{k_{1}\dots k_{n}}$, thus $(a_i-iJ_0a_i)h_{k_{1}\dots k_{n}}=h_{k_{1}\dots k_{i}-1\dots k_{n}}$ for $i\in \{1,\dots,n\}$, which gives already the assertion for $T=iI$, while we have $(a_i-iJ_Ta_i)h^g_{k_{1}\dots k_{n}}=h^g_{k_{1}\dots k_{i}-1\dots k_{n}}$ if $T=T_0.g$. The claim that $(\oplus_{k\in K} \C \cdot f_{h_{1}, T_1, k}, \mathcal{A}_{1,2}(V, J_T))\subset  \kappa^{1,2}_{h_1,T_1,N}$ with $K\subset K_N$ chain incident to $0$ define indecomposable subrepresentations then follows from the normal form of indecomposable representations of $\C[x]$, which are of the form $J_{\lambda,n}:\C^n\rightarrow \C^n$ with
\begin{equation}\label{decompN}
J_{\lambda,n}e_i=\lambda e_i+e_{i-1}, \ i>1\quad J_{\lambda,n}e_i=\lambda e_i,\ i=1,
\end{equation}
where $e_i$ are a basis of $\C^n$. That the indecomposable subrepresentations of $\mathcal{A}_{1,2}(V, J_T)$ acting via $\sigma$ on $\mathcal{F}_{h_0, T, N}$ are necessarily of the form above is again a consequence of the Cauchy-Kowalewskaja theorem, in complete analogy to the proof of Proposition \ref{class}. Again by a direct calculation one checks that if $\kappa^{1,2}_{h,g_1, N}$ and $\kappa^{1,2}_{\tilde h,g_2, N}$ are equivalent as pairs of algebras and representations then $\mu^N(\hat h, t,g).\kappa^{1,2}_{h,g_1, N}$ is equivalent to $\mu(\hat h,t, g).\kappa^{1,2}_{\tilde h,g_2, N}$ for any $(\hat h,t, g) \in G$. Then it suffices to check that if $\kappa^{1,2}_{h,g, N}$ for $g \in Mp(2n), h \in \R^{2n}$ is equivalent to $\kappa^{1,2}_{0,Id_{Mp(2n,\R)}, N}\in \mathcal{K}^N_{1,2}$, then $h=0$. We check the case $\mathcal{K}^N_{1}$. By (\ref{decompN}) we have to show that if $T=T_0.g$
\[
\sigma\circ \alpha_{J_T}(a_i)f_{h, g, k}=f_{h, g, k-1},  \ k \in \N^N,
\]
where $k-1=(k_{1}\dots k_i-1\dots k_{n}) \in \N^N$, for all $a_i$, then $h=(h_1,h_2)=0$. But 
\[
\sigma\circ \alpha_{J_T}(a_i)f_{h,g,k}=((h_2)_j+\sum_{i}T_{ji}(h_1)_i) f_{h, g, k}+ f_{h, g, k-1}.
\]
by the invertibility of $Im(T)$, we then infer $h_1=0$. But then it also follows that $h_2=0$. The case $\mathcal{K}^N_{2}$ is proven analogously. All other assertions are again in complete analogy of the proof of Proposition \ref{class} resp. follow from the discussion above the Proposition and we suppress them here.
\end{proof}
Analogous to the irreducible case, fixing $T \in \mathfrak{h}$, $\sigma_T=\sigma\circ \alpha_{J_T}$ as giving an algebra homomorphism $\sigma_T:\mathcal{A}_{1,2}(V)\rightarrow {\rm End}(\mathcal{S}(\R^n))$, thus a representation of $\mathcal{A}_{1,2}(V)$ on the set of smooth vectors of $L$, we arrive at the following
\begin{folg}\label{equivalenceN}
The set of equivalence classes of indecomposable subrepresentations of ${\rm Im}(\sigma_T)\subset {\rm End}(\mathcal{S}(\R^n))$ on $\mathcal{S}(\R^n)$, where $T \in \mathfrak{h}$ and is fixed, is isomorphic to $\bigcup_{N\in \N_0} \mathcal{P}_0(K_N)\times G/G_0$, to be more precise it is for fixed $K \in \mathcal{P}_0(K_N)$ explicitly given by the $G/G_0$-orbit of $\mu$ through $(\oplus_{k\in K} \C \cdot f_{0, g, k}, \mathcal{A}_{1,2}(V, J_T)), T=T_0.g$ in $\mathcal{K}^N_{1,2}$. On the other hand the set of all $\sigma_T, \ T\in \mathfrak{h}$ and their corresponding set of indecomposable representations on $\mathcal{S}(\R^n)$ is isomorphic to $\bigcup_{N\in \N_0} \mathcal{P}_0(K_N)\times G/G_0\cap G_U$ by the same identifications.
\end{folg}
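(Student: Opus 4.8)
The plan is to transcribe the proof of Corollary~\ref{equivalence} with the combinatorial data $\mathcal{P}_0(K_N)$ carried along, replacing Proposition~\ref{class} by Proposition~\ref{classN}. Fix $T=T_0.g\in\mathfrak{h}$. By Proposition~\ref{classN}, as $K$ ranges over $\mathcal{P}_0(K_N)$ and $N$ over $\N_0$, the model pairs $(\bigoplus_{k\in K}\C\cdot f_{0,g,k},\mathcal{A}_{1,2}(V,J_T))$ together with their $\tilde\mu^N$-orbits are pairwise inequivalent exactly when the data $(K,gG_0)$ differ. This already shows that $(K,gG_0)\mapsto[\text{the associated indecomposable}]$ defines a well-defined injection of $\bigcup_{N\in\N_0}\mathcal{P}_0(K_N)\times G/G_0$ into the set of equivalence classes of indecomposable subrepresentations of ${\rm Im}(\sigma_T)$ on $\mathcal{S}(\R^n)$, and it remains to prove surjectivity.

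For surjectivity, let $W\subset\mathcal{S}(\R^n)$ be finite-dimensional, invariant under $\sigma_T(\mathcal{A}_1(V))$ and indecomposable over it. The generators act as $\sigma_T(a_i)=\sigma(a_i-iJ_Ta_i)$, and by the commutation relation $[a_i-iJ_Ta_i,a^g]=2(a_i-iJ_Ta_i)$ from the proof of Proposition~\ref{classN} they lower the harmonic-oscillator eigenvalue $\mu_k$ by $2$; hence they act locally nilpotently on $W$, so $W\subset\bigoplus_{k=0}^{N}Q^g_k$ with $N$ the largest total degree occurring. Identifying $\bigoplus_k\C\cdot h^g_k$ with $\C[x_1,\dots,x_n]$ so that $\sigma_T(a_i)$ becomes $\partial/\partial x_i$, one sees that $W$ has one-dimensional socle; the normal form~(\ref{decompN}) applied one variable at a time, together with the Cauchy--Kovalevskaya uniqueness argument of Corollary~\ref{equivalence} (a module built from its top stratum is determined by that stratum), then shows that, up to the equivalence of pairs and a $U(n)$-rotation of the generators, $W=\bigoplus_{k\in K}\C\cdot h^g_k$ for a down-set $K$, i.e.\ $K\in\mathcal{P}_0(K_N)$. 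Finally a translation in $G/G_0$ moves the centre of the underlying coherent state, so the general $W$ is $G$-conjugate to a model pair. This yields the first assertion; the case of $\mathcal{A}_2(V)$ is identical, with the generators $b_i-iJ_Tb_i$ adjoined.

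For the last assertion about varying $T$, I would use that $a=\tfrac12\sum_i(a_i\cdot a_i+b_i\cdot b_i)$ is fixed by $Ad(\hat U(n))$ (noted above the Proposition, since $\rho_*(a)=-J_0$ commutes with $\mathfrak{u}(n)$), so $G_U$ preserves each $Q_k$, while by Theorem~\ref{mumford} and Lemma~\ref{fock} the various $\sigma_T$ are intertwined by the $Mp(2n,\R)$-factor of $G$. Passing from a fixed $\sigma_T$ to the family $\{\sigma_T:T\in\mathfrak{h}\}$ therefore enlarges the acting group while shrinking the isotropy from $G_0$ to $G_0\cap G_U$, and one obtains $\bigcup_{N\in\N_0}\mathcal{P}_0(K_N)\times G/(G_0\cap G_U)$ under the same identifications.

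The step I expect to be the genuine obstacle is the surjectivity claim of the second paragraph: verifying that every finite-dimensional indecomposable submodule of $\mathcal{S}(\R^n)$ is, after a $U(n)$ change of generators and at the level of pairs (algebra, representation) rather than of modules over a fixed presentation, of down-set type $\bigoplus_{k\in K}\C\cdot h^g_k$. This is the point where the potentially wild representation theory of $\C[x_1,\dots,x_n]$ has to be controlled by the full metaplectic symmetry; everything else is a faithful transcription of the irreducible case.
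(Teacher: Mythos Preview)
The paper gives no independent proof of this Corollary; it is stated immediately after Proposition~\ref{classN} as its direct consequence, in exact analogy with how Corollary~\ref{equivalence} follows from Proposition~\ref{class}. Your proposal is precisely this transcription---invoke Proposition~\ref{classN} for the injectivity and the orbit description, and reuse the Cauchy--Kowalewskaja argument of Corollary~\ref{equivalence} for surjectivity---so your approach and the paper's coincide.

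You are also right that the surjectivity step is the genuine obstacle, and your caution there is warranted: the paper does not resolve it either. In the proof of Proposition~\ref{classN} the paper asserts that ``the indecomposable subrepresentations of $\mathcal{A}_{1,2}(V,J_T)$ acting via $\sigma$ on $\mathcal{F}_{h_0,T,N}$ are necessarily of the form above \dots\ a consequence of the Cauchy--Kowalewskaja theorem, in complete analogy to the proof of Proposition~\ref{class}'', with no further detail. Your appeal to the Jordan form~(\ref{decompN}) ``one variable at a time'' together with a $\hat U(n)$-rotation is at the same level of sketch and does not obviously close the gap. Concretely, for $n=2$ and $T=iI$ the span of $h_{0,0}$, $h_{1,0}$, $h_{0,1}+h_{2,0}$ is a $3$-dimensional indecomposable $\sigma_T(\mathcal{A}_1(V))$-submodule on which $\sigma_T(a_1)$ is a single $3\times 3$ Jordan block while $\sigma_T(a_2)\neq 0$; no down-set $K\in\mathcal{P}_0(K_2)$ produces this pair of matrices, and no linear change of the generators (the $\hat U(n)$-part of $G_0$) repairs this, while the full $Mp(2n,\R)$-part of $G_0$ moves the module and the algebra in lockstep and hence does not help either. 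So either the intended equivalence of pairs is broader than the $G_0$-action of Proposition~\ref{classN}, or some restriction on the class of submodules is implicit; in any case, neither your sketch nor the paper's supplies the missing argument.
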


\section{Symplectic spinors and Frobenius structures}

In this section, we will exhbit the main concept of 'Higgs pairs' resp. 'Frobenius structures' via symplectic spinors in a generality that will be sufficient to deal with the different manifestations of these structures over symplectic manifolds $M$ with certain additional data, i.e. the presence of a Hamiltonian system or a Lagrangian submanifold. In all cases, the assumption that $c_1(M)= 0\ {\rm mod}\ 2$ (here a nearly complex structure is chosen) will be necessary and sufficient to define the appropriate lift of the symplectic frame bundle.

\subsection{Symplectic spinors and Lie derivative}\label{spinorsconn}

Let $(M,\omega)$ be a symplectic manifold of dimension $2n$. For $p\in M$ we denote by $R_p$ the set of symplectic bases in $T_pM$, that is the $2n$-tuples $e_{1}, \dots,e_{n},f_{1}, \dots, f_{n}$
so that
\[
\omega_{x}(e_{j},e_{k})=\omega_{x}(f_{j},f_{k}) = 0, \ 
\omega_{x}(e_{j},f_{k}) = \delta_{jk} \quad \textrm{for} \ j,k =1,\dots,2n.
\]
The symplectic group $Sp(2n)$ acts simply transitively on $R_p, \ p \in M$ and we denote by $\pi_{R}:R:=\bigcup_{p\in m} R_p \rightarrow M$ the symplectic frame bundle. By the Darboux Theorem $R$ it is a locally trivial $Sp(2n)$-principal fibre bundle on $M$. As it is well-known, the $\omega$-compatible almost complex structures $J$ are in bijective correspondence with the set of $U(n)$-reductions of $R$. Given such a $J$, we call local sections of the associated $U(n)$-reduction $R^J$ of the form $(e_1,\dots, e_n, f_1,\dots,f_n)$ unitary frames. These frames are characterized by
\[
g(e_j,e_k)=\delta_{jk} \quad g(e_j,f_k)=0, \quad Je_j=f_j,
\]
where $j,k=1,\dots,n$ and $g(\cdot,\cdot)=\omega(\cdot,J\cdot)$. Now a metaplectic structure of $(M,\omega)$ is a $\rho$-equivariant $Mp(2n)$-reduction of $R$, that is:
\begin{Def}\label{metaplecticst}
A pair $(P,f)$, where $\pi_P:P\rightarrow M$ is a $Mp(2n,\mathbb{R})$-principal bundle on $M$ and $f$ a bundle morphism $f:P \rightarrow R$, is called metaplectic structure of $(M,\omega)$, if the following diagram commutes:
\begin{equation}\label{metadiag}
\begin{CD}
P \times Mp(2n,\mathbb{R})  @>>> P \\
@VV{f\times\rho}V              @VV{f}V \\
R \times Sp(2n,\mathbb{R}) @>>>   R
\end{CD}
\end{equation}
where the horizontal arrows denote the respective group actions.
\end{Def}
It follows that $f:P\rightarrow R$ is a two-fold connected covering. Furthermore it is known (\cite{habermann}, \cite{kost}) that $(M,\omega)$ admits a metaplectic structure if and only if $c_{1}(M)=0 \ mod \ 2$. In that case, the isomorphism classes of metaplectic structures are classified by $H^{1}(M,\mathbb{Z}_{2})$. $\kappa$ defines a continuous left-action of $Mp(2n,\mathbb{R})$ on $L^{2}(\mathbb{R}^n)$, acting unitarily on $L^{2}(\mathbb{R}^{n})$. Combining this with the right-action of $Mp(2n)$ on a fixed metaplectic structure $P$, we get a continuous right-action on $P\times L^{2}(\mathbb{R}^n)$ by setting
\begin{equation}\label{balance}
\begin{split}
(P \times L^{2}(\mathbb{R}^{n})) \times Mp(2n) \ &\rightarrow \
P \times L^{2}(\mathbb{R}^{n}) \\
((p,f),g) \ &\mapsto \ (pg,\kappa(g^{-1})f).
\end{split}
\end{equation}
The symplectic spinor bundle $\mathcal{Q}$ is defined to be its orbit space
\[
\mathcal{Q}=P \times_{\kappa}L^{2}(\mathbb{R}^{n}) := (P \times
L^{2}(\mathbb{R}^{n}))/Mp(2n)
\]
w.r.t. this group action, so $\mathcal{Q}$ is the $\kappa$-associated vector bundle of $P$. We will refer to its elements in the following by  $[p,u]$, $p\in P$, $u\in L^{2}(\mathbb{R}^{n})$. Note that if $\pi_P$ is the projection $\pi_P: P \rightarrow M$ in $P$, then $\mathcal{Q}$ is a locally trivial fibration $\tilde \pi: \mathcal{Q} \rightarrow M$ with fibre $L^{2}(\mathbb{R}^{n})$ by setting $\tilde \pi([p,u])= x$ if $\pi_P(p) = x$. Then continuous sections $\phi$ in $\mathcal{Q}$ correspond to continuous $Mp(2n)$-equivariant mappings $\hat \phi:P\rightarrow L^2(\mathbb{R}^n)$, that is $\hat \phi(pq)=\kappa(q^{-1})\hat \phi(p)$ for $p\in P$. Hence we define {\it smooth} sections $\Gamma(\mathcal{Q})$ in $\mathcal{Q}$ as the continuous sections whose corresponding mapping $\hat \phi$ is smooth as a map $\hat \phi:P\rightarrow L^2(\mathbb{R}^n)$. It then follows (\cite{habermann}) that $\hat \phi(p)\in \mathcal{S}(\mathbb{R}^n)$ for all $p\in P$, so smooth sections in $\mathcal{Q}$ are in fact sections of the subbundle
\[
\mathcal{S} = P \times_{\kappa}\mathcal{S}(\mathbb{R}^{n}).
\]
Note that due to unitarity of $L$, the usual $L^2$-inner product on $L^{2}(\mathbb{R}^{n})$ defines a fibrewise hermitian product $<\cdot, \cdot>$ on $\mathcal{Q}$.\\
Given a $U(n)$-reduction $R^J$ of $R$ w.r.t. a compatible almost complex structure $J$ on $M$ and a fixed metaplectic structure $P$, we get a $\hat U(n):=\rho^{-1}(U(n))$-reduction $\pi_{P^J}:P^J \rightarrow M$ of $P$ by setting $P^J:=f^{-1}(R^J)$, where $f$ is as in Definition \ref{metaplecticst}. So we get by denoting the restriction of $\kappa$ to $\hat U(n)$ by $\tilde \kappa$ an isomorphism of vector bundles
\begin{equation}\label{spinorbundle}
\mathcal{Q}\simeq \mathcal{Q}^J:=P^J \times_{\tilde \kappa}L^{2}(\mathbb{R}^{n}).
\end{equation}
Correspondingly we define $\mathcal{S}^J$ so that $\mathcal{S}^J\simeq \mathcal{S}$. At this point, the Hamilton operator $H_0$ of the harmonic oscillator on $L^2(\mathbb{R}^n)$ gives rise to an endomorphism of $\mathcal{S}$ and a splitting of $\mathcal{Q}$ into finite-rank subbundles as follows. Let $H_0:\mathcal{S}(\mathbb{R}^n)\rightarrow \mathcal{S}(\mathbb{R}^n)$ be the Hamilton operator of the $n$-dimensional harmonic oscillator as given by
\[
(H_0u)(x)=-\frac{1}{2}\sum_{j=1}^{n}(x_{j}^{2}u-\frac{\partial^{2}u}{\partial x_{j}^{2}}), \ u \in \mathcal{S}(\mathbb{R}^n).
\]
\begin{prop}[\cite{habermann}]\label{decomp}
The bundle endomorphism $\mathcal{H}^J:\mathcal{S}^J\rightarrow\mathcal{S}^J$ declared by $\mathcal{H}^J([p,u])=[p,H_0u],\ p\in P, u\in \mathcal{S}(\mathbb{R}^n)$ is well-defined. Let $\mathcal{M}_l$ denote the eigenspace of $H_0$ with eigenvalue $-(l+\frac{n}{2})$. Then the spaces $\mathcal{M}_l,\ l \in \mathbb{N}_0$ form an orthogonal decomposition of $L^2(\mathbb{R}^n)$ which is $\tilde \kappa$-invariant. So $\mathcal{Q}^J$ decomposes into the direct sum of finite rank-subbundles
\[
\mathcal{Q}^J_l=P^J \times_{\tilde \kappa}\mathcal{M}_l, \quad {\rm s.t.}\ {\rm rank}_{\mathbb{C}}\mathcal{Q}^J_{k}=\left(\begin{matrix}n+k-1 \\ k \end{matrix}\right)
\]
where we defined $\mathcal{Q}^J_l=\{q \in \mathcal{S}:\mathcal{H}^J(q)=-(l+\frac{n}{2})q\}$. 
\end{prop}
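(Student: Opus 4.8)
The plan is to reduce the entire statement to one fact: the harmonic oscillator Hamiltonian $H_0$ commutes with the metaplectic action $\tilde\kappa$ of $\hat U(n)=\rho^{-1}(U(n))$ on $L^2(\mathbb{R}^n)$, i.e. $[\kappa(g),H_0]=0$ for all $g\in\hat U(n)$ on the Schwartz space $\mathcal{S}(\mathbb{R}^n)$. Granting this for a moment, well-definedness of $\mathcal{H}^J$ is immediate: an element of $\mathcal{S}^J$ is a class $[p,u]$ subject to $[pg,\kappa(g^{-1})u]=[p,u]$ for $g\in\hat U(n)$, and since $H_0$ preserves $\mathcal{S}(\mathbb{R}^n)$, the assignment $[p,u]\mapsto[p,H_0u]$ is independent of the chosen representative precisely because $H_0\kappa(g^{-1})u=\kappa(g^{-1})H_0u$.

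To obtain the commutation, I would argue as in Section \ref{lagcoh}. By Proposition \ref{diffmp} (and Lemma \ref{fock}), $H_0$ coincides up to a nonzero scalar with $L_*(a)$, where $a=\tfrac{1}{2}\sum_{i=1}^{n}(a_i\cdot a_i+b_i\cdot b_i)\in\mathfrak{mp}(2n,\mathbb{R})$; moreover $\rho_*(a)=-J_0$ is central in $\mathfrak{u}(n)\subset\mathfrak{sp}(2n,\mathbb{R})$, so $\mathrm{ad}(b)(a)=0$ for every $b\in\mathfrak{u}(n)$ and hence $\mathrm{Ad}(g)a=a$ for every $g$ in the connected group $\hat U(n)$. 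Therefore the integrated intertwining relation for the metaplectic representation gives $L(g)\,L(\exp ta)\,L(g)^{-1}=L(\exp(t\,\mathrm{Ad}(g)a))=L(\exp ta)$ for all $t\in\mathbb{R}$. Differentiating this identity at $t=0$ on the dense domain of smooth vectors --- where $t\mapsto L(\exp ta)u$ is differentiable, so that the non-differentiability of $L$ as a map into bounded operators plays no role --- yields $[\kappa(g),H_0]=0$ on $\mathcal{S}(\mathbb{R}^n)$; this is exactly the mechanism already used in Section \ref{lagcoh} to show $Q_k=Q_k^g$ for $g\in\hat U(n)$.

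Next I would record the spectral decomposition of $H_0$ on $L^2(\mathbb{R}^n)$, which is the classical harmonic-oscillator computation reproduced in the excerpt: the eigenfunctions are the products of Hermite functions $h_{k_1\dots k_n}(x)=\prod_{j=1}^n h_{k_j}(x_j)$ with $k_j\in\mathbb{N}_0$, and $H_0h_{k_1\dots k_n}=-(l+\tfrac{n}{2})h_{k_1\dots k_n}$ where $l=\sum_j k_j$ (the sign coming from $H_0=-\tfrac{1}{2}H$ in the notation of that computation). Hence $\mathcal{M}_l=\mathrm{span}\{h_{k_1\dots k_n}:\sum_j k_j=l\}$; these subspaces are mutually orthogonal (eigenspaces of a self-adjoint operator, or directly by orthogonality of Hermite functions) and their union is total, so $L^2(\mathbb{R}^n)=\bigoplus_l\mathcal{M}_l$ as a Hilbert direct sum, and $\dim_{\mathbb{C}}\mathcal{M}_l$ equals the number of $(k_1,\dots,k_n)\in\mathbb{N}_0^n$ with $\sum_j k_j=l$, namely $\binom{n+l-1}{l}$ by the stars-and-bars count $M_n^l$ already cited. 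Now the two steps combine: by the commutation each eigenspace $\mathcal{M}_l$ is stable under $\kappa(g)$ for all $g\in\hat U(n)$, i.e. $\tilde\kappa$-invariant, so $\mathcal{Q}^J_l:=P^J\times_{\tilde\kappa}\mathcal{M}_l$ is a well-defined complex subbundle of $\mathcal{Q}^J=P^J\times_{\tilde\kappa}L^2(\mathbb{R}^n)$ of rank $\binom{n+l-1}{l}$, fibrewise orthogonal to $\mathcal{Q}^J_{l'}$ for $l\neq l'$ since $\tilde\kappa$ acts unitarily; taking the fibrewise Hilbert direct sum gives $\mathcal{Q}^J=\bigoplus_l\mathcal{Q}^J_l$, restriction to smooth sections gives the corresponding decomposition of $\mathcal{S}^J$ (each $\mathcal{M}_l\subset\mathcal{S}(\mathbb{R}^n)$), and since $\mathcal{H}^J$ acts on $[p,u]$ with $u\in\mathcal{M}_l$ by the scalar $-(l+\tfrac{n}{2})$, we get $\mathcal{Q}^J_l=\{q\in\mathcal{S}:\mathcal{H}^J q=-(l+\tfrac{n}{2})q\}$.

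The only genuinely delicate point is the commutation $[\kappa(g),H_0]=0$ for $g\in\hat U(n)$: since $L=\kappa$ is not differentiable as a map into bounded operators, one cannot simply write $\kappa(g)$ as an exponential of a Lie-algebra element and must instead run the argument through the one-parameter subgroup $\exp(ta)$ acting on Schwartz vectors. Everything else is the standard spectral theory of the $n$-dimensional harmonic oscillator together with the associated-bundle formalism, and follows \cite{habermann}.
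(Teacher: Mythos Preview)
Your proposal is correct and follows essentially the same approach as the paper. The paper does not give an explicit proof block for Proposition~\ref{decomp} (it is cited from \cite{habermann}), but immediately after the Lemma on the torus representation $F_t=L(\exp ta)$ in Section~\ref{coherent} it remarks that that Lemma ``gives in essence a proof of the splitting theorem reproduced below in Proposition~\ref{decomp}''; the ingredients there --- the Hermite eigenfunction computation of the $Q_k$ and the observation that $\rho_*(a)=-J$ with $\mathrm{ad}(b)(a)=0$ for $b\in\mathfrak{u}(n)$ forces $Q_k=Q_k^g$ for $g\in\hat U(n)$ --- are exactly the two steps you assemble.
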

Occasionally, we will use the dual spinor bundle $Q'$ of $Q$. To define this, note that if we topologize the Schwartz space $\mathcal{S}(\mathbb{R}^{n})$ by the countable family of semi-norms
\[
p_{\alpha,m}(f)={\rm sup}_{x\in \mathbb{R}^n}(1+|x|^m)|(D^\alpha f)(x)|, \  f\in \mathcal{S}(\mathbb{R}^{n}),
\]
then the topology of $(\mathcal{S}(\mathbb{R}^{n}),\tau)$ is induced by a translation-invariant complete metric $\tau$, hence manifests the structure of a Frechet-space. Furthermore $\kappa:Mp(2n)\rightarrow \mathcal{U}(\mathcal{S}(\mathbb{R}^{n}))$ still acts continuously, which follows by the decomposition (\ref{metalinear})-(\ref{fourier}) and the fact that multiplication by monomials and Fourier transform act continuously w.r.t. $\tau$, which is a standard result. Then, denoting the dual space of $(\mathcal{S}(\mathbb{R}^{n}),\tau)$ as $\mathcal{S}'(\mathbb{R}^{n})$, we can consider for any pair $T \in \mathcal{S}'(\mathbb{R}^{n}), g \in Mp(2n)$ the continuous linear functional $\hat \kappa(g)(T) \in \mathcal{S}'(\mathbb{R}^{n})$ defined by
\begin{equation}\label{metadual}
(\hat \kappa(g)(T))(f)=T(\kappa(g)^*f), \ f \in \mathcal{S}(\mathbb{R}^{n}).
\end{equation}
Thus we have an action $\hat \kappa:Mp(2n)\times \mathcal{S}'(\mathbb{R}^{n})\rightarrow \mathcal{S}'(\mathbb{R}^{n})$ which extends $\kappa:Mp(2n)\rightarrow \mathcal{U}(\mathcal{S}(\mathbb{R}^{n}))$ and is continuous relative to the weak-$*$-topology on $\mathcal{S}'(\mathbb{R}^{n})$. Note that since the inclusion $i_1:\mathcal{S}(\mathbb{R}^{n})\subset L^2(\mathbb{R}^n)$ is continuous, we have the continuous triple of embeddings $\mathcal{S}(\mathbb{R}^{n})\subset L^2(\mathbb{R}^n) \subset \mathcal{S}'(\mathbb{R}^{n})$. Here $L^2(\mathbb{R}^n)$ carries the norm topology and the inclusion $i_2:L^2(\mathbb{R}^n) \hookrightarrow \mathcal{S}'(\mathbb{R}^{n})$ is given by $i_2(f)(u)=(f,\overline u)_{L^2(\mathbb{R}^n)}$ where the latter denotes the usual $L^2$-inner product on $\mathbb{R}^n$. We thus define in analogy to (\ref{spinorbundle})
\[
\mathcal{Q}'=P^J \times_{\hat \kappa}\mathcal{S}'(\mathbb{R}^{n}),
\]
where here, $\hat \kappa:U(n)\rightarrow {\rm Aut}(\mathcal{S}'(\mathbb{R}^{n}))$ means restriction of $\hat \kappa$ to $U(n)$ (using the same symbol). Now any fixed section $\varphi \in \Gamma(\mathcal{Q}')$ may be evaluated on any $\psi \in \Gamma(\mathcal{Q})$ by writing $\varphi=[\overline s, T], \psi=[\overline s, u]$ w.r.t. a local section $\overline s: U\subset M\rightarrow P^J$ and smooth mappings $T:U\rightarrow \mathcal{S}'(\mathbb{R}^{n})$, $u: U\rightarrow \mathcal{S}(\mathbb{R}^{n})$ by setting 
\[
\varphi(\psi)|U(x)= T(u)(x), \ x \in U\subset M.
\]
It is clear that this extends to a mapping $\varphi:\Gamma(\mathcal{Q})\rightarrow C^\infty(M)$.\\

A connection $\nabla: \Gamma(TM) \rightarrow \Gamma(T^{*}M \otimes TM)$ on $(M, \omega)$ is called {\it symplectic} iff $\nabla\omega = 0$. As is well-known (\cite{tondeur}), there always exist symplectic connections, even torsion free symplectic connections on any symplectic manifold, but the latter are not unique. However, if $J$ is an $\omega$-compatible almost complex structure, the formula
\begin{equation}\label{bla}
(\nabla_{X}\omega)(Y,Z)=(\nabla_{X}g)(JY,Z)+g((\nabla_{X}J)(Y),Z).
\end{equation}
shows that the additional assumption $\nabla J=0$ would force a torsion-free symplectic connection to be the Levi-Civita connection of a Kaehler manifold. So in general, symplectic connection preserving $J$ are not torsion-free. Note that symplectic connections are in bijective correspondence to connections $Z: TR \rightarrow \mathfrak{sp}(2n,\mathbb{R})$ on the symplectic framebundle $R$ (cf. \cite{habermann}). Let $Z:TR \rightarrow \mathfrak{sp}(2n,\mathbb{R})$ be the connection on $R$ corresponding to the symplectic connection $\nabla$ on $M$. Then $Z$ uniquely lifts to a connection one-form $\overline Z: TP \rightarrow \mathfrak{mp}(2n,\mathbb{R})$ on $P$ so that $\overline Z= \rho_{*}^{-1}\circ Z\circ f_{*}$, since $\rho_{*}$ is an isomorphism, $\overline Z$ is well-defined. For $s:U \subset M \rightarrow R$ being a local section, $\overline s:U \subset M \rightarrow P$ a local lift to $s$ inmto $P$, $X\in \Gamma(TM)$ and $u:U\rightarrow \mathcal{S}(\mathbb{R}^n)$, we have the induced covariant derivative $\nabla: \Gamma(\mathcal{Q}) \rightarrow \Gamma(T^{*}M \otimes \mathcal{Q})$ expressed on the local section $\varphi =[\overline s, u]$ as
\begin{equation} \label{spinorabl}
\nabla_{X} \varphi = [\overline s,du(X) + L_{*}(\overline Z \circ \overline s_{*}(X))u].
\end{equation}
We then have:
\begin{lemma}[\cite{habermann}]\label{cliffharm}
Symplectic Clifford-multiplication, spinor derivative and Hermitian Product in $\mathcal
S$ are compatible as follows:
\[
\begin{split}
<X \cdot \varphi, \psi > \ &=\ - <\varphi,X \cdot \psi> \\
\nabla_{X}(Y\cdot \varphi) \ &= \ (\nabla_{X}Y)\cdot \varphi+ Y \cdot
\nabla_{X}\varphi \\
X<\varphi,\psi> \ &= \ <\nabla_{X} \varphi, \psi> + <\varphi, \nabla_X \psi>.
\end{split}
\]
\end{lemma}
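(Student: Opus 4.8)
The plan is to reduce all three identities to computations in the model fibre, using that symplectic Clifford multiplication, the Hermitian product and the spinor connection on $\mathcal{Q}$ are all induced from a single metaplectic principal bundle: Clifford multiplication from the $Mp(2n)$-equivariant linear map $\sigma:\R^{2n}\to\End(\mathcal{S}(\R^n))$ of Definition \ref{defclif} (equivariance being the relation $\sigma(\rho(g)v)=L(g)\sigma(v)L(g)^{-1}$, the differentiated form of (\ref{heiequiv}) together with (\ref{cliffordheis})), the Hermitian product from the $L^2$-inner product (which is $\kappa$-invariant because $L$ is unitary), and the connection from the lifted connection one-form $\overline Z$ on $P$ via the local formula (\ref{spinorabl}). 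So first I would fix a local lift $\overline s:U\subset M\to P^J$ of a unitary frame $s$ and write $\varphi=[\overline s,u]$, $\psi=[\overline s,w]$ with $u,w:U\to\mathcal{S}(\R^n)$ smooth and $\underline X,\underline Y:U\to\R^{2n}$ the coordinate vectors of $X,Y$ in $s$; then $X\cdot\varphi=[\overline s,\sigma(\underline X)u]$, $\langle\varphi,\psi\rangle|_U=(u,w)_{L^2}$, and $\nabla_X\varphi=[\overline s,\,du(X)+L_\ast(\overline Z\,\overline s_\ast X)u]$. Since $\sigma(v)$ and every $L_\ast(A)$ preserve $\mathcal{S}(\R^n)$, everything below happens inside the Fr\'echet space $\mathcal{S}(\R^n)$ and no domain issues arise.

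For the first identity it suffices to show each $\sigma(v)$, $v\in\R^{2n}$, is skew-Hermitian for the $L^2$-product on $\mathcal{S}(\R^n)$; by linearity this reduces to the basis vectors, where $\sigma(a_j)=ix_j$ and $\sigma(b_j)=\partial/\partial x_j$ by (\ref{cliffbig}). Multiplication by the real function $x_j$ is self-adjoint, so $ix_j$ is skew-adjoint; and $\partial/\partial x_j$ is skew-adjoint by integration by parts, the boundary terms vanishing on Schwartz functions. Hence $(\sigma(\underline X)u,w)_{L^2}=-(u,\sigma(\underline X)w)_{L^2}$ pointwise on $U$, which is the claim. The third identity is just as quick: differentiating the function $(u,w)_{L^2}$ along $X$ gives $(du(X),w)_{L^2}+(u,dw(X))_{L^2}$, while (\ref{spinorabl}) produces the same two terms plus the cross terms $(L_\ast(A)u,w)_{L^2}+(u,L_\ast(A)w)_{L^2}$ with $A=\overline Z\,\overline s_\ast X$; differentiating $(L(\exp tA)u,L(\exp tA)w)_{L^2}=(u,w)_{L^2}$ at $t=0$ on the smooth vectors of Proposition \ref{diffmp} shows $L_\ast(A)$ is skew-Hermitian, so the cross terms cancel.

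The second identity is the real content, and its engine is the infinitesimal intertwining relation
\[
[L_\ast(A),\sigma(v)]=\sigma(\rho_\ast(A)v),\qquad A\in\mathfrak{mp}(2n,\R),\ v\in\R^{2n},
\]
obtained by differentiating $\sigma(\rho(g)v)=L(g)\sigma(v)L(g)^{-1}$ in $g$ at the identity (on the smooth vectors $\mathcal{S}(\R^n)$). Granting this I would compute locally: by (\ref{spinorabl}), $\nabla_X(Y\cdot\varphi)=[\overline s,\,d(\sigma(\underline Y)u)(X)+L_\ast(A)\sigma(\underline Y)u]$; expanding $d(\sigma(\underline Y)u)(X)=\sigma(d\underline Y(X))u+\sigma(\underline Y)du(X)$ by linearity of $\sigma$, and $L_\ast(A)\sigma(\underline Y)u=\sigma(\underline Y)L_\ast(A)u+\sigma(\rho_\ast(A)\underline Y)u$ by the commutation relation, the terms carrying $\sigma(\underline Y)$ assemble to $[\overline s,\sigma(\underline Y)(du(X)+L_\ast(A)u)]=Y\cdot\nabla_X\varphi$, and the leftover is $[\overline s,\sigma(d\underline Y(X)+\rho_\ast(A)\underline Y)u]$. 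The last step is to recognise $d\underline Y(X)+\rho_\ast(A)\underline Y$ as the frame-coordinate vector of $\nabla_X Y$, which is exactly the local expression for the symplectic connection $\nabla$ on $TM$ associated to the connection one-form $Z=\rho_\ast\circ\overline Z$ on $R$ (using $\overline Z=\rho_\ast^{-1}\circ Z\circ f_\ast$ and $f\circ\overline s=s$); so the leftover is $(\nabla_X Y)\cdot\varphi$ and the Leibniz rule follows.

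I expect the only real obstacle to be bookkeeping: keeping the left/right conventions for the $Mp(2n)$-actions on $P$ and on $L^2(\R^n)$ consistent (hence getting the signs and the occurrences of $\rho^{-1}$ right), and remembering that $L$ is not differentiable as a map into $\mathcal{U}(L^2(\R^n))$, so every Lie-algebra identity must be read on the smooth vectors $\mathcal{S}(\R^n)$ exactly as in Proposition \ref{diffmp}. Conceptually nothing is hard: the lemma simply records that the bundle map $\mu:TM\otimes\mathcal{Q}\to\mathcal{Q}$ is parallel and that $\sigma$ and $L_\ast$ are compatible (skew-Hermitian) with the $L^2$-structure, both automatic from equivariance of $\sigma$ and unitarity of $L$ once the conventions are pinned down --- this is essentially the argument in \cite{habermann}.
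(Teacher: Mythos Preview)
Your proof is correct. Note that the paper does not actually give its own proof of this lemma --- it is stated with the citation \cite{habermann} and no argument, since all three identities are standard facts about the spinor bundle taken over directly from Habermann's monograph. What you have written is essentially the proof one finds there: reduce to the model fibre via a local section of $P$, use skew-adjointness of $ix_j$ and $\partial/\partial x_j$ on $\mathcal{S}(\R^n)$ for the first identity, skew-adjointness of $L_\ast(A)$ (from unitarity of $L$) for the third, and the infinitesimal form of (\ref{heiequiv}) for the Leibniz rule. Your remark that the differentiation must be performed on smooth vectors is exactly the technical point Habermann also has to address.
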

Since we will mostly deal with symplectic connections satisfying $\nabla J = 0$, the question arises if $\overline Z:TP
\rightarrow \mathfrak{mp}(2n,\mathbb{R})$ reduces to a $\hat {\overline Z}:TP^J \rightarrow \hat {\mathfrak{u}}(n)$ in $P^J$, so 
that $i_{*} \hat {\overline Z}\ =\ i^{*}\overline Z$. Here $i: \hat U(n) \hookrightarrow Mp(2n,\mathbb{R})$ and $i: P^J \hookrightarrow P$ are the respective inclusions. Under this condition, the spinor derivatives corresponding to $P$ and $P^J$ are identical. Indeed one has
\begin{lemma}[\cite{habermann}]
If $\nabla$ is a symplectic covariant derivative over $M$ and we have $\nabla J=0$, then the corresponding connection $Z$ in $R$ reduces to $Z^J$ in $R^J$ In the above sense. The latter lifts to a connection $\hat Z^J$ over $P^J$ as before:
\[
\begin{CD}
T P^J   @>{{\hat Z^J}}>> \hat {\mathfrak{u}}(n) \\
@VV{f^J_{*}}V              @VV{\rho_{*}}V \\
TR^J  @>Z^J>>   		\mathfrak{u}(n)
\end{CD}
\]
Here $f^J$ is the restriction of $f:P \rightarrow R$ to $P^J$.
\end{lemma}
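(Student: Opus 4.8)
The plan is to deduce the statement from the classical principle that a linear connection reduces to a reduction of the frame bundle exactly when it preserves the tensorial data cutting out that reduction, and then to transport the reduction across the two-fold cover $\rho$. First I would note that the hypotheses already force $\nabla g = 0$: substituting $\nabla\omega = 0$ and $\nabla J = 0$ into the identity (\ref{bla}) gives $(\nabla_X g)(JY,Z)=0$ for all $Y,Z$, and since $J$ is invertible this means $\nabla g = 0$. Thus $\nabla$ simultaneously preserves $\omega$, $g$ and $J$, i.e.\ it preserves precisely the Hermitian data defining the $U(n)$-reduction $R^J\subset R$.

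Next I would make the reduction of $Z$ explicit. Working in a local unitary frame $s=(e_1,\dots,e_n,f_1,\dots,f_n)\colon U\to R^J\subset R$ (so $g(e_j,e_k)=\delta_{jk}$, $g(e_j,f_k)=0$, $Je_j=f_j$), the pulled-back connection form $s^*Z\in\Omega^1(U;\mathfrak{sp}(2n,\mathbb{R}))$ records the Christoffel data $\nabla_X e_j,\nabla_X f_j$. Preservation of $\omega$, $g$ and $J$ is equivalent to $s^*Z(X)$ lying in $\mathfrak{sp}(2n,\mathbb{R})\cap\mathfrak{so}(2n)=\mathfrak{u}(n)$ for every $X$ and every point of $U$ — the standard description of $\mathfrak{u}(n)$ as the common stabilizer of the three structures, read off in a unitary basis. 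Since transition functions between unitary frames lie in $U(n)$, these local $\mathfrak{u}(n)$-valued one-forms glue to a connection one-form $Z^J$ on the principal $U(n)$-bundle $R^J$, and by construction $i_*Z^J=i^*Z$ for $i\colon R^J\hookrightarrow R$; equivalently the $Z$-horizontal subspaces along $R^J$ are tangent to $R^J$.

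To lift to $P^J=f^{-1}(R^J)$ I would use $\overline Z=\rho_*^{-1}\circ Z\circ f_*$ together with the fact that $f_*$ maps $TP^J$ into $TR^J$, so $Z\circ f_*$ is $\mathfrak{u}(n)$-valued on $TP^J$. Because $\rho\colon\hat U(n)\to U(n)$ is the connected two-fold cover, $\rho_*$ restricts to a Lie algebra isomorphism $\hat{\mathfrak{u}}(n)\xrightarrow{\sim}\mathfrak{u}(n)$, hence $\rho_*^{-1}$ carries $\mathfrak{u}(n)$ into $\hat{\mathfrak{u}}(n)$ and $\overline Z|_{TP^J}$ takes values in $\hat{\mathfrak{u}}(n)$. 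Setting $\hat Z^J:=\overline Z|_{TP^J}$, one checks $\hat U(n)$-equivariance and reproduction of the fundamental vector fields — both inherited by restriction from the corresponding properties of $\overline Z$ under $Mp(2n,\mathbb{R})$ — so $\hat Z^J$ is a connection one-form on $P^J$; the square $\rho_*\circ\hat Z^J=Z^J\circ f^J_*$ then commutes by the definition of $\overline Z$ and $f^J=f|_{P^J}$. Finally, since the induced spinor derivative (\ref{spinorabl}) only sees $\overline Z$, and $\overline Z$ along $P^J$ equals $\hat Z^J$, the covariant derivatives on $\mathcal{Q}$ built from $P$ and from $P^J$ agree.

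The one point requiring genuine care, rather than formal manipulation, is the frame-level equivalence ``$\nabla$ preserves $(\omega,g,J)$'' $\Longleftrightarrow$ ``$s^*Z$ is $\mathfrak{u}(n)$-valued'', i.e.\ pinning down $\mathfrak{sp}(2n,\mathbb{R})\cap\mathfrak{so}(2n)=\mathfrak{u}(n)$ with the block conventions for $(e_j,f_j)$ in force here, and, dually, verifying that the local $\mathfrak{u}(n)$-valued forms patch to an honest principal connection on $R^J$ (equivalently, that the horizontal distribution of $Z$ is tangent to $R^J$ — this is where $\nabla g=\nabla\omega=\nabla J=0$ is used in full). Everything after that is bookkeeping: the lift is forced because $\rho_*$ is an isomorphism, and the diagram commutes tautologically. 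This is, of course, the content of \cite{habermann}; the sketch above is how I would reconstruct it.
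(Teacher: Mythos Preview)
Your proof is correct and follows exactly the standard argument one finds in the cited reference. Note, however, that the paper itself does not supply a proof for this lemma at all --- it is stated with attribution to \cite{habermann} and no argument is given in the text. Your reconstruction (deduce $\nabla g=0$ from (\ref{bla}), identify the local connection form as $\mathfrak{u}(n)$-valued via the characterization $\mathfrak{sp}(2n,\R)\cap\mathfrak{so}(2n)=\mathfrak{u}(n)$, then lift through the Lie algebra isomorphism $\rho_*$) is precisely the argument in Habermann's book, so there is nothing to compare against here.
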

We finally briefly describe the Lie derivative on symplectic spinors associated to a locally Hamiltonian symplectic diffeomorphism on $(M,\omega)$ as introduced in \cite{haberklein}. Recall that a family of vector fields $X_t \in \Gamma(TM), t \in I$ ($I$ is $\R$ or a small nghbd of $0$) on $(M, \omega)$ is called locally Hamiltonian if $i_{X_t}\omega$ is closed. Then its flow $\psi_t, \ t \in I$ satisfies $\psi_t^*(\omega)\omega$ for any $t$, that is $\psi_t \in {\rm Symp_0}(M, \omega)$, where the latter is the connected component of the identity of the symplectmorphism group and there is for any $t \in I$ the distinguished isotopy $\Psi_\tau,\ \tau \in [0,t]$, connecting $\Psi_t$ to the identity. Any symplectomorphism $\phi$ on $M$ induces an automorphism in $R$ by
\[
\begin{split}
\phi_{*}\ :\ &R \rightarrow R \\
&(e_{1},\dots,e_{n},f_{1},\dots,f_{n}) \mapsto
(\phi_{*}e_{1},\dots,\phi_{*}e_{n},\phi_{*}f_{1},\dots,\phi_{*}f_{n}).
\end{split} 
\]
lifting (non-uniquely) to an automorphism $\hat \phi_*:P\rightarrow P$ so that if $f:P\rightarrow R$ is the projection, then $f\circ \hat \phi_*=\phi_*\circ f:P\rightarrow R$. Assuming $M$ and hence $P$ connected this lift depends only on the choice of branch over $p \in R$. Since by the above, sections of  $\mathcal{S}$ are $Mp(2n, \R)$-equivariant maps $\varphi:P \rightarrow \mathcal{S}(\mathbb{R}^{n})$, we can define an action of $\phi$ on smooth sections of $\mathcal{Q}$ by setting
\[
(\phi^{-1})_{*}\varphi=\varphi\circ\tilde\phi_{*}:P \rightarrow
\mathcal{S}(\mathbb{R}^{n}) 
\]
$(\phi^{-1})_{*}\varphi$ remains $Mp(2n,\R)$ equivariant and hence defines a smooth spinor field over $M$.
For $\nabla$ a symplectic connection we have that (cf. \cite{habermann})
\begin{equation}\label{nablaphi}
\nabla^{\phi}_{(\phi^{-1})_{*}X}(\phi^{-1})_{*}Y=(\phi^{-1})_{*}(\nabla_{X}Y)
\end{equation}
is also a symplectic connection and the associated covariant derivative on spinors is given by
\begin{equation}\label{nablaphi2}
\nabla^{\phi}_{(\phi^{-1})_{*}X}(\phi^{-1})_{*}\varphi=(\phi^{-1})_{*}(\nabla_{X}\varphi).
\end{equation}
Let now $\psi_t, \ t\in I$ be a locally Hamiltonian flow, that is $i_{X_t}\omega$ is closed.
By requiring $(\psi_{0}^{-1})_{*}=id_{\Gamma(\mathcal{Q})}$ and by the continuity of the family
$(\psi_{t}^{-1})_{*}:\Gamma(\mathcal{Q}) \rightarrow \Gamma(\mathcal{Q})$, the latter is unambigously defined for all $t \in [0,1]$. One defines
\begin{Def}
The Lie derivative of $\varphi \in \Gamma(\mathcal{Q})$ in the direction of a locally Hamiltonian vector field $X=X_t, \ t \in I$ is given by
\[
\mathcal{L}_{X}\varphi\ =\ \frac{d}{dt}(\phi_{t}^{-1})_{*}\varphi_{\vert t=0},
\]
where $\{\psi_{t}\}_{t\in I}$ is the flow of $X_t$ on $M$.
\end{Def}
Then it is proven in \cite{haberklein}:
\begin{theorem}\label{lieabl}
Let $\nabla$ be a torsion-free symplectic connection and $X_t, t \in I$ a locally Hamiltonian vector field on $(M,\omega)$. Then
\begin{equation}\label{liebla}
\mathcal{L}_{X}\varphi\ =\ \nabla_{X}\varphi+\frac{i}{2}\sum_{j=1}^{n}\lbrace
\nabla_{e_{j}}X\cdot f_{j}-\nabla_{f_{j}}X\cdot e_{j}\}\cdot \varphi \quad
\textrm{for} \ \varphi \in \Gamma(\mathcal{Q}),
\end{equation}
where $e_{1},\dots,e_{n},f_{1},\dots,f_{n}$ is an arbitrary symplectic frame.
\end{theorem}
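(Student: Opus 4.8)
\emph{Proof sketch (plan).} The plan is to show that $\mathcal{L}_X-\nabla_X$ is a zeroth order operator on $\Gamma(\mathcal{Q})$ and to identify it with the symplectic Clifford (metaplectic) action of the endomorphism $\nabla X\in\Gamma(\mathrm{End}\,TM)$, $Y\mapsto\nabla_Y X$. First note that $\mathcal{L}_X$ obeys the Leibniz rule $\mathcal{L}_X(g\varphi)=(Xg)\varphi+g\,\mathcal{L}_X\varphi$ (differentiate $(g\circ\psi_t)\cdot((\psi_t^{-1})_*\varphi)$ at $t=0$), so $\mathcal{L}_X-\nabla_X$ is $C^\infty(M)$-linear and it suffices to compute it pointwise in a symplectic frame. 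Also, since $X_t$ is locally Hamiltonian, $d(i_X\omega)=0$; combining this with $\nabla\omega=0$ and torsion-freeness gives $\omega(\nabla_Y X,Z)=\omega(\nabla_Z X,Y)$ for all $Y,Z$, i.e. $\nabla X$ is $\omega$-symmetric and hence $\nabla X\in\mathfrak{sp}(T_xM)$ at every point. This is the property that makes the correction term land in the algebra $\mathfrak{a}$ of symmetric quadratics.

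Next I would pass to the (meta)frame bundle. The flow $\psi_t$ of $X_t$ lifts to the flow $\psi_{t*}$ on $R$ (push-forward of frames) and, after fixing a branch over one point, to $\tilde\psi_{t*}$ on $P$. Writing sections of $\mathcal{Q}$ as $Mp(2n)$-equivariant maps $\hat\varphi:P\to\mathcal{S}(\mathbb{R}^n)$, one has $\mathcal{L}_X\varphi\leftrightarrow d\hat\varphi(\widehat X)$ with $\widehat X:=\tfrac{d}{dt}\big|_{0}\tilde\psi_{t*}$ a vector field on $P$ projecting to $X$. Decompose $\widehat X=\widehat X^{\mathrm{hor}}+\widehat X^{\mathrm{vert}}$ with respect to the lifted symplectic connection $\overline Z$. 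By formula (\ref{spinorabl}), $d\hat\varphi(\widehat X^{\mathrm{hor}})$ is exactly $\nabla_X\varphi$; and if $\zeta\in\mathfrak{mp}(2n)$ is the vertical component of $\widehat X$, then equivariance $\hat\varphi(p\exp t\zeta)=\kappa(\exp(-t\zeta))\hat\varphi(p)$ gives $d\hat\varphi(\widehat X^{\mathrm{vert}})=-L_*(\zeta)\hat\varphi(p)$. Thus $\mathcal{L}_X\varphi=\nabla_X\varphi-L_*(\zeta)\varphi$, and everything comes down to identifying $\zeta$.

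Here I would use the classical fact that, for a torsion-free connection, the connection form on $R$ evaluated on the natural lift of $X$ equals $\nabla X$ read in the chosen frame; transporting through $\rho_*=ad$ this says $\zeta=ad^{-1}(\nabla X)\in\mathfrak{a}\subset\mathfrak{mp}(2n)$, where $ad:\mathfrak{a}\to\mathfrak{sp}(2n)$ is the isomorphism of Lemma \ref{liesymp}. The explicit form of $ad^{-1}$ is the small combinatorial point: using $ad(a_j a_k)=-Y_{jk}$, $ad(b_j b_k)=Z_{jk}$, $ad(a_j b_k+b_k a_j)=2X_{jk}$ one checks on the basis $X_{jk},Y_{jk},Z_{jk}$ that $ad^{-1}(A)=\tfrac12\sum_{j=1}^{n}\big(A(e_j)\cdot f_j-A(f_j)\cdot e_j\big)$ for $A\in\mathfrak{sp}(2n)$ and any symplectic frame $(e_j,f_j)$; crucially this expression lies in the symmetric quadratics $\mathfrak{a}$ exactly when $A$ is $\omega$-symmetric, which is why the locally Hamiltonian hypothesis is needed. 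Finally Proposition \ref{diffmp} gives that $L_*$ of a homogeneous quadratic equals $(-i)$ times symplectic Clifford multiplication by it, so $L_*(\zeta)\varphi=L_*(ad^{-1}(\nabla X))\varphi=-\tfrac{i}{2}\sum_j(\nabla_{e_j}X\cdot f_j-\nabla_{f_j}X\cdot e_j)\cdot\varphi$, and substitution yields the asserted formula. Frame-independence of the right-hand side is automatic, since it equals $i\cdot ad^{-1}(\nabla X)\cdot\varphi$ with $ad^{-1}$ and Clifford multiplication both intrinsic.

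The step I expect to be the main obstacle is pinning down the exact sign and normalization in the identification $\zeta=ad^{-1}(\nabla X)$: one must track carefully that spinors are acted on through $(\psi_t^{-1})_*$ (one source of a sign), that the relevant lift is of the push-forward $\psi_{t*}$ on $R$ (and not of $(\psi_{-t})_*$), and that torsion-freeness is precisely what removes the torsion term $T(X,\cdot)$ from the vertical component. It is easy to be off by a sign or a factor of two, and a clean way to fix the conventions is to verify the identity in the flat model $(M,\omega)=(\mathbb{R}^{2n},\omega_0)$ with $X$ a linear Hamiltonian vector field, where $\psi_t\in Sp(2n)$ and both sides are computed by the explicit formulas (\ref{metalinear})--(\ref{fourier}) and Proposition \ref{diffmp}. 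The remaining ingredients — the Leibniz reduction, the $ad^{-1}$ bookkeeping, and the $(-i)$ coming from $L_*$ — are routine.
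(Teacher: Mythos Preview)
The paper does not actually prove this theorem; it is quoted from the cited reference \cite{haberklein} (Habermann--Klein), and the subsequent remark about the non-torsion-free case (\ref{liebla2}) explicitly refers to ``the proof of the theorem in \cite{haberklein}''. So there is no in-paper proof to compare against.

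That said, your sketch is correct and is essentially the standard argument one expects in \cite{haberklein}: reduce to the principal bundle, split the natural lift $\widehat X$ into horizontal and vertical parts, identify the horizontal contribution with $\nabla_X$ via (\ref{spinorabl}), and identify the vertical part with $ad^{-1}(\nabla X)\in\mathfrak{a}$, then apply Proposition \ref{diffmp}. Your verification that $\nabla X\in\mathfrak{sp}(T_xM)$ from $d(i_X\omega)=0$, $\nabla\omega=0$, and torsion-freeness is exactly the mechanism that places the correction in $\mathfrak{a}$, and your formula $ad^{-1}(A)=\tfrac12\sum_j\big(A(e_j)\cdot f_j-A(f_j)\cdot e_j\big)$ checks against Lemma \ref{liesymp} on the basis $X_{jk},Y_{jk},Z_{jk}$.

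On the sign worry: the computation goes through as you wrote it. With the right $Sp(2n)$-action on frames $(p\cdot g)_j=\sum_k p_k g_{kj}$, the vertical part of $\widehat X$ at a frame $p$ is the matrix of $E_j\mapsto\nabla_{E_j}X$ (torsion-freeness kills the $T(X,\cdot)$ term, which is precisely what produces the extra contraction term in (\ref{liebla2}) otherwise). Equivariance of $\hat\varphi$ then yields the $-L_*(\zeta)$ contribution, and the $(-i)$ from Proposition \ref{diffmp} turns this into $+\tfrac{i}{2}\sum_j\{\nabla_{e_j}X\cdot f_j-\nabla_{f_j}X\cdot e_j\}\cdot\varphi$. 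Your suggested sanity check on $(\mathbb{R}^{2n},\omega_0)$ with a linear Hamiltonian field is a good one and confirms all constants.
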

Note that, from the proof of the theorem in \cite{haberklein} we see that for a non-torsion-free symplectic connection $\nabla$  an additional term appears, that is one gets:
\begin{equation}\label{liebla2}
\mathcal{L}_{X}\varphi\ =\ \nabla_{X}\varphi+\frac{i}{2}\sum_{j=1}^{n}\lbrace
\nabla_{e_{j}}X\cdot f_{j}-\nabla_{f_{j}}X\cdot e_{j}\}\cdot \varphi + 
\frac{i}{4}\sum_{j=1}^{n}\lbrace i\omega(\nabla_{e_{j}}X, f_{j})-i\omega(\nabla_{f_{j}}X, e_{j})\}\varphi,
\quad
\end{equation}
for $\varphi \in \Gamma(\mathcal{Q})$. It is interesting to note the symmetry between the two last terms: up to a constant $\frac{1}{2}$, the last term replaces the symplectic Clifford multiplication by contraction with terms of the form $i_Y\omega$.
Given two $X_{f},X_{g}$ Hamiltonian vector fields over $M$ associated to functions $f,g$, their commutator is Hamiltonian
with Hamiltonian $\omega(X_{f},X_{g})$. For the spinor derivative one has:
\begin{folg}[\cite{haberklein}]\label{heisenberg}
Let $\varphi \in \Gamma(\mathcal{Q})$ and let the vectorfield $X,Y$ be Hamiltonian.
Then
\[
\mathcal{L}_{[X,Y]}\varphi\ =\ [\mathcal{L}_{X},\mathcal{L}_{Y}]\varphi.
\]
\end{folg}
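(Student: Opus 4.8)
The plan is to bypass the explicit formula (\ref{liebla}) entirely and instead transplant the classical argument that ``the Lie derivative realises the bracket of vector fields'' into the equivariant-function picture on the metaplectic bundle $P$. The only structure of symplectic spinors that enters is this: for $\phi\in{\rm Symp}_0(M,\omega)$ the induced automorphism $\phi_\ast:R\to R$ lifts through the two-fold cover $f:P\to R$ to a bundle automorphism $\widetilde\phi_\ast:P\to P$, uniquely determined once $M$, hence $P$, is connected by fixing the branch along one isotopy and extending by continuity; and the operator $S_\phi\varphi:=\varphi\circ\widetilde\phi_\ast$ on $\Gamma(\mathcal{Q})$ is exactly the one used to define $\mathcal{L}_X$ in Section~\ref{spinorsconn}. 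In particular no connection is used, so the argument in fact proves the identity for any symplectic connection, not just torsion-free ones.

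First I would record two bookkeeping facts about these lifts. (a) If $\psi_t=\psi^X_t$ is the flow of a locally Hamiltonian $X$, then $\widetilde{\psi_{t+s}}_\ast=\widetilde{\psi_t}_\ast\circ\widetilde{\psi_s}_\ast$, because both sides are continuous-in-$s$ lifts of $\psi_{(t+s)\ast}=\psi_{t\ast}\psi_{s\ast}$ agreeing at $s=0$, and a continuous lift through a covering with a prescribed value at one point is unique. Hence $A^X_t\varphi:=S_{\psi_t}\varphi=\varphi\circ\widetilde{\psi_t}_\ast$ is a one-parameter group of (topological) automorphisms of $\Gamma(\mathcal{Q})$ with generator $\mathcal{L}_X$ by definition, and $A^X_{-t}=(A^X_t)^{-1}$. (b) For $\phi$ as above and a locally Hamiltonian $Y$ with flow $\psi^Y_s$, conjugation downstairs gives the flow of the pushforward, $\psi^{\phi_\ast Y}_s=\phi\circ\psi^Y_s\circ\phi^{-1}$; lifting and invoking uniqueness of the canonical lift once more, $\widetilde{\psi^{\phi_\ast Y}_s}_\ast=\widetilde\phi_\ast\circ\widetilde{\psi^Y_s}_\ast\circ\widetilde\phi_\ast^{\,-1}$. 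Unwinding the definition of $S_\bullet$ this is exactly the conjugation identity
\[
A^{\phi_\ast Y}_s \;=\; S_\phi^{-1}\,A^Y_s\,S_\phi .
\]

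Now I would specialise $\phi=\psi^X_t$, so $S_\phi=A^X_t$ and the identity reads $A^{(\psi^X_t)_\ast Y}_s=A^X_{-t}A^Y_s A^X_t$. Differentiating in $s$ at $s=0$ gives $\mathcal{L}_{(\psi^X_t)_\ast Y}=A^X_{-t}\,\mathcal{L}_Y\,A^X_t$; here one uses that $(\psi^X_t)_\ast Y$ and in particular $[X,Y]$ are again locally Hamiltonian (indeed Hamiltonian, cf.\ the remark just before the corollary) so the left side is defined, and that $Z\mapsto\mathcal{L}_Z$ is $\R$-linear and continuous, which is visible from (\ref{liebla}). Differentiating once more in $t$ at $t=0$: the product rule together with $\frac{d}{dt}\big|_{0}A^X_{\pm t}=\pm\mathcal{L}_X$ turns the right side into $-\mathcal{L}_X\mathcal{L}_Y+\mathcal{L}_Y\mathcal{L}_X=-[\mathcal{L}_X,\mathcal{L}_Y]$, while the chain rule together with the classical identity $\frac{d}{dt}\big|_{0}(\psi^X_t)_\ast Y=-[X,Y]$ turns the left side into $\mathcal{L}_{-[X,Y]}=-\mathcal{L}_{[X,Y]}$. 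Comparing the two yields $\mathcal{L}_{[X,Y]}\varphi=[\mathcal{L}_X,\mathcal{L}_Y]\varphi$, as claimed.

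The main obstacle is precisely the handling of the canonical lifts to $P$: everything rests on the existence and uniqueness of the continuous lift of an isotopy through $f:P\to R$ (which needs $M$ connected, as already used in Section~\ref{spinorsconn}) and on the compatibility of this canonical choice with composition and with conjugation, i.e.\ facts (a) and (b); once these are secured the rest is routine differentiation. A more computational alternative is to insert $\mathcal{L}_X\varphi=\nabla_X\varphi+\beta(X)\cdot\varphi$ with $\beta(X)=\frac{i}{2}\sum_j\big(\nabla_{e_j}X\cdot f_j-\nabla_{f_j}X\cdot e_j\big)$ into $[\mathcal{L}_X,\mathcal{L}_Y]-\mathcal{L}_{[X,Y]}$, apply Lemma~\ref{cliffharm}, observe that the $\nabla$-quadratic part collapses to the spinor curvature $R^{\mathcal{Q}}(X,Y)\varphi$, and cancel it against $\big(\nabla_X\beta(Y)-\nabla_Y\beta(X)-\beta([X,Y])+[\beta(X),\beta(Y)]\big)\cdot\varphi$ using the Kostant-type identity for the $\mathfrak{sp}(2n,\R)$-curvature of $\nabla$ applied to the $\mathfrak{sp}(TM)$-valued tensor $\nabla X$, transported into $\textbf{sCl}(\mathbb{R}^{2n})$ via $\mathfrak{a}\cong\mathfrak{sp}(2n,\R)$ and $L_\ast|_{\mathfrak{a}}$; this works but is less transparent and uses torsion-freeness unnecessarily.
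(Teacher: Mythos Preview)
The paper does not actually prove this corollary; it is imported verbatim from \cite{haberklein} with no argument given here. So there is no ``paper's own proof'' to compare against, and your proposal stands on its own.

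Your argument is correct. The key points---uniqueness of the continuous lift of an isotopy through the covering $f:P\to R$, compatibility of the canonical lift with composition (your fact (a)) and with conjugation (your fact (b))---are exactly what is needed to transport the classical identity $\frac{d}{dt}\big|_0(\psi^X_t)_\ast Y=-[X,Y]$ upstairs. The sign bookkeeping checks out: with $S_\phi\varphi=\varphi\circ\widetilde\phi_\ast=(\phi^{-1})_\ast\varphi$ one has $S_\phi^{-1}=S_{\phi^{-1}}$, hence $(A^X_t)^{-1}=A^X_{-t}$, and the two differentiations give $-\mathcal{L}_{[X,Y]}=-[\mathcal{L}_X,\mathcal{L}_Y]$ as you wrote. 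One small remark: you announce that you ``bypass the explicit formula (\ref{liebla}) entirely'' but then invoke it to justify that $Z\mapsto\mathcal{L}_Z\varphi$ is linear and depends continuously on $Z$; this is harmless, but you could equally well argue joint smoothness of $(s,t)\mapsto A^{(\psi^X_t)_\ast Y}_s\varphi$ directly from smoothness of the flows and of the lift, avoiding (\ref{liebla}) altogether and keeping the argument connection-free as promised.

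Your alternative computational route via $\mathcal{L}_X=\nabla_X+\beta(X)\cdot$ and the spinor curvature is presumably closer in spirit to what is done in \cite{haberklein}, where (\ref{liebla}) is derived; but since this paper gives no details, that remains a guess.
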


\subsection{Frobenius structures and spectral covers}

Let $(M,\omega)$ be a symplectic (smooth) manifold of dimension $2n$ so that $c_1(M)= 0\ {\rm mod}\ 2$, $J$ is a compatible almost complex structure, $\nabla$ a symplectic connection and $\mathcal{Q}$ the symplectic spinor bundle wrt a choice of metaplectic structure $P$. Denote by $\textbf{sCl}_{\C}(TM,\omega)=\bigcup_{x\in M}\textbf{sCl}_{\C}(TM_x,\omega_x)$ the (infinite dimensional) vector bundle of (complexified) symplectic Clifford algebras, acting as fibrewise bundle endomorphisms on $\mathcal{Q}$. In the following, we will denote by $\mathcal{L}\subset \mathcal{Q}$ also (finite) sums and tensor products of arbitrary subbundles $\mathcal{L}\subset \mathcal{Q}$, with the action of $\textbf{sCl}_{\C}(TM,\omega)$ resp. a given spinor connection $\nabla$ extended in the usual way (where with 'usual' we mean unless indicated otherwise that if $a \in \textbf{sCl}_{\C}(TM,\omega)$ acts as endomorphism on $\mathcal{L}_{1,2}\subset \mathcal{Q}$, then it acts as $a\otimes {\bf 1}+ {\bf 1}\otimes a$ on $\mathcal{L}_1\otimes \mathcal{L}_2$ and $\nabla$ extends as ungraded derivation). Note that we understand the term 'subbundle' here in a general sense: a smoothly varying family of (finite or infinite-dimensional) subspaces $\mathcal{L}_x\subset \mathcal{Q}_x,\ x \in M$ whose 'dimension' (if finite) is locally constant on $M$ (compare the remarks above Theorem \ref{genclassN}). Let now $\mathcal{A}\subset \textbf{sCl}_{\C}(TM,\omega)$ be a subbundle of $\textbf{sCl}_{\C}(TM,\omega)$ so that its fibres $A_x$ for any $x \in M$ are commutative associative (not necessarily free) subalgebras with unity over $\R$ or $\C$ of $\textbf{sCl}_{\C}(TM_x,\omega_x)$ and so that there is an $\R$- (or $\C$)-linear injection $i:T_xM \hookrightarrow  A_x$ for any $x \in M$. Let $\mathcal{L}\subset \mathcal{Q}$ be a (finite or infinite dimensional) subbundle of $\mathcal{Q}$ so that $(\mathcal{A}_x,\mathcal{L})$ is for any $x\in M$ a representation of $\mathcal{A}_x$ as an algebra over $\C$. We denote by
\[
\star:TM \rightarrow End(\mathcal{L}),\ (X, \varphi) \mapsto X\star \varphi, 
\]
the restriction of the linear action of $\mathcal{A}$ on $\mathcal{L}$ to $TM$. Unless indicated otherwise, we assume that for any section $\varphi \in \Gamma(\mathcal{L})$, we have $\nabla \varphi\in \Gamma(\mathcal{L})$ if $\nabla$ is the spinor connection induced by $\nabla$ and $\nabla$ and $\mathcal{L}$ are defined throughout $M$ except possibly (where indicated) on the zero-dimensional part of a set of the form $\mathcal{S}=\mathcal{C}\cup \mathcal{D}$ with $\mathcal{C}=\bigcup_{i=1}^k\mathcal{C}_i$ and $\mathcal{D}=\bigcup_{i=1}^k\mathcal{D}_i$ where the $\mathcal{C}_i$ are isolated zero-dimensional subsets of $M$ and $\mathcal{D}_i\subset M$ is an $(n-1)$-dimensional closed submanifold.
\begin{Def}\label{frobenius}
We will say that the $5$-tuple $(\mathcal{L}, \mathcal{A}, \nabla, <\cdot,\cdot>, \mathcal{E})$, where $<\cdot ,\cdot>$ is the spinor scalar product on $\mathcal{L}$, and $\mathcal{E}\in \Gamma(M,T_{\C}M\otimes \mathcal{L}^*)$ is a {\it Frobenius structure} with singularity data $\mathcal{S}$ if (in addition to the above) the following relations are satisfied.
\begin{enumerate}
\item Write $\star:TM \rightarrow End(\mathcal{L})$ as the $1$-form $\Omega \in \Omega^1(M,End(\mathcal{L}))$. Then $\Omega\wedge\Omega=0$. If $\Omega=A_1+iA_2$, where $A_1, A_2 \in \Omega^1(M,End(\mathcal{L}))$ take values in the (formally) self-adjoint operators w.r.t. to $<\cdot, \cdot>$ and $A_i\wedge A_j=0,  \ i,j=1,2$, we say the structure is semi-simple.
\item $d_{\nabla}\Omega=0$, that is for any $X,Y \in \Gamma(TM)$ and $\varphi \in \Gamma(\mathcal{L})$ we have
\[
\nabla_{X}(Y\star \varphi) \ = \ (\nabla_{X}Y)\star \varphi+ Y \star \nabla_{X}\varphi.
\]
\item $\nabla(\nabla \mathcal{E})=0$.
\item $\mathcal{E}$ is an $\mathcal{L}^*$-valued locally Hamiltonian vector field on $M$, that is there exists a closed one form $\alpha \in \Omega^1(M, \mathbb{C}\otimes \mathcal{L}^*)$ so that
\begin{equation}\label{hamilton}
i_{\mathcal{E}}\omega=\alpha.
\end{equation}
If for any (bundle of) irreducible subrepresentations $(\mathcal{A},\mathcal{L}_i)$ we have $\Omega|_{\mathcal{L}_i}= f_i^*(\frac{dz}{z}) \in \Omega^1(M\setminus \mathcal{D}_i, \C)$ on $M$ outside $\mathcal{D}_i=\{x \in M:\ f_i=0\}$ and locally on $M\setminus \mathcal{S}_i$ (where $\mathcal{S}_i=\mathcal{C}_i \cup \mathcal{D}_i$ and $\mathcal{C}_i=\{x \in M:\Omega|_{\mathcal{L}_i}=0\}$) we have $(\alpha|_{\mathcal{L}_i})({\bf 1}) =f_i^*({\rm log(z)}\frac{dz}{z}|dz|^{-2})$ for some globally defined smooth function $f_i:M\rightarrow \mathbb{C}$ and a choice of branch of ${\rm log}$ and a global section ${\bf 1}:M\rightarrow \mathcal{L}_i$ we say that the Frobenius structure is {\it rigid}. Note that then, $\Omega|_{\mathcal{L}_i}=f_i^*(\frac{dz}{z}) \in H^1(M\setminus \mathcal{D}_i, \mathbb{Z})$.
\item For any subbundle of irreducible (hence one-dimensional) subrepresentations $(\mathcal{A},\mathcal{L}_i)_x, \ x \in M$ of $(\mathcal{A},\mathcal{L})_x$ there exist functions $b_i, d_i,  \in C^0(M,\mathbb{C})$ so that for $\varphi, \psi \in \Gamma(\mathcal{L}_i)$, $Y \in \Gamma(TM)$ and $\mathcal{E}_i:=\mathcal{E}|_{\mathcal{L}_i}(1)$ we have
\begin{equation}\label{charge}
\begin{split}
\mathcal{E}_i.<\varphi, \psi>- <\mathcal{L}_{\mathcal{E}_i}\varphi, \psi> - <\varphi, \mathcal{L}_{\mathcal{E}_i}\psi>&=d_i<\varphi, \psi> \\
\mathcal{L}_{\mathcal{E}_i}(Y\star \varphi)-\nabla_{\mathcal{E}_i}Y\star \varphi- Y\star \mathcal{L}_{\mathcal{E}_i}\varphi=b_i\varphi
\end{split}
\end{equation}
and if $\nabla$ is torsion free the latter equation reads
\[
\mathcal{L}_{\mathcal{E}_i}(Y\star \varphi)-[\mathcal{E}_i,Y]\star \varphi- Y\star \mathcal{L}_{\mathcal{E}_i}\varphi=\nabla_{Y}\mathcal{E}_i\star \varphi+b_i\varphi.
\]
\end{enumerate}
Note that the $b_i,d_i$ are in general not required to be constant. If they are, the Frobenius structure will be called {\it flat}. If $(\nabla, \mathcal{L})$ do only exist on $M\setminus \bigcup_{i=1}^k \mathcal{C}_i$, we will say the Frobenius structure is singular at $\mathcal{C}=\bigcup_{i=1}^k \mathcal{C}_i$, if $(\nabla, \mathcal{L})$ are defined on $M$ we will say the Frobenius structure is regular. If in the singular situation, the functions $f_i$ defined in 4. are still globally well-defined, we say the Frobenius structure is rigid singular. If $\mathcal{E}$ does not exist globally on $M\setminus \mathcal{C}$, but there exists an open covering $\mathcal{U}$ of $M\setminus \mathcal{C}$ so that $\mathcal{E}_U\in \Gamma(M,T_{\C}M\otimes \mathcal{L}^*)$ satisfies (3.), (5.) and (\ref{hamilton}) on each $U \in \mathcal{U}$ with $\alpha_U \in \Omega^1(U, \mathbb{C}\otimes \mathcal{L}^*)$ we say the Frobenius structure is weak. If furthermore in this case the $\alpha_U$  can be chosen so that on $M\setminus \mathcal{S}_i$ with $\mathcal{S}=\bigcup_{i=1}^k \mathcal{C}_i\cup \mathcal{S}_i$ we have $(\alpha_U|_{\mathcal{L}_i})(1) =f_i^*({\rm log(z)}\frac{dz}{z}|dz|^{-2})$ for globally defined functions $f_i:M\rightarrow \mathbb{C}$ satisfying $\Omega|\mathcal{L}_i= f_i^*(\frac{dz}{z}) \in \Omega^1(M\setminus \mathcal{D}_i, \C)$ and there exist coverings $p_i:\tilde M_i\rightarrow M\setminus \mathcal{D}_i$ so that the $p_i^*(\alpha_U|_{\mathcal{L}_i})(1)$ and the corresponding Euler vector fields $(\mathcal{E}_i)_U=p_i^*(\alpha_U|_{\mathcal{L}_i})(1)^{\omega_\perp}$ assemble to globally defined objects on $\tilde M_i\setminus p_i^{-1}(\mathcal{C}_i)$ so that if $\xi_i=f_i^*(\frac{dz}{z}) \in H^1(M\setminus \mathcal{D}_i, \mathbb{Z})$ we have $p_i^*(\xi_i)$ is exact on $\tilde M_i$, then we will say the Frobenius structure is {\it weakly rigid}. We finally consider the following notion (which will not be central in this article, but occurs in important examples):
\begin{enumerate}
\setcounter{enumi}{5}
\item We call two rigid (regular, all notions are defined in analogy for the singular case outside $\mathcal{C}$) Frobenius structures $(\mathcal{A}_{1,2},\mathcal{L}_{1,2})$ with respective Euler vector fields $\mathcal{E}_{1,2}\in \Gamma(M,T_{\C}M\otimes \mathcal{L}_{1,2}^*)$ and $\alpha_{1,2} \in \Omega^1(M, \mathbb{C}\otimes \mathcal{L}_{1,2}^*)$ satisfying (\ref{hamilton}) {\rm dual}, if there are smooth functions $\Theta_{1,2}:\mathcal{L}_{1,2}\rightarrow \C$ so that if $D_{1,2}$ denote the zero divisors of $\Theta_{1,2}$ and $\frac{dz}{z} \in \Omega^1(\C^*)$ the logarithmic $1$-form, there is for any (subbundle of) irreducible subrepresentations $(\mathcal{A},\mathcal{L}^i_{1})$ a corresponding irreducible $(\mathcal{A},\mathcal{L}^i_{2})$ so that over $M\setminus D_2$ resp. $M\setminus D_1$ we have
\begin{equation}\label{duality}
\Omega_1|\mathcal{L}^i_{1}= ({\bf 1})^*(\Theta_{2}|\mathcal{L}^i_{2})^*(\frac{dz}{z}), \quad \Omega_2|\mathcal{L}^i_{2}= ({\bf 1})^*(\Theta_{1}|\mathcal{L}^i_{1})^*(\frac{dz}{z}), \quad 
\end{equation}
where ${\bf 1}:M\rightarrow \mathcal{L}^i_{1,2}$ are as in (4.) and we identify $\Omega^1(M, {\rm End}(\mathcal{L}^i_{1,2}))\simeq \Omega^1(M, \C )$. Furthermore, locally over any open $U\subset M$ we have $\alpha_{1,2}({\bf 1}) =(\Theta_{2,1}\circ ({\bf 1})^*({\rm log(z)}\frac{dz}{z}|dz|^{-2}) \in \Omega^1(U, \C)$ for some choice of branch of logarithm. If there exist global sections $\vartheta_{1,2} \in \Gamma(\mathcal{L}_{1,2})$ and a $\delta \in \Gamma(\mathcal{Q}')$ so that
\begin{equation}\label{duality2}
({\bf 1})^*\Theta_{1,2}=<\vartheta_{1,2}, \delta> \in C^\infty(M,\C),
\end{equation}
the dual rigid Frobenius structures will be called a {\it dual pair}. If only one of the equations in (\ref{duality}) resp. (\ref{duality2}) is satisfied we will call $(\mathcal{A}_{1,2},\mathcal{L}_{1,2})$ weakly dual resp. a weakly dual pair. If furthermore one can chose for a rigid Frobenius structure $(\mathcal{A},\mathcal{L})$ a $\Theta:\mathcal{L}\rightarrow \C$ and $\vartheta \in \Gamma(\mathcal{L})$ satisfying the above, then $(\mathcal{A},\mathcal{L})$ is called {\it self-dual}.
\end{enumerate}
\end{Def}
{\it Remark.} The condition $d_{\nabla}\Omega=0$ in (2.) above means equivalently that if $c_i(X, \phi, \psi)=<A_i(X)\phi, \psi>, i=1,2, \phi, \psi \in E,\ X\in TM$ s.t. $c_i \in T^*M\otimes {\rm Sym}^2(E)$, then
\[
d_\nabla c_i=0, \quad i=1,2,
\]
and this is equivalent to $\nabla c_i \in {\rm Sym}^2(T^*M)\otimes {\rm Sym}^2(E)$, thus is symmetric. To see the latter note that $d_\nabla c_i$ is the antisymmetrization of $\nabla c_i\in (T^*M)^{2\otimes}\otimes {\rm Sym}^2(E)$ in the first two entries. On the other hand we have an isomorphism $\Phi:{\rm Sym}(E)\rightarrow {\rm Sym}^2(E), \ A \mapsto <A(\cdot),\cdot>$ (here ${\rm Sym}(E)$ denote the symmetric endomorphisms of $E$ wrt $<\cdot, \cdot>$) and since $\Phi(A_i)=c_i$, it follows $\Phi\circ d_\nabla A_i= d_\nabla c_i$, hence the first equivalence. Note that $d_{\nabla}\Omega=0$ is implied by the second condition in (2.).\\
Note that in this article, we will only deal with finite dimensional (generalized, in the above sense) smooth subbundles $\mathcal{L}\subset \mathcal{Q}$, furthermore in all our examples $(\mathcal{A},\mathcal{L})$ will be, if not irreducible, then decomposable into the sum of irreducible one-dimensional representations of $\mathcal{A}$ and $\mathcal{A}$ will be semi-simple and the above definition is tentative in these that it is modeled on these examples. We will henceforth assume that $\mathcal{L}\subset \mathcal{Q}$ is a finite dimensional subbundle. Since $\mathcal{A}$ is commutative, $\star:TM \rightarrow End(\mathcal{L})$ gives over any point $x \in M$ a (in general non-faithful) representation of ${\rm Sym}^*(TM)$ on $\mathcal{L}_x$, that is a morphism
\[
\star:{\rm Sym}^*(TM)\rightarrow End(\mathcal{L}), (y_1\odot\dots\odot y_k)_x(\varphi)=(y_1\star\dots\star y_k)_x(\varphi),
\]
where $y_1, \dots y_k\in T_xM$, $\varphi \in \mathcal{L}_x$ and $\odot$ denotes symmetric product. Any smooth local section $U\subset M \rightarrow {\rm Sym}^*(TM)$ can be viewed as a smooth function on $T^*M$ over $U\subset M$ (being polynomial in the fibres) by setting
\[
(y_1\odot\dots\odot y_k)_x(\mu)=(\mu(y_1)\cdot\dots\cdot \mu(y_k)), \ \mu \in T_x^*M,\ x\in U,
\]
we will call the sheaf over $M$ of such functions by $p_*\mathcal{O}_{T^*M}$, where $p:T^*M\rightarrow M$ is the canonical projection. Thus $\mathcal{L}$ gets the structure of an $\mathcal{O}_{T^*M}$-module and we arrive at 
\begin{Def}[\cite{audin}]
We define the spectral cover $L$ of a (finitely generated) Frobenius structure $(\mathcal{A},\mathcal{L})$ as the support of $\mathcal{L}$ as an $\mathcal{O}_{T^*M}$-module, that is the set of prime ideals $\mathfrak{p}$ in $\mathcal{O}_{T^*M}$ such that there exists no element $s$ in the multiplicative subset $\mathcal{O}_{T^*M}\setminus \mathfrak{p}$ so that $s\cdot \mathcal{L}=0$.
\end{Def}
It then follows that the prime ideals in ${\rm Supp}(\mathcal{L})_x$ correspond to the irreducible factors in the minimal polynomial of $\Omega_x(\cdot) \in \Omega^1(M, End(\mathcal{L}))$ associated to the common generalized eigenspaces of the endomorphisms $\Omega_x(X_i)$, when $X_i$ are a basis of $T_xM$. In especially, if there is at least one local vectorfield $X\in TU, U\subset M$ so that the minimal and characteristic polynomials coincide, we have that $L$ is given over $U$ by the vanishing locus of the map
\[
P_X: \Gamma(T^*U) \rightarrow End(\mathcal{L}), \quad P(\alpha)={\rm det}(\Omega(X)-\alpha(X)Id_{\mathcal{L}}),
\]
for all local vectorfields $X \in \Gamma(TU)$. Thus in this case, we have
\[
L\simeq {\rm Spec}\left(\frac{\mathcal{O}_{T^*M}}{I_\Omega}\right)
\]
where $I_\Omega$ is the ideal in $\mathcal{O}_{T^*M}$ generated by the characteristic polynomial of $\Omega$, acting on $\mathcal{L}$. If $(\mathcal{A},\mathcal{L})$ is {\it semi-simple}, then $\Omega$ is diagonalizable and the bundle $\mathcal{L}\rightarrow M$ splits as as sum
\begin{equation}\label{splitting}
\mathcal{L}=\bigoplus_{i=1}^k\mathcal{L}_i
\end{equation}
of eigenline bundles of the operators $\Omega$. If moreover all eigenvalues are distinct, we say $(\mathcal{A},\mathcal{L})$ is regular semi-simple. Then for $i=1,\dots, k$ there exists locally a one-form $\alpha_i \in \Omega^1(U, \mathbb{C})$ realizing the zero locus of $P_X$ corresponding to $\mathcal{L}_i$ for all $X \in TU$. The next observation is well-known in the theory of Frobenius manifolds (cf. \cite{audin}).
\begin{prop}\label{lagrangian}
If $(\mathcal{A},\mathcal{L})$ is regular semi-simple and $\nabla$ is torsion-free, then the $\alpha_i$ are closed for $i=1,\dots, k$.
\end{prop}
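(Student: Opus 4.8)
\textit{Proof proposal.} The plan is to argue locally on an open set $U\subset M$ over which the eigenline decomposition $\mathcal{L}=\bigoplus_{i=1}^{k}\mathcal{L}_i$ of \eqref{splitting} is trivialized by non-vanishing sections $e_i\in\Gamma(U,\mathcal{L}_i)$, so that $X\star e_i=\alpha_i(X)\,e_i$ for all $X\in\Gamma(TU)$, and to extract $d\alpha_i=0$ from condition (2) of Definition \ref{frobenius} (the derivation property $\nabla_X(Y\star\varphi)=(\nabla_XY)\star\varphi+Y\star\nabla_X\varphi$) together with torsion-freeness of $\nabla$. Since closedness is a local statement this suffices.

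First I would show that $\nabla$ respects the decomposition, i.e.\ $\nabla_X e_i\in\Gamma(\mathcal{L}_i)$. Write $\nabla_X e_i=\sum_j c_{ij}(X)\,e_j$ for $1$-forms $c_{ij}\in\Omega^1(U,\mathbb{C})$ and insert $\varphi=e_i$ and an arbitrary $Y$ into the derivation identity. Expanding, the left-hand side is $\bigl(X\alpha_i(Y)\bigr)e_i+\alpha_i(Y)\sum_j c_{ij}(X)e_j$ and the right-hand side is $\alpha_i(\nabla_XY)\,e_i+\sum_j c_{ij}(X)\alpha_j(Y)\,e_j$. Comparing the $e_j$-component for $j\neq i$ gives $c_{ij}(X)\bigl(\alpha_i(Y)-\alpha_j(Y)\bigr)=0$ for all $X,Y$; because the structure is \emph{regular} semi-simple the forms $\alpha_i$ are pairwise distinct, so for $j\neq i$ a nonzero value $c_{ij}(X)$ would force $\alpha_i=\alpha_j$, a contradiction. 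Hence $c_{ij}\equiv 0$ for $j\neq i$ and $\nabla$ is block-diagonal in the decomposition.

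With this in hand, the $e_i$-component of the same identity reduces (the $c_{ii}(X)\alpha_i(Y)$ terms cancelling) to the scalar relation $X\bigl(\alpha_i(Y)\bigr)=\alpha_i(\nabla_XY)$ for all $X,Y\in\Gamma(TU)$. Then a direct computation gives
\[
d\alpha_i(X,Y)=X\bigl(\alpha_i(Y)\bigr)-Y\bigl(\alpha_i(X)\bigr)-\alpha_i([X,Y])
=\alpha_i\bigl(\nabla_XY-\nabla_YX-[X,Y]\bigr)=\alpha_i\bigl(T_\nabla(X,Y)\bigr),
\]
where $T_\nabla$ is the torsion of $\nabla$. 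Since $\nabla$ is torsion-free the right-hand side vanishes, so $d\alpha_i=0$ on $U$; as $U$ was arbitrary the $\alpha_i$ are closed, which is the assertion (equivalently, the spectral cover $L$ is Lagrangian in $T^*M$).

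The main obstacle I anticipate is the genericity input in the first step: one must pin down exactly what ``all eigenvalues are distinct'' means, namely that $\alpha_i-\alpha_j$ is a nowhere-vanishing (or at least not identically vanishing) $1$-form for $i\neq j$, so that $c_{ij}(X)\neq 0$ contradicts distinctness; this is precisely the content of \emph{regular} semi-simplicity, and it is here that condition (1), $\Omega\wedge\Omega=0$, is tacitly used to guarantee that the eigenvalue forms $\alpha_i$ are simultaneously well defined for all $X$ and that the eigenline bundles $\mathcal{L}_i$ are smooth. Apart from this, everything is the standard bookkeeping identifying the flatness $d_\nabla\Omega=0$ plus torsion-freeness with the Lagrangian property of $L$, and I would only need to check that the local trivializing sections $e_i$ can be chosen smoothly — which follows from the locally constant rank assumption on $\mathcal{L}$ and the semisimplicity of $\Omega$.
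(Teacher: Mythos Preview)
Your proposal is correct and follows essentially the same approach as the paper: both arguments expand $\nabla_X e_i$ in the eigenline basis, use condition (2) of Definition \ref{frobenius} together with regular semi-simplicity to force the off-diagonal connection forms to vanish, and then extract $d\alpha_i=0$ from torsion-freeness. The only cosmetic difference is that the paper fixes a point $x$ and extends $Y$ so that $(\nabla Y)_x=0$ before comparing coefficients, whereas you keep $Y$ arbitrary, derive $X(\alpha_i(Y))=\alpha_i(\nabla_X Y)$, and plug directly into the torsion formula; the content is identical.
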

\begin{proof}
Let $\varphi_i \in \Gamma_U(\mathcal{L}_i)$ span $\mathcal{L}_i$ over some open nghbd $U$ of $x \in M$, respectively. By (2.) of Definition \ref{frobenius}, we have for any $Y \in T_xM$ that we extend to a vector field $Y\in \Gamma(TU)$ on a nghbd of $x$ that so that $(\nabla Y)_x=0$, that is
\[
\nabla_{X}(Y\star \varphi_i)  = Y \star \nabla_{X}\varphi_i, 
\]
at $x \in U$ for $X \in T_xM$. Writing $Y\star \varphi=\alpha_i(Y)\varphi_i$ for $\alpha_i \in \Omega^1(U, \mathbb{C})$ and since $\nabla$ is torsion-free i.e. determines $d$, it suffices to show that $\nabla\alpha_i=0$, thus $d(\alpha_i(Y))=0$ for any set of sections $Y \in \Gamma(TU)$ parallel at $x$ as above so that the set of $Y|T_xM=Y_x$ spans $T_xM$. Writing
\[
\nabla_{X}\varphi_i=\sum_{j=1}^k a_{ij}(X)\varphi_j
\]
for some one forms $a_{ij}(\cdot)$, we infer from the previous equation
\[
\nabla_{X}(\alpha_i(Y) \varphi_i) = \sum_{j=1}^k a_{ij}(X)\alpha_j(Y)\varphi_j
\]
and thus
\[
d(\alpha_i(Y))(X)\varphi_i = \sum_{j=1}^k a_{ij}(X)(\alpha_j-\alpha_i)(Y)\varphi_j.
\]
The $(\alpha_i)(Y)$ being distinct and the set $\varphi_i$ being linearly independent we infer comparing coefficients that $a_{ij}=0$ for $i\neq j$ and hence $d(\alpha_i(Y))=0$.
\end{proof}
{\it Remark.} Note that if $\nabla$ is not torsion-free, we can only deduce $\nabla \alpha_i=0$ in the above situation. However in the case of  standard semisimple (irreducible) Frobenius structures $(\mathcal{A},\mathcal{L}_i)$ considered here (cf. Theorem \ref{genclass}) the closedness of the $\alpha_i$ will follow already from the requirement that $\nabla$ preserves $\mathcal{L}_i$, so the (non-ramified) spectral covers will be always Lagrangian in this case.\\
Given a vectorfield $\mathcal{E} \in \Gamma(M, T_{\C}M\otimes \mathcal{L}^*)$ satisfying (3.) and (4.) of Definition \ref{frobenius}, the 'scaling of structure' property (5.) will actually follow (cf. Theorem \ref{lieabl}) if we demand that if for $\varphi \in \Gamma(\mathcal{L}_i)$ and any $i$, where $\mathcal{L}_i$ is a given irreducible representation of $\mathcal{A}$, $L_*(\mathfrak{sp}(2n,\mathbb{R}))$ leaves $\mathbb{C}\cdot \varphi$ invariant. This is of course a rather strong assumption. Instead, from the linearity of $\mathcal{E}_i=\mathcal{E}|_{\mathcal{L}_i}(1)$, that is the property $\nabla(\nabla \mathcal{E}_i)=0$ it follows that for any $i\in \{1,\dots,k\}$ the term $\mathcal{L}_{\mathcal{E}_i}-\nabla_{\mathcal{E}_i}$ in Theorem \ref{lieabl} is in local 'normal Darboux coordinates' in some sense (see below) the Schroedinger-equation associated to the locally Hamiltonian vectorfield $\mathcal{E}_i$. In our specific Frobenius structures, the $\varphi \in \Gamma(\mathcal{L}_i)$ thus will always satisfy a (time-dependent) Schroedinger-equation associated to the normal-order quantization of the (locally linear, complex) Hamiltonian vectorfield $\mathcal{E}_i$. Since the appearance of such a Schroedinger-equation is of some importance here (cf. Lax \cite{lax}), we will recall the result from \cite{haberklein} here for our present setting.
\begin{prop}\label{ham1}
There is, for any $i=1,\dots,k$, a symplectic coordinate system $\Phi:U\rightarrow \R^{2n}$ in a neighbourhood of any $x \in U\subset M$ (unique up to choice of symplectic basis in $x \in M$), so that $(\mathcal{E}_i)_x$ is the Hamiltonian vectorfield to a quadratic Hamiltonian function $H_i:\R^{2n}\rightarrow {\C}$. Then, $(\mathcal{H}_i)_x:=(\mathcal{L}_{\mathcal{E}_i}-\nabla_{\mathcal{E}_i})_x:\mathcal{S}(\mathbb{R}^{n})\rightarrow \mathcal{S}(\mathbb{R}^{n})$ is the (Fourier-transform-conjugated) normal-ordering quantized Hamilton operator associated to $H_i$. $\mathcal{H}_i$ is in general non-selfadjoint.
\end{prop}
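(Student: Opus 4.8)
The plan is to use the only two properties of the Euler field available to us: its ``linearity'' $\nabla(\nabla\mathcal{E}_i)=0$ from condition (3) of Definition \ref{frobenius}, and its locally Hamiltonian character from condition (4), to put $\mathcal{E}_i$ into a linear normal form, and then to read $\mathcal{H}_i=\mathcal{L}_{\mathcal{E}_i}-\nabla_{\mathcal{E}_i}$ off the Lie--derivative formula of Theorem \ref{lieabl}, recognizing the outcome via Lemma \ref{liesymp} and Proposition \ref{diffmp} as the metaplectic quantization of a quadratic Hamiltonian.

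\emph{Step 1: a linear normal form for $\mathcal{E}_i$.} Condition (3) says the second covariant derivative of the $T_\C M$-valued field $\mathcal{E}_i=\mathcal{E}|_{\mathcal{L}_i}(1)$ vanishes; passing to a Darboux chart $\Phi:U\to\R^{2n}$ that is $\nabla$-affine (such a chart exists when $\nabla$ is flat, which covers the examples of \cite{kleinham}; in general it suffices that only the $1$-jet of the data at $x$ enters what follows) one gets that $\mathcal{E}_i$ is an affine vector field $z\mapsto Az+v$ with $A\in\mathfrak{gl}(2n,\C)$. By condition (4) $i_{\mathcal{E}_i}\omega$ is closed, hence locally exact, so $\mathcal{E}_i=X_{H_i}$ for a local potential $H_i$; an affine Hamiltonian vector field has a potential of degree $\le 2$ and forces $A=(\nabla\mathcal{E}_i)_x\in\mathfrak{sp}(2n,\C)$ (for torsion-free $\nabla$; with torsion the discrepancy lands in the scalar term of (\ref{liebla2}) and does not affect the symbol). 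Dropping the constant and absorbing the linear part of $H_i$ by a translation of $\Phi$ we may take $H_i(z)=\tfrac12\omega_0(Az,z)$, a complex quadratic form. The only remaining freedom is the choice of symplectic basis of $T_xM$, which gives the claimed $Sp(2n,\R)$-ambiguity.

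\emph{Step 2: $\mathcal{H}_i$ as a quantization.} Apply Theorem \ref{lieabl} (together with the $\tfrac{i}{4}$-summand of (\ref{liebla2}) when $\nabla$ has torsion), extended $\C$-bilinearly to the complex field $\mathcal{E}_i$: for $\varphi\in\Gamma(\mathcal{L}_i)$ one gets, evaluated at $x$ in a symplectic frame $e_1,\dots,f_n$ of $T_xM$,
\[
(\mathcal{H}_i)_x\varphi=\frac{i}{2}\sum_{j=1}^{n}\bigl(\nabla_{e_j}\mathcal{E}_i\cdot f_j-\nabla_{f_j}\mathcal{E}_i\cdot e_j\bigr)\cdot\varphi,
\]
with $\cdot$ symplectic Clifford multiplication. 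Extending $e_j,f_j$ $\nabla$-parallel at $x$, one has $\nabla_{e_j}\mathcal{E}_i|_x=Ae_j$, $\nabla_{f_j}\mathcal{E}_i|_x=Af_j$; since $A$ is (complex) symplectic, the bracket $q_A:=\sum_j(Ae_j\cdot f_j-Af_j\cdot e_j)$ has vanishing scalar part and is a symmetric degree-two element of $\textbf{sCl}_\C(\R^{2n})$, i.e.\ lies in the complexification of $\mathfrak a$, and under the isomorphism $\mathfrak a\simeq\mathfrak{sp}(2n,\R)$ of Lemma \ref{liesymp} it is a fixed multiple of the element corresponding to $A$, hence to $H_i$ under the correspondence between $\mathfrak{sp}(2n,\C)$ and complex quadratic forms. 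By Proposition \ref{diffmp} one has $L_{\ast}=-i\,\sigma$ on $\mathfrak a$, so $(\mathcal{H}_i)_x$ is, up to a universal scalar, $L_{\ast}(q_A)$: the metaplectic-Lie-algebra quantization of the quadratic $H_i$. Since in our conventions $a_j\mapsto ix_j$ acts by multiplication and $b_j\mapsto\partial/\partial x_j$ by differentiation, the $a$-factors of $q_A$ produce multiplication operators standing to the left of the derivative factors, i.e.\ (after the Fourier transform implicit in identifying the $L^2$-models for the Lagrangian splitting of $\R^{2n}$ into the spans of the $a_j$ and the $b_j$) the holomorphic/creation directions; thus $(\mathcal{H}_i)_x$ is the Fourier-transform-conjugate of the normal-ordered quantization of $H_i$. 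Since $\mathcal{E}_i$ is $T_\C M$-valued, $H_i$ is $\C$-valued and this operator is in general not symmetric, which is the final clause.

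\emph{Main obstacle.} The delicate point is Step 1: one must make precise the ``normal Darboux coordinates'' of the preamble, i.e.\ reconcile the Darboux condition with enough of a $\nabla$-normalization to turn $\nabla(\nabla\mathcal{E}_i)=0$ into genuine linearity of $\mathcal{E}_i$ near $x$ (or argue, as above, that only the $1$-jet of $\mathcal{E}_i$ and of $\nabla\mathcal{E}_i$ at $x$ enters $(\mathcal{H}_i)_x$), and one must check that Theorem \ref{lieabl}, established in \cite{haberklein} for real locally Hamiltonian fields, survives the $\C$-bilinear extension to the complex Euler field. The remaining bookkeeping — the exact universal scalar, the torsion correction, and normal- versus Weyl-ordering modulo the Fourier conjugation — is routine once conventions are fixed, and is quickest to verify against the explicit formulas in Proposition \ref{diffmp}.
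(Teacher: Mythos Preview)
Your argument is essentially correct and arrives at the same conclusion, but it is organised differently from the paper's own proof, and the difference is worth noting.

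The paper does \emph{not} apply the Lie--derivative formula of Theorem \ref{lieabl} and then identify the resulting Clifford element algebraically, as you do in Step~2. Instead it rederives the Lie derivative for this particular linear field from scratch: after linearising $\mathcal{E}_i$ it writes the time-$t$ flow of $\mathcal{E}_i$ on $\R^{2n}$ as $S_t=\exp(tA^{\top})$, lifts $S_t$ to $M_t\in Mp(2n,\R)$, and shows (as in \cite{haberklein}) that the induced action on spinors is $(\phi_t^{-1})_\ast\varphi=[\overline s,\mathcal{F}^{-1}\!\circ L(M_t)\circ\mathcal{F}\,\psi]$. Differentiating at $t=0$ then gives $(\mathcal{H}_i)_x$ directly as $-iL_\ast(\rho_\ast^{-1}(A^{\top}))$ conjugated by the Fourier transform. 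The advantage of the paper's route is that the Fourier-transform conjugation and the appearance of $A^{\top}$ (whence ``normal-ordering'') fall out mechanically from the flow computation, whereas in your Step~2 you have to argue for them separately and a bit informally. Your route, conversely, is more economical: once Theorem \ref{lieabl} is in hand it is only algebra, and your identification of $q_A=\sum_j(Ae_j\cdot f_j-Af_j\cdot e_j)$ as the element of $\mathfrak a_{\C}$ corresponding to $A$ via Lemma \ref{liesymp} is the right mechanism.

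On your ``Main obstacle'': the paper resolves exactly this by invoking Fedosov's \emph{weakly normal Darboux coordinates} \cite{fedosov}, which are Darboux at $x$ with Christoffel symbols vanishing to infinite order there; then $\nabla(\nabla\mathcal{E}_i)=0$ gives $\mathcal{E}_i(z)=Az+O(|z|^{\infty})$ with $A\in\mathfrak{sp}(2n,\C)$, which is all that is needed at the single point $x$. Your parenthetical ``only the $1$-jet enters'' is therefore the right intuition, and Fedosov's construction is the precise statement making it work without assuming $\nabla$ flat.
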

\begin{proof}
Note that fixing a symplectic basis in $T_xM$ and using Fedosov's associated weakly normal Darboux coordinates (\cite{fedosov}) at $x \in U\subset M$, we infer that since $\nabla(\nabla\mathcal{E}_i))=0$ on $U$, that in these coordinates
\[
\mathcal{E}_i=J_0 \nabla H_i\circ  \Phi^{-1}(z)= J_0\nabla <z,Qz> + O(|z|^\infty), \ z \in \R^{2n}, \ Q\in M(2n, \mathbb{C}),
\]
that is, on $U$ we have $\mathcal{E}_i(z)=Az + O(|z|^\infty)$, where $A \in {\rm sp}(2n, \C)$. Then, by linearly extending $L_*$ to the complexification of $\mathfrak{sp}(2n, \R)$, we get by computations analogous to (\cite{haberklein}, Corollary 3.3) that
\[
L_*\circ (\rho_*^{-1}(A^\top))=-i\mathcal{H}_i,
\]
where $\mathcal{H}_i$ is the (Fourier-transform-conjugated) normal-ordering quantization Hamiltonian associated to $H_i$. Setting $S_t=exp(tA^\top)\in Sp(2n,\R)$, lifting $S_t$ to the path $M_t \in Mp(2n,\R)$ with $M_0=Id$ and choosing a local frame $s: U\subset M\rightarrow R$ over $U$ with lift $\overline s: U\subset M\rightarrow P$, we get as in \cite{haberklein} if $\phi_t:\Gamma(\mathcal{Q})\rightarrow \Gamma(\mathcal{Q})$ is the family of automorphisms induced by the flow of $\mathcal{E}_i$ for small $t$ and $\varphi=[\overline s, \psi]$ over $U$:
\[
(\phi_t^{-1})_{*}\varphi= [\overline s, \mathcal{F}^{-1}\circ L(M_t)\circ {\mathcal{F}}\psi]
\]
where ${\mathcal{F}}$ denotes the Fourier-transform. Differentiating at $t=0$ gives the assertion.
\end{proof}
'Fourier-transform-conjugated' thus means, that in contrary to the usual convention, we replace $q_j$ in $H_i$ by $\frac{\partial}{\partial x_j}$ and $p_j$ by the multiplication operator $ix_j$ and extend complex-linearly. We finally give a sufficient condition (at least for the semi-simple case) for a vector field $\mathcal{E}_i$ satisfying (3.) and (4.) in Definition \ref{frobenius} to also satisfy the condition (5.). This condition is satisfied in all our examples and can be essentially stated as $\mathcal{H}_i$ being the Hamiltonian operator to the 'ladder operator' $\Omega$, where $\mathcal{H}_i$ is the 'Hamiltonian' (locally) associated to $\mathcal{E}_i$ as in the previous theorem. The condition is in especially satisfied if $\mathcal{E}_i$ is of the (local) form $\mathcal{E}_i=u_i (du_i)^{\#_\omega}$ where $u_i$ is a local primitive of $\alpha_i$ over $U\subset M$ while $\Omega$ is of the form examined in Section \ref{coherent} and an appropriate $\hat U(n)$-reduction of $P$ (an $\omega$-compatible complex structure) is chosen. Note that for $\beta\in \Omega^1(M, \C)$, we denote $\beta^{\#_\omega} \in \Gamma(TM)$ 
the vectorfield so that 
\[
\omega(\beta^{*}, J\beta^{*})\cdot i_{\beta^{\#_\omega}}\omega=\beta,\ {\rm i.e.}\  \beta(J\circ\beta^{\#_\omega})=1,
\]
where $(\cdot)^*: T^*M\rightarrow TM$ denotes the usual duality given by $\omega$. We will denote the inverse of $(\cdot)^{\#_\omega}:T^*M\rightarrow TM$ with the same symbol. Note that while $(du_i)^{\#_\omega}$ is singular on the critical locus $\mathcal{C}_i$ of $du_i \in \Omega^1(M, \C)$, $\mathcal{E}_i=u_i (du_i)^{\#_\omega}$ is well-defined on any open sets where a local primitive of $\alpha_i$ exists by choosing $u_i$ so that $u_i|\mathcal{C}_i=0$. We denote the Fourier transform on symplectic spinors associated to an $\hat U(n)$-reduction of $P$ by (cf. \cite{kleinf}) by $\mathcal{F}$. Note that in the following, we will set for a $2$-tensor $R: E\otimes E \rightarrow V$, where $E$ is a Hermitian vector space and $V$ is a vector space $Tr(R)=\sum_{i=1}^n R(e_i,e_i)$ for a unitary basis $e_i$. We then have:
\begin{prop}\label{euler}
Let $(\mathcal{A},\mathcal{L})$ be semi-simple. If for any $x \in M$ and a small open set $U\subset M$ containing $x$, the Hamiltonian $(\mathcal{H}_i)_x$ associated to a symplectic frame in $x$, an eigenline bundle ${\mathcal{L}_i}$ over $U$, $\varphi \in \Gamma_U(\mathcal{L}_i)$ and to $\mathcal{E}_i$ satisyfing (3.) and (4.) in Definition \ref{frobenius} satisfies 
\begin{equation}\label{hamiltonian}
(\mathcal{H}_i)_x\varphi=(c_1{\rm Tr}\left(\mathcal{F}^{-1}\circ \Omega_i^{t}\cdot\Omega_i\circ \mathcal{F}\right)+ c_2{\rm Tr}\left(\Omega_i\cdot\Omega_i\right)+c_3)_x\cdot\varphi,\ c_1, c_2, c_3 \in \mathbb{C},\ c_1\neq 0,
\end{equation}
where $\Omega_i=\Omega|\mathcal{L}_i$, $\Omega_i^t$ denotes the adjoint wrt $<\cdot, \cdot>$, then $\mathcal{E}_i$ obeys (5.) in Definition \ref{frobenius}. Assume that $\mathcal{E}_i$ is of the form $\mathcal{E}_i=u_i (du_i)^{\#_\omega}$ where $u_i$ is a local primitive of the eigenform $\alpha_i$ of $\Omega_i$ corresponding to the splitting (\ref{splitting}). Choose an $\omega$-compatible complex structure $J$ on $M$ that satisfies $\nabla J=0$. Then $\mathcal{E}_i$ satisfies (3.) and (4.) in Definition \ref{frobenius}. If  $\Omega_i(X)\varphi=(X-iJX)\cdot \varphi, \varphi \in \Gamma(\mathcal{L}_i)$, then (\ref{hamiltonian}) holds for constants $c_1, c_2$ determined by $\mathcal{E}_i$. Note that if $E_i \in C^\infty(M, \C)$ is determined by ${\rm pr}^\perp_{\mathcal{L}_i}((\mathcal{H}_i)_x\varphi)=E_i(x)\varphi, \ x \in M, \varphi \in \mathcal{L}_i$ where ${\rm pr}^{\perp}_{\mathcal{L}_i}$ denotes pointwise orthogonal projection wrt $<\cdot,\cdot>$ in $\mathcal{Q}_x,\ x \in M$ on $(\mathcal{L}_i)_x$ we have the equality 
\begin{equation}\label{charge2}
d_i = 2{\rm Re}(E_i)=2 {\rm Re}\left(c_1{\rm Tr}(|\alpha_i|^2)+ c_2{\rm Tr}(\alpha_i^2) +c_3\right).
\end{equation}
\end{prop}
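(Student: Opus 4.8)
\emph{Proof proposal.} The plan is to deduce everything from the decomposition of the Lie derivative supplied by Theorem~\ref{lieabl} together with the quantization statement of Proposition~\ref{ham1}. Writing $\mathcal{E}_i=\mathcal{E}|_{\mathcal{L}_i}(1)$ and passing, after fixing a symplectic frame at $x$, to Fedosov's weakly normal Darboux coordinates (\cite{fedosov}) as in the proof of Proposition~\ref{ham1}, one has $\mathcal{L}_{\mathcal{E}_i}=\nabla_{\mathcal{E}_i}+\mathcal{H}_i$ with $\mathcal{H}_i=(\mathcal{L}_{\mathcal{E}_i}-\nabla_{\mathcal{E}_i})_x$ the Fourier-transform-conjugated, normal-ordered quantization of the quadratic Hamiltonian $H_i$ of $\mathcal{E}_i$; this uses (\ref{liebla}) (resp.\ (\ref{liebla2}) in the non-torsion-free case) to recognize $\mathcal{L}_{\mathcal{E}_i}-\nabla_{\mathcal{E}_i}$ as a purely symplectic-Clifford term. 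The statement then breaks into four independent claims: (a) $\mathcal{E}_i=u_i(du_i)^{\#_\omega}$ forces (3.) and (4.) of Definition~\ref{frobenius}; (b) the spectral identity (\ref{hamiltonian}) forces (5.); (c) $\Omega_i(X)\varphi=(X-iJX)\cdot\varphi$ forces (\ref{hamiltonian}) with the asserted constants; (d) the charge formula (\ref{charge2}).

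For (a), since $\nabla J=0$ the eigenform $\alpha_i$ is closed (Proposition~\ref{lagrangian} and the remark after it), so it admits a local primitive $u_i$ with $du_i=\alpha_i$; by the defining property of $(\cdot)^{\#_\omega}$ the form $i_{\mathcal{E}_i}\omega$ is proportional to $u_i\,du_i=\tfrac12 d(u_i^2)$, hence locally exact, giving (4.) with $\alpha=\tfrac12 d(u_i^2)$ and $H_i=\tfrac12 u_i^2$. For (3.) one uses that, for the standard semisimple structures of Section~\ref{coherent}, the eigenform $\alpha_i$ is covariantly constant in the normal coordinates at $x$, so $u_i$ is affine there and $(du_i)^{\#_\omega}$ a constant vector field; then $\mathcal{E}_i=u_i(du_i)^{\#_\omega}=J_0\nabla H_i+O(|z|^\infty)$ has affine coefficients and $\nabla(\nabla\mathcal{E}_i)=0$. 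For (b), substitute $\mathcal{L}_{\mathcal{E}_i}=\nabla_{\mathcal{E}_i}+\mathcal{H}_i$ into the two identities (\ref{charge}). In the first, metric compatibility (Lemma~\ref{cliffharm}) cancels the $\nabla_{\mathcal{E}_i}$-terms against $\mathcal{E}_i.<\varphi,\psi>$, leaving $-<\mathcal{H}_i\varphi,\psi>-<\varphi,\mathcal{H}_i\psi>$, which by (\ref{hamiltonian}) equals $2{\rm Re}(E_i)<\varphi,\psi>$ with the sign convention of Proposition~\ref{decomp}; hence $d_i=2{\rm Re}(E_i)$. In the second, $\mathcal{L}_i$ is $\Omega(Y)$-invariant (being an eigenline bundle), so $Y\star\varphi\in\Gamma(\mathcal{L}_i)$, and using $d_\nabla\Omega=0$ (condition (2.)) the $\nabla_{\mathcal{E}_i}$-terms again cancel, leaving $\mathcal{H}_i(Y\star\varphi)-Y\star(\mathcal{H}_i\varphi)=E_i(Y\star\varphi)-E_i(Y\star\varphi)=0$, so $b_i=0$; the torsion-free rewriting works the same after inserting $[\mathcal{E}_i,Y]=\nabla_{\mathcal{E}_i}Y-\nabla_Y\mathcal{E}_i$.

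For (c), when $\Omega_i(X)\varphi=(X-iJX)\cdot\varphi=\sigma\circ\alpha_J(X)\varphi$, the representation (\ref{cliffbig}) identifies $\Omega_i(a_j),\Omega_i(b_j)$ with the harmonic-oscillator ladder operators, and Lemma~\ref{cliffharm} shows the adjoint $\Omega_i^t$ (w.r.t.\ $<\cdot,\cdot>$) is the conjugate ladder operator; thus $\Omega_i^t\Omega_i$ is, up to a scalar, the number operator and $\Omega_i\Omega_i$ the anomalous quadratic term. The quadratic Hamiltonian $H_i=\tfrac12 u_i^2$ ($u_i$ affine, $du_i=\alpha_i$) therefore quantizes to a quadratic expression in $\Omega_i,\Omega_i^t$ whose coefficients are read off from those of $\alpha_i$; matching against $c_1{\rm Tr}(\mathcal{F}^{-1}\Omega_i^{t}\Omega_i\,\mathcal{F})+c_2{\rm Tr}(\Omega_i\Omega_i)+c_3$ determines $c_1,c_2,c_3$ in terms of $\mathcal{E}_i$, the conjugation $\mathcal{F}^{-1}(\cdot)\mathcal{F}$ being precisely the Fourier-transform-conjugation of Proposition~\ref{ham1} (the swap $ix_j\leftrightarrow\partial_j$) that turns the number-operator piece into the operator $\mathcal{H}$ of Proposition~\ref{decomp}.

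For (d), restrict (\ref{hamiltonian}) to $\mathcal{L}_i$: there $\Omega_i(X)=\alpha_i(X)\,{\rm id}$ by the eigenline property (\ref{splitting}), so for a unitary frame $e_k$ one gets ${\rm Tr}(\Omega_i\Omega_i)|_{\mathcal{L}_i}=\sum_k\alpha_i(e_k)^2={\rm Tr}(\alpha_i^2)$, ${\rm Tr}(\Omega_i^{t}\Omega_i)|_{\mathcal{L}_i}=\sum_k|\alpha_i(e_k)|^2={\rm Tr}(|\alpha_i|^2)$, and $\mathcal{F}$-conjugation is trivial on scalars, whence $E_i=c_1{\rm Tr}(|\alpha_i|^2)+c_2{\rm Tr}(\alpha_i^2)+c_3$ and, with (b), $d_i=2{\rm Re}(E_i)$; $E_i$ can also be checked against the explicit coherent-state eigenvalues of $\sigma\circ\alpha_{J_T}$ in Lemma~\ref{eigenvalues}. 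I expect the main obstacle to be step (c): making precise how the normal-ordered, Fourier-conjugated quantization of $\tfrac12 u_i^2$ is expressed through $\Omega_i$ and $\Omega_i^t$ while keeping all constants, signs and the $\mathcal{F}$-conjugation straight, and — relatedly — justifying that $\mathcal{H}_i$ preserves $\mathcal{L}_i$ (or only does so after the orthogonal projection ${\rm pr}^\perp_{\mathcal{L}_i}$ defining $E_i$), so that $E_i$ is a genuine scalar function; a secondary point requiring care is that $\nabla(\nabla\mathcal{E}_i)=0$ in (a) hinges on the covariant constancy of $\alpha_i$ for these structures.
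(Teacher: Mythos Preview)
Your overall architecture (splitting into (a)--(d) and using $\mathcal{L}_{\mathcal{E}_i}=\nabla_{\mathcal{E}_i}+\mathcal{H}_i$) is the right one and matches the paper, but two of your steps contain genuine errors.

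\textbf{Part (a): the closed form $\alpha$.} You claim $i_{\mathcal{E}_i}\omega=u_i\,du_i=\tfrac12 d(u_i^2)$, but this ignores the normalization built into $(\cdot)^{\#_\omega}$: by definition $\omega(\alpha_i^{*},J\alpha_i^{*})\cdot i_{(du_i)^{\#_\omega}}\omega=du_i$, so in fact
\[
i_{\mathcal{E}_i}\omega=\frac{u_i\,\alpha_i}{\omega(\alpha_i^{*},J\alpha_i^{*})}=:\tilde\alpha_i,
\]
and the denominator is not constant. Showing $d\tilde\alpha_i=0$ is not automatic; the paper proves it by a foliation argument: one checks $X.\omega(\alpha_i^{*},J\alpha_i^{*})=0$ for all $X$ tangent to the leaves $u_i=\mathrm{const.}$, whence $\alpha_i\wedge d(\omega(\alpha_i^{*},J\alpha_i^{*}))=0$ and $d\tilde\alpha_i=0$. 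Likewise $\nabla(\nabla\mathcal{E}_i)=0$ follows directly from $\nabla\omega=0$ and $du_i((du_i)^{\#_\omega})=1$, not from any covariant constancy of $\alpha_i$.

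\textbf{Part (b): the second identity in (\ref{charge}).} Your conclusion $b_i=0$ is wrong, and the mistake is instructive. You write $\mathcal{H}_i(Y\star\varphi)-Y\star(\mathcal{H}_i\varphi)=E_i(Y\star\varphi)-E_i(Y\star\varphi)=0$, treating $\mathcal{H}_i$ as multiplication by the scalar $E_i$. But (\ref{hamiltonian}) is an operator identity: $\mathcal{H}_i$ involves ${\rm Tr}(\mathcal{F}^{-1}\Omega_i^{t}\Omega_i\mathcal{F})$, a genuine second-order Clifford expression, and $E_i$ is only its \emph{projected} value on $\mathcal{L}_i$. What one actually has to compute is the commutator $[\mathcal{H}_i,\Omega_i(Y)]$ as operators; since $\Omega_i$ commutes with itself but $[\Omega_i(e_l),\Omega_i^{t}(e_k)]=ib\,\delta_{lk}$ for some local function $b$, only the $c_1$-term contributes and one finds $b_i=ib\cdot c_1\sum_k\Omega_i(e_k)$ (acting on $\varphi\in\mathcal{L}_i$ this becomes a scalar function). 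This is nonzero in general, yet still of the form $b_i\varphi$, which is exactly what (5.) requires.

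\textbf{Part (c).} Your description here is too schematic to be a proof. The paper's device is the algebraic identity, valid in any unitary frame with $f_j=Je_j$,
\[
\sum_{j=1}^{n}\Bigl\{\nabla_{e_j-if_j}\mathcal{E}_i\cdot(e_j+if_j)-\nabla_{e_j+if_j}\mathcal{E}_i\cdot(e_j-if_j)\Bigr\}=4\,\mathcal{H}_x,
\]
obtained by expanding and comparing with the Clifford expression for $\mathcal{H}_x$ in Theorem~\ref{lieabl}. One then feeds in the explicit form $\nabla_{e_k}\mathcal{E}_i=\sum_j(du_i(e_k)\beta_j+u_i\,d\beta_j(e_k))(e_j-if_j)$ (using $\beta_{j+n}=-i\beta_j$ from the shape of $\Omega_i$) and recognizes the two summands on the left as, respectively, $\mathcal{F}^{-1}\Omega_i^{t}\Omega_i\mathcal{F}$ and $\Omega_i\Omega_i$ (Fourier conjugation swaps $\Omega_i$ and $\Omega_i^{t}$). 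The torsion term from (\ref{liebla2}) produces the additive constant $c_3$; $c_1\neq 0$ because $\Omega_x\neq 0$. Parts (d) and the first line of (\ref{charge}) you handle correctly.
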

\begin{proof}
Assume $\Omega_x\varphi \neq 0$ (otherwise the assertion is trivial). The first part of the first assertion (the first line of (\ref{charge})) follows by considering that for $\varphi, \psi \in \Gamma_U(\mathcal{L}_i)$, invoking the third line of Lemma \ref{cliffharm} and (\ref{liebla2}) and noting that
\[
(\Omega_x^{t}\cdot \Omega_x \varphi, \psi)=(\Omega_x \varphi, \Omega_x \psi)=\alpha_i(x)\overline \alpha_i(x)(\varphi, \psi)
\]
the first summand in (\ref{hamiltonian}) produces a contribution of $2|\alpha_i|^2$ to $d_i$ while the other summands are obvious. The second line of $(\ref{charge})$ follows by considering that $[\Omega_i(e_l), \Omega_i^t(e_k)]=ib\delta_{lk}$ for some arbitrary local unitary frame $(e_k)_{k=1}^n$ of $T_{\C}U, U \subset M$ and a local function $b \in C^\infty(U), U \subset M$ using again (\ref{liebla2}) and the second line in Lemma \ref{cliffharm}; then $b_i$ over $U$ in (\ref{charge}) is given by $b_i=ib\cdot c_1\sum_{k=1}^n\Omega_i(e_k)$. Note that these observations also prove (\ref{charge2}).\\ 
For the second assertion, first note that if we set $\mathcal{E}_i=u_i (du_i)^{\#_\omega}$, then it follows from a direct calculation involving $\nabla\omega=0$ and $du_i((du_i)^{\#_\omega})=1$ that $\nabla(\nabla \mathcal{E}_i)=0$. We claim that the form $\tilde \alpha_i$ in (\ref{hamilton}) which is on $M\setminus \mathcal{C}_i$ given by
\[
\tilde \alpha_i=u_i\frac{\alpha_i}{\omega(\alpha_i^{*}, J\alpha_i^{*})},
\]
is closed. For this, note that for $X \in \Gamma(TM)$ and since $d\alpha_i=0$ we have
\begin{equation}\label{bla6}
X.\omega(\alpha_i^{*}, J\alpha_i^{*})=X.\alpha_i(J\alpha_i^{*})=J\alpha_i^*.(\alpha_i(X))+\alpha([J\alpha_i^{*}, X]).
\end{equation}
For any $x \in M\setminus \mathcal{C}_i$ s.t. $\alpha_i\neq 0$ there is a neighbourhood $x \in U\subset M$ so that $(x_1=u_i, x_2, \dots,x_{2n})\subset \R^{2n}$ are coordinates on $M$, that is a diffeomorphism $\phi:U\rightarrow \R^{2n}$ s.t. $\Phi_*(u_i)=x_1$ adapted to the foliation given by $u_i=const.$ on $U$, that is $(u_i=c, x_2, \dots,x_{2n}), c \in \R$ are local coordinates on the leaves $\mathcal{F}_c \subset U$ of this foliation on $U$ and we can assume that $dx_2=\phi^*(du_i\circ J)$ on $U$. Choosing $X\in \Gamma(\mathcal{F}_c)$ to be one of the coordinate vector fields $X_i=\phi^{-1}_*(\frac{\partial}{\partial x_i}),\ i\geq 2$ we see that (\ref{bla6}) vanishes. Thus $\alpha_i\wedge d(\omega(\alpha_i^{*}, J\alpha_i^{*}))=0$, implying $d\tilde \alpha_i=0$. Considering now $\mathcal{E}_i=u_i(du_i)^{\#_\omega} = (\tilde \alpha_i)^*$ we see that as long as $M\setminus \mathcal{C}_i$ is open, $\mathcal{E}_i$ satisfies (4.) in Definition \ref{frobenius} on $M$.\\
Assume now first that $\nabla$ is torsion-free. Then by Proposition \ref{ham1} and Theorem \ref{lieabl}, we have
\[
\mathcal{H}_x\varphi_i =(\mathcal{L}_\mathcal{E}-\nabla_\mathcal{E})_x\varphi_i =\frac{i}{2}\sum_{j=1}^{n}\lbrace
\nabla_{e_{j}}\mathcal{E}\cdot f_{j}-\nabla_{f_{j}}\mathcal{E}\cdot e_{j}\}\cdot \varphi_i
\]
for $\varphi \in \Gamma_U(\mathcal{L}_i)$ for any $i \in \{1,\dots, k\}$. Here we chose a symplectic frame $(e_1, \dots e_n, e_{n+1}=f_1,\dots, e_{2n}=f_{n})$ at $x \in U$ and extend over $U$ so that $\nabla e_j=0, \nabla f_j=0 \ j=1, \dots, n$ at $x$. Since $\nabla J=0$, we can assume that $f_j=J e_j,\ j=1, \dots, n$ over $U$. Let $du_i=\alpha_i \in \Omega^1(U, {\rm End}(\mathcal{L}_i))$ the eigenform of $\Omega$, acting on $\mathcal{L}_i$, with $u_i \in C^\infty(U, \mathbb{C})$ its local primitive. We show the assertion (\ref{hamiltonian}) as an equality of endomorphisms of $(\mathcal{L})_x$. Then note that since $du(e_j)=-idu(f_j), \  j=1,\dots, n$ by definition of $\Omega$, $(du_i)^{\#_\omega}(\cdot )$, interpreted as element of $TM=(T^*M)^*$ evaluated on $\varphi \in \Gamma(\mathcal{L}_i)$ and on $X^* \in \Gamma(T^*U)$ equals $(X^*, \varphi) \mapsto (X-iJX)\cdot \varphi$. Hence using the basis above, we can write on $U$ if $du_i=\sum_{j=1}^{2n}\beta_je_j^{\#_\omega}$ and using that $\beta_{j+n}=-i\beta_j, \ j=1, \dots, n$ by the definition of $\Omega$ and since $e_j^{\#_\omega}=\omega(e_j, \cdot), j=1,\dots, n$, $f_j^{\#_\omega}=\omega(Je_j, \cdot), j=n+1,\dots, 2n$: 
\[
\mathcal{E}_i\cdot \varphi=u_i(\sum_{j=1}^{n}\beta_je_j+\beta_{n+j}e_{j+n})\cdot \varphi= u_i\sum_{j=1}^n\beta_j(e_j-if_j)\cdot \varphi
\]
where $\beta_j \in C^\infty(U)$. Since $\nabla e_i=\nabla f_i=0$ at $x\in U$ for all $i=1, \dots, n$, we have with this identification
\begin{equation}\label{bla2}
(\nabla_{e_k}\mathcal{E}_i)\cdot \varphi =\left(\sum_{j=1}^n du_i(e_k)\beta_j(e_j-if_j)+u_i\sum_{j=1}^n d\beta_j(e_k)(e_j-if_j)\right)\cdot \varphi.
\end{equation}
Note that in both formulae above, $\cdot$ denotes symplectic Clifford multiplication (not Frobenius multiplication $\star$). Now consider the calculation for any $j\in \{1,\dots,n\}$:
\[
\begin{split}
\nabla_{e_j-if_j}\mathcal{E}_i\cdot (e_j+if_j)&= \nabla_{e_j-if_j}\mathcal{E}\cdot e_j + i \nabla_{e_j-if_j}\mathcal{E}\cdot f_j\\
&=\nabla_{e_j}\mathcal{E}\cdot e_j- i \nabla_{f_j}\mathcal{E}\cdot e_j+ i\nabla_{e_j}\mathcal{E}\cdot f_j+ \nabla_{f_j}\mathcal{E}\cdot f_j,
\end{split}
\]
while
\[
\begin{split}
\nabla_{e_j+if_j}\mathcal{E}_i\cdot (e_j-if_j)&= \nabla_{e_j+if_j}\mathcal{E}\cdot e_j - i \nabla_{e_j+if_j}\mathcal{E}\cdot f_j\\
&=\nabla_{e_j}\mathcal{E}\cdot e_j+ i \nabla_{f_j}\mathcal{E}\cdot e_j- i\nabla_{e_j}\mathcal{E}\cdot f_j+ \nabla_{f_j}\mathcal{E}\cdot f_j.
\end{split}
\]
Substracting both entities and summing over $j$ yields
\[
\sum_{j=1}^{n}\lbrace\nabla_{e_j-if_j}\mathcal{E}\cdot (e_j+if_j)-\nabla_{e_j+if_j}\mathcal{E}\cdot (e_j-if_j)\rbrace=-2i\sum_{j=1}^{n}\lbrace i \nabla_{f_j}\mathcal{E}\cdot e_j- i\nabla_{e_j}\mathcal{E}\cdot f_j\rbrace=4\mathcal{H}_x.
\]
Plugging ${e_j-if_j}$ resp. ${e_j+if_j}$ into the argument of $\nabla_{(\cdot)}\mathcal{E}_i$ in (\ref{bla2}), we see that the terms on the left hand side of the equation are at $x$ linear combinations of $\mathcal{F}^{-1}\circ \Omega^t\cdot\Omega\circ \mathcal{F}$ (note that adjoining by $\mathcal{F}$ interchanges $\Omega$ and $\Omega^t$) and $\Omega\cdot \Omega$ in the second case. This gives the assertion in the case that $\nabla$ is torsion-free. If $\nabla$ is not torsion-free, we get by (\ref{liebla2}) an additional constant $c_3$ in the asserted formula. Finally $c_1 \neq 0$ follows since $\Omega_x \neq 0$ and (\ref{bla2}).
\end{proof}
We will call (semi-simple, weak) Frobenius structures whose multiplication and Euler vector field are induced by a compatible complex structure satisfying $\nabla J=0$ (a $\hat U(n)$-reduction $P^J$ of $P$) in the sense of Proposition \ref{euler}, that is $\Omega$ is given by the map $X\mapsto (X-iJX) \in {\rm End}(\mathcal{L})$ and $\mathcal{E}$ is on appropriate open sets $U\subset M$ of the form $\mathcal{E}_i=u_i (du_i)^{\#_\omega}$ for local primitives $u_i$ of the eigenforms $\alpha_i \in \Omega ^1(U, \C)$ of $\Omega$ on each irreducible suprepresentation $\mathcal{L}_i$ of $\mathcal{A}$, {\it standard}. Such Frobenius structures thus depend on the choice of a $\hat U(n)$-reduction $P^J$ of a given metaplectic structure $P$ on $M$. We will say two standard (not neccessarily irreducible) Frobenius structures are equivalent if the underlying $\hat U(n)$-structures $P^J$ are isomorphic and the pairs $(\mathcal{A}_x, \mathcal{L}_x)$ are equivalent as algebra representations for any $x \in M$. Then it already follows that the respective $\Omega \in \Omega^1(M, End(\mathcal{L}))$ are conjugated and the respective spectral covers $L$ coincide. We have the following classification result in the case of trivial $\hat U(n)$-reductions of $P$. Note that any section $s:M\rightarrow H_n\times_\rho Mp(2n, \R)/G_0$ (cf. Section \ref{coherent}) gives rise to a map ${\rm pr}_1\circ s:M\rightarrow H_n\simeq \R^{2n}$.
\begin{prop}\label{classi}
Assume $R^J$ is an $U(n)$-reduction of the symplectic frame bundle $R$ of $M$ with associated symplectic connection satisfying $\nabla J=0$ and that has a global section $\tilde s:M\rightarrow R^J$ which lifts to a global section $\overline s: M\rightarrow P^J$ in the corresponding $\hat U(n)$ reduction $P^J$ of $P$, where $P^J$ is a given $\hat U(n)$-reduction of $P$. Then the set of equivalence classes of irreducible semi-simple (weak, in the case of $(M, J)$ Kaehler regular, in general singular) standard Frobenius structures whose underlying $\hat U(n)$-structures are isomorphic to $P^J$, is (after eventually homotoping the pair $(\nabla,J)$ with the deformation of $J$ preserving the isomorphy class of $P^J$) parametrized by the set of maps $s:M\rightarrow H_n\times_\rho Mp(2n, \R)/G_0$ so that $d({\rm pr}_1\circ s)=0$ with the notation of Section \ref{coherent}, using that $T^*M\simeq M\times \R^{2n}$.
\end{prop}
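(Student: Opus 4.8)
The plan is to reduce a standard irreducible semisimple Frobenius structure to a single $\R^{2n}$-valued function on $M$, identify that function with $\mathrm{pr}_1\circ s$, and show that conditions (1)--(5) of Definition \ref{frobenius} collapse, for standard structures, to the one requirement $d(\mathrm{pr}_1\circ s)=0$; the whole argument runs through the classification of pointwise coherent-state lines in Section \ref{coherent}.

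First I would put $(\nabla,J)$ into a normal form. Since $R^J$ carries a global section $\tilde s$, the manifold $M$ is parallelizable, hence admits a flat symplectic connection $\nabla_0$ with $\nabla_0 J_0=0$, where $J_0$ is the almost complex structure whose unitary frame is $\tilde s$. Deforming $(\nabla,J)$ to $(\nabla_0,J_0)$ is a homotopy through $\omega$-compatible pairs; as a deformation of $J$ preserves the isomorphism class of $P^J$, and as $\nabla$ and $\mathcal E$ do not enter the equivalence relation on standard Frobenius structures, this changes neither the $\hat U(n)$-structure up to isomorphism nor the equivalence class. Using $\overline s:M\to P^J$ I trivialize $\mathcal Q^J\simeq M\times L^2(\R^n)$, $\mathcal S^J\simeq M\times\mathcal S(\R^n)$ and $T^*M\simeq M\times\R^{2n}$; in these trivializations $\Omega(X)=\sigma\circ\alpha_{J_0}(X)$, so a one-dimensional subbundle $\mathcal L$ preserved by $\Omega$ is fibrewise a common eigenline of the commuting operators $\sigma\circ\alpha_{J_0}(a_j)$, hence by Lemma \ref{eigenvalues} of the form $\mathbb C\cdot f_{h(x),iI}=\pi((h_1(x),h_2(x)),0)\cdot\mathcal M_0$ for a unique smooth map $h:M\to\R^{2n}$ (with $\mathcal M_0$ the Gaussian line of Proposition \ref{decomp}). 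By Proposition \ref{class} and Corollary \ref{equivalence} applied with $T=iI$ fixed, the equivalence class of the pointwise pair $(\mathcal A_x,\mathcal L_x)$ is exactly the element of $G/G_0$ determined by $h(x)$; this is the candidate section $s$, with $\mathrm{pr}_1\circ s=h$ (after identifying $G/G_0\simeq\R^{2n}$ and checking that $\mathrm{pr}_1$ descends to it).

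Next I would extract the constraint. On the line $\mathcal L$ the form $\Omega$ acts by the scalar eigenform $\alpha\in\Omega^1(M,\mathbb C)$ of Lemma \ref{eigenvalues}, so $\Omega\wedge\Omega=0$ and the semisimple splitting $\Omega=A_1+iA_2$ into self-adjoint parts are automatic, and conditions (3)--(5) will follow from Proposition \ref{euler} once $\alpha$ is closed: set $\mathcal E_i=u_i\,(du_i)^{\#_\omega}$ for a local primitive $u_i$ of $\alpha$, and note that $\Omega(X)\varphi=(X-iJX)\cdot\varphi$ is precisely the hypothesis of Proposition \ref{euler} (if $\alpha$ is not globally exact one obtains only the weak, resp.\ singular, variant permitted in the statement). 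The decisive requirement is that $\nabla$ preserve $\mathcal L$, together with the Leibniz condition (2). Using the spinor connection formula (\ref{spinorabl}) (which in the flat normal form is the ordinary fibrewise derivative), the relation $d\pi=\sigma$ on $\R^{2n}$ of (\ref{cliffordheis}), the Leibniz rule of Lemma \ref{cliffharm}, and the fact that $\sigma(v)$ carries a nonzero vector of $\mathcal M_0$ to a nonzero vector of $\mathcal M_1$ for every $v\neq0$, I would compute that, for $\varphi$ spanning $\mathcal L$, the obstruction to $\nabla_X\varphi$ lying in $\mathcal L$ (i.e.\ the $\mathcal M_1$-part of $\pi((h_1,h_2),0)^{-1}\nabla_X\varphi$) is a nonzero multiple of $\sigma(dh(X))\,f_{h,iI}$; hence $\nabla$ preserves $\mathcal L$ if and only if $dh=d(\mathrm{pr}_1\circ s)=0$, and then $\alpha$ has constant coefficients in the parallel coframe, so $d\alpha=0$ and (2) holds (cf.\ Proposition \ref{lagrangian} and the remark following it). Reversing these implications, every smooth $h:M\to\R^{2n}$ with $dh=0$ yields a standard irreducible semisimple (weak, regular in the Kähler case, singular in general) Frobenius structure with $\hat U(n)$-structure $P^J$ realizing $s=h$; and since for fixed $J_0$ the equivalence class of $(\mathcal A_x,\mathcal L_x)$ is a complete invariant of the eigenvalue tuple $\{(h_2(x))_j+i(h_1(x))_j\}_j$, hence of $h(x)$, the assignment (equivalence class)$\mapsto s$ is well defined, injective, and by the above surjects onto $\{s:M\to G/G_0:\ d(\mathrm{pr}_1\circ s)=0\}$, which is the assertion.

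The main obstacle I expect is the computation in the third paragraph: correctly isolating the transverse ($\mathcal M_1$) component of $\nabla_X\varphi$ on the coherent-state bundle and thereby proving that ``$\nabla$ preserves $\mathcal L$'' is equivalent to $d(\mathrm{pr}_1\circ s)=0$. This needs careful bookkeeping with (\ref{spinorabl}), the oscillator decomposition of Proposition \ref{decomp}, and the Heisenberg/metaplectic intertwining relations, and it also rests on verifying that the passage to the flat normal form is a homotopy of $(\nabla,J)$ preserving the isomorphism type of $P^J$ (which uses the parallelizability of $M$ supplied by the global section and the homotopy invariance of the metaplectic isomorphism class); a secondary, routine point is the explicit identification $G/G_0\simeq\R^{2n}$ together with the fact that $\mathrm{pr}_1$ is constant on $G_0$-cosets.
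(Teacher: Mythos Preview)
Your identification of the pointwise data with the coherent-state classification (Lemma \ref{eigenvalues}, Proposition \ref{class}, Corollary \ref{equivalence}) matches the paper. The gap is in how you handle the connection, and it causes you to end up proving a different statement than the one claimed.

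You deform to a single flat pair $(\nabla_0,J_0)$, independent of the Frobenius structure, and then compute the obstruction to $\nabla_0$ preserving $\mathcal L$; your computation is correct and gives $X\!\cdot\! h=0$ for all $X$, i.e.\ $h:M\to\R^{2n}$ is locally constant. But the condition in the proposition is \emph{not} that the function $h$ be locally constant: the clause ``using that $T^*M\simeq M\times\R^{2n}$'' means one is to read $\mathrm{pr}_1\circ s$ as a section of $T^*M$, i.e.\ as a $1$-form, and $d(\mathrm{pr}_1\circ s)=0$ is closedness of that $1$-form (compare the parallel formulation in Theorem \ref{genclass}, and the example below Corollary \ref{curvature} where a genuinely non-constant closed section $s_l$ of $T^*M$ produces a Frobenius structure). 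Your parametrizing set is therefore strictly too small: you miss every structure whose eigenform $\alpha$ is closed but has non-constant coefficients in the parallel coframe.

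The reason your argument breaks is the phrase ``after eventually homotoping the pair $(\nabla,J)$'': the homotopy allowed by the statement is not to a fixed normal form but to an $h$-adapted one. In the paper's proof this is explicit: given the closed $1$-form $\xi_1=\mathrm{pr}_1\circ s$, one chooses Darboux coordinates with $\xi_1=dx_1$ and then (in the non-K\"ahler case) modifies $J$ to a $\tilde J$ that rescales along $\ker(\xi_1)$ and its $J$-image, together with a compatible $\tilde\nabla$, so that $\tilde\nabla\xi_1=0$; only then does the normal-Darboux/Fedosov argument show that the spinor connection preserves $\mathcal L$. Your flat $\nabla_0$ satisfies $\nabla_0 J_0=0$ but has no reason to annihilate a non-constant closed $\xi_1$, so for such $h$ the deformation you perform takes you \emph{out} of the class of $5$-tuples satisfying Definition \ref{frobenius}; hence you cannot conclude anything about the original structure from the behaviour of $\nabla_0$. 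To repair the argument you must, as the paper does, let the target of the homotopy depend on $s$.
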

\begin{proof}
Let $J_0=J$ and $R^{J_0}$ be the corresponding trivial $U(n)$-reduction of $R$ over $M$. A global section $\tilde s:M \rightarrow R^{J_0}$ is of the form $\tilde s(x)=(e_1, \dots, e_n, f_1, \dots, f_n)$, $f_n=J_0e_n$ for any $x \in M$, let $\overline s: M\rightarrow P^{J_0}$ be the corresponding lift defining a trivialization $P^{J_0}\simeq M\times Mp(2n, \R)$. Replacing $J_0$ by $J=gJ_{0}g^{-1}$ for global sections $g: M\rightarrow Sp(2n, \R)$ resp. lifts $\overline g: M\rightarrow Mp(2n, \R)$ induced by $Sp(2n,\R)$ resp. $Mp(2n,\R)$ acting on the second factor in $R^{J_0}$ resp. $P^{J_0}$, parametrizes the set of $\hat U(n)$-reductions of $P$ which are equivalent to $P^{J_0}$. To each smooth map $h:M\rightarrow (\R^{2n}, 0)\subset H_n$ we can associate over any $x \in M$ the pair $(\C\cdot f_{h(x), T(x)}, \mathcal{A}_2(\R^{2n}, J_T(x)))$, $J_T(x)=J_x$ giving (relative to $\overline s$) the standard irreducible Frobenius structure associated to $J$ and $h$. Hence we are left with considering Proposition \ref{class} resp. Corollary \ref{equivalence}, which give the result immediately since the equivalence classes of pairs of irreducible representations and algebras $\kappa^{2}_{h,T}$ are parameterized by $H_n\times_\rho Mp(2n, \R)/G_0$, once we prove that a symplectic connection $\nabla$ preserving $J$ preserves the section $\mu_2(s, (\C\cdot f_{0, iI},\mathcal{A}_2(\R^{2n}, J_{iI}))):M\rightarrow \mathcal{A}_2$ where $s(x)=(h(x), \overline s(x))$ in the sense that it preserves the line bundle ${\rm im}({\rm pr}_1\circ\mu_2(s, (\C\cdot f_{0, iI},\mathcal{A}_2(\R^{2n}, J_{iI}))))$ (understood relative to the global section $\overline s.\overline g:M\rightarrow P$) and satisfies (2.) in Definition \ref{frobenius}. But the latter follows from $\nabla J=0$, for the former note that on a nghbhd $U$ around any $x \in M$ we can find a Darboux coordinate system on $U\simeq \R^{2n}\simeq T^*\R^n$ centered at $x$ of the form $x_s=(x_1, \dots, x_n, \xi_1, \dots, \xi_n),\ x_i, \xi_i \in C^\infty(\R^n)$ so that $\xi_1=dx_1={\rm pr}_1\circ s|U$ (given that $d({\rm pr}_1\circ s)=0$). Now we claim that in the case $\nabla$ is torsion free (thus $M$ Kaehler) and $\omega (\xi, \cdot)= \xi_1, \xi \in \Gamma(T\R^n)$ we have $\nabla_X \xi=0$ on $U$ for all $X \in \Gamma(U)$. To see this note that in this case, $0=d\xi_1(X,Y)=\omega(\nabla_X\xi, Y)-\omega(\nabla_Y\xi, X), \ X,Y \in \Gamma(TU)$. Note that $\nabla_X\xi \notin {\rm ker}(\xi_1)$ for any $X \in \Gamma(U)$. So let $Y(x)=J\xi(x)$ for the fixed $x \in U$ above, $\nabla Y=0$, $\nabla X=0$ on $U$ while $X(x) \in {\rm ker}(\xi_1)$. Then we infer from $d\xi_1=0$ that $\omega(\nabla_X\xi, Y)=0$ at $x$. If $X(x)\in \C\cdot J\xi(x)$ while $\nabla X=0$ on $U$ we deduce using $\nabla J=0$ that $\omega(\nabla_{J\xi}J\xi, J\circ Y)=0$ at $x$, so $\nabla_X\xi=0$ for all $X$ on $U$. If $\nabla$ is not torsion-free it is easy to see that the torsion term in $d\xi_1(X,Y)$ for $X,Y$ tangent to the coordinate leaves of the above Darboux coordinates, is zero except for $d\xi_1(\xi,J\xi)=\xi.\omega(\xi, J\xi)=\xi.|\xi|^2$ with $\nabla J=0$ being essential. We now claim that we can homotope $(\nabla, J)$ on $M\setminus \mathcal{C}$ to $(\tilde \nabla, \tilde J)$ defined on $M\setminus \mathcal{C}$, so that we have $\tilde \nabla J=0$, $\tilde \nabla\omega=0$ and on $M\setminus \mathcal{C}$, we have $\tilde \nabla \xi_1=0$. On $U_r=\{y \in U, \xi(y)\neq 0\}$ we can homotope $J$ to $\tilde J:TU\rightarrow TU$ where $\tilde J$ is equal to $\frac{1}{|\xi|^2}\cdot J$ on ${\rm ker}(\xi_1)$, equal to $|\xi|^2\cdot J$ on $J({\rm ker}(\xi_1))$ and equal to $J$ else and on $U\setminus U_r$. As one checks, $\tilde J$ defines a global $\omega$-compatible almost complex structure $\tilde J:TM\rightarrow TM$ on $M\setminus \mathcal{C}$. There is furthermore a symplectic connection $\tilde \nabla$ so that $\tilde \nabla \tilde J=0$. Then, as in the torsion-free case, we infer $\tilde \nabla_X\xi=0$ for any $X \in \Gamma(U)$. We will denote $\tilde J, \tilde \nabla$ in the following as $J, \nabla$ again (and assume to work on $M\setminus \mathcal{C}$ in the non-torsion-free case). Note that $x_s$ defines a frame $\overline s_D:U \rightarrow P$ by taking the $g=\omega(\cdot,J\cdot)$-gradients of $(x_1, \dots,x_n)\in C^\infty(\R^n)$ and the $\omega$-duals of $(\xi_1, \dots, \xi_n)$ and lifting these to $T^*\R^n$. Let $L$ be the isotropic foliation of $T^*U$ given by $(x_1, \xi_1)=c \in \R^2$. We thus can apply the construction of Fedosov \cite{fedosov} to the symplectic reduction $T^*U/L$ and thus find a (weakly) normal Darboux coordinate system $x_s^\nabla=(\tilde x_i, \tilde \xi_i), i=1,\dots, n$ on $U$ relative to the symplectic connection $\nabla$ that coincide with $x_s|L$ up to infinite order at $x$, that is $(x_1, \xi_1)-(\tilde x_1, \tilde \xi_1) =O(|x_s|^\infty)$ and we can thus assume that $x_s$ is weakly normal Darboux at $x$. Considering the family of line bundles and commutative algebras over $U$
\[
\begin{split}
(\mathcal{L}, \mathcal{A})|U&=[\overline s.\overline g,\mu_2(s, (\C\cdot f_{0, iI},\mathcal{A}_2(\R^{2n}, J_{iI})))]\\
&=[\overline s_D,\mu_2((\tilde h(x), \overline s(x). \hat g), (\C\cdot f_{0, iI},\mathcal{A}_2(\R^{2n}, J_{iI})))]
\end{split}
\]
for an appropriate section $\hat g:U\rightarrow Mp(2n, \R)$ and $\tilde h: U\rightarrow \R^{2n}$ using the formulae in Proposition \ref{class} and noting that the coordinates $\tilde h_i$ of $s$ wrt $\overline s_D$ on $U$ satisfy $d\tilde h_i|x= O(|x|^\infty)$) we arrive at the assertion. Note that we used that assuming the frame $\overline s^J=\overline s.\overline g:U\rightarrow P$ takes values in the $J$-reduction $P^J$ of $P$, $\nabla$ is reduced to $P^J$ and satisfies furthermore $\nabla \overline s^J=0$, where we think of $\nabla$ as a connection on the trivial $\hat U(n)$-bundle $P^J$, we can since $\nabla \overline s_D|x=0$ also assume that $\nabla \hat g|x=0$, considering $\hat g$ as a section of the trivial bundle $U \times Mp(2n, \R)$.
\end{proof}
Note that we here identified the algebras $\mathcal{A}_2(\R^{2n}, J_T)$ for any $T \in \mathfrak{h}$ according to Lemma \ref{algebras}. If we do not identify them, the irreducible semisimple standard (weak) Frobenius structures would be parametrized by sections $s:H_n\times_\rho Mp(2n, \R)/(G_0\cap G_U)$ satisfying the same integrability conditon with the notation of Section \ref{coherent}, Proposition \ref{class}. To generalize the above, consider any closed subgroup $\tilde G\subset \hat U(n)\subset Mp(2n,\R)$, let $G=H_n\times_\rho Mp(2n, \R)$ and let $i:\tilde G \hookrightarrow G, i(\tilde G)=H_n\times_\rho \tilde G$ the standard embedding. Let $BG$, $B\tilde G$ be the classifying spaces of $G$ and $\tilde G$, respectively, $EG\rightarrow BG$ the universal bundle. Then it is well-known that a principal $G$-bundle $\hat P$ over $M$ can be reduced to a $\tilde G$-bundle $Q$ that is $\hat P \simeq Q\times_{\tilde G, i} G$ for some $\tilde G$-bundle $Q$ (where the notation $Q\times_{\tilde G, i} G$ refers to the balanced product induced by $i:\tilde G\rightarrow G$, compare (\ref{balance})), if there exists a lift of the classifying map $f:M\rightarrow BG$ for $\hat P$ so that following diagram commutes:
\begin{equation}\label{universal}
\begin{diagram}
&  &B\tilde G=EG \times_G G/\tilde G \\
&\ruTo^{\tilde f} &\dTo\\
M  &\rTo_{f} &BG
\end{diagram}
\end{equation}
and the homotopy class of lifts $\tilde f$ parametrize the isomorphism classes of $(Q,i)$-reduction of $\hat P$. The homotopy class of lifts of $f$ in turn defines a homotopy-class of sections $s:M\rightarrow f^*(EG \times_G G/\tilde G)\simeq \hat P \times_G G/\tilde G$. Let now $P_{\tilde G}$ be a fixed $\tilde G \subset \hat U(n)\subset Mp(2n,\R)$-reduction of a given metaplectic structure $P$, let again $i:\tilde G \hookrightarrow G=H_n\times_\rho Mp(2n, \R)$ resp. $i:Mp(2n,\R) \hookrightarrow G$ be the standard embeddings. Consider the $G$-principal bundle $\hat P$ induced by $i$, that is
\begin{equation}\label{reduction}
\hat P=P_{\tilde G}\times_{i,\tilde G} G\simeq P\times_{i, Mp(2n,\R)} G.
\end{equation}
Note that we show in Proposition \ref{higgs} that $\hat P$ is isomorphic to the bundle $P_G$ introduced in (\ref{tangent}). Then, by the above $P_{\tilde G}$ is tautologically a $\tilde G\subset G$-reduction of the $G$-bundle $\hat P$ and the isomorphism classes of $\tilde G$ reductions of $\hat P$ are parametrized by the above arguments by the homotopy classes of sections of
\begin{equation}\label{pg}
\hat P_{G/\tilde G}=\hat P\times_{G}(G/{\tilde G})\rightarrow M.
\end{equation}
On the other hand, two isomorphic $\tilde G\subset \hat U(n)$-reductions of $\hat P$ with $\tilde G\hookrightarrow H_n\times_\rho Mp(2n, \R)$ the standard embedding are also isomorphic as $\tilde G$-reductions $P_{\tilde G}$ of $P$ since the latter are in bijective correspondence with the homotopy classes of global sections of the associated bundle $P \times_{\tilde G} Mp(2n, \R)/{\tilde G}$ (considering (\ref{pg}) mod $H_n\subset G$). For the following, let $P_G=\pi_P^*(TM) \times_{Mp(2n, \R)} P$ be defined as in (\ref{tangent}) below and denote by ${\rm pr}_1:P_G\rightarrow T^*M$ the map $pr_1((y,q), x), (p, x))=((gy,q),x),\ x \in M,\ y \in \R^{2n},\ p,q \in P_{\tilde G},\ q=p.g,\ g \in \tilde G$ (using the complex structure $J$ corresponding to $P_{\tilde G}$ to identify $TM\simeq T^*M$). Note further that ${\rm pr}_1$ factors to a map $\tilde {\rm pr}_1: P_{G/\tilde G}\rightarrow T^*M$ when considering $P_{G/\tilde G}$ as a quotient $r:P_G\rightarrow P_{G/\tilde G}$, that is ${\rm pr}_1=\tilde {\rm pr}_1\circ r$. Using the above, we can then deduce:
\begin{theorem}\label{genclass}
For a given closed subgroup $\tilde G\subset \hat U(n)\subset Mp(2n,\R)$ and a fixed metaplectic structure $P$ on $M$, the set of irreducible semi-simple (weak, in the case of $(M, J)$ Kaehler regular, in general singular) standard Frobenius structures whose underlying $\tilde G$-structure $P_{\tilde G}$ is a $\tilde G$-reduction of $P$ and the associated symplectic connection preserves the almost complex structure $J$ associated to $P_{\tilde G}$, is in bijective correspondence to the set of sections $s$ of $P_{G/\tilde G}$ in (\ref{pg}), so that we have $\hat s=\tilde {\rm pr}_1\circ s:M\rightarrow T^*M$ is closed, that is $d\hat s=0$. Furthermore two such structures $s_1, s_2$ are equivalent if and only if $s_1$ and $s_2$ are homotopic and $j\circ s_1=j\circ s_2$ if we understand $s_i$ as equivariant maps $s_i:\hat P \rightarrow (H_n\times_\rho Mp(2n, \R))/{\tilde G}$ for $i=1,2$ and $j:(H_n\times_\rho Mp(2n, \R))/{\tilde G}\rightarrow (H_n\times_\rho Mp(2n, \R))/G_0$ is the canonical projection.
\end{theorem}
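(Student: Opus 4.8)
The plan is to deduce Theorem \ref{genclass} from the globally trivialized case of Proposition \ref{classi} by a gluing argument, and then to repackage the outcome via the classifying--space description of $\tilde G$--reductions recalled around (\ref{universal})--(\ref{pg}). Concretely, fix a cover $\{U_\alpha\}$ of $M$ over which $P_{\tilde G}$ is trivial, with local sections $\overline s_\alpha:U_\alpha\rightarrow P_{\tilde G}$ and transition cocycle $g_{\alpha\beta}:U_\alpha\cap U_\beta\rightarrow\tilde G$. By definition a standard irreducible semi--simple Frobenius structure with underlying $\tilde G$--structure $P_{\tilde G}$ assigns to each $x\in M$ a one--dimensional representation $(\mathcal{A}_x,\mathcal{L}_x)$ with $\Omega$ given by $X\mapsto(X-iJX)\cdot$ and Euler field of the form $\mathcal{E}_i=u_i(du_i)^{\#_\omega}$; read against $\overline s_\alpha$ (which normalizes the complex structure to $T=iI$), Lemma \ref{eigenvalues} and Corollary \ref{equivalence} show this is recorded over $U_\alpha$ by a map $s_\alpha:U_\alpha\rightarrow H_n\times_\rho Mp(2n,\R)/G_0$ with $({\rm pr}_1\circ s_\alpha)$ closed --- exactly the local datum of Proposition \ref{classi}, including the local replacement of $(\nabla,J)$ by $(\tilde\nabla,\tilde J)$ on $U_\alpha\setminus\mathcal{C}$ and the passage to Fedosov weakly normal Darboux coordinates performed there, which are natural and purely local. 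The first task is to verify that $s_\alpha$ and $s_\beta$ differ on $U_\alpha\cap U_\beta$ precisely by $g_{\alpha\beta}$ acting through the $\tilde G$--action induced from $\mu_{1,2}$ in (\ref{action1}). This is where Proposition \ref{class} enters: since $\tilde G\subset\hat U(n)$, the subgroup $H_n\times_\rho\tilde G$ lies in $G_U$ and hence acts on the set of representations $\mathcal{K}_{1,2}$ by invertible intertwining operators, so the local Frobenius data (line bundle, algebra, multiplication, Euler field) transform exactly by the clutching cocycle of $P_{G/\tilde G}=\hat P\times_G(G/\tilde G)$ from (\ref{pg}). Invoking $\hat P\simeq P_G$ (Proposition \ref{higgs}), the patched object is a global section $s$ of $P_{G/\tilde G}$, and the map $\tilde{\rm pr}_1:P_{G/\tilde G}\rightarrow T^*M$ --- induced from ${\rm pr}_1:P_G\rightarrow T^*M$, which fibrewise is the ${\rm pr}_1$ of Section \ref{coherent} read against $\overline s_\alpha$ --- turns the local conditions $d({\rm pr}_1\circ s_\alpha)=0$ into the single global condition $d\hat s=0$, where $\hat s=\tilde{\rm pr}_1\circ s$.

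Conversely, given a section $s$ of $P_{G/\tilde G}$ with $d\hat s=0$, I would run Proposition \ref{classi} backwards chart by chart: over $U_\alpha$ the section becomes a map $s_\alpha:U_\alpha\rightarrow H_n\times_\rho Mp(2n,\R)/\tilde G$, and applying $\mu_2$ along $s_\alpha$ to the base point $(\C\cdot f_{0,iI},\mathcal{A}_2(\R^{2n},J_{iI}))$ produces a line subbundle $\mathcal{L}|_{U_\alpha}\subset\mathcal{Q}^J|_{U_\alpha}$ with its commutative algebra $\mathcal{A}$ and $\Omega:X\mapsto(X-iJX)\cdot$; the intertwining property just used makes these agree on overlaps, so $(\mathcal{A},\mathcal{L})$ is defined on $M\setminus\mathcal{C}$. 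That $\nabla$ preserves $\mathcal{L}$, that $\Omega\wedge\Omega=0$ and $d_\nabla\Omega=0$, and that $\mathcal{E}_i=u_i(du_i)^{\#_\omega}$ with $u_i$ a local primitive of $\alpha_i=\hat s$ satisfies (3.), (4.) and (5.) of Definition \ref{frobenius}, are then precisely the local statements of Propositions \ref{classi}, \ref{ham1}, \ref{lagrangian} and \ref{euler}; the only genuinely new input is that $d\hat s=0$ is exactly what makes the primitives $u_i$ exist locally and $\mathcal{E}_i$ locally Hamiltonian, which is the global avatar of $d({\rm pr}_1\circ s)=0$ in Proposition \ref{classi}. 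Checking that the two assignments are mutually inverse is then bookkeeping of frames.

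For the equivalence statement, recall that two standard Frobenius structures are equivalent iff the underlying $\hat U(n)$-- (hence $\tilde G$--) structures are isomorphic and the pairs $(\mathcal{A}_x,\mathcal{L}_x)$ are equivalent for all $x$. By the correspondence recalled before the theorem --- homotopy classes of sections of $\hat P\times_G(G/\tilde G)$ $\leftrightarrow$ isomorphism classes of $\tilde G$--reductions of $\hat P$ $\leftrightarrow$ isomorphism classes of $\tilde G$--reductions $P_{\tilde G}$ of $P$ --- the first condition is equivalent to $s_1\simeq s_2$. For the second, Corollary \ref{equivalence} together with Proposition \ref{class} says that the equivalence class of the one--dimensional representation $\kappa^2_{h(x),T_x}$ is recorded precisely by its image in $G/G_0$, i.e. by the value at $x$ of $j\circ s_i$, where $s_i:\hat P\rightarrow(H_n\times_\rho Mp(2n,\R))/\tilde G$ are the corresponding equivariant maps and $j$ the projection to $(H_n\times_\rho Mp(2n,\R))/G_0$; hence pointwise equivalence of the $(\mathcal{A}_x,\mathcal{L}_x)$ is equivalent to $j\circ s_1=j\circ s_2$. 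Combining the two criteria gives the stated one.

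I expect the main obstacle to be the gluing step: showing that the locally defined line bundle, algebra and Euler field of Proposition \ref{classi} patch into global objects. This rests entirely on the precise $G$--equivariance of the constructions of Section \ref{coherent}, and in particular on Proposition \ref{class} --- the fact that $H_n\times_\rho\tilde G\subset G_U$ acts by honest intertwining operators --- so that the $\tilde G$--valued transition cocycle of $P_{\tilde G}$ acts on the local parametrizing maps exactly as the clutching data of $P_{G/\tilde G}$, and in no other way. A secondary point, routine but needing care, is to check that the local replacements of $(\nabla,J)$ by $(\tilde\nabla,\tilde J)$ and the Fedosov normalizations used chart by chart in Proposition \ref{classi} can be carried out compatibly over all of $M\setminus\mathcal{C}$; this follows from their naturality but should be spelled out.
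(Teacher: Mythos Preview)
Your argument is correct and rests on the same pointwise input (Proposition \ref{class}, Corollary \ref{equivalence}) and the same integrability condition $d\hat s=0$ as the paper, but the global packaging is genuinely different. You run a \v{C}ech gluing: trivialize $P_{\tilde G}$ over a cover, invoke Proposition \ref{classi} chartwise, and patch using that $\tilde G\subset\hat U(n)$ acts on $\mathcal{K}_{1,2}$ by intertwiners. The paper instead works globally from the start: it realizes the Frobenius line bundle in two equivalent ways, once as the associated bundle $\mathcal{E}_M=s^*(\tilde E_G)\times_{\tilde G,\tilde\mu_2\circ i}\mathcal{A}^0_2$ pulled back from the classifying space $B_{\tilde G}=E_G/\tilde G$, and once as the image of a global section $\tilde s$ of the Grassmannian bundle $\mathcal{E}_{{\rm Gr}_1(\mathcal{W})}=P_{\tilde G}\times_{\tilde G}{\rm Gr}_1(\mathcal{W})$; the comparison of the two descriptions goes through the equivariant function $g:\hat P\rightarrow G/\tilde G$ with $s=g\cdot s_0$, which identifies $P_{\tilde G,s}$ with a translate of the tautological reduction $P_{\tilde G}\subset\hat P$. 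Your approach is more elementary and makes the role of the transition cocycle explicit; the paper's buys a cleaner statement of what the line bundle \emph{is} (an associated bundle to a specific $\tilde G$-reduction of $\hat P$), which it then reuses for the first structure connection in Proposition \ref{higgs}. One small point: in your forward direction you land in $G/G_0$ (following Proposition \ref{classi}), whereas the theorem asks for a section of $P_{G/\tilde G}$; the lift from $G/G_0$ to $G/\tilde G$ is supplied by the given $\tilde G$-reduction itself, and you should say so explicitly rather than leave it implicit in ``bookkeeping of frames''.
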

\begin{proof}
The proof follows by the remarks before this Proposition, Proposition \ref{class} resp. Corollary \ref{equivalence} and considering the fact that any section $s$ of $\hat P_{G/\tilde G}$, considered as an equivariant map $s:\hat P\rightarrow G/\tilde G$ defines an equivariant map $\hat s:\hat P \rightarrow  \mathcal{A}_2$ (for $V=\R^{2n}$ in $\mathcal{A}_2$ and $\mathcal{A}_2$ given as in (\ref{action1})) by setting
\begin{equation}\label{equiv}
p \in \hat P \mapsto \mu_2\left(s(p),(\C\cdot f_{0, iI},\mathcal{A}_2(\R^{2n}, iI)\right).
\end{equation}
Consider now the quotient bundle $E_G\rightarrow E_G/\tilde G=B_{\tilde G}$ which is a $\tilde G$-bundle over $B_{\tilde G}$ which we denote by $\tilde E_G$ and $E_G, B_{\tilde G}$ are as above, we can thus form the associated bundle $\mathcal{E}=\tilde E_{G}\times_{\tilde \mu_2\circ i} \mathcal{A}_2\rightarrow B_{\tilde G}$. 
Note that by $\tilde \mu_2$ we denote the action of (closed subgroups and quotients of) $G$ on $\mathcal{A}_2$ given by the explicit isomorphism $\mathcal{A}_2\simeq G/G_0\cap G_U$ and the action of $G$ on $G/G_0\cap G_U$. Then since $f^*(E_G/\tilde G)=\hat P_{G/\tilde G}$, where $f$ is a classifying map $f:M\rightarrow BG$ for $\hat P$, we see that any section $s$ of $\hat P_{G/\tilde G}$ defines a one dimensional line bundle associated to the $\tilde G$-bundle $s^*(\tilde E_G)\rightarrow M$
\[
\mathcal{E}_M=s^*(\tilde E_{G})\times_{\tilde G, \tilde\mu_2\circ i}\mathcal{A}^0_2\rightarrow M,\ \mathcal{A}^0_2:=(\C\cdot f_{0,iI}, \mathcal{A}_2(\R^{2n}, iI)),
\]
being a line-subbundle of $s^*(\mathcal{E})$ and we claim that $\mathcal{E}_M$ induces a Frobenius structure over $M$ and that all irreducible semisimple standard Frobenius structures arise in this way. This is seen by considering that any $s$ as above defines a reduction of $\hat P$ to $\tilde G$ so that $P_{\tilde G,s} =s^*(\tilde E_{G})$, thus $\mathcal{E}_{\tilde G}:=P_{\tilde G,s}\times_{\mu_2\circ i} \mathcal{A}^0_2 = \mathcal{E}_M$. Then note that the equivariant map $\hat s:\hat P \rightarrow  \mathcal{A}_2$ in (\ref{equiv}) defines a global smooth section $\hat s$ of the associated fibre bundle $\mathcal{E}_{G}=\hat P\times_{G, \tilde \mu_2}\mathcal{A}_2$. Let ${\rm ev}_1:\mathcal{A}_2\rightarrow L^2(\R^n)$ be the map that assigns to a pair $(\C\cdot f_{h, T},\mathcal{A}_2(\R^{2n}, T)) \in \mathcal{A}_2$ the (union of the points of the) complex line $\C\cdot f_{h, T}\subset L^2(\R^n)$ and let $\mathcal{W}={\rm ev}_1(\mathcal{A}_2)\subset L^2(\R^n)$ the image of this map in $L^2(\R^n)$. Then we have that $\tilde s={\rm ev}_1\circ \hat s:\hat P\rightarrow {\rm Gr}_1(\mathcal{W})$ where ${\rm Gr}_1(\mathcal{W})={\rm im}({\rm pr}_1(\mathcal{A}_2))$ denotes the $1$-dimensional subspaces of $\mathcal{W}$ which are of the form $\C\dot f_{h,T}$, is a continuous (smooth) section wrt the gap metric topology of $L^2(\R^n)$ that defines a smooth section $\tilde s: M\rightarrow \mathcal{E}_{{\rm Gr}_1(\mathcal{W})}$ of the fibre bundle 
\[
\mathcal{E}_{{\rm Gr}_1(\mathcal{W})}=\hat P\times_{G, {\rm ev}_1\circ \tilde \mu_2\circ (i, i_{\mathcal{W}})}{\rm Gr}_1(\mathcal{W})\simeq P_{\tilde G}\times_{\tilde G, {\rm ev}_1\circ \tilde \mu_2\circ (i, i_{\mathcal{W}})}{\rm Gr}_1(\mathcal{W})\rightarrow M,
\]
where $i_{\mathcal{W}}:{\rm Gr}_1(\mathcal{W})\rightarrow \mathcal{A}_2$ is just the embedding $i_{\mathcal{W}}(\C\cdot f_{h,T})=(\C\cdot f_{h,T}, \mathcal{A}_2(\R^{2n},T))$. The image of $\tilde s$, that is the (generalized) subbundle $\hat {\mathcal{E}}_M:={\rm im}(\tilde s)\subset \mathcal{E}_{{\rm Gr}_1(\mathcal{W})}$ we claim to be isomorphic to $\mathcal{E}_{\tilde G}=\mathcal{E}_M$. We have to compare the two $\tilde G$-principal bundles
\[
\begin{diagram}
P_{\tilde G,s}=s^*(\tilde E_{G}) &\rTo  &\hat P_{G/\tilde G} &\lTo 
& P_{\tilde G}= s_0^*(\tilde E_{G})\\
&\rdTo &\uTo^{s}\uTo_{s_0} &\ldTo \\
&  &M  &  
\end{diagram}
\]
where $s_0$ corresponds to $P_{\tilde G}$ in the sense of (\ref{universal}) and the discussion below it. Note that it follows from the definition of $\hat P$ that $s_0:\hat P\rightarrow G/\tilde G$ is the map which equals $s_0(p)=Id_{G/\tilde G},\ p \in P_{\tilde G}\subset \hat P$ and is extended to $\hat P$ according to $s_0(p.g)=Id_{G/\tilde G}.g$, where with $P_{\tilde G}\subset \hat P$ we here mean the standard inclusion. Now writing $g(p)s_0(p)=s(p)$ for some equivariant function $g:\hat P\rightarrow G/\tilde G$, we infer that $P_{\tilde G,s}=s^*(\tilde E_{G})$ is embedded as $P_{\tilde G,s}=\{g(p).p\subset \hat P: p \in P_{\tilde G}\}$. Using this we infer that if $p \in P_{\tilde G,s}$ and $(p, (\C\cdot f_{0,iI}, \mathcal{A}_2(\R^{2n}, iI)) \in P_{\tilde G,s}\times_{\mu_2\circ i} \mathcal{A}^0_2$, this defines an element in $\mathcal{E}_{G}=\hat P\times_{G, \tilde \mu_2} \mathcal{A}_2$. Let $g.p \in P_{\tilde G} \subset \hat P$ for some $g \in G/\tilde G$, then in $\mathcal{E}_{G}$ we have $(p, (\C\cdot f_{0,iI}, \mathcal{A}_2(\R^{2n}, iI))) \sim (g.p, \tilde \mu_2\left(g^{-1},(\C\cdot f_{0,iI}, \mathcal{A}_2(\R^{2n}, iI))\right)) \in P_{\tilde G}\times_{G, \tilde \mu_2\circ i} \mathcal{A}_2$ and thus we arrive at the assertion $\hat {\mathcal{E}}_M=\mathcal{E}_M$. That $\nabla$, provided it preserves $J$, also preserves (after eventually modifying the pair $(\nabla, J)$ preserving the isomorphy class of $P_{\tilde G}$ as indicated in the proof of Proposition \ref{classi} and restricting the above data to $M \setminus \mathcal{C}$) the line subbundle $\hat {\mathcal{E}}_M={\rm im}(\tilde s)$ if the integrability condition $d\hat s=0$ with $\hat s=\tilde {\rm pr}_1\circ s:M\rightarrow T^*M$ is satisfied then follows by choosing a local section $P^J$ over $U\subset M$ that determines around any $x \in U$ normal Darboux coordinates on a nghbhd $U_x \subset U$ of $x$ so that its coordinate vector fields are parallel at $x \in M$ wrt $\nabla$. Then expressing $\hat s|U_y$ in these coordinates, noting that the $\omega$-dual of $\hat s$, $\hat s^* \in \Gamma(TU)$ satisfies $\nabla \hat s^*=0$ (cf. the proof of Proposition \ref{classi}) and invoking the explicit formulas in Proposition \ref{class} we see that $\nabla$ preserves $\hat {\mathcal{E}}_M={\rm im}(\tilde s)$ in $x$. An alternative proof of this fact will be given in the course of the proof of Proposition \ref{higgs}.
\end{proof}
Note that in the above the same remark applies as under Proposition \ref{classi}: {\it not} identifying the (isomorphic) algebras $\mathcal{A}_2(\R^{2n}, J_T)$ for any $T \in \mathfrak{h}$ two Frobenius structures $s_1$ and $s_2$ are equivalent if and only if $s_1$ and $s_2$ are homotopic and $j\circ s_1=j\circ s_2$ where in this case $j:(H_n\times_\rho Mp(2n, \R)/{\tilde G}\rightarrow (H_n\times_\rho Mp(2n, \R)/G_0\cap G_U$ is the canonical projection. Note further that the proof of Theorem \ref{genclass} illustrates two ways to understand an irreducible, semi-simple standard Frobenius structure associated to an equivariant map $s:\hat P \rightarrow (H_n\times_\rho Mp(2n, \R))/{\tilde G}$, that is a section of $P_{G/\tilde G}$, on one hand $s$ induces a section of the bundle $\mathcal{E}_{{\rm Gr}_1(\mathcal{W})}=P_{\tilde G}\times_{\tilde G, {\rm ev}_1\circ \tilde \mu_2\circ (i, i_{\mathcal{W}})}{\rm Gr}_1(\mathcal{W})$, on the other hand an irreducible semi-simple standard Frobenius structure can be understood as a line-bundle associated to the $\tilde G$-bundle $P_{\tilde G, s}=s^*(\tilde E_{G})$, namely $\mathcal{E}_M=P_{\tilde G, s}\times_{\tilde\mu_2\circ i}\mathcal{A}^0_2$. Note that this is a correspondence between fibre bundles, there is a priorily no interpretation of tangent vectors of $M$ as elements of ${\rm Aut}(\mathcal{E}_M)$, unless of course by using the above 'reciprocity'. To be precise, if $[s_U,\tilde u] \in \Gamma_U(\mathcal{E}_M), U\subset M$ is a local section, where $s_U:U\subset M\rightarrow P_{\tilde G,s}$ a local section, $\tilde u:U\subset M\rightarrow \mathcal{A}^0_2$, then we write $\tilde s_u=g^{-1}(s_U) s_U:M\rightarrow P_{\tilde G}$ with the equivariant function $g:\hat P\rightarrow G/\tilde G$ from the proof above. Frobenius multiplication of $[\tilde s_U, X],\ X:U\rightarrow \R^{2n}$ and $[s_u, \tilde u]$ is then given by
\begin{equation}\label{frobeniusm}
[\tilde s_U, X]\cdot [s_u, \tilde u]= [s_u, L(g(s_U))\circ \sigma_{g^{-1}(s_U)(iI)}(X)\circ L(g^{-1}(s_U))\tilde u]
\end{equation}
where $\sigma_T:\mathcal{A}_{2}(V)\rightarrow {\rm End}(\mathcal{S}(\R^n)),\ T \in \mathfrak{h}$ was defined above Corollary \ref{equivalence}.
Note that this follows from the representation of Frobenius multiplication in the bundle $\mathcal{E}_{{\rm Gr}_1(\mathcal{W})}=P_{\tilde G}\times_{\tilde G, {\rm ev}_1\circ \tilde \mu_2\circ (i, i_{\mathcal{W}})}{\rm Gr}_1(\mathcal{W})$ and the equivalence of $\mathcal{E}_M$ and the image of the section $\tilde s$ of $\mathcal{E}_{{\rm Gr}_1(\mathcal{W})}$ induced by $s$ above as vector bundles associated to $\tilde G$-reductions of the $G$-bundle $\hat P\simeq P_G$. We will see below that the degree $1$-part of the corresponding Frobenius algebra has an interpretation in terms of a 'Cartan-geometry type' connection in $P_{\tilde G,s}$.\\
As was noted above, $P_{\tilde G}$ resp. $P_{\tilde G,s}$ can be considered as subsets of $\hat P$ by considering $P_{\tilde G,s}=s^{-1}(Id_{G/\tilde G})\subset \hat P$ when writing $s$ as an equivariant map $s:\hat P\rightarrow G/\tilde G$, analogously for $s_0$ and $P_{\tilde G}$. Then the function $g:\hat P \simeq P_G \rightarrow G/\tilde G$ satisfying $s(p)=g(p)s_0(p)$ can be labelled as an automorphism of $\hat P$ that maps $P_{\tilde G}$ to $P_{\tilde G,s}$. Assume that for $s$ and $s_0$, composed with the canonical projection $\pi_{Mp}: G\rightarrow Mp(2n,\R)/\tilde G$ and considering $P\subset \hat P$ we have that $\tilde s=\pi_{Mp}\circ s|P:P\rightarrow Mp(2n,\R)/\tilde G$ and $\tilde s_0=\pi_{Mp}\circ s_0|P:P\rightarrow Mp(2n,\R)/\tilde G$ are isotopic, thus defining equivalent $\tilde G$-reductions $P_{\tilde G}$ of $P$.
Recall the projection ${\rm pr}_1:P_G\rightarrow T^*M$ factoring to $\tilde {\rm pr}_1: P_{G/\tilde G}\rightarrow T^*M$ as above, assume that $s$, considered as a map $s:M \rightarrow P_{G/\tilde G}$, satsifies $d\hat s=0$, with $\hat s=\tilde {\rm pr}_1\circ s:M\rightarrow T^*M$. Call the associated automorphisms $g:P_G \rightarrow G/\tilde G$ satisfying the latter condition {\it closed}. Define the automorphisms ${\rm Aut}_c(\hat P,P_{\tilde G})$ as the set of automorphisms $g:\hat P\rightarrow G/\tilde G$ which fix a given equivalence class of $\tilde G$-reductions $P_{\tilde G}$ of $P$ in the above way and are {\it closed} in the above sense. Then we have by Theorem \ref{genclass}:
\begin{folg}\label{classf}
The elements of ${\rm Aut}_c(\hat P,P_{\tilde G})$ are in one to one correspondence with the irreducible standard (weak, in general singular) Frobenius structures whose underlying $\tilde G\subset \hat U(n)$-reductions of $P$ are equivalent to $P_{\tilde G}$.
\end{folg}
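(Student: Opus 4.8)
The plan is to read this off directly from Theorem~\ref{genclass} by unwinding the dictionary, set up in the paragraphs preceding the statement, between sections of $\hat P_{G/\tilde G}$ and automorphisms of $\hat P$. Theorem~\ref{genclass} already supplies a bijection between irreducible standard (weak; regular in the K\"ahler case, singular in general) Frobenius structures whose underlying $\tilde G$-reduction is \emph{some} $\tilde G$-reduction of $P$ and the sections $s$ of $P_{G/\tilde G}$ in (\ref{pg}) that satisfy the integrability condition $d\hat s=0$ with $\hat s=\tilde{\mathrm{pr}}_1\circ s$. (Note that for irreducible, hence one-dimensional, representations ``semi-simple'' is vacuous, so the hypothesis here matches that of Theorem~\ref{genclass}.) Thus it suffices to match, bijectively, those sections whose \emph{induced} $\tilde G$-reduction of $P$ lies in the fixed isotopy class of $P_{\tilde G}$ with the set $\mathrm{Aut}_c(\hat P,P_{\tilde G})$.

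First I would fix the reference section $s_0\colon\hat P\to G/\tilde G$ determined by $P_{\tilde G}$ as in the discussion below (\ref{universal}) (i.e. $s_0\equiv \mathrm{Id}_{G/\tilde G}$ on $P_{\tilde G}\subset\hat P$, extended equivariantly by $s_0(p.g)=\mathrm{Id}_{G/\tilde G}.g$). For an arbitrary section $s$ there is a unique equivariant $g=g_s\colon\hat P\to G/\tilde G$ with $s(p)=g_s(p)s_0(p)$, and conversely every such $g$ recovers $s=g\,s_0$; as already observed, $g_s$ is precisely the automorphism of $\hat P$ carrying $P_{\tilde G}$ onto $P_{\tilde G,s}=s^*(\tilde E_G)$. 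So $s\mapsto g_s$ is a bijection between all sections of $P_{G/\tilde G}$ and all such automorphisms. It then remains to check that under this identification: (i) the integrability condition $d\hat s=0$ is exactly the condition that $g_s$ be \emph{closed} — which is immediate, since $\hat s=\tilde{\mathrm{pr}}_1\circ s$ depends on $s$ alone and ``closed'' was \emph{defined} to mean $d\hat s=0$; and (ii) the requirement that $\pi_{Mp}\circ s|_P$ be isotopic to $\pi_{Mp}\circ s_0|_P$ — equivalently that the $\tilde G$-reduction of $P$ induced by $s$ lie in the class of $P_{\tilde G}$ — is precisely the stipulation in the definition of $\mathrm{Aut}_c(\hat P,P_{\tilde G})$ that $g_s$ fix the given equivalence class of $\tilde G$-reductions of $P$. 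Combining (i), (ii) and Theorem~\ref{genclass} yields the asserted one-to-one correspondence.

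The only genuinely non-clerical point I expect, as opposed to bookkeeping, is to confirm that the ``closed'' condition on $g_s$ is well posed independently of the chosen representative $s_0$ of its isotopy class — i.e. that $d\hat s=0$ is a property of the Frobenius structure itself and not of the auxiliary trivialisation data — and that $\pi_{Mp}\circ s_0|_P$ is isotopic to (not merely homotopic, through possibly non-$\tilde G$-valued maps, to) the structure map of the fixed $P_{\tilde G}$. Both follow from the explicit formulas for the $G$-action in Proposition~\ref{class} together with the facts, already used in the proofs of Theorem~\ref{genclass} and Proposition~\ref{classi}, that $\mathrm{pr}_1$ factors through $P_{G/\tilde G}$ and that $(\nabla,J)$ may be homotoped within its class (restricting to $M\setminus\mathcal{C}$ in the non-torsion-free case) so as to preserve the relevant line subbundle. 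No analytic input beyond Corollary~\ref{equivalence} and Theorem~\ref{genclass} enters.
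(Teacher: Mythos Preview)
Your proposal is correct and follows essentially the same approach as the paper. The paper's own argument is simply the phrase ``Then we have by Theorem~\ref{genclass}'', relying on the dictionary between sections $s$ of $\hat P_{G/\tilde G}$ and automorphisms $g$ with $s=g\,s_0$ that is set up in the paragraph immediately preceding the corollary; you have merely unpacked that dictionary in detail, verifying that the closedness condition and the condition of fixing the equivalence class of $P_{\tilde G}$ match on both sides.
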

Note that in the above, we considered a given $\tilde G\subset \hat U(n)$-reduction $P_{\tilde G}$ of the $Mp(2n,\R)$-bundle $P$ as inducing  canonically a $\tilde G$-reduction of the $G$-bundle $\hat P$ by the definition of $\hat P$ in (\ref{reduction}) and used the same notation for both, strictly speaking different, objects.\\
Based on Proposition \ref{classN}, we will finally review the analogue of Theorem \ref{genclass} in the case of not neccessarily irreducible, but indecomposable (standard) Frobenius structures $(\mathcal{A}, \mathcal{L})$, where we understand {\it standard} as in the previous semi-simple case, that is $\Omega \in \Omega^1(M, End(\mathcal{L}))$ is realized by the map $X\mapsto (X-iJX)$, acting by symplectic Clifford multiplication on $\mathcal{L}$. To avoid a trivial re-statement of Proposition \ref{genclass} and for later applications, we will furthermore allow for pairs $(\mathcal{A}, \mathcal{L})$ on the smooth symplectic manifold $M$ so that the generalized subbundle $\mathcal{L}\subset \mathcal{Q}$ is smooth, that is there are for any $U\subset M$ smooth local generators $\xi_i:U\subset \mathcal{L},\ {\rm span}_{\C}\{\xi_i(x)\}_{i=1}^l=\mathcal{L}_x, x \in U$ but the fibre dimension $x\mapsto {\rm dim}(\mathcal{L}_x)$ is not neccessarily locally constant on $M$ for any $x \in M$, the points in $M$ where this fails to hold being called singular. We then assume in the following that there exists a finite increasing sequence of closed submanifolds $\emptyset =X_0\subset X_1\subset X_2\subset \dots \subset X_r=M$, so that the dimension of the fibres of $\mathcal{L}|X_k\setminus X_{k-1},\ k\in \{1,\dots,r\}$ is locally constant, thus on any stratum $X_k, \ k\in \{1,\dots,r\}$, the set of singular points of $\mathcal{L}|X_k$ is given by $X_{k-1}$. Finally we will assume for the following that the normal bundle $N_{k-1}$ of $X_{k-1}$ in $TM$ is trivial and for the on any $X_k\setminus X_{k-1}$ locally constant numbers $d_k={\rm dim}(\mathcal{L}|X_k\setminus X_{k-1})$ we have $d_{k-1}-d_k={\rm dim}\ N_{k-1}-{\rm dim}\ N_{k}$. Denote by $\mathcal{X}=\{X_0, X_1, \dots, X_r\}$ the set of strata on $M$. We will call a Frobenius structure $(\mathcal{A}, \mathcal{L})$ in the above sense smoothly subordinated to $\mathcal{X}$.\\
Let for the below $\tilde G=\hat U(n)\subset Mp(2n,\R)$ or $\tilde G=\hat O(n)\subset Mp(2n,\R)$. As above, we then let $P_{\tilde G}$ be a fixed $\tilde G$-reduction of a given metaplectic structure $P$ on $M$, let again $i:\tilde G \hookrightarrow G=H_n\times_\rho Mp(2n, \R)$ resp. $i:Mp(2n,\R) \hookrightarrow G$ be the standard embeddings and the corresponding principal bundle $\hat P$ induced by $i$ as defined in (\ref{reduction}), recall that $\hat P\simeq P_G=\pi_P^*(TM) \times_{Mp(2n, \R)} P$ as we prove below in Proposition \ref{higgs} where $P_G$ is introduced in (\ref{tangent}). Then, by the above, $P_{\tilde G}$ is tautologically a $\tilde G\subset G$-reduction of the $G$-bundle $\hat P$ and the isomorphism classes of $\tilde G$-reductions of $\hat P$ are parametrized again by the homotopy classes of sections of $\hat P_{G/\tilde G}$ in (\ref{pg}). The generalization of Theorem \ref{genclass} to the case of indecomposable standard Frobenius structures on a stratified (but smooth) symplectic manifold $M$ is then, fixing the same notations for ${\rm pr}_1:P_G\rightarrow T^*M$ and $\tilde {\rm pr}_1: P_{G/\tilde G}\rightarrow T^*M$ as discussed above Theorem \ref{genclass} given by the following. Recall that in above Proposition \ref{classN}, we defined $\mathcal{P}_0(K_N)\subset \N_0^n$ to be the set of all subsets $K\subset K_N\subset \N_0^n$ which are chain incident to $0$. We call the set of all $\{i \in \{1,\dots,n\}: \exists (k_1, \dots, k_n)\in K_N \ {\rm s.t.}\ k_i\geq 1\}$, the singular support of $K_N$, denoted ${\rm supp}_s(K_N)$.
\begin{theorem}\label{genclassN}
For a given closed subgroup $\tilde G\subset \hat U(n)\subset Mp(2n,\R)$ and a fixed metaplectic structure $P$ on $M$, the set of  indecomposable (weak, in the case of $(M, J)$ Kaehler regular, in general singular) standard Frobenius structures smoothly subordinated to a stratification $\mathcal{X}$ of $M$ as described above whose underlying $\tilde G$-structure $P_{\tilde G}$ is a $\tilde G$-reduction of $P$ and the given symplectic connection preserves the almost complex structure $J$ associated to $P_{\tilde G}$, is in bijective correspondence to the set of pairs $(s, K)$ where $s$ is a smooth section of $P_{G/\tilde G}$ in (\ref{pg}) and $K:\mathcal{X}\rightarrow \bigcup_{N\in \N_0} \mathcal{P}_0(K_N)$ is so that $|{\rm supp}_s(K_N(X_k))|={\rm dim}(N_k),\ k \in \{1,\dots,r\}$, so that we have $\hat s=\tilde {\rm pr}_1\circ s:M\rightarrow T^*M$ is closed, that is $d\hat s=0$. Furthermore two such structures $(s_1, K_1), (s_2,K_2)$ are equivalent if and only if $K_1=K_2$ as maps on $\mathcal{X}$ and $s_1$ and $s_2$ are homotopic and $j\circ s_1=j\circ s_2$ if we understand $s_i$ as equivariant maps $s_i:\hat P \rightarrow (H_n\times_\rho Mp(2n, \R))/{\tilde G}$ for $i=1,2$ and $j:(H_n\times_\rho Mp(2n, \R))/{\tilde G}\rightarrow (H_n\times_\rho Mp(2n, \R))/G_0$ is the canonical projection.
\end{theorem}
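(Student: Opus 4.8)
The plan is to repeat the proof of Theorem \ref{genclass} stratum by stratum, replacing the fibrewise classification of irreducible representations (Proposition \ref{class}, Corollary \ref{equivalence}) with that of indecomposable ones (Proposition \ref{classN}, Corollary \ref{equivalenceN}), and using the map $K$ to book-keep how the fibre of $\mathcal{L}$ degenerates as one descends through $X_0\subset\dots\subset X_r=M$. First, for the direction $(s,K)\Rightarrow$ Frobenius structure: exactly as in Theorem \ref{genclass}, the section $s$ of $P_{G/\tilde G}$ determines a $\tilde G$-reduction $P_{\tilde G,s}=s^*(\tilde E_{G})$ of $\hat P\simeq P_G$ (hence of $P$), together with the equivariant map $\hat s:\hat P\to\mathcal{A}^N_{1,2}$ sending $p$ to $\mu^N_{1,2}(s(p),(\C\cdot f_{0,iI},\mathcal{A}_{1,2}(\R^{2n},iI)))$, now built from the action $\mu^N_{1,2}$ of (\ref{action1N}). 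Over a point $x\in X_k\setminus X_{k-1}$ one attaches the finite-dimensional space $\mathcal{F}_{h,g,N}=\bigoplus_{l\in K(X_k)}\C\cdot f_{h,g,l}$ rather than a single line; the condition $|{\rm supp}_s(K_N(X_k))|=\dim N_k$ together with the standing hypothesis $d_{k-1}-d_k=\dim N_{k-1}-\dim N_k$ makes the fibre dimensions compatible across adjacent strata, so that $\mathcal{L}\subset\mathcal{Q}$ becomes a smooth generalized subbundle smoothly subordinated to $\mathcal{X}$ — here one uses triviality of the normal bundles $N_k$ to trivialize the relevant harmonic-oscillator eigenspaces near $X_{k-1}$, and chain-incidence of $K(X_k)$ to $0$ to ensure that the $\mathcal{A}$-action closes on each $\mathcal{F}_{h,g,N}$ via the lowering relations $(a_i-iJ_Ta_i)h^g_{k_1\dots k_n}=h^g_{k_1\dots k_i-1\dots k_n}$ recorded in the proof of Proposition \ref{classN}.

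Next I would verify that $(\mathcal{A},\mathcal{L})$ so constructed is a Frobenius structure in the sense of Definition \ref{frobenius}, again mimicking Theorem \ref{genclass}: $\Omega\wedge\Omega=0$ since the image of $X\mapsto (X-iJX)$ lies in the $\omega_{\C}$-Lagrangian $L_T$ and hence the operators commute; $d_\nabla\Omega=0$ since $\nabla J=0$ together with the integrability $d\hat s=0$ forces $\nabla$ to preserve $\mathcal{L}$, by the normal-Darboux-coordinate argument of Proposition \ref{classi} applied on each stratum (and, in the non-torsion-free case, on $M\setminus\mathcal{C}$ after the homotopy of the pair $(\nabla,J)$ indicated there); indecomposability of $(\mathcal{A},\mathcal{L})_x$ on each stratum is precisely Proposition \ref{classN}; and the remaining axioms (the Euler field $\mathcal{E}$, the scaling relations) are obtained from $\hat s$ and the Hamiltonian description of Propositions \ref{ham1} and \ref{euler}, unchanged from the irreducible case except that $\mathcal{L}$ now has higher rank on the lower strata.

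For the converse, start with an indecomposable standard Frobenius structure $(\mathcal{A},\mathcal{L})$ smoothly subordinated to $\mathcal{X}$ whose underlying $\tilde G$-structure reduces $P$. Over each $X_k\setminus X_{k-1}$ the pointwise pair $(\mathcal{A}_x,\mathcal{L}_x)$ is an indecomposable representation of $\mathcal{A}_{1,2}(\R^{2n})$ on the smooth vectors of $L$, so by Corollary \ref{equivalenceN} its isomorphism type lies in $\mathcal{P}_0(K_N)\times G/G_0$. The $\mathcal{P}_0(K_N)$-component is a discrete invariant, hence locally constant, hence constant on each connected component of the stratum; this defines $K(X_k)$, and $|{\rm supp}_s(K_N(X_k))|=\dim N_k$ is forced by $\dim\mathcal{L}_x=d_k$ and the shape of $\mathcal{F}_{h,g,N}$. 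The $G/G_0$-component varies smoothly and, using the fixed $\tilde G$-reduction and the identification $\hat P\simeq P_G$ (Proposition \ref{higgs}), assembles, exactly as in Theorem \ref{genclass}, into a smooth section $s$ of $P_{G/\tilde G}$; the closedness $d\hat s=0$ follows because $\nabla$ preserves $\mathcal{L}$, which is the standard-structure analogue of Proposition \ref{lagrangian} noted in the remark following it. One then checks, via the formula (\ref{frobeniusm}) and the computation in Theorem \ref{genclass}, that applying the forward construction to $(s,K)$ reproduces $(\mathcal{A},\mathcal{L})$ up to equivalence.

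Finally, the equivalence criterion: two Frobenius structures coincide under equivalence iff, fibrewise, the representations are isomorphic and the underlying $\tilde G$-reductions of $P$ are isomorphic. Proposition \ref{classN} states the fibrewise indecomposables $\kappa^{1,2}_{h_1,g_1,N}$, $\kappa^{1,2}_{h_0,g_0,N}$ are equivalent exactly when $K_1=K_2$ and some conjugate of $G_0$ intertwines them; translated into the bundle picture this is $K_1=K_2$ as maps on $\mathcal{X}$ together with $j\circ s_1=j\circ s_2$ for $j:(H_n\times_\rho Mp(2n,\R))/\tilde G\to(H_n\times_\rho Mp(2n,\R))/G_0$, while isomorphism of the $\tilde G$-reductions is homotopy of $s_1,s_2$ as sections of $P_{G/\tilde G}$; the case $N=0$ reduces to Theorem \ref{genclass}. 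The hard part will be the gluing across strata — proving that the stratum-wise families $\mathcal{F}_{h,g,N}$ patch to a single smooth generalized subbundle of $\mathcal{Q}$ and that $\nabla$ preserves it across the singular strata $X_{k-1}$ — and this is exactly where triviality of the $N_k$, the relation $d_{k-1}-d_k=\dim N_{k-1}-\dim N_k$, and chain-incidence of each $K(X_k)$ to $0$ (forcing the degeneration of an open-stratum fibre to follow the Jordan normal form (\ref{decompN})) must be used together.
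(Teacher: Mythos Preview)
Your approach is essentially the paper's: run the proof of Theorem \ref{genclass} stratum by stratum, replacing Proposition \ref{class}/Corollary \ref{equivalence} by Proposition \ref{classN}/Corollary \ref{equivalenceN}, and use $K$ to record the fibrewise Jordan type. The paper's write-up is in fact sketchier than yours on the converse direction and the equivalence criterion, deferring most details to ``in analogy to the proof of Theorem \ref{genclass}.''

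One concrete point worth sharpening: where you say ``use triviality of the normal bundles $N_k$ to trivialize the relevant harmonic-oscillator eigenspaces,'' the paper makes this precise by passing from $P_{\tilde G}|X_k$ to a \emph{further} reduction $P_{\tilde G_k}$ to the subgroup $\tilde G_k\subset\tilde G$ consisting of frames adapted to the splitting $TM|X_k=N_k\oplus TX_k$ (the trivialization $\mathfrak{n}_k:X_k\to N_k$ picks a connected component). This is what gives meaning to the condition $|\mathrm{supp}_s(K_N(X_k))|=\dim N_k$: the indices in the singular support of $K(X_k)$ are matched to the normal directions via this adapted frame, so that the equivariant map $\hat s:\hat P|X_k\to\mathcal{A}^N_2$ in (\ref{equivN}) is genuinely $\tilde G_k$-equivariant. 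Without this reduction, the pairing of oscillator ladder indices with geometric normal directions is not canonical. Also, you flag the gluing across strata as ``the hard part,'' but the paper disposes of it in one line: global smoothness of $\mathcal{E}_M\simeq\hat{\mathcal{E}}_M$ follows because the underlying section $s:M\to\hat P_{G/\tilde G}$ is globally smooth on $M$ (the stratum-wise constructions are all restrictions of objects built from this single $s$).
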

\begin{proof}
The proof is largely analogous to the proof of Theorem \ref{genclass}. Any fixed trivialization $\mathfrak{n}_k: X_k\rightarrow N_K\subset TM$ defines a further reduction $P_{\tilde G_k}$ of $P_{\tilde G}|X_k$ to the subgroup $\tilde G_k\subset \tilde G$, where $P_{\tilde G_k}$ consists of the set of those frames in $P_{\tilde G}|X_k$ which are adapted to the decomposition of $TM|X_k= N_k\oplus TX_k$: we take any fixed connected component of $P_{\tilde G_k}$, all choices will lead to isomorphic structures, we assume that the ${\rm dim}\ N_k$ elements of any frame corresponding to ${\rm supp}_s(K_N(X_k))$ in $\pi_k(p),\ p\in P_{\tilde G_k}, \pi_k: P_{\tilde G_k} \rightarrow R_{\tilde G_k}$, where $R_{\tilde G_k}$ is the corresponding reduction of $R|X_k$, define a global section of $R_{Mp(2n,\R)/\tilde G_k}=P|X_k\times_{Mp(2n,\R)}(Mp(2n,\R)/{\tilde G_k})\rightarrow X_k$. Then noting that $\hat P|X_k=P_{\tilde G_k}\times_{i_k,\tilde G_k} G,\ i_k: \tilde G_k\hookrightarrow G$ we can extend $s$ as a section of $P_{G/\tilde G}|X_k\rightarrow X_k$ via the above to a (homonymous) section $s$ of $\hat P_{G/\tilde G_k}=\hat P|X_k\times_{G}(G/{\tilde G_k})\rightarrow X_k$. Then $K:\mathcal{X}\rightarrow \bigcup_{N\in \N_0} \mathcal{P}_0(K_N)$, restricted to $X_k\subset M, \ k\in \{1, \dots,r\}$ and $s$ considered as an equivariant map $s:\hat P|X_k\rightarrow G/\tilde G_k$ and the fact that $K|X_k \in \mathcal{P}_0(K_N)$ for some $N \in \N_0$ with $|{\rm supp}_s(K_N(X_k))|={\rm dim}(N_k),\ k \in \{1,\dots,r\}$, define an equivariant map $\hat s:\hat P|X_k \rightarrow  \mathcal{A}^N_2$ (for $V=\R^{2n}$ in $\mathcal{A}^N_2$ and $\mathcal{A}^N_2$ given as in (\ref{action1N})) by setting
\begin{equation}\label{equivN}
p \in \hat P|X_k \mapsto \mu^N_2\left(s(p),(\oplus_{k\in K(X_k)} \C \cdot f_{0, iI, k}, \mathcal{A}_{2}(\R^{2n}, iI))\right), \ k\in \{1,\dots, r\}.
\end{equation}
with the notation from the discussion above Proposition \ref{classN}. Then the equivariant map $\hat s:\hat P|X_k \rightarrow  \mathcal{A}^N_2$ in (\ref{equivN}) defines a global smooth section $\hat s$ of the associated fibre bundle $\mathcal{E}_{G}=\hat P|X_k\times_{G,  \mu^N_2}\mathcal{A}^N_2$. Let ${\rm ev}_1:\mathcal{A}^N_2\rightarrow L^2(\R^n)$ be the map that assigns to a pair $(\mathcal{F}_{h, T, N},\mathcal{A}_2(\R^{2n}, T)) \in \mathcal{A}^N_2$ the (union of the points of) the vector space $\mathcal{F}_{h, T, N}\subset L^2(\R^n)$ and let $\mathcal{W}={\rm ev}_1 (\mathcal{A}^N_2)\subset L^2(\R^n)$ the image of this map in $L^2(\R^n)$. Then we have that $K$, restricted to $X_k, \ k\in\{1,\dots,r\}$ and  $\tilde s={\rm ev}_1\circ \hat s:\hat P\rightarrow {\rm Gr}_1(\mathcal{W})$ where ${\rm Gr}_N(\mathcal{W})={\rm im}({\rm pr}_1(\mathcal{A}^N_2))$ denotes the set of $\leq M^N_n$-dimensional subspaces of $\mathcal{W}$, define a continuous (smooth) section wrt the gap metric topology of $L^2(\R^n)$ $\tilde s: M\rightarrow \mathcal{E}_{{\rm Gr}_N(\mathcal{W})}$ and thus a section of the (family of) fibre bundles 
\[
\mathcal{E}_{{\rm Gr}_N(\mathcal{W})}|X_k=\hat P|X_k\times_{G, {\rm ev}_1\circ \mu^N_2\circ (i, i_{\mathcal{W}})}{\rm Gr}_N(\mathcal{W})\simeq P_{\tilde G_k}\times_{\tilde G_k, {\rm ev}_1\circ \mu^N_2\circ (i_k, i_{\mathcal{W}})}{\rm Gr}_N(\mathcal{W})\rightarrow X_k\subset M,
\]
where $i_{\mathcal{W}}:{\rm Gr}_N(\mathcal{W})\rightarrow \mathcal{A}^N_2$ is the embedding $i_{\mathcal{W}}(\mathcal{F}_{h, T, N})=(\mathcal{F}_{h, T, N}, \mathcal{A}_2(\R^{2n},T))$. The image of $\tilde s$, that is the (generalized) subbundle $\hat {\mathcal{E}}_M|X_k:={\rm im}(\tilde s|X_k)\subset \mathcal{E}_{{\rm Gr}_N(\mathcal{W})}|X_k$ we claim to be isomorphic to $\mathcal{E}_M|X_k$ defined by $K|X_k$ and the section $s$ of $\hat P_{G/\tilde G_k}\rightarrow X_k$ asssociated to the $\tilde G_k$-bundle $P_{\tilde G_k,s} =s^*(\tilde E_G)\rightarrow M$ (where here we understand $\tilde E_G\rightarrow E_G/\tilde G_k=B_{\tilde G_k}$ as defined over $X_k$) as
\[
\mathcal{E}_M|X_k=s^*(\tilde E_{G})|X_k\times_{\tilde G_k, \mu^N_2\circ i}(\mathcal{A}^0_2)^N\rightarrow X_k,\ (\mathcal{A}^0_2)^N:=(\oplus_{k\in K(X_k)} \C \cdot f_{0, iI, k}, \mathcal{A}_2(\R^{2n}, iI)).
\]
The proof of this claim as the remaining assertions are proven in analogy to the proof of Theorem \ref{genclass} and thus omitted here. Note finally that the smoothness $\mathcal{E}_M\simeq \hat {\mathcal{E}}_M$ follows from the fact that the underlying defining section $s:M\rightarrow \hat P_{G/\tilde G}$ is globally smooth on $M$.
\end{proof}
In generalization of and with the notation above Corollary \ref{classf}, that is ${\rm Aut}_c(\hat P,P_{\tilde G})$ is the set of automorphisms $g:\hat P\rightarrow G/\tilde G$ that give a fixed equivalence class of $\tilde G$-reductions $P_{\tilde G}$ of $P$ and are {\it closed} in the sense described above Corollary \ref{classf} we can then state for a given stratification $\mathcal{X}$ and a set of trivializations of their normal bundles $N_k\subset TM|X_k$ of $M$:
\begin{folg}\label{classfN}
The set of pairs $(g, K)$ where $g$ is an element of ${\rm Aut}_c(\hat P,P_{\tilde G})$ and $K$ maps $\mathcal{X}$ to $\bigcup_{N\in \N_0} \mathcal{P}_0(K_N)$ so that $|{\rm supp}_s(K_N(X_k))|={\rm dim}(N_k),\ k \in \{1,\dots,r\}$, is in one to one correspondence with the indecomposable (weak, in general singular) standard Frobenius structures smoothly subordinated to $\mathcal{X}$ whose underlying $\tilde G\subset \hat U(n)$-reductions of $P$ are equivalent to $P_{\tilde G}$.
\end{folg}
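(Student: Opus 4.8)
The plan is to obtain Corollary~\ref{classfN} from Theorem~\ref{genclassN} in exactly the manner in which Corollary~\ref{classf} was obtained from Theorem~\ref{genclass}: the only feature beyond the irreducible case is that the combinatorial datum $K:\mathcal{X}\to\bigcup_{N\in\N_0}\mathcal{P}_0(K_N)$ is carried along unchanged, so the entire task is to re-express a section $s$ of the bundle $\hat P_{G/\tilde G}$ of (\ref{pg}) in terms of an automorphism $g$ of $\hat P$ that fixes the prescribed equivalence class of $\tilde G$-reductions $P_{\tilde G}$.

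First I would invoke Theorem~\ref{genclassN}: the indecomposable (weak, in general singular) standard Frobenius structures smoothly subordinated to $\mathcal{X}$ whose underlying $\tilde G$-reduction is equivalent to $P_{\tilde G}$ are in bijection with pairs $(s,K)$, where $s$ is a smooth section of $\hat P_{G/\tilde G}$ whose composite $\hat s=\tilde{\rm pr}_1\circ s:M\to T^*M$ with the projection of (\ref{reduction}) is closed, and $K$ satisfies $|{\rm supp}_s(K_N(X_k))|={\rm dim}\,N_k$ on every stratum $X_k$. Second, I would set up the dictionary between such $s$ and automorphisms $g\in{\rm Aut}_c(\hat P,P_{\tilde G})$ by the device already used above Corollary~\ref{classf}: writing $s$ and the tautological section $s_0$ attached to $P_{\tilde G}$ as $G$-equivariant maps $\hat P\to G/\tilde G$ and setting $s(p)=g(p)\,s_0(p)$, the resulting $g:\hat P\to G/\tilde G$ is, for $M$ connected and once a branch is fixed, the automorphism of $\hat P$ carrying $P_{\tilde G}=s_0^{-1}({\rm Id})$ onto $P_{\tilde G,s}=s^{-1}({\rm Id})$, and $g\mapsto s=g\cdot s_0$ inverts it. Third, I would check that the two side conditions correspond: $s$ represents the fixed equivalence class of $P_{\tilde G}$, i.e. $\pi_{Mp}\circ s|_P$ is isotopic to $\pi_{Mp}\circ s_0|_P$, precisely when $g$ fixes that class in the sense used in the definition of ${\rm Aut}_c(\hat P,P_{\tilde G})$, while $\hat s$ is closed precisely when $g$ is \emph{closed}, which is immediate since \emph{closed} for $g$ was defined to mean $d\hat s=0$. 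As the datum $K$ and the constraint $|{\rm supp}_s(K_N(X_k))|={\rm dim}\,N_k$ are literally the same on both sides, composing the bijection $(s,K)\leftrightarrow(g,K)$ with that of Theorem~\ref{genclassN} yields the assertion; throughout one uses $\hat P\simeq P_G$ from Proposition~\ref{higgs} to give meaning to $\tilde{\rm pr}_1$ and to the automorphisms in play, while the stratum-wise refinements of $s$ to $\tilde G_k$-reductions (via the chosen trivializations of the $N_k$) enter only inside the construction of the Frobenius structure furnished by Theorem~\ref{genclassN}.

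The points that require real care, and hence the main obstacle, are the well-definedness of the dictionary $s\leftrightarrow g$ and the genuine equivalence of the two formulations of fixing the equivalence class of $P_{\tilde G}$. For the first, one must use connectedness of $\hat P$ so that the equivariant lift underlying $g$ is determined up to the single choice of branch, and verify that $g$ indeed defines a bundle automorphism rather than a mere set map; for the second, one must unwind the definitions so that isotopy of the induced $Mp(2n,\R)/\tilde G$-valued maps on $P$ on the section side matches the automorphism-level notion built into ${\rm Aut}_c(\hat P,P_{\tilde G})$. One should also record, as in the proof of Theorem~\ref{genclassN}, that the construction is independent of the auxiliary trivializations of the normal bundles $N_k\subset TM|_{X_k}$ and of the chosen connected component of each $P_{\tilde G_k}$, all such choices producing isomorphic structures; granting this, nothing beyond the bookkeeping above remains.
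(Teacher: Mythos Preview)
Your proposal is correct and follows essentially the same approach as the paper, which in fact gives no explicit proof for Corollary~\ref{classfN} at all: the paper simply prefaces the statement with ``In generalization of and with the notation above Corollary~\ref{classf}\ldots'' and treats it as an immediate consequence of Theorem~\ref{genclassN} together with the dictionary $s\leftrightarrow g$ (via $s(p)=g(p)s_0(p)$) and the definition of ${\rm Aut}_c(\hat P,P_{\tilde G})$ already established in the discussion preceding Corollary~\ref{classf}. Your write-up is thus a faithful and somewhat more careful unpacking of what the paper leaves implicit.
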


\subsection{Spectrum, structure connection and formality}

Inspecting (\ref{bla2}), we see that the expression $\nabla \mathcal{E}_i$ is for any $i\in \{1, \dots, k\}$ for a semi-simple Frobenius structure $(\mathcal{A}, \mathcal{L})$ with $\Omega$-eigenline-bundle splitting $\mathcal{L}= \bigoplus_{i=1}^k\mathcal{L}_i$ an element of $\Omega^1(M, {\rm End}(\mathcal{L}_i))$. In the usual case of Frobenius structures (\cite{dubrovin}) defining a module structure of $\mathcal{O}_{T^*M}$ on $TM$, the presence of a 'flat structure' and canonical coordinates $e_j, j=\{1,\dots, 2n\}$ satisfying $e_j\circ e_i=\delta_{ij}e_i$ and the $e_i$ being orthogonal wrt a given metric imply that the expression $\nabla \mathcal{E}$, $\mathcal{E}$ being the Euler vector field, as an endomorphism of $TM$ is diagonalized by coordinates $t_i$ defining the flat structure with its eigenvalues manifesting the {\it spectrum} of the Frobenius structure. In our situation, we are thus tempted to call the (in general non-closed) forms 
\begin{equation}\label{spec1}
\tilde \alpha_i:=\nabla \mathcal{E}_i \in \Omega^1(M, {\rm End}(\mathcal{L}_i))\simeq \Omega^1(M, \C), \ i\in \{1,\dots,k\},
\end{equation}
where the identification is here given by $\cdot$, the spectrum of the semi-simple Frobenius structure $(\mathcal{A}, \mathcal{L})$, but in more restricted cases we are able to come up with something more intelligible. In the following, we will always assume that if $\mathcal{E}_i$ does not exist globally on $M$, we have chosen a covering $\pi:\tilde M_i\rightarrow M$ so that $\pi^*\alpha_i$ is exact (for instance that associated to ${\rm ker}\ (ev_{\alpha_i}: \pi_1(M)\rightarrow  \R)$), hence $\mathcal{E}_i$ is well defined on $\tilde M$. We will continue to write $M$ instead of $\tilde M_i$, where this causes no confusion. As above we will denote $\mathcal{C}_i$ the critical set of $\alpha_i$ resp. $\pi^*\alpha_i$ in $M$ resp. $\tilde M$, that is the set where $\alpha_i=0$ (resp. $\pi^*(\alpha_i)=0$).
\begin{propdef}\label{spectrum}
Let for the following $M$ be connected and compact or compact with boundary.
\begin{enumerate}
\item Assume $(\mathcal{A}, \mathcal{L})$ is a semi-simple standard Frobenius structure with $\nabla J=0$ and $k=n$ and that $(du_i)^{\#_\omega}(du_j\circ J)=\delta_{ij}$ and $\{u_i,u_j\}=0$ for all $i,j \in \{1,\dots, n\}$, that is for any $x \in M$ the vectors $((du_i)^{\#_\omega}, J(du_i)^{\#_\omega})_x, i=1,\dots, n$ are proportional to a unitary basis of $(T_xM, \omega_x, J_x)$. Assume that $\nabla$ is torsion-free (thus $M$ Kaehler). Then ${\rm ker}\nabla \mathcal{E}_i=(\C\cdot(du_i\circ J)^{\#_\omega})^\perp$, where $\perp$ here refers to orthogonality wrt $\omega (\cdot, J\cdot)$. Furthermore $w_i=\nabla_{(du_i\circ J)^{\#_\omega}} \mathcal{E}_i \in {\rm End}(\mathcal{L}_i)\simeq \Omega^0(M,\C), i=1, \dots, n$, are constant, thus define a set of $w_i \in \C$ which we will call the spectral numbers of $(\mathcal{A}, \mathcal{L})$.
\item Assume $(\mathcal{A}, \mathcal{L})$ is a semi-simple {\it rigid} standard Frobenius structure with $\nabla J=0$ and that $M$ is formal, that is all (higher) Massey products on $H^*(M, \mathbb{C})$ vanish and that $\mathcal{C}_i$ is generic, thus $H^1(\mathcal{C}_i, \C)=0$. Then for any $i \in K\subset \{1, \dots, k\}$ so that the eigenform $\alpha_i$ of $\Omega$ on $\mathcal{L}_i$ has a non-trivial cohomology class, that is $0\neq [\alpha_i] \in H^1(M, \mathbb{C})$ the corresponding form $\tilde \alpha_i=\nabla \mathcal{E}_i \in \Omega^1(\tilde M_i, \C)$ has a non-vanishing closed part $\tilde \alpha_i^c$ wrt the Hodge decomposition of $H^1(\tilde M_i, \mathbb{C})$. Assume there is a {\rm canonical} set $\gamma_j \in H_1(M, \mathbb{Z}), j \in \{1,\dots,r\}$ of generators of $H_1(M, \mathbb{Z})$ and write for each $i \in K$ $PD[\alpha_i]=\sum_{j=1}^r a_{ij}\gamma_j$. If $\tilde M_i\rightarrow M$ is non-trivial choose a lift the $\gamma_j$ to (in general non-closed) paths $\tilde \gamma_j$ in $\tilde M_i$. We define the evaluation $w_{ij}=[\tilde \alpha_i^c](a_{ij}\tilde \gamma_j), \ i \in K, j \in \{1, \dots, r\}$ as the spectral numbers of $(\mathcal{A}, \mathcal{L})$.
\item Assume $(\mathcal{A}, \mathcal{L})$ is a semi-simple standard Frobenius structure with $\nabla J=0$ so that $\nabla$ is torsion-free (thus $M$ Kaehler). Then $\nabla \mathcal{E}_i$ is closed for all $i \in \{1,\dots, k\}$. Assume $[\alpha_i]\neq 0$ for all $i=1,\dots, k$. We define $w_{ij}=[\nabla \mathcal{E}_i](a_{ij}\tilde \gamma_j), \ i \in \{1,\dots, k\}, j \in \{1, \dots, r\}$ as in (2.) with $\gamma_j \in H_1(M, \mathbb{Z}), j \in \{1,\dots,r\}$ generating $H_1(M, \mathbb{Z})$ and $PD[\alpha_i]=\sum_{j=1}^r a_{ij}\gamma_j$.
\item If for (1.) or (2.) of the above the common assumptions hold and assume in addition that rational multiples of $0 \neq [\alpha_i]\in H^1(M, \mathbb{Q})$ for all $i=1,\dots, n$ generate $H^1(M, \mathbb{Q})$ and with the above notations, $n=r$. Then the respective definitions of spectral numbers coincide for an appropriate set of generators $\gamma_j \in H_1(M, \mathbb{Z})/{\rm Tor}$. If for any other subset of (1.)-(3.) the common assumptions are satisfied, then the respective definitions of the spectrum coincide.
\end{enumerate}
\end{propdef}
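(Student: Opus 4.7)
The plan is to show that under each set of common hypotheses, the spectral data of (1), (2), (3) all amount to the evaluation of the cohomology class $[\nabla\mathcal{E}_i]$ (or of its closed part, in the non-torsion-free case of (2)) against a basis of $H_1(M,\mathbb{Z})/{\rm Tor}$ adapted to the covering $\pi:\tilde M_i\to M$. Under the extra hypotheses of (4), namely rational generation of $H^1(M,\mathbb{Q})$ by the $[\alpha_i]$ and $n=r$, one may choose $\gamma_j\in H_1(M,\mathbb{Z})/{\rm Tor}$ so that $PD[\alpha_i]$ lies, up to rational rescaling, along the single generator $\gamma_i$, making $a_{ij}=a_i\delta_{ij}$. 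This collapses the matrix-valued spectrum of (2) and (3) to its diagonal and makes the comparison with the single-index $w_i$ of (1) manifest.

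The first key step is to establish the explicit proportionality $\nabla\mathcal{E}_i=-w_i\,\alpha_i$ under the hypotheses of (1). From the description in (1) that $\nabla\mathcal{E}_i$ vanishes on $(\mathbb{C}\cdot(du_i\circ J)^{\#_\omega})^\perp$ with $\perp$ taken in $g=\omega(\cdot,J\cdot)$, a computation using the defining identity $\beta(J\beta^{\#_\omega})=1$ and the relation that $\omega((du_i\circ J)^{\#_\omega},JY)$ is a non-zero multiple of $du_i(Y)$ identifies the annihilator of $\nabla\mathcal{E}_i$ as $\ker du_i=\ker\alpha_i$; hence $\nabla\mathcal{E}_i$ is a scalar multiple of $\alpha_i$. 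Using $du_i((du_i\circ J)^{\#_\omega})=-1$ one reads off the constant as $-w_i$, giving $[\nabla\mathcal{E}_i]=-w_i[\alpha_i]\in H^1(M,\mathbb{C})$.

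The second step passes from this cohomological identity to the line-integral evaluations in (2) and (3). Since $\pi:\tilde M_i\to M$ is, by construction, the cover on which $\pi^*\alpha_i$ is exact with primitive $\pi^*u_i$, a path $\tilde\gamma_j$ lifting $\gamma_j$ has endpoint difference of $\pi^*u_i$ equal to $\int_{\gamma_j}\alpha_i$, so that $\int_{\tilde\gamma_j}\nabla\mathcal{E}_i=-w_i\int_{\gamma_j}\alpha_i$. For the diagonalised choice of generators this yields $w_{ij}^{(3)}=a_{ij}\int_{\tilde\gamma_j}\nabla\mathcal{E}_i=-w_i\,a_i^2\,\delta_{ij}$, matching $w_i$ after the prescribed rescaling and thus proving the coincidence of (1) and (3). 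The coincidence of (2) and (3) is tautological whenever both sets of hypotheses apply: torsion-freeness already makes $\tilde\alpha_i=\nabla\mathcal{E}_i$ closed as claimed in (3), so $\tilde\alpha_i^c=\tilde\alpha_i$ and the two evaluations are literally the same integral. Finally (1) versus (2) follows by combining the proportionality of Step 1---which using formality together with $H^1(\mathcal{C}_i,\mathbb{C})=0$ survives passage to the Hodge-decomposed closed part $\tilde\alpha_i^c$---with the same period-lift argument.

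The principal technical obstacle is the careful bookkeeping of normalisation constants stemming from the nonstandard operator $(\cdot)^{\#_\omega}$, and the justification that the formal "evaluation of a closed form on a non-closed lifted path" really computes $\int_{\gamma_j}\alpha_i$; this rests on the explicit construction of the covering $\pi:\tilde M_i\to M$ as a period cover of $\alpha_i$. Once these are fixed, the reduction from the $n\times r$ spectral indexing to a diagonal via the rational-generation hypothesis of (4) is routine linear algebra over $\mathbb{Q}$.
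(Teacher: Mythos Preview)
Your approach to part (4) is essentially the paper's: both arguments hinge on the observation that, under the hypotheses of (1), the kernel description $\ker\nabla\mathcal{E}_i=(\mathbb{C}\cdot(du_i\circ J)^{\#_\omega})^\perp$ together with the constancy of $w_i$ is equivalent to the pointwise proportionality $\nabla\mathcal{E}_i=-w_i\,\alpha_i$, and both then reduce the comparison to integrating this identity over lifts $\tilde\gamma_j$ of a diagonalised basis of $H_1(M,\mathbb{Z})/\mathrm{Tor}$. The paper does not state the proportionality quite so explicitly---it instead projects $\tilde\gamma_j'$ onto $\mathbb{C}\cdot(du_j\circ J)^{\#_\omega}$ and uses constancy of $\tilde\alpha_j^c((du_j\circ J)^{\#_\omega})$ directly---but this is the same mechanism.

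There is, however, a normalisation error in your computation. The paper fixes the generators so that $\alpha_j(a_{jj}\tilde\gamma_j)=1$, i.e.\ $\int_{\gamma_j}\alpha_j=1/a_{jj}$; with this convention $w_{jj}^{(2/3)}=a_{jj}\int_{\tilde\gamma_j}\nabla\mathcal{E}_j$ comes out to exactly $w_j$, not $-w_j\,a_j^2$. Your extra factor $a_i^2$ arises from silently assuming $\int_{\gamma_j}\alpha_i=a_i\delta_{ij}$, which is not the convention in play; the statement asserts exact coincidence, not coincidence ``after prescribed rescaling.'' A second, more substantive gap is that your proposal treats the claims inside (1), (2), (3) as given inputs rather than proving them. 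The paper's proof of (1) (kernel and constancy) is a genuine local computation in normal Darboux frames using the orthogonality hypothesis $(du_i)^{\#_\omega}(du_j\circ J)=\delta_{ij}$, and its proof of (2) (non-vanishing closed part of $\tilde\alpha_i$) is a nontrivial argument invoking the Deligne--Griffiths--Morgan--Sullivan characterisation of formality to exhibit a closed summand $\hat\alpha_i^c$ of $\tilde\alpha_i$ via a partition-of-unity decomposition of $\alpha_i$; your one-line ``which using formality\dots survives passage to the Hodge-decomposed closed part'' does not supply this. If your intent was only to address (4), say so explicitly and fix the normalisation; otherwise the proofs of (1)--(3) need to be filled in.
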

\begin{proof}
{\it Proof of (1.)} With the assumptions of (1.), choose for each $x \in M$ a ngbhd $x_0\in U\subset M$ and a unitary frame $(e_1, \dots, e_{2n})$ of $TM|U$ that is proportional at any $y \in U$ to the dual $\{(du_j)^{\#_\omega}, J(du_j)^{\#_\omega}\}_{j=1}^{n}$ basis to $((du_i), du_i\circ J), i=1,\dots, n$. Then $(e_1, \dots, e_{2n})(y)$ determines at any point $y \in U$ and a neighbourhood $U_y\subset U$ of $y$ a normal Darboux coordinate system on $U_y$ so that $(\tilde e_1, \dots, \tilde e_{2n})_y$ is the associated coordinate frame on $U_y$ that obeys $\nabla_{\tilde e_i} \tilde e_j=0, \ j=1, \dots 2n$ at $y$. Then write for any such pair $(U_y,y),\ y\in U$ $(\alpha_i|U_y)(z)=du_i(z)=\sum_{j=1}^{2n}\hat \beta^y_{ij}(z)(\tilde e_j^{\#_\omega})_y(z),\ i=1, \dots, 2n,\ z \in U_y$ for $\hat \beta^y_{i,j} \in C^\infty(U_y)$ and $\hat \beta^y_{ij}(y)= 0, i\neq j$. Note that we will often drop the parameter superscript $y$ in $\hat \beta^y_{ij}$ below. Also, write $\alpha_i=\sum_{j=1}^{2n}\beta_{ij}e_j^{\#_\omega}$ on $U$. Note that the factor $|\alpha_i|^{-2}=\omega(\alpha_i^{*}, J\alpha_i^{*})^{-1} \in C^\infty (U_y)$ is absorbed in the $\beta_{ij}$ resp. $\tilde \beta_{ij}$ here by the definition of $\tilde e_j^{\#_\omega}$. While $\alpha_i$ and $\mathcal{E}_i$ are defined on $M$, the coefficients $\beta_{ij}$ are thus only smooth on $M\setminus \mathcal{C}_i$. 
We then infer from (\ref{bla2}) that for all $i \in \{1,\dots, n\}$ on $U$ by taking for any $y \in U$ the derivative in any Darboux coordinate system on $U_y$ that
\begin{equation}\label{eulerbla2}
\begin{split}
\nabla_{\cdot}\mathcal{E}_i\cdot \varphi &= \left(\sum_{j=1}^{n} du_i(\cdot)\beta_{ij}(e_j-if_j) + u_i d\hat \beta_{ij}(\cdot)(e_j-if_j)\right)\cdot \varphi\\
&= \left(\sum_{j=1}^{2n} du_i(\cdot)\beta_{ij}\alpha_i(e_j)+u_i d\hat \beta_{ij}(\cdot)\alpha_i(e_j)\right)\varphi.
\end{split}
\end{equation}
The second sum, evaluated at $(du_i)^{\#_\omega}$ is equal to
\[
\sum_{j=1}^{2n} u_i ((du_i)^{\#_\omega}.du_i(e_{j+n}))du_i(e_j).
\]
From the definition of $\nabla$ as the Levi-Civita connection of $(M, J)$ and $(du_i)^{\#_\omega}(du_j\circ J)=\delta_{ij}$ we see that $\nabla_{(du_i\circ J)^{\#_\omega}}.du_i=0$ for $i\neq k$, so the above term is $0$ unless $i=k$ and thus ${\rm ker}\nabla \mathcal{E}_i=(\C\cdot(du_i\circ J)^{\#_\omega})^\perp$.
Note that if the symplectic connection $\nabla$ is torsion-free, the closedness of the form $\nabla_{\cdot}\mathcal{E}_i \in \Omega^1(M, {\rm End}(\mathcal{L}_i))$ follows in general since in that case and relative to $(e_1, \dots, e_{2n})$, the exterior derivative is a linear combination of $\nabla_{e_i}$ and $\nabla(\nabla \mathcal{E})=0$. To prove the second assertion in (1.), we have to prove that $w_i=\nabla_{(du_i\circ J)^{\#_\omega}} \mathcal{E}_i \in {\rm End}(\mathcal{L}_i)$ are constant on $M$. Set on local open sets $Y_i=\nabla_{(du_i\circ J)^{\#_\omega}}\mathcal{E}_i$, write $\nabla_{(du_i\circ J)^{\#_\omega}}\mathcal{E}_i\star \varphi=\beta_i \varphi$ for $\varphi \in \Gamma(\mathcal{L}_i)$ for some $\beta_i \in C^{\infty}(U)$ and consider for any $X \in \Gamma(U)$ 
\[
(X.\beta_i) \varphi =\nabla_{X}(Y_i\star \varphi) -Y_i \star \nabla_{X}\varphi =(\nabla_{X}Y_i)\star \varphi
\]
but since $(\nabla_{X}Y_i)=\nabla_X\nabla_{J(du_i)^{\#_\omega}}\mathcal{E}_i=0$, we find that $\beta_i$ is locally constant and thus constant on $M$. We give a second proof of the constancy of the spectrum $w_i$ using the defining second equation for the Euler vector field in (5.) of Definition \ref{frobenius} for the case $Y=J(du_i)^{\#_\omega}=\frac{\partial}{\partial u_i}$ and $\nabla$ torsion-free. Note that $\nabla_{Y}\mathcal{E}_i$ acts on $\varphi \in \Gamma(\mathcal{L}_i)$ by $\cdot$ and by $\star$ and by (\ref{eulerbla2}) both actions differ locally by elements of $\C$ depending on $Y$. So we consider
\[
\mathcal{L}_{\mathcal{E}_i}(Y\star \varphi)-[\mathcal{E}_i,Y]\star \varphi- Y\star \mathcal{L}_{\mathcal{E}_i}\varphi=\nabla_{Y}\mathcal{E}_i\star \varphi +b_i\varphi,
\]
where for a semisimple standard Frobenius structure $b_i\in \C$ as follows from the evaluation of $[\Omega_i(e_l), \Omega_i^t(e_k)]=ib\delta_{lk}$. Since $Y\star \varphi=(du_i\circ J)^{\#_\omega}\star \varphi$ multiplies $\varphi \in \Gamma(\mathcal{L}_i)$ by a function being invariant under the flow of $\mathcal{E}_i$ and thus commutes with $\mathcal{L}_{\mathcal{E}_i}$, we infer from the latter formula that
\[
-[\mathcal{E}_i,Y]\star \varphi=\nabla_{Y}\mathcal{E}_i\star \varphi+b_i\varphi.
\]
Since the flow of $\mathcal{E}_i$ preserves $\omega$, we have 
\[
\mathcal{L}_{\frac{\partial}{\partial u_i}}(u_i (du_i)^{\#_\omega})=(\mathcal{L}_{\frac{\partial}{\partial u_i}}(u_idu_i))^{\#_\omega}=(di_{\frac{\partial}{\partial u_i}}(u_idu_i))^{\#_\omega}
\]
and again using $(du_i\circ J)^{\#_\omega}\cdot\varphi= -i(du_i)^{\#_\omega}\cdot\varphi$ we see that $\mathcal{L}_{{\frac{\partial}{\partial u_j}}}$ of the latter expression is $0$ for all $i,j$ since $\mathcal{L}_{{\frac{\partial}{\partial u_j}}}du_i=0$ for all $j \in \{1,\dots,2n\},\ i \in \{1,\dots, n\}$. Note that in the situation of (1.) we have the following explicit formula for $\nabla_{\frac{\partial}{\partial u_k}}\mathcal{E}_k$, evaluated on $\varphi \in \Gamma(\mathcal{L}_k)$:
\[
w_k=(\nabla_{\frac{\partial}{\partial u_k}}\mathcal{E}_k)\cdot \varphi = \beta_{kk}\left(\beta_{kk}+u_k d\tilde \beta_{kk}(\frac{\partial}{\partial u_k})\right)\varphi
\]
where the last term is multiplication of $\varphi$ by an element of $\C$.\\
{\it Proof of (2.)} Consider now the assumptions of (2.), that is $0\neq [\alpha_i] \in H^1(M, \mathbb{C})$ for $i \in K\subset \{1, \dots, k\}$ and $M$ is formal. We examine the conditions under which $\tilde \alpha_i$, as given by (\ref{eulerbla2}), is closed. Note that if we would have $\beta_{ij}=\hat \beta_{ij}$ for all $i,j$ in the last line of (\ref{eulerbla2}), the sum of the terms involving no derivative of $\alpha_i(e_j)$ in the exterior derivative of $\tilde \alpha_i$ would give zero by $d^2=0$. Also, we have to analyze the remaining terms in $d(\tilde \alpha_i)$.\\
Thus, let $i \in K$ and $\alpha_i|U=\sum_{j=1}^{2n}\beta_{ji}e_j^{\#_\omega}$ for a special local unitary frame $(e_1,\dots,e_{2n})$ of $TM$ over $U\subset M$ that we define now. Let for any $x \in M$ a nghbhd $x \in U\subset M$. Define now a unitary frame $(e_1,\dots,e_{2n})$ in $TU$ s.t. ${\rm span}((e_2,\dots,e_{n},e_{n+2},\dots,e_{2n}))={\rm ker}(\alpha_i)^\perp\subset TU$ as follows. By parallel transport along geodesic rays we get an orthogonal frame $(e_2,\dots,e_{n},e_{n+2},\dots,e_{2n}))$ in ${\rm ker}\ (\alpha_i)_x)\subset T_xU$ by the projected connection $\hat \nabla:={\rm pr}_{{\rm ker}(\alpha)^\perp}(\nabla)$ so that $(e_1=\frac{du_i^{\#_\omega}}{||du_i^{\#_\omega}||},e_2,\dots,e_n, Je_1,\dots, e_{2n})$ gives a unitary frame on $U$. Conclude that thus on $U$ $(e_1,\dots, e_n, Je_{1}, \dots,Je_{n})$ gives a unitary frame with the property that ${\rm ker}(\alpha_i)^\perp=\C \cdot (e_1,Je_1)$ and ${\rm ker}(\alpha_i)={\rm span}_{\C}(e_2,\dots, e_n, Je_{2}, \dots,Je_{n})$ on $U$. \\
Assume first that $M$ is compact or compact with boundary. Let $\mathcal{U}=\{U_l\}_{l=1}^m$ be an open covering of $M$ so that for each $y \in U_l$ for appropriate $U_y\subset U_l$ we choose as in the proof of (1.) locally normal Darboux coordinates corresponding to the special frames $(e^l_1,\dots,e^l_{2n})$ on each $U_l$ constructed in the previous paragraph. Let $1 =\sum_{l=1}^m\rho_l$ with $\overline{{\rm supp}(\rho_l)}\subset U_l$ be a decomposition of unity subordinated to $\mathcal{U}$, furthermore $\alpha_i|U_l=\sum_{j=1}^{2n}\beta^l_{ij}(e^l_j)^{\#_\omega}$ for $e^l_j$ the above local symplectic frames over the $U_l$ (the coefficients of $\alpha_i$ corresponding to the Darboux coordinates on the open sets $U_y\subset U_l$ will again be denoted as $\hat \beta^{k,y}_{ij}$ resp. with superfix $y$ suppressed). Then on $M_c:=M\setminus \mathcal{C}_i$ we write
\begin{equation}\label{sum}
\alpha_i=\sum_{k=1}^m\sum_{j=1}^{2n}\beta^{k}_{ij}\rho_k(e^k_j)^{\#_\omega}=\sum_{k=1}^m\sum_{j=1}^{2n}\tilde \beta^{k}_{ij}\hat e^k_{j},
\end{equation}
where we have defined $\tilde \beta^{k}_{ji}=\beta^k_{ji}\rho^{1/2},\ k\in \{1,\dots,m\}$ and $\hat e^k_{j}=\rho_k^{1/2}(e^k_j)^{\#_\omega}, \ k\in \{1,\dots,m\},\ j=\{1,\dots,2n\}$. Noting that $\mathcal{C}_i$ is generically $0$-dimensional, thus $H^1(\mathcal{C}_i, \C)=0$, we will in the following considerations write $M$ instead of $M_c$. We then see with Theorem 4.1 of Deligne et al. (\cite{deligne2}) that there exists a decomposition $\Omega^1(M, \C)=N\oplus C$ where $C$ is the set of closed elements of $\Omega^1(M, \C)$ and $N$ is an appropriate complement of $C$ in $\Omega^1(M, \C)$ (we can take the complement given by the Hodge decomposition of $\Omega^*(M, \C)$), so that if $\alpha_i \in \Omega^1(M, \C)$ is closed and in the ideal generated by $N$, then $\alpha_i$ is exact. Assuming $[\alpha_i]\neq 0$, there is thus at least one index-pair $(j,k)\in (\{1,\dots, 2n\}, \{1,\dots, m\})$ so that in the sum $\alpha_i=\sum_{j=1}^{2n}\sum_{k=1}^{m}\tilde \beta^{k}_{ij}\hat e^k_{j}$ the space spanned (over $\C$) by $\{\hat e^k_{j}\}_{j,k}$ has nontrivial (orthogonal) projection to $\Lambda^*(C)$. Let $R\subset (\{1,\dots, 2n\}, \{1,\dots, m\})$ be the subset of those indices $(j,k) \in (\{1,\dots, 2n\}, \{1,\dots, m\})$ so that $\{\hat e^k_{j}\}_{(j,k) \in R}$ has this property, denote $\tilde e^k_{j}={\rm pr}_C^\perp(\hat e^k_{j}), (j,k) \in R$ where ${\rm pr}_C^\perp$ is the orthogonal projection onto $C$ along $N$. Restricting now the summation in both summands of (\ref{eulerbla2}) to $(j,k) \in R$ and setting 
\[
\hat \alpha_i^c\varphi= \left(\sum_{j,k\in R} du_i(\cdot)\beta^k_{ij}\rho_k\alpha_i(e_j) +  u_i d\hat \beta^k_{ij}(\cdot)\rho_k\alpha_i(e_j)\right)\varphi,
\]
we now claim that $d \hat \alpha_i^c=0$. We first claim that ${\rm ker}(d(\rho^{1/2}\beta_{ij}))^\perp={\rm ker}(d(\rho^{1/2}\hat \beta^k_{ij}))^\perp$ for $(j,k) \in R$. To see this fix $y \in U_y\subset U_k, k \in {\rm pr}_2(R)$ and consider the forms
\[
\gamma_i=\sum_{j \in {\rm pr_1}(R)}\tilde \beta^{k}_{ij}\hat e^k_{j}, \quad \tilde \gamma_i=\sum_{j \in {\rm pr_1}(R)}\hat \beta^{k,y}_{ij}(\tilde e^{y,k}_j)^{\#_\omega}
\]
Note that if we can prove that $\nabla$ preserves the subspaces in $\Gamma(T^*U_y)$ spanned over $C^\infty(U_y, \C^*)$ by $(\hat e^k_{1}, \hat e^k_{1}\circ J)$ resp. $(\tilde e^{y,k}_1)^{\#_\omega}, (\tilde e^{y,k}_1)^{\#_\omega}\circ J)$, then it follows from $\nabla J=0$, $\nabla$ thus preserving the orthogonal complement of $(\hat e^k_{1}, \hat e^k_{1}\circ J)$ resp. $(\tilde e^{y,k}_1)^{\#_\omega}, (\tilde e^{y,k}_1)^{\#_\omega}\circ J)$ on $U_y$ and from $\gamma_i(y)=\tilde \gamma_i(y)$, that $\gamma_i=\tilde \gamma_i^n$ on $U_y$. From this in turn the claim follows since $d\hat e^k_{j}=d(\tilde e^{y,k}_j)^{\#_\omega}=0$ on $U_y$ by the choice of our $(j,k) \in R$ (in the case of $\hat e^k_{j}$) resp. the choice of the normal Darboux coordinate system (in the case of $(\tilde e^{y,k}_j)^{\#_\omega}$). Now $\nabla (\tilde e^{y,k}_1)^{\#_\omega}=0$ and $\nabla (\tilde e^{y,k}_1)^{\#_\omega}\circ J=0$ in $y$ follows again from the property of normal Darboux coordinates in $U_y$, thus it remains to prove that $\nabla$ preserves $C^\infty(U_y, \C)\cdot(\hat e^k_{1},\hat e^k_{1}\circ J)$ in $T^*U_y$. But that follows from the fact that by our choices, $\alpha_i|U = C^\infty(U, \C)\cdot(\hat e^k_{1}+i \hat e^k_{1}\circ J)$ and $\nabla\alpha_i=0$ by the proof or Proposition \ref{lagrangian}.\\
Thus by the above partial summation over $(k,j) \in R$ wrt the partition of unity $(\mathcal{U}, \{\rho_i\}_{i=1}^m)$ in (\ref{eulerbla2}) we arrive at $\hat \alpha^c_i \in \Omega^1(\tilde M_i, {\rm End}(\mathcal{L}_i))\simeq \Omega^1(\tilde M_i, \C)$ and it is straightforward to show now that $\hat \alpha_i^c$ is in fact closed. Since $\hat \alpha_i^c$ gives a nontrivial direct summand of $\tilde \alpha_i$ in $C$ wrt the Hodge decomposition of $\Omega^1(M, \C)$, we see that $\tilde \alpha_i^c$, the orthogonal projection of $\tilde \alpha$ onto $C$ is nontrivial (it actually coincides with $\hat \alpha_i^c$). Consider now the (assumed) canonical basis $\gamma_j \in H_1(M, \mathbb{Z}), j=1,\dots, r$ write $PD[\alpha_i]=\sum_{j=1}^r a_{ij}\gamma_j, i \in K$, choose arbitrary lifts $\tilde \gamma_i$ to $\tilde M_i$. Define $w_{ij}=[\tilde \alpha_i^c](a_{ij}\tilde \gamma_j), \ i \in K, j \in \{1, \dots, r\}$ as the spectral numbers of $(\mathcal{A}, \mathcal{L})$. Note that the evaluation of $\tilde \alpha^c$ on the $\tilde \gamma_i$ with given basepoint is well-defined by the closedness of $\tilde \alpha^c$. Furthermore note that since $(\mathcal{A}, \mathcal{L})$ is by assumption rigid, there exist integral cohomology classes $b_i \in H^1(M, \Z)$ and $c_i \in \C$ so that $[\alpha_i]= c_i\cdot b_i \in H^1(M, \C)$. Then by Farber (\cite{farberb}, proof of Theorem 2.4), the $w_{ij}$ do not depend on the choice of base point of the lift $\tilde \gamma_i$ of $\gamma_i$ to $\tilde M_i$.\\
Let now the assumptions in (1.) and (2.) be simultanously satisfied while the canonical set of generators $\gamma_i, i=1,\dots, n$ of $H_1(M, \mathbb{Z})/{\rm Tor}$ being given by rational multiples of $PD[\alpha_i] \in H_1(M, \mathbb{Q})$. Lifting the $\gamma_i$ to paths $\tilde \gamma_i: [0,1]\rightarrow \tilde M_i$ we write again $PD[\alpha_i]=\sum_{j=1}^r a_{ij}\gamma_j, i \in K, a_{ij} \in \mathbb{Q}$ and define $w_{ij}=[\tilde \alpha_i^c](a_{ij}\tilde \gamma_j), \ i \in \{1,\dots,n\}, j \in \{1, \dots, r\}$. Note that we have $a_{ij}=0, i\neq j$, thus $\alpha_j([\hat \gamma_j])=1$ with $[\hat \gamma_j]=a_{jj}[\tilde \gamma_j]$ and arclength-parameterize $\tilde \gamma_j:[0,c]\rightarrow \tilde M_j$ so that wrt to an appropriate parallel symplectic (unitary) frame along $\tilde \gamma_j$ we have ${\rm pr}^{\perp}_{\C\dot (du_j\circ J)^{\#_\omega}}\circ \tilde \gamma_j'(t)=(d(t),0,\dots,0),\ d:[0,c]\rightarrow  \R$ where ${\rm pr}^{\perp}_{\C\dot (du_j\circ J)^{\#_\omega}}$ is the orthogonal projection wrt (the lifted) $(\omega, J)$ on $\C\dot (du_j\circ J)^{\#_\omega}\subset T\tilde M_j$. Then writing $(\alpha_j)(\tilde \gamma_j'(t))=(f(t)d(t), 0, \dots,0)$ for a function $f:[0,c]\rightarrow \R$  we have by the constancy of $w_{jj}=\tilde \alpha_j^c((du_i\circ J)^{\#_\omega})$ on $M$
\[
\begin{split}
\tilde \alpha_j^c([\hat \gamma_j])=& a_{jj}\int_{[0,c]}\tilde \alpha_j^c(\tilde \gamma'_j(t))dt= a_{jj}\int_{[0,c]}\tilde \alpha_j^c((du_j\circ J)^{\#_\omega})f(t)d(t)dt\\
=& w_{jj}\int_{[0,c]}a_{jj}f(t)d(t)dt=w_{jj}
\end{split}
\]
where we used the orthogonality of the $\alpha_i$. It is finally easy to see that if the assumptions in (1.) and (2.) hold, then $\tilde \alpha_i^c=\hat \alpha_i^c=\tilde \alpha_i$ which proves our assertion.
\end{proof}
Before we give (in \cite{kleinham}) an alternative algebraic criterion for extracting spectrality information from $\nabla \mathcal{E} \in \Omega^1(M, {\rm End}(\mathcal{L})\otimes \mathcal{L}^*)\simeq \Omega^1(M, \C\otimes \mathcal{L}^*)$ as in Proposition \ref{spectrum} (2.) above, we discuss how to interpret the so-called structure connection (or {\it Dubrovin connection}), which is for a parameter $z\in \C$ informally written as
\begin{equation}\label{dubrovin1}
\tilde \nabla_X\varphi_i =(\nabla_X +z\Omega_i(X))\varphi_i, \ \varphi_i \in \Gamma(\mathcal{L}_i), X \in \Gamma(TM),
\end{equation}
in the language of Section \ref{spinorsconn}. Here, $\Omega \in \Omega^1(M,{\rm End}(\mathcal{L})\otimes \mathcal{L}^*)$ is as in Definition \ref{frobenius}, $\nabla$ is the connection on $\Gamma(\mathcal{L})$ being induced by a fixed symplectic connection on $M$ and we assume that a compatible almost complex structure $J$ is chosen so that $\nabla J=0$ and $(\Omega, \mathcal{L}, \nabla)$ is standard and semisimple wrt the decomposition $\mathcal{L}=\bigoplus_{i=1}^k\mathcal{L}_i$. For a given metaplectic structure $\pi_P:P\rightarrow M$, consider the fibrewise direct product of $P$ with the pull back bundle $\hat \pi_M:\pi_P^*(TM)\rightarrow P$, considered as a bundle $\pi_P\circ \hat \pi_M:\pi_P^*(TM)\rightarrow M$ over $M$, twisted by the right action of $Mp(2n, \R)$, that is we set
\begin{equation}\label{tangent}
\begin{split}
P_G&=\pi_P^*(TM) \times_{Mp(2n, \R)} P\\
&=\{((y,q), x), (p, x)): x \in M, y \in \R^{2n}, p,q \in P, \pi_P(p)=\pi_P(q)=x\}/Mp(2n, \R),
\end{split}
\end{equation}
where we factor through the obvious 'diagonal' right action $\left(\hat g,((y,q), x), (p, x)\right) \mapsto \left((y,q.\hat g), x), (p.\hat g, x)\right),\ \hat g \in Mp(2n, \R)$. Consider the right action of $G=H_n\times_\rho Mp(2n, \R)$ on $P_G$ given for $h=(h_1, h_2) \in H_n$ and $\hat g \in Mp(2n, \R)$ so that $\rho(\hat g)=g$ by
\begin{equation}\label{tangentaction}
\tilde \mu: G\times P_G\rightarrow P_G, \ \tilde \mu\left((h, \hat g),(((y,q),p),x)\right)=\left(((\rho(g)^{-1}(y)+h, q.\hat g), p),x \right).
\end{equation}
We claim (proof below) that $\hat \mu$ defines a transitive right $G$-action on $P_G$ that induces the structure of a principle $G$-bundle on $P_G$ and that furthermore $P_G$ is isomorphic to the balanced product $\hat P_G=P\times_{Mp(2n, \R), {\rm Ad}} G$ which is the $G=H_n\times_\rho Mp(2n, \R)$-principal bundle (compare (\ref{reduction})) induced as a balanced product by the action of the (inverse) adjoint map  
\[
{\rm Ad}: Mp(2n, \R)\rightarrow {\rm End}(G),\ (g_1, (h, g_0))\mapsto (h, {\rm Ad}(g_1^{-1})(g_0)),\ g_0,g_1 \in Mp(2n, \R),\ h \in \R^{2n},
\]
on the second factor in $P\times G$ (while the principal fibre action is the usual $G$ action). We will denote by $\phi_{Ad}:P \rightarrow \hat P_G$ the corresponding extension homomorphism. Let now $L\subset \R^{2n}$ be any real Lagrangian subspace, that is $\omega_0|L=0$ and consider the associated maximal parabolic subgroup of $Sp(2n, \R)$ and its preimage under $\rho$:
\[
\mathfrak{P}_L=\{S \in Sp(2n, \R): SL=L\},\ \hat {\mathfrak{P}}_L=\rho^{-1}(\mathfrak{P}_L).
\]
Assume there exists a reduction of a given $\hat U(n)$-reduction $P^J$ of $P$ to $\hat U_L(n):=\hat U(n)\cap  \hat {\mathfrak{P}}_L$ which we call $P_L^J$. Consider then the extension of $P_L^J$ induced by the adjoint ${\rm Ad}: \hat U(n)\rightarrow {\rm End}(G)$ resp. its restriction to $\hat U_L(n)$, given by $P^J_{L,G}:=P^J_L\times_{\hat U_L(n), {\rm Ad}} G_L$, where $G_L$ is given by the subgroup
\begin{equation}\label{glemb}
G_L= H_n\times_\rho \hat U_L(n)\subset H_n\times_\rho Mp(2n, \R)=G,
\end{equation}
and we denote the corresponding extension map as $\phi_{{Ad}}:P_L^J\rightarrow P^J_{L,G}$. $P^J_{L,G}$ is a $G_L$-principal bundle and since $i^J:P^J_{L,G}\rightarrow P$ is an inclusion, we have an equivalence of $G$-principal fibre bundles
\begin{equation}\label{reductionbla}
\hat P_G\simeq P^J_{L,G}\times_{G_L, ({\rm Ad},id)} G,\ G_L\subset G,
\end{equation}
where $({\rm Ad},id):G_L\rightarrow G$ acts wrt the product structure on $G_L$ resp. $G$, where $id:L\hookrightarrow H_n$ is the identity. We denote the corresponding extension map by $\phi_{{\rm Ad},i}: P^J_{L,G}\rightarrow \hat P_G$. Thus $\hat P_G$ is the extension of $P^J_{L,G}$ from $G_L$ to $G$ given by $(Ad, i)$ and thus also $P^J_{L,G}\times_{G_L, ({\rm Ad},i)} G\simeq P_G$ as $G$-principal bundles. Let now $G^0_L=\{0\}\times_\rho \hat U_L(n)\subset G$ and $s:\hat P_G \rightarrow G/G^0_L$ be any equivariant smooth map inducing a section of the fibration
\begin{equation}\label{reductionbla2}
\hat P_{G/G^0_L}=\hat P_G\times_{{\rm Ad},G} G/G^0_L\rightarrow M.
\end{equation}
Then as in the discussion over Theorem \ref{genclass}, we can associate to any section of $\hat P_{G/G^0_L}$ a $G^0_L$-reduction of $\hat P_G$ and the isomorphism classes of these reductions are in bijective correspondence to the homotopy classes of sections $s:\hat P_G \rightarrow G/G^0_L$. We denote a representative of such a $G^0_L$-reduction of $\hat P_G$ associated to $s$ by $\hat P_{L,s}$. Note that the map $s_0:\hat P_G \rightarrow G/G^0_L$ given by $s_0(p)= 1\cdot G^0_L,  \ p \in P_L^J$ and equivariantly extended to $\hat P_G$ corresponds to the standard reduction $P_L^J$ of $\hat P_G$.\\
Note finally that $P^J_L$, $P^J$ and $P$ are reductions of $\hat P_G$ to the subgroups $\hat U_L(n), \hat U(n), Mp(2n, \R) \subset G$, respectively under the homomorphism ${\rm Ad}: Mp(2n, \R)\rightarrow {\rm End}(G)$ resp. its various restrictions. For the following, let $\mathfrak{g}=\mathfrak{sp}(2n,\mathbb{R})\oplus \mathfrak{h}_n$ and $\mathfrak{sp}(2n,\mathbb{R})=\mathfrak{u}(2n,\mathbb{R})\oplus\mathfrak{p}$ the Cartan decomposition with associated projections ${\rm pr}_{\mathfrak{u}}:\mathfrak{g}(2n,\mathbb{R})\rightarrow \mathfrak{u}(2n,\mathbb{R})$, ${\rm pr}_{\mathfrak{h}_n}:\mathfrak{g}\rightarrow \mathfrak{h}_n$. Let $\mathfrak{p}_L\subset \mathfrak{sp}(2n,\mathbb{R})$ be the Lie algebra of $\mathfrak{P}_L\subset Sp(2n, \R)$. Let furthermore ${\rm pr}_L:\mathfrak{h}_n\rightarrow L\times \{0\}$ be the projection onto the maximally abelian subspace in $\mathfrak{h}_n$ given by $L$, let $\mathfrak{l}\subset \mathfrak{h}_n$ be the commutative sub-Lie algebra defined its image and $\mathfrak{l}_{\C}$ be its complexification. Recall that if $P$ is a $G$-principal bundle then the {\it tensorial} $1$-forms of type $Ad$ on $\pi_P:P\rightarrow M$ with values in in the Lie algebra of $G$, $\mathfrak{g}$, are those $1$-forms $w:TP \rightarrow\mathfrak{g}$ which vanish on ${\rm ker}(d\pi_P)$ and such that $(R_g)^*w=Ad(g^{-1})w$ for all $g \in G$, where $R_g,\ g \in G$ denotes the right action of $G$ on $P$. Note that there is an isomorphism between the vector space of tensorial $1$-forms on $P$ and the vector space of $1$-forms on $M$ with values in the associated bundle $\underline {\mathfrak{g}}=P \times_{Ad} \mathfrak{g}$, written $\Omega^1(M,\underline {\mathfrak{g}})$. Returning to the above, note that $\hat P_G$, given a connection $\hat Z:TP\rightarrow \mathfrak{sp}(2n,\mathbb{R})$ and a $\hat U_L(n)$-reduction $\hat Z_L^J$ of $\hat Z$ to $P_L^J$, carries a tautological connection $\hat Z_{G,L}^{J,\omega}: T\hat P^J_{G, L} \rightarrow \mathfrak{g}_{L,\C}:=\mathfrak{u}(2n,\mathbb{R})\cap \mathfrak{p}_L \oplus \mathfrak{h}_{n,\C}$ which consists of the sum of the canonical extension $\hat Z^J_{G, L}$ of $\hat Z_L^J$ to $P^J_{G, L}$ (see below) and the tensorial $1$-form on $P^J_{G, L}$ which is given by 
\begin{equation}\label{fouriermu}
\begin{split}
w_0:&TM\rightarrow P^J_{G,L} \times_{Ad}\mathfrak{g}_{L,\C},\quad p \in P^J_{G,L}, (g,h=(h_1,\dots, h_{2n})) \in G_L,\\
w_0(X)&=((p, (g,h)), (0,\sum_{l=1}^{n}\left({\rm Ad}(g^{-1})(h)_l\Phi_p(X)_l+i {\rm Ad}(g^{-1})(h)_{l+n}\Phi_p(X)_{l+n}\right)a_l),
\end{split}
\end{equation}
where $\Phi_p:TM \rightarrow \R^{2n}$ is the isomorphism determined by $p \in  P^J_{G,L}$, $(a_l)_{l=1}^{n}$ is the standard basis in $\R^{n}\times \{0\}\subset \R^{2n}$ and $\mathfrak{g}_{L,\C}$ refers to the (complexification of) Lie algebra of $G_L=H_n\times_\rho \hat U_L(n)$ as described above Lemma \ref{fock}. Note that by definition, $\hat Z_{G,L}^{J,\omega}$ takes values in $\mathfrak{g}^0_{L,\C}:=\mathfrak{u}(2n,\mathbb{R})\cap \mathfrak{p}_L \oplus \mathfrak{l}_{\C}\subset \mathfrak{g}_{L,\C}$. Note that for a given almost complex structure $J$ and for $P_G$ as in (\ref{tangent}), we denote by $pr_1:P_G\rightarrow TM\simeq T^*M$ the map $pr_1((y,q), x), (p, x))=((gy,q),x),\ x \in M,\ y \in \R^{2n},\ p,q \in P_L^J, \ q=p.g,\ g \in \hat U_L(n)$. Note that as above Theorem \ref{genclass}, ${\rm pr}_1$ factors to a map $\tilde {\rm pr}_1: \hat P_{G/G^0_L}\rightarrow T^*M$ when considering $\hat P_{G/G^0_L}$ as a quotient $r:P_G\rightarrow \hat P_{G/G^0_L}$, that is ${\rm pr}_1=\tilde {\rm pr}_1\circ r$. For a given equivariant map $s:\hat P_G \rightarrow G/G^0_L$ and the map $s_0:\hat P_G \rightarrow G/G^0_L$ corresponding to the standard reduction $P_L^J$ of $\hat P_G$ as above consider the smooth map $\hat g:\hat P_G \rightarrow G/G^0_L$ satisfying $s(p)=\hat g(p)s_0(p)$. We then have
\begin{prop}\label{higgs}
Assume $\nabla$ is a given symplectic connection, $\hat Z:TP\rightarrow \mathfrak{sp}(2n,\mathbb{R})$ its connection $1$-form, $J$ is a compatible almost complex structure so that $\nabla J=0$ and $\mathfrak{P}_L\subset Sp(2n, \R)$ a maximal parabolic subgroup as above. Assume $P_L^J$ is a reduction of $P^J$ to $\hat {\mathfrak{P}}_L$, thus $P^J_L\subset P^J\subset P^J_{G,L}\subset P_G$ is the  chain of inclusions of principal fibre bundles wrt the chain of inclusions of structure groups $\hat U_L(n)\subset \hat U(n) \subset G_L\subset G$ as described above. Consider a reduction of the given symplectic connection $\hat Z:TP\rightarrow \mathfrak{sp}(2n,\mathbb{R})$ to $P^J$, $\hat Z^J$ resp. its further $\hat U_L(n)$-reduction $\hat Z_L^J$ to $P^J_L$ and the extension of $\hat Z_L^J$ to $P^J_{L,G}$, called $\hat Z_{G,L}^{J}$. Denote the extension of the given symplectic connection $\hat Z:P\rightarrow \mathfrak{sp}(2n,\mathbb{R})$ to $\hat P_G$ by $\hat Z^0_G$. With the corresponding inclusions of Lie algebras $\mathfrak{u}(2n,\mathbb{R})\cap \mathfrak{p}_L\subset \mathfrak{g}_{L,\C}\subset \mathfrak{g}_{\C}$ we have the commuting diagram:
\begin{equation}\label{dia1}
\begin{diagram}
T P^J_L   &\rTo^{\phi_{{Ad}}} &T P^J_{G,L}   &\rTo^{\phi_{{\rm Ad},i}}  &T \hat P_G\\
\dTo^{\hat Z_L^J} &     &\dTo^{\hat Z^J_{G,L}} &      &\dTo_{\hat Z^0_{G}}\\
\mathfrak{u}(2n,\mathbb{R})\cap \mathfrak{p}_L  &\rTo_{i_*}   &\mathfrak{g}_{L,\C}  &\rTo_{i_*} &\mathfrak{g}_{\C}
\end{diagram}
\end{equation}
Consider now the 'tautological' connection $\hat Z_{G,L}^{J,\omega}=\hat Z_{G,L}^{J}+w_0:T\hat P^J_{G, L} \rightarrow \mathfrak{g}_{L,\C}$ described above and its extension $\hat Z^\omega_G$ to $\hat P_G$. Further, let $(\Omega, \mathcal{L}, \nabla)$ be a semisimple standard irreducible Frobenius structure corresponding to a section $s$ of $\hat P_{G/G^0_L}$, defining a $G^0_L\simeq\hat U(n)_L$-reduction $\hat P^J_{L,s}$ of $\hat P_G$ so that the $\hat U_L(n)$-reduction of $P$ given by the composition of $s:\hat P \rightarrow G/G^0_L$ with the canonical projection on the subquotient $\pi_{Mp}:G/G^0_L\rightarrow Mp(2n, \R)/\hat U_L(n)$, that is the section $\tilde s=\pi_{Mp}\circ s:M\rightarrow P\times_{Mp(2n,\R)} Mp(2n, \R))/\hat U_L(n)$ corresponds to the above pair $(P^J_L, J)$. Then if $P^J_{G_L,s}$ is the reduction of $\hat P_G$ corresponding to the composition of $s$ with the quotient map $G/G^0_L\rightarrow G/G_L$ we have $P^J_{G_L,s}\simeq P^J_{G,L}$ and considering $\hat Z^{J,\omega}_{G_L,s}:TP_{G_L,s}\rightarrow \mathfrak{g}_{L, \C}$ as the reduction of $\hat Z^\omega_{G}:T\hat P_{G}\rightarrow \mathfrak{g}_{\C}$, we have wrt this isomorphism $\hat Z^{J,\omega}_{G_L,s}= \hat Z^{J,\omega}_{G,L}$. With $\tilde Z_{L,s}^J:TP^J_{L,s}\rightarrow \mathfrak{u}(2n,\mathbb{R})\cap \mathfrak{p}_L$ further reducing $\hat Z^{J,\omega}_{L,G}$ to $P^J_{L,s}$ the following diagram commutes: 
\begin{equation}\label{dia2}
\begin{CD}
T P^J_{L,s}   @>\phi_{Ad}>> T P^J_{G_L,s}  @>{\phi_{{\rm Ad},i_L}}>>  T \hat P_G\\
@VV{{\hat Z_{L,s}^J}}V      @VV{{\hat Z^{J,\omega}_{G,L}}}V          @VV{{\hat Z^\omega_{G}}}V\\
\mathfrak{u}(2n,\mathbb{R})\cap \mathfrak{p}_L  @>{i_*}>>  \mathfrak{g}_{L,\C}  @>{i_*}>> \mathfrak{g}_{\C}
\end{CD}
\end{equation}
Assume now that $L=L_0=\R^n\times \{0\}$. Then $\hat Z^0_G$ reduces to connections $\hat Z^0_{L,s}:T P^J_{L,s} \rightarrow \mathfrak{u}(2n,\mathbb{R})\cap \mathfrak{p}_L$ on $P^J_{L,s}$ resp. $\hat Z^0_{G,L}:T P^J_{L,G}\simeq TP^J_{G_L,s} \rightarrow \mathfrak{g}_{L,\C}$. Consider $s$ as a map $s:M\rightarrow \hat P_{G/G^0_L}$, then if $\hat s=\tilde {\rm pr}_1\circ s:M\rightarrow T^*M$ satisfies $d\hat s=0$ we have 
$\hat Z^0_{L,s}=\hat g^*\hat Z^J_{L}$ when considering $P^J_{L,s}$ and $P^J_{L}$ as subsets of $\hat P_G$ and with $\hat g:\hat P_G \rightarrow G/G^0_L$ associated to $s, s_0:\hat P_G \rightarrow G/G^0_L$ as above. Furthermore there is a tensorial $1$-form $w_L:T P^J_{G,L} \rightarrow \mathfrak{g}^0_{L,\C}\subset \mathfrak{g}_{L,\C}$ of type $Ad$ (namely $w_L:= \hat Z_{G,L}^{J,\omega} -\hat Z^0_{G,L}$) so that with the above notations the connection $\hat \nabla$ on $\Gamma(\mathcal{E}_i)$ that is associated to the connection $1$-form (see the below remark) 
\begin{equation}\label{duconn}
\tilde Z:= i_*\hat Z^0_{L,s} + w_L\circ \phi_{Ad} : T P^J_{L,s}\rightarrow \mathfrak{g}^0_{L, \C}\subset \mathfrak{g}_{L,\C}
\end{equation}
is identical (as a map $\hat \nabla: \Gamma(\mathcal{L}) \rightarrow \Gamma(T^{*}M \otimes \mathcal{L})$) to $\tilde \nabla$ as defined in (\ref{dubrovin1}) (for $z=1$). Note that here, we represent $\mathfrak{g}_{L,\C}$ on $\mathcal{S}(\R^n)$ by the assignment $\kappa_{T_0}: \mathfrak{g}_{L,\C}\rightarrow  {\rm End}(\mathcal{S}(\R^n))$. Also, we identify the associated bundles $\mathcal{Q}$ on $P^J_{L,s}$ and $P^J_{G, L}$ by the usual identification.
\end{prop}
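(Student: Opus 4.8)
The plan is to peel the assertion into three essentially formal steps --- commutativity of the diagram (\ref{dia1}), the isomorphism $P^J_{G_L,s}\simeq P^J_{G,L}$ together with the identity $\hat Z^{J,\omega}_{G_L,s}=\hat Z^{J,\omega}_{G,L}$, and commutativity of (\ref{dia2}) --- followed by one genuinely geometric input, the identity $\hat Z^0_{L,s}=\hat g^*\hat Z^J_L$ in the case $L=L_0=\R^n\times\{0\}$ under the hypothesis $d\hat s=0$, and to conclude by identifying the connection one-form $\tilde Z$ of (\ref{duconn}) with the Dubrovin connection (\ref{dubrovin1}) through the representation $\kappa_{T_0}$ of Lemma \ref{fock}.

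First I would record the functoriality of connection extension and reduction: if $Q\to M$ is a principal $H$-bundle with connection one-form $\theta$ and $\phi\colon H\to K$ a Lie group homomorphism, then $Q\times_{H,\phi}K$ carries a unique connection $\phi_*\theta$ whose pullback along the canonical inclusion $Q\hookrightarrow Q\times_{H,\phi}K$ is $\phi_*\circ\theta$; this is functorial in $\phi$ and commutes with passage to sub-bundles. Applying it to $\hat U_L(n)\xrightarrow{{\rm Ad}}G_L\xrightarrow{({\rm Ad},i)}G$ makes (\ref{dia1}) commute directly from the definitions of $\hat Z^J_{G,L}$ and $\hat Z^0_G$ as iterated extensions of $\hat Z$ (equivalently of its $\hat U_L(n)$-reduction $\hat Z^J_L$). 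For the second step, $s$ was chosen so that $\tilde s=\pi_{Mp}\circ s$ corresponds to the pair $(P^J_L,J)$; hence the $G_L$-reduction of $\hat P_G$ it cuts out is isomorphic to the standard one, i.e. $P^J_{G_L,s}\simeq P^J_{G,L}$, and the reduction of the extension $\hat Z^\omega_G$ to this sub-bundle is then forced by uniqueness to be $\hat Z^{J,\omega}_{G,L}$. The commutativity of (\ref{dia2}) is the same two-step extension argument, now carried out for the $s$-twisted reductions $P^J_{L,s}\subset P^J_{G_L,s}\subset\hat P_G$.

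The geometric core is the identity $\hat Z^0_{L,s}=\hat g^*\hat Z^J_L$ when $L=L_0$. I would decompose the automorphism $\hat g$ determined by $s(p)=\hat g(p)s_0(p)$ along $G=H_n\times_\rho Mp(2n,\R)$: its $Mp(2n,\R)/\hat U_L(n)$-component is trivial since $\tilde s$ corresponds to $P^J_L$, so $\hat g$ amounts to translation in the Heisenberg fibre by $\hat s=\tilde{\rm pr}_1\circ s\colon M\to T^*M$. Pulling $\hat Z^J_L$ back through such a translation reproduces $\hat Z^0_{L,s}$ up to a correction built from $d\hat s$ and from the bracket $[(v,r),(w,t)]=(0,\omega_0(v,w))$ of $\mathfrak h_n$: the first vanishes by hypothesis, and the second vanishes on vectors tangent to $M$ mapped into $\mathfrak g^0_{L,\C}$ because $L_0$ is abelian. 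To make this rigorous I would re-run the normal-Darboux-coordinate argument already used in the proofs of Proposition \ref{classi} and Theorem \ref{genclass}: near any point one chooses coordinates adapted to $\hat s$ so that its $\omega$-dual vector field is $\nabla$-parallel there --- which is precisely what $d\hat s=0$ together with $\nabla J=0$ and $\nabla\omega=0$ deliver --- so that $\hat g$ intertwines the two connection forms to infinite order at that point, hence everywhere by continuity.

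Assembling: since $\hat Z^J_{G,L}$ and $\hat Z^0_{G,L}$ both arise from the single symplectic connection $\hat Z$ by compatible extension and reduction one checks $\hat Z^J_{G,L}=\hat Z^0_{G,L}$, so the tensorial one-form $w_L=\hat Z^{J,\omega}_{G,L}-\hat Z^0_{G,L}$ is exactly the form $w_0$ of (\ref{fouriermu}). Evaluating $\tilde Z=i_*\hat Z^0_{L,s}+w_L\circ\phi_{Ad}$ in $\kappa_{T_0}\colon\mathfrak g_{L,\C}\to{\rm End}(\mathcal S(\R^n))$ of Lemma \ref{fock}: the $\mathfrak{sp}(2n,\R)$-summand $i_*\hat Z^0_{L,s}$ yields, via $\kappa_{T_0}|\mathfrak{sp}(2n,\R)=\Phi_{T_0}\circ L_*$ and $\hat Z^0_{L,s}=\hat g^*\hat Z^J_L$, the spinor connection $\nabla$ on the line bundle $\mathcal L_i\simeq\hat{\mathcal E}_M$ of the irreducible Frobenius structure (using that $\hat g$ realizes the identification of Theorem \ref{genclass}), while $w_0(X)$ acts on $\mathcal L_i$ by $\kappa_{T_0}(a_l)=\sigma(a_l-iJ_0a_l)$, i.e. by symplectic Clifford multiplication by $X-iJX$, which is $\Omega_i(X)$ (with scalar $\alpha_i(X)$, by Lemma \ref{eigenvalues}). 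Hence $\hat\nabla_X\varphi_i=\nabla_X\varphi_i+\Omega_i(X)\varphi_i$ for $\varphi_i\in\Gamma(\mathcal L_i)$, which is (\ref{dubrovin1}) at $z=1$. I expect the main obstacle to be the bookkeeping behind $\hat Z^0_{L,s}=\hat g^*\hat Z^J_L$: one must control simultaneously the Heisenberg-translation part of $\hat g$, the semidirect-product twisting in $G$, and the fact that $\hat Z$ is adapted to $P^J_L$ only after the normalization of $(\nabla,J)$ used in Proposition \ref{classi}, so that keeping the various bundles $P^J_L,P^J,P^J_{G,L},P^J_{G_L,s},P^J_{L,s},\hat P_G\simeq P_G$ and their connections straight is where the real care lies; by contrast the closing representation-theoretic identification is a short computation from Lemmas \ref{fock} and \ref{eigenvalues}.
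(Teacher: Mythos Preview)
Your outline follows the paper's own architecture closely: functoriality of extension for diagram (\ref{dia1}), the identification $P^J_{G_L,s}\simeq P^J_{G,L}$ forced by the hypothesis on $\tilde s=\pi_{Mp}\circ s$, and then the geometric heart $\hat Z^0_{L,s}=\hat g^*\hat Z^J_L$ under $d\hat s=0$, followed by the representation-theoretic identification via $\kappa_{T_0}$. There are two places where the paper supplies more than you do, and one where it takes a somewhat different route.

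First, you treat the existence of the various reductions of connections (e.g.\ that $\hat Z^0_G$ reduces to $P^J_{L,s}$, that $\hat Z^\omega_G$ reduces to $P^J_{G_L,s}$, etc.) as automatic from ``functoriality''. Extension of a connection along a group homomorphism is indeed automatic and functorial, but reduction to a sub-bundle is not: it requires an $Ad(H)$-invariant complement of $\mathfrak h$ in $\mathfrak g$. The paper handles this explicitly by invoking the Iwasawa decomposition $\mathfrak{sp}(2n,\R)=\mathfrak k\oplus\mathfrak a\oplus\mathfrak n$ and the parabolic structure of $\mathfrak p_L$ to exhibit the required $Ad$-invariant complements at each stage (for instance $\mathfrak m_P=\mathfrak u(2n,\R)\cap\mathfrak m_1$ as complement of $\mathfrak u(2n,\R)\cap\mathfrak p_L$ in $\mathfrak u(2n,\R)$). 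This is not deep, but it is a genuine verification your sketch omits.

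Second, for the key identity $\hat Z^0_{L,s}=\hat g^*\hat Z^J_L$ you propose to re-run the normal-Darboux-coordinate argument of Proposition \ref{classi}. The paper instead works directly with the horizontal distributions inside a local trivialization $\hat P_G|_U\simeq U\times G$: it writes $\mathcal H_{(g^{-1}h,g)}=(R_h)_*\mathcal H_{(0,g)}$ for the extended horizontal distribution, computes $(R_h)_*(0,X,a)$ explicitly from the semidirect-product law (\ref{semid}), compares with $\hat g_*(0,X,a)$, and observes that the discrepancy is precisely $g^{-1}(d_1\hat g)_1$, which vanishes by $d\hat s=0$ (together with $d_2\hat g=0$ from the hypothesis on $\tilde s$). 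Both approaches should work; the paper's is more self-contained here and avoids re-invoking the normalization of $(\nabla,J)$ from Proposition \ref{classi}, while yours ties the argument more tightly to the earlier classification theorem.

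The paper also devotes a paragraph to constructing the explicit isomorphism $\Psi\colon P_G\to\hat P_G$, $\Psi(((y,q),x),(p,x))=((p,x),(y,g(p,q)))$ with $p\cdot g(p,q)=q$, and checking its $Mp(2n,\R)$- and $G$-equivariance; you have taken this as given. Your final assembly via $\kappa_{T_0}$ and Lemma \ref{eigenvalues} matches the paper's closing computation.
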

{\it Remark.} Note that we consider the spinorbundle $\mathcal{Q}$ associated to $P_{L,s}^J$ resp. $P^J_{G_L,s}$ by the representation 
\[
\hat \mu: G\rightarrow {\rm End}(\mathcal{S}(\R^n)),\quad  ((h,t,g),f)\ \mapsto\ \pi((h,t))L(g)f,
\]
compare (\ref{semirep}), restricted to $G^0_L$ resp. $G_L$. The Frobenius structure (semisimple, irreducible, standard) associated to $s:M\rightarrow \hat P_{G/G^0_L}$ is by Theorem \ref{genclass} then given by associating the line $\mathcal{A}^0_2$ to $P^J_{L,s}$, in particularly the connection $\tilde Z: TP^J_{L,s}\rightarrow \mathfrak{g}^0_{L, \C}$ in (\ref{duconn}) gives a connection (the 'first structure connection') on $\mathcal{L}=\mathcal{E}_M=P^J_{L,s}\times_{\hat \mu\circ i}\mathcal{L}_0,\ \mathcal{L}_0:=\C\cdot f_{0,iI} \subset \mathcal{S}(\R^n)$, with the notation of Proposition \ref{class} by the following procedure: we have $\hat \mu|Mp(2n, \R)=L$, as is obvious. On the other hand we define the covariant derivative associated to $\tilde Z: T P^J_{L,s}\rightarrow \mathfrak{g}^0_{L, \C}$ by the formula
\begin{equation}\label{repduconn}
\hat \nabla_{X} \varphi = [\overline s_U,du(X) + \kappa_{T_0}(\tilde Z \circ (\overline s_U)_{*}(X))u],\quad X\in \Gamma(TM),
\end{equation}
where $T_0=iI \in \mathfrak{h}$, $s_U:U\subset M\rightarrow P^J_{L,s}$ is a local section and $\kappa_{T_0}:\mathfrak{g}_{L,\C}\rightarrow {\rm End}(\mathcal{S}(\R^n))$ is as defined in Lemma \ref{fock} and $[s_U,u], u: U\subset M\rightarrow \mathcal{S}(\R^n)$ represents a local section $\varphi:U \rightarrow \mathcal{L}\subset \mathcal{Q}$. Note that by Lemma \ref{fock} $\kappa_{T_0}|\mathfrak{sp}(2n,\mathbb{R})=\Phi_T\circ L_*$ while $\kappa_{T_0}|\mathfrak{h}_n=\Phi_{T_0}\circ \hat \mu_*$. As we will see in the proof below $\kappa_{T_0}|\mathfrak{u}(2n,\mathbb{R})=L_*$, since $\Phi_{T_0}|\mathfrak{u}(2n,\mathbb{R})=id_{\mathfrak{u}(2n,\mathbb{R})}$, thus $\kappa_{T_0}\circ{\rm pr}_{\mathfrak{u}\cap \mathfrak{p}_L} \circ \hat Z^0_{L,s}=L_*\circ \hat Z^0_{L,s}$, as required by the above.
\begin{proof}
Consider elements of $P_G$ as representatives $\left(((y,q), x), (p, x)\right), x \in M, y \in \R^{2n}, p,q \in P, \pi_P(p)=\pi_P(q)=x$ as above while representatives of $\hat P_G$ as $(p,x), (h,g), p \in P, x \in M, \pi_P(p)=x, h \in \R^{2n}, g \in Mp(2n, \R)$. We claim there is a well-defined map
\begin{equation}\label{psi}
\Psi: P_G\rightarrow \hat P_G,\quad \Psi(((y,q), x), (p, x))= \left((p,x), (y, g(p,q))\right),
\end{equation}
where $g(p,q) \in Mp(2n, \R)$ is the unique element so that $p.g(p,q)=q$. Thus we claim that $\Psi$ is equivariant wrt to the respective $Mp(2n, \R)$-actions on the sets of representatives of $P_G$ resp. $\hat P_G$, thus $\Psi[((y,q), x), (p, x)]=[(p,x), (y, g(p,q))]$ and that the resulting factor map $\Psi_G: P_G\rightarrow \hat P_G$ is smooth and equivariant wrt to the respective $G$-actions on $P_G$ and $\hat P_G$. To see the first claim, let $g_1 \in Mp(2n, \R)$ and note that by definition
\[
\Psi(((y,q.g_1), x), (p.g_1, x))= \left((p.g_1,x), (y, \tilde g(p.g_1,q.g_1))\right)
\]
where $p.g_1.\tilde g(p.g_1,q.g_1)=q.g_1$. Since $p.g(p,q)=q$, we see that $\tilde g(p.g_1,q.g_1)=Ad(g_1^{-1})g(p,q)$ which shows the first assertion. The smoothness of $\Psi$ follows by considering the defining formula (\ref{psi}) relative to a local section $s:U \rightarrow P$ while the equivariance wrt to the right $G$-actions on $P_G$ and $\hat P_G$ is now obvious and left to the reader (note that $G$ acts on the second factor in $\hat P_G$ by the usual $G$-action, not by the adjoint).\\
By the discussion above Theorem \ref{genclass} the isomorphy classes of $\tilde G\subset \hat U(n)\subset G$-reductions of $\hat P_G$ are given by homotopy classes of sections $s: M\rightarrow \tilde P_{G/\tilde G}$ of
\[
\tilde P_{G/\tilde G}=\tilde P_G\times_{H_n\times_\rho Mp(2n, \R)}(H_n\times_\rho Mp(2n, \R))/{\tilde G})\rightarrow M,
\]
where here $\tilde P_G=P\times_{Mp(2n, \R), i} G$ and $i:Mp(2n, \R)\rightarrow G$ is the (standard) inclusion, we denote a representatve of such a reduction by $P_{\tilde G,s}$. The associated bundle $\tilde P_{G/\tilde G}$ remains the same when replacing $i$ by $Ad:\tilde G\rightarrow G$ (note that we have a given a canonical reduction of $\hat P_G$ to $i(\tilde G)\subset G$ if $P$ is reduced to $\tilde G$), thus also fixing the equivalence class of $P_{\tilde G,s}$ as a $\tilde G$-reduction of $\hat P^G$ wrt to the homomorphism $Ad:\tilde G\rightarrow G$. Let $s$ be the section of $\tilde P_{G/\tilde G}$ correponding to a fixed $\tilde G=\hat U_L(n)\subset \hat U(n)\subset Mp(2n,\R)$-reduction of $\hat P_G$ and a fixed semisimple irreducible standard Frobenius structure associated to this reduction as discussed in Theorem \ref{genclass}, then $P^J_{L,s}$ is the corresponding $\hat U_L(n)$-bundle. We can also replace $\tilde G\subset \hat U(n)\subset G$ by the embedding of $G_L\subset G$ as defined in (\ref{glemb}) and thus consider sections $\hat s:M\rightarrow P_{G/G_L}$. Then any section $s: M\rightarrow \tilde P_{G/\tilde G}$ with $\tilde G\subset \hat U(n)$ as associated to a semisimple standard irreducible Frobenius structure as above, fixes in a canonical way a homotopy class of sections $\hat s:M\rightarrow P_{G/G_L}$ (by projecting to the quotient) and the bundle $P^J_{G_L,s}$ will be the corresponding reduction of $\hat P_G$ to $G_L$. On the other hand, considering for $\hat U_L(n)\subset G$ resp. $G_L\subset G$ we also have the (standard) reductions $P^J_L$ resp. $P^J_{G,L}$ of $\hat P^J$ resp. of $\hat P_G$ as introduced above (\ref{glemb}), the existence of the former was assumed in this Proposition. By the definition of $G_L$ and by the assumption that $s:\hat P \rightarrow G/G^0_L$, projected down to $Mp(2n, \R)/U(n)_L\simeq G/G_L$, defines $P^J_L$, it follows that $P^J_{G_L,s}$ is naturally isomorphic to  $P^J_{G,L}$.\\
What remains to show is on one hand that the given symplectic connection $\hat Z:TP\rightarrow \mathfrak{sp}(2n,\mathbb{R})$ reduces to $P^J$ resp. to $\hat Z^J_L$ on $P^J_L$ and furthermore, that the extensions of $\hat Z^J_L$ to $P^J_{G,L}$ and $\hat P_G$ (cf. \ref{dia1}) reduce to $P^J_{G_L,s}$ and $P^J_{L,s}$ as in (\ref{dia2}). Analogously, we have to show that the extension of $\hat Z_{G,L}^{J,\omega}$ in (\ref{dia1}) to $\hat P_G$ reduces to $P^J_{G_L,s}$ and $P^J_{L,s}$ in (\ref{dia2}).\\
Note that an extension $\hat Z^0_G:TP_G\rightarrow \mathfrak{g}_{\C}$ of $\hat Z:TP\rightarrow \mathfrak{sp}(2n,\mathbb{R})$ always exists and is unique (by $R_G^*$-invariance). That a $\hat U(n)$-reduction $\hat Z^J$ of $\hat Z$ exists follows (as is well-known) from the fact that $Ad(\hat U(n))(\mathfrak{m})\subset \mathfrak{m}$ where $\mathfrak{m}\subset \mathfrak{sp}(2n,\mathbb{R})$, that is
\[
\mathfrak{sp}(2n,\mathbb{R})=\mathfrak{u}(2n, \R) \oplus \mathfrak{m}, \quad  \mathfrak{m}=\{X \in \mathfrak{gl}(2n, \R): XJ=-JX, X^t=X\}.
\]
Consider now the Iwasawa decomposition of $\mathfrak{sp}(2n,\mathbb{R})$, so $\mathfrak{sp}(2n,\mathbb{R})=\mathfrak{k}\oplus\mathfrak{a}\oplus \mathfrak{n}$, where $\mathfrak{k}=\mathfrak{u}(2n, \R)$ corresponds to the fixed point set of the Cartan involution on $\mathfrak{sp}(2n,\mathbb{R})$, $\mathfrak{a}$ is maximally abelian and $\mathfrak{n}$ is a nilpotent subalgebra. Then $\mathfrak{a}\oplus \mathfrak{n}$ is contained in a Borel subalgebra of $\mathfrak{sp}(2n,\mathbb{R})$ (cf. \cite{cap}, 3.2.8). Because of the latter, we have $\mathfrak{a}\oplus \mathfrak{n} \subset \mathfrak{p}_L$, on the other hand since $\mathfrak{sp}(2n,\mathbb{R})= \mathfrak{m}_1\oplus \mathfrak{p}_L$ for some $Ad(P_L)$-invariant Lie-subalgebra $\mathfrak{m}_1$, we can define $\mathfrak{m}_P=\mathfrak{u}(2n, \R)\cap \mathfrak{m}_1$ and have $\mathfrak{u}(2n, \R)=\mathfrak{u}(2n,\mathbb{R})\cap \mathfrak{p}_L \oplus \mathfrak{m}_P$, thus the desired $Ad(P_L\cap \hat U(n))$-invariant complement of $\mathfrak{u}(2n,\mathbb{R})\cap \mathfrak{p}_L$ in $\mathfrak{u}(2n, \R)$. This proves that $\hat Z^J$ further reduces to $\hat Z^J_L$, thus to $P^J_L$. Analogously, given the connections $\hat Z^J_{G_L,s}$ or $\hat Z^0_{G,L}$ on $P^J_{G_L,s}\simeq P^J_{G,L}$, these reduce to $\hat U(n)_L$-connections $\hat Z^J_{L,s}$ resp. $\hat Z^0_{L,s}$ on $P^J_{L,s}$ since $\mathfrak{u}(2n, \R)\cap \mathfrak{m}_1\oplus \mathfrak{h}_n$ is an $Ad(\hat U(n)\cap P_L)$-invariant complement of $\mathfrak{u}(2n,\mathbb{R})\cap \mathfrak{p}_L$ in $\mathfrak{g}$ (given $Ad(\hat U(n)\cap P_L)$ of course also preserves $\mathfrak{h}_n)$. Again analogously, $\hat Z^\omega_G$ on $\hat P_G$ reduces to $\hat Z^{J,\omega}_{G_L,s}$ with values in $\mathfrak{g}_{L,\C}$ since $\mathfrak{u}(2n, \R)\cap \mathfrak{m}_1 \oplus \mathfrak{m}$ is an $Ad(G_L)$-invariant complement to $\mathfrak{u}(2n, \R)\cap \mathfrak{p}_L\oplus \mathfrak{h}_n$ in $\mathfrak{g}_{\C}$.\\
It remains to show that $\hat Z_{G_L,s}^{J,\omega}$ and $\hat Z_{G,L}^{J,\omega}$ coincide wrt the isomorphism $P^J_{G_L,s}\simeq P^J_{G,L}$ and that $\hat w_L:= \hat Z_{G_L,s}^{J,\omega} -\hat Z^0_{G,L}=w_L$ indeed takes values in $\mathfrak{u}(2n,\mathbb{R})\cap \mathfrak{p}_L \oplus \mathfrak{l}$. Note that $\hat Z^{J,\omega}_{G_L,s}$ is {\it defined} as the reduction of $\hat Z^\omega_G$ on the right-most vertical arrow of (\ref{dia2}) to $P^J_{G_L,s}$ and $\hat Z^\omega_G$ is the extension of $\hat Z_{G,L}^{J,\omega}$ to $\hat P_G$. But since the sections $\hat s=\pi_{G/G_L}\circ s: \hat P \rightarrow G/G_L\simeq Mp(2n, \R)/\hat U(n)_L$ and $s_0:\hat P\rightarrow G/G_L$ defining $P^J_{G_L,s}$ and $P^J_{G,L}$ coincide (modulo isomorphy) by assumption and of course extension and subsequent reduction lead to the same connection (modulo the isomorphy), the claim follows. Note finally that $w_L$ takes values in $\mathfrak{u}(2n,\mathbb{R})\cap \mathfrak{p}_L\oplus \mathfrak{l}_{\C}$ since by definition of $P_L$ $\mathfrak{l}_{\C}$ is an $Ad(P_L)$-invariant complement of $\mathfrak{u}(2n,\mathbb{R})\cap \mathfrak{p}_L$ in $\mathfrak{u}(2n,\mathbb{R})\cap \mathfrak{p}_L \oplus \mathfrak{l}_{\C}$. Note further that for the arguments above, we can ignore whether a given extension is defined via $Ad$ or inclusion since a principal bundle homomorphism $\phi_{Ad}:P\rightarrow P\times_{H, Ad} G$ ($P$ a $H$ bundle, $H\subset G$ subgroup) corresponding to $Ad$ preserves given horizontal distributions $\mathcal{H}\subset TP, \mathcal{H}_G \subset T(P\times_{H, Ad} G)$ if and only if the homomorphism $i:P\rightarrow P\times_{H, Ad} G$ given by $i(p)=(p,(0,Id))$ preserves the same. But $P\times_{H, Ad} G$ is equivalent to $P\times_{H, id} G$ as a $G$-extension of $P$ by the above remarks.\\
Considering $s$ as a map $s:M\rightarrow \hat P_{G/G^0_L}$ we now show that if $\hat s=\tilde {\rm pr}_1\circ s:M\rightarrow T^*M$ satisfies $d\hat s=0$ we have $\hat Z^0_{L,s}=\hat g^*\hat Z^J_{L}$ when considering $P^J_{L,s}$ and $P^J_{L}$ as subsets of $\hat P_G$ and with $\hat g:\hat P_G \rightarrow G/G^0_L$ associated to $s, s_0:\hat P_G \rightarrow G/G^0_L$ as indicated above. Fix a local trivialization  $U\subset \hat P_G\simeq M\times G$ and consider the $\hat U_L(n)$-bundle $P^J_L$ as a subset $P^J_L \subset \hat P_G$ given by the inclusion $P^J_L\hookrightarrow  \hat P_G$ of (\ref{reductionbla}) (considering $P^J_L\subset P^J_{G,L}$). Note that the horizontal distributions $\mathcal{H}^0\subset T\hat P_G$ corresponding to $\hat Z^0_G$, the extension of $Z^J_L$ to $\hat P_G$ are given at any point $(h, g) \in G=H_n\times_\rho Mp(2n, \R), h \in H_n, g \in \hat U_L(n)$ in $U$ by $\mathcal{H}_{(g^{-1}h,g)}=(R_h)_*(\mathcal{H}_{(0,g)})$, where $(R_h)_*$ is the differential of the right translation $R_h:P^J_L\subset \hat P_G \rightarrow \hat P_G, \  (0,g)\mapsto (g^{-1}h,g)$, restricted to $TU\cap TP^J_L$ and $\mathcal{H}_{(0,g)}\subset T_{(0,g)}(P^J_L\cap U)$ is the horizontal subspace associated to $Z^J_L$ at $(0,g)\in P^J_L\cap U$. Considering the horizontal distribution $\mathcal{H}_s\subset P^J_{L,s}$ accociated to $Z^0_{L,s}$ as a family of subspaces $\mathcal{H}_s\subset T\hat P_G|P^J_{L,s}$ we thus see again by right-invariance of $\mathcal{H}_s$ that $\hat Z^0_{L,s}=\hat g^*\hat Z^J_{L}$ if and only if $\hat g_*:T\hat P_G\rightarrow T \hat P_G$, restricted to $\mathcal{H}\subset T(P^J_L\cap U)\subset \hat T(\hat P_G\cap U)$ satisfies if $(h,g) \in {\rm im}(g)$ and $(0,g)= \hat g^{-1}((g^{-1}h,g))$
\[
(R_h)_*((0,g), (0,X,a))=\hat g_*((0,g), (0, X, a)),  \ (X ,a) \in \mathcal{H}\subset T_xM\oplus \mathfrak{u}(2n,\mathbb{R})\cap \mathfrak{p}_L,\ X\neq 0,
\]
and since $(R_h)_*(0,X,a)=(a^{-1}h, X, a)$ by a direct calculation involving (\ref{semid}) while by equivariance of $s, s_0$ we have $\hat g_*((0,g), (a^{-1}h+g^{-1}(d_1\hat g)_1, X, a))$, where $(d_1\hat g)_1$ is the first coordinate of the partial differential of $\hat g|U\cap P^J_{L}:U\cap P^J_{L}\subset \hat P_G \rightarrow G/G^0_L$ into the direction of the first ($H_n$-)coordinate. Note that by our assumption on $s$, namely that $\tilde s=\pi_{Mp}\circ s:M\rightarrow P\times_{Mp(2n,\R)} Mp(2n, \R))/\hat U_L(n)$ determines the pair $(P^J_L, J)$ we can assume that $d_2\hat g=0$. Since $d\hat s=0$ implies $(d_1\hat g)_1=0$, we arrive at the assertion.\\
Note that for $L=L_0$, the subgroup $\mathfrak{P}_L\cap U(n)\subset Sp(2n, \R)$ equals $O(n)$, so given an element of $p \in P^J_L$ we have for any $X \in T_xM$ so that $\pi_L(p)=x$, where $\pi_L:P_L^J\rightarrow M$ a unique splitting $T_xM=L_1\oplus L_2$ and an isomorphism $\Phi_p:T_xM\rightarrow \R^{2n}$ so that the Lagrangian splitting $T_x M=L_1\oplus L_2$ induced by the $O(n)$-reduction of $P^J$ to $P^J_L$ is mapped under $\Phi_p$ to $\R^n\oplus \R^n$, the standard Lagrangian splitting. We then set $\mathfrak{g}^0_{L,\C}= \mathfrak{u}(2n,\mathbb{R})\cap \mathfrak{p}_L \oplus  \mathfrak{l}_{\C}$, write $X=X_1+X_2, X \in T_xM$ wrt the splitting above and define $w_L \in \Omega^1(M,\underline {\mathfrak{g}^0_{L,\C}})$ for any $p \in T P^J_{L,s}$ by
\[
w_L:= \hat Z_{G,L}^{J,\omega} -\hat Z^0_{G,L}=((p, (g,h)), (0,\sum_{l=1}^{n}\left({\rm Ad}(g^{-1})(h)_l\Phi_p(X)_l+i {\rm Ad}(g^{-1})(h)_{l+n}\Phi_p(X)_{l+n}\right)a_l)
\] 
for $(g,h=(h_1,\dots, h_{2n})) \in G_L$, using (\ref{fouriermu}). It is then easy to verify that $w_L \in \Omega^1(M,\underline {\mathfrak{g}^0_{L,\C}})$ and using (\ref{repduconn}) together with the equality $\hat Z^0_{L,s}=\hat g^*\hat Z^J_{L}$ proven above we see that (\ref{duconn}) defines the Dubrovin connection as defined in (\ref{dubrovin1}) for $z=1$. The assertion $\kappa_{T_0}|\mathfrak{u}(2n,\mathbb{R})=L_*$ from the remark below the proposition follows by rewriting the spanning elements of $\mathfrak{mp}(2n,\mathbb{R})$ of Proposition \ref{diffmp} for the case $\mathfrak{u}(2n,\mathbb{R})$, this is for instance done in \cite{habermann}.
\end{proof}
{\it Remark.} Note that the connection (\ref{duconn}) can be interpreted in some sense as 'half' of a Cartan geometry (cf. Cap/Slovak \cite{cap}) of type $(G,U(n))$ over $M$, since $TM$ is pointwise isomorphic to $H_n$, we hope to pursue this viewpoint in a subsequent paper. The $1$-form $w_0 \in \Omega^1(M,\underline {\mathfrak{g}^0_{L,\C}})$ constructed in the proof above will be in the following referred to occasionally as the 'Higgs field' of the semisimple standard irreducible Frobenius structure $(\Omega, \mathcal{L}, \nabla)$ and the parabolic subgroup $\mathfrak{P}_L\subset Sp(2n, \R)$. Note further that the set of principal bundles $P^J_{L,s}$ and connections $\hat Z_{G_L,s}^J$ (first structure connection) resp. $\hat Z^0_{L,s}$ determining topology and geometry of a semisimple (irreducible, standard) Frobenius structure are essentially contained in the 'universal bundle' $\hat P_G$ resp. its tautological connection $\hat Z^\omega_{G}$, which is why these two objects should be regarded as 'classifying objects' for the respective structures in this situation. Note that a similar, but more complicated discussion as Proposition \ref{higgs} (and its Corollaries below) can be given in the case of indecomposable standard Frobenius structures in the sense of Theorem \ref{genclassN}, this will be done in \cite{kleinlag}. \\
Consider now a given connection $1$-form $Z:T P^J_{L,s}\rightarrow \mathfrak{g}_{L,\C}$ whose curvature $\Omega_{Z}\in \Omega^2(P^J_{L,s},\mathfrak{g}_{L, \C})$, defined in slight extension of the usual notion of curvature for connections $Z:P\rightarrow \mathfrak{g}$ on $G$-bundles $P$, is given by
\[
\Omega_{Z} =d Z + [Z, Z],
\]
where here, $[\cdot, \cdot] \in \Omega^2(P, V)$ for a given $G$-principal bundle and a given vector space $V$ is the usual bracket on $V$-valued $1$-forms on $P$ (cf. \cite{friedrich}), specified to tensorial $1$-forms on $P^J_{L,s}$ with values in $V=\mathfrak{g}_{L,\C}$. Since the curvature forms $\Omega_{\tilde Z}, \Omega_{\hat Z}$ associated to $\tilde Z, \hat Z:=i_*\hat Z^0_{L,s}:T P^J_{L,s}\rightarrow \mathfrak{g}_{L,\C}$, $\tilde Z$ as in Proposition \ref{higgs}, are related by 
\begin{equation}\label{maurercartan}
\Omega_{\tilde Z}- \Omega_{\hat Z}= D_{\hat Z}w_L+\frac{1}{2}[w_L, w_L],
\end{equation}
where $w_L \in \Omega^1(M,\underline {\mathfrak{g}^0_{L, \C}})$ is as defined in the proof of Proposition \ref{higgs} and considered as a tensorial $1$-form with values in $\mathfrak{g}^0_{L,\C}$, thus an element of $\Omega^1(P^J_{L,s}, \mathfrak{g}^0_{L,\C})$ using the isomorphism described above the Proposition, while $D_{\hat Z}w_0=dw_L+ [\hat Z, w_L]$, we have as an immediate Corollary the following. Note that for $P_G$ as in (\ref{tangent}), we denote by $pr_1:P_G\rightarrow T^*M$ the map $pr_1((y,q), x), (p, x))=((gy,q),x),\ x \in M,\ y \in \R^{2n},\ p,q \in P y \in \R^{2n},\ p,q \in P^J, \ q=p.g,\ g \in \hat U(n)$.
\begin{folg}\label{curvature}
Assume, given the assumptions and notations of of Proposition \ref{higgs}, that the curvature $\Omega_{\hat Z^J}$ of $\hat Z^J:P^J\rightarrow \mathfrak{u}(2n,\mathbb{R})$, that is the $\hat U(n)$-reduction of the given symplectic connection $\hat Z:P\rightarrow \mathfrak{sp}(2n,\mathbb{R})$ as defined in Proposition \ref{higgs}, vanishes. Assume furthermore that the section $s:M\rightarrow P_{G/G_L^0}$ defining the semisimple, irreducible, standard Frobenius structure is {\it closed} in the sense that ${\rm pr}_1\circ s:M\rightarrow T^*M$ is closed when using the description (\ref{tangent}) of $\hat P_G\simeq P_G$ and noting that $TM\simeq T^*M$ when considering the $G_L$-reduction $P^J_{L,G}$ of $\hat P_G$ as in (\ref{reductionbla}). Then the same vanishing of the curvature holds for $\Omega_{\tilde Z}$, that is
\[
D_{\hat Z}w_L+\frac{1}{2}[w_L, w_L]=0.
\]
In especially, the Dubrovin connection $\tilde  \nabla$ induced by $\tilde Z$ on the subbundle $\mathcal{L}\subset \mathcal{Q}$ associated to $P_{L,s}^J$ and the given section $s:M\rightarrow P_{G/G_L^0}$ as above (\ref{repduconn}), is flat, that is $(\tilde \nabla)^2 \in \Omega^2(M, End(\mathcal{L}))$ vanishes, so that locally on $M$, there are $\tilde \nabla$-parallel sections of $\mathcal{L}$.
\end{folg}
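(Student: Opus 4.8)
Proof proposal (plan).

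The plan is to deduce the statement directly from the Maurer--Cartan--type identity (\ref{maurercartan}) together with the hypotheses. First I would observe that by Proposition \ref{higgs}, under the assumption $d\hat s=0$ (which is exactly the closedness of $s$ hypothesized here), the connection $\tilde Z$ in (\ref{duconn}) decomposes as $\tilde Z = i_*\hat Z^0_{L,s} + w_L\circ\phi_{Ad}$ with $\hat Z^0_{L,s}=\hat g^*\hat Z^J_L$. Since $\hat g:\hat P_G\to G/G^0_L$ is a bundle automorphism (more precisely the gauge transformation carrying $P^J_L$ to $P^J_{L,s}$), the curvature of $\hat Z^0_{L,s}$ is the $\hat g$-pullback of the curvature of $\hat Z^J_L$; and the curvature of $\hat Z^J_L$ is, as the $\hat U_L(n)$-reduction of $\hat Z^J$, just the restriction of $\Omega_{\hat Z^J}$, which vanishes by hypothesis. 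Hence $\Omega_{\hat Z}=0$ where $\hat Z=i_*\hat Z^0_{L,s}$. Feeding this into (\ref{maurercartan}), the left-hand side $\Omega_{\tilde Z}-\Omega_{\hat Z}$ equals $\Omega_{\tilde Z}$, so it remains to argue $\Omega_{\tilde Z}=0$.

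Next I would show $\Omega_{\tilde Z}=0$ by the same closedness argument applied one level up, at the stage of the tautological connection $\hat Z^{J,\omega}_{G,L}=\hat Z^J_{G,L}+w_0$ on $P^J_{G,L}$. The point is that $\tilde Z$ is (by Proposition \ref{higgs}) the reduction to $P^J_{L,s}$ of the connection $\hat Z^\omega_G$ on $\hat P_G$, which is the extension of $\hat Z^{J,\omega}_{G,L}$; extension and reduction do not change curvature (up to the identifications), so $\Omega_{\tilde Z}$ is governed by $\Omega_{\hat Z^{J,\omega}_{G,L}} = \Omega_{\hat Z^J_{G,L}} + D_{\hat Z^J_{G,L}}w_0 + \tfrac12[w_0,w_0]$. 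The first term vanishes because $\hat Z^J$ is flat and extension preserves this. The covariant-exterior-derivative term $D_{\hat Z^J_{G,L}}w_0$ and the quadratic term $\tfrac12[w_0,w_0]$ I would compute using the explicit formula (\ref{fouriermu}) for $w_0$: since $w_0$ takes values in the abelian subalgebra $\mathfrak{l}_\C$ (its $\mathfrak h_n$-component lies in the maximally abelian $L$-piece), the bracket $[w_0,w_0]$ is controlled by $[\mathfrak l_\C,\mathfrak l_\C]=0$ except possibly for the central term of $\mathfrak h_n$; and the closedness condition $d\hat s=0$ — equivalently $d({\rm pr}_1\circ s)=0$, i.e. the one-form $\hat s$ on $M$ with values in $T^*M\simeq \R^{2n}$ is closed — is precisely what makes $d$ of the coefficient functions $\mathrm{Ad}(g^{-1})(h)_l\Phi_p(X)_l$ appearing in $w_0$ vanish against the flat frame, killing $D_{\hat Z^J_{G,L}}w_0$ as well. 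This is the same local normal-Darboux-coordinate computation already carried out in the proof of Proposition \ref{classi} and invoked in Theorem \ref{genclass}, so I would cite that rather than redo it.

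Combining: $\Omega_{\tilde Z}=0$, and then by (\ref{maurercartan}) with $\Omega_{\hat Z}=0$ we get $D_{\hat Z}w_L+\tfrac12[w_L,w_L]=0$, which is the displayed identity. Finally, the flatness of the induced Dubrovin connection $\tilde\nabla$ on $\mathcal L\subset\mathcal Q$ follows formally: $\tilde\nabla$ is, by (\ref{repduconn}), the covariant derivative associated via $\kappa_{T_0}$ to $\tilde Z$, and $\kappa_{T_0}$ is a Lie algebra homomorphism (Lemma \ref{fock}), so the curvature $(\tilde\nabla)^2\in\Omega^2(M,\mathrm{End}(\mathcal L))$ is the image under $\kappa_{T_0}$ of the curvature form $\Omega_{\tilde Z}$, hence zero; then the existence of local $\tilde\nabla$-parallel sections of $\mathcal L$ is the standard integrability theorem for flat connections. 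The main obstacle I expect is the middle step — verifying cleanly that the ``extension then reduction'' operations on $\hat Z^\omega_G$ really do preserve curvature through the chain $P^J_L\subset P^J\subset P^J_{G,L}\subset P_G$ with its several structure-group changes (some via inclusion, some via $\mathrm{Ad}$), and that the abelian-plus-closedness cancellation in $D w_0+\tfrac12[w_0,w_0]$ is not spoiled by the central $\mathbb R$-factor of $\mathfrak h_n$; everything else is bookkeeping already set up in Proposition \ref{higgs} and its proof.
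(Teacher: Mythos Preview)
Your proposal is correct but takes a more roundabout route than the paper. You argue in two independent stages --- first $\Omega_{\hat Z}=0$, then $\Omega_{\tilde Z}=0$ via the extension/reduction chain through the tautological connection on $P^J_{G,L}$ --- and only \emph{afterwards} read off the displayed identity from (\ref{maurercartan}). The paper does the opposite and shorter thing: it shows the displayed identity $D_{\hat Z}w_L+\tfrac12[w_L,w_L]=0$ \emph{directly}, by observing (i) $w_L$ takes values in the maximally abelian subspace $\mathfrak l_{\C}\subset\mathfrak g_{L,\C}$, hence $[w_L,w_L]=0$; (ii) $(\mathrm{im}\,\hat Z)\cap\mathfrak l_{\C}=\{0\}$; and (iii) the closedness hypothesis $d\hat s=0$ gives $dw_L=0$. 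Then $\Omega_{\tilde Z}=0$ drops out of (\ref{maurercartan}) together with $\Omega_{\hat Z}=0$. So your detour through $w_0$, the extension $\hat Z^\omega_G$, and the ``extension then reduction preserves curvature'' step is unnecessary --- the argument lives entirely at the level of $w_L$ on $P^J_{L,s}$.

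Your stated worry about the central $\mathbb R$-factor of $\mathfrak h_n$ spoiling $[w_L,w_L]=0$ is unfounded: $w_L$ lands in $\mathfrak l_{\C}$, the complexified Lagrangian $L$, and the Heisenberg bracket on $\mathfrak l$ is $[(v,0),(w,0)]=(0,\omega_0(v,w))$, which vanishes precisely because $L$ is Lagrangian. This is exactly what the paper means by ``$w_L$ takes values in the maximally abelian subspace $\mathfrak l_{\C}$''. Your approach does buy you a slightly more structural picture (everything is inherited from the tautological connection), but the paper's one-line computation is all that is actually needed.
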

\begin{proof}
Given the above formulas, the assertion is immediate when considering that $w_L$ takes values in the maximally abelian subspace $\mathfrak{l}_{\C}\subset \mathfrak{g}_{L,\C}$ while $({\rm im} \hat Z)\cap \mathfrak{l}_{\C}=\{0\}$ and under the above assumptions, we have $dw_L=0$.
\end{proof}
{\it Examples.} We finally return to our example in the introduction, at least in its most simple form: given a closed section $l:N\rightarrow T^*N=M$ of a cotangent bundle over a $N$-dimensional manifold $N$, $T^*N$ carrying the canonical symplectic form $\omega$, that is $\mathfrak{l}={\rm im}(l)$ is a Lagrangian submanifold, that is $l^*\omega=0$, we can tautologically consider $l$ as a map $\tilde l:N\rightarrow T^*M|N$ ($T^*M|N$ here means $i^*(T^*M)$ if $i:N\hookrightarrow M$) by considering pointwise $l(x)=(x,p) \in T^*N$ and writing $\tilde l(x)=((x, 0), (p,0))=(\tilde x, \tilde p)\in T^*M$ and extend $\tilde l$ to an open neighbourhood $N \subset U\subset M$ in $M$ to give a closed smooth section $s_l: U\subset M\rightarrow T^*M|U$ so that $\tilde l_U|N=\tilde l$. Then, considering $T^*M$ with its standard symplectic form $\omega_0$, choosing a symplectic connection $\nabla$ on $T^*M|U$ and a compatible almost complex structure $J$ s.t. $\nabla J=0$ on $T^*M|U$, understanding $s_l$ as a closed section of $T^*M$ over $U$ and lifting it to a section $\hat s_l:U\subset M\rightarrow i^*P_{G/G_L^0}\simeq i^*(\pi_P^*(T^*M) \times_{Mp(2n, \R)} P/G_L^0)$ by setting $\hat s_l(x)= ((p,\tilde s_l), p).G_L^0$ for $x \in M$ if $s_l(x)=(p, \tilde s_l(x)),\ \tilde s_l(x) \in \R^{2n},\ p\in P$, where $\hat P_{G/G^0_L}\simeq P_G/G_L^0$. Here, $i:U\subset M$ denotes inclusion and $\hat P_G$ over $M$ is reduced to $G_L^0\simeq \hat U(n)_L$ as in (\ref{reductionbla}) and $L=\R^n\times \{0\}$, so that $\hat U(n)_L\simeq \hat O(n)\subset \hat U(n)$ and the $G_L$- resp. $G_L^0\simeq \hat O(n)$-reductions $P^J_{G,L}$ resp. $P^J_{L,s_l}$ of $i^*\hat P_G$ (notation as above) are fixed by the given almost complex structure on $T^*M$ and the union of the cotangent fibres $V^*M\subset T^*M$ over $U$. Thus we arrive (after possibly homotoping $J$ and $\nabla$ preserving the isomorphy class of $P^J_{G,L}$ as described in Proposition \ref{classi}) at an irreducible standard (in general singular) Frobenius structure
\[
\mathcal{L}=\mathcal{E}_U= P^J_{L,s_l}\times_{G_L^0,\tilde\mu_2\circ i}\mathcal{A}^0_2
\]
over $U\subset M$ by using Theorem \ref{genclass} (using notation from its proof) with first structure connection $\tilde Z:T P^J_{L,s_l}\rightarrow \mathfrak{g}^0_{L, \C}$ as given by Proposition (\ref{higgs}) whose curvature vanishes by Corollary \ref{curvature} if and only if the symplectic connection chosen on $(U\subset M,\omega)$ is flat. Denoting by $i_N:N\hookrightarrow U$ the inclusion, we can consider the pullback $i_N^*\mathcal{L}$ and by using the assignment (\ref{frobeniusm}) one gets a well-defined Frobenius multiplication $\Omega$ of elements of $TN$ resp. ${\rm Sym}^*(TN)$ on $i_N^*\mathcal{L}$. Note that alternatively in the sense of the discussion below Theorem \ref{genclass}, we can understand this Frobenius structure as the image of the section of the bundle $\mathcal{E}_{{\rm Gr}_1(\mathcal{W})}=P^J_L\times_{G_L^0, {\rm ev}_1\circ \tilde \mu_2\circ (i, i_{\mathcal{W}})}{\rm Gr}_1(\mathcal{W})$ given by $s_l$ as described in the proof of Theorem \ref{genclass}, where the implicit embedding is here $i_{\mathcal{W}}:{\rm Gr}_1(\mathcal{W})\rightarrow \mathcal{A}_1$. Using Lemma \ref{eigenvalues}, it then follows immediately that
\[
\mathfrak{l}\simeq {\rm Spec}\left(\frac{\mathcal{O}_{T^*N}}{I_\Omega}\right)
\]
where $I_\Omega$ is the ideal in $\mathcal{O}_{T^*N}$ generated by the characteristic polynomial of $\Omega$, acting on $i_N^*\mathcal{L}$. If the Lagrangian section of $T^*N$ given by $l$ is furthermore {\it exact}, that is if it is the time $1$-image of the zero section of $T^*N$ under a Hamiltonian flow, one can show that $l$ gives rise to a {\it rigid} (in general singular) and {\it self-dual} irreducible standard Frobenius structure, but this will be done in \cite{kleinlag}.\\
On the other hand, to give an example in the sense of Theorem \ref{genclassN}, assume we have a Lagrangian embedding $l:\tilde L\rightarrow T^*N=M$ where $N$ is an $n$-dimensional manifold and $T^*N$ carries the canonical symplectic form, that is $l^*\omega=0$, so that with $L=l(\tilde L)$ $\pi:L\subset T^*N\rightarrow N$ is singular on a connected closed $1$-codimensional submanifold $S\subset L$, that is $S=\{x \in L:{\rm dim }({\rm ker}((d\pi)_x)\cap dl_*(T_{l^{-1}(x)}\tilde L))= 1\}$ is a closed connected codimension-$1$ submanifold of $L$ and the set of all $x \in L$ where  ${\rm ker}((d\pi)_x)\cap dl_*(T_{l^{-1}(x)}\tilde L)>1$ is empty. Denote $S=S_1$ and denote by $S_{1,0}$ the set of points of $S_1$ so that $\pi|S_1$ is non-singular, that is $S_{1,0}=\{x \in S_1: {\rm ker}(d(\pi|S)_x)=0, d(\pi|S):TS\rightarrow TN\}$ and denote by $S_{1,1}\subset S_{1}$ the set of points $x \in S_1$ where ${\rm dim}({\rm ker}(d(\pi|S)_x))=1$. Continuing like this (defining $S_{1,1,0}$ as the set of points in $S_{1,1}$ where $\pi|S_{1,1}$ is non-singular and $S_{1,1,1}$ as those points of $S_{1,1}$ where the rank of its differential drops $1$ etc.) we arrive at the set of Thom-Boardman singularities of $(\pi,L)$ of type $S_{1,1,1,0,\dots}$ which is generically a smooth submanifold of $L$ of codimension $k$ in $L$ (closed in the case of $S_{1,\dots,1}$), where $k$ denotes the number of $1$'s in the notation $S_{1,1,\dots,1,1,0,\dots}\subset L$. As above, we can tautologically consider $l$ as a map $\tilde l:\tilde L\rightarrow T^*M|N$ by considering pointwise $l(z)=(x,p) \in T^*N,\ z \in \tilde L$ and writing $\tilde l(z)=((x, 0), (p,0))=(\tilde x, \tilde p)\in T^*M$ and extend $\tilde l$ to a map $\tilde l^e=\tilde L\times [0,1]^n \rightarrow T^*M$ so that ${\rm im}(\tilde l)=L^e$ is a Lagrangian submanifold of $T^*M$ (with the standard symplectic form $\omega^e$ induced by $\omega$ and $J$ on $M$) and $\tilde l^e|\tilde L\times \{0\}=\tilde l$. In any case we factor $\tilde l$ through a section $s_l: U \subset M \rightarrow T^*M|U$, $ds_l=0$ so that $L\subset U\subset M$ and $\tilde l^e= s_l\circ i_U$, where $i_U:\tilde L\times [0,1]^n\rightarrow M$ is the embedding of a tubular (Darboux-)neighbourhood $U$ of $L$ into $M$, that is $i_U|\tilde L\times \{0\}=l$ and ${\rm im}(i_U)=U$. We can then assume that there are smooth codimension $k$ submanifolds $S_k$ of $L^e$ for any $k \in \{1,\dots, n\}$ so that $S_k\cap {\rm im}(\tilde l)=S_k\cap s_L(L)=s_L(S_{1,1,..0})$ with $k$ $1$'s appearing in $S_{1,1,..0}\subset L$. Furthermore, assuming that the normal bundle $N_k$ of any $S_{1,1,..0} \subset L$ in $TL$ is trivial, we can assume that $i_U$ is chosen so that the normal bundle $N_k^e$ of $s_l^{-1}(S_k)\subset {\rm im}(i_U)$ in $TU$ is also trivial. \\
We are thus in the situation of Theorem \ref{genclassN} and with the notations above this theorem, let $\tilde G= G_{L}^0\subset G$ (with $G_L^0$ as defined above (\ref{reductionbla2})), then we can argue that the closed section $s_l: U \subset M \rightarrow T^*M|U$ defines for any $k \in \{1,\dots,n\}$ (note that $S_k$ may be empty for $k\geq k_0$) a section $\hat s_l:U\cap S_k\subset M\rightarrow i_k^*P_{G/\tilde G_{k}}\simeq i_k^*(\pi_P^*(T^*M) \times_{Mp(2n, \R)} P/\tilde G_{k})$ (where $i_k:S_k\hookrightarrow U\subset M$ and $\tilde G_k\subset \tilde G$ is as defined in the proof of Theorem \ref{genclassN} for any $k \in \{1, \dots, n\}$) by setting $\hat s_l(x)= ((p,\tilde s_l), p).\tilde G_k$ for $x \in M$ if $s_l(x)=(p, \tilde s_l(x)),\ \tilde s_l(x) \in \R^{2n},\ p\in P$, where we use $\hat P_{G/G^0_{L,k}}\simeq P_G/G_{L,k}^0$ with $G_{L,k}^0=\tilde G_k$. We thus associate to $\hat s_l$ the line bundle $\mathcal{E}_U \rightarrow U$ which is given over $S_k\subset U, k \in \{1,\dots,n\}$ as
\[
\mathcal{L}|S_k=\mathcal{E}_U|S_k=P_{\tilde G_k,s}\times_{\tilde G_k, \mu^N_2\circ i}(\mathcal{A}^0_2)^N.
\]
Denoting by $i_L:L\hookrightarrow U\subset M=T^*N$ the inclusion, we can consider the pullback bundle $i_L^*\mathcal{L}\rightarrow L$ and by using the assignment (\ref{frobeniusm}) one gets (again, after possibly homotoping $J$ and $\nabla$ preserving the isomorphy class of $P_{\tilde G_k}$ as indicated in Proposition \ref{classi}) an (in general, singular) standard indecomposable Frobenius structure in the sense of Definition \ref{frobenius}, i.e. a well-defined Frobenius multiplication $\Omega$ of elements of $TL$ resp. ${\rm Sym}^*(TL)$ on $i_L^*\mathcal{L}$. Note that alternatively in the sense of the discussion in the proof of Theorem \ref{genclassN}, we can understand this Frobenius structure as the image of the section of the bundle $\mathcal{E}_{{\rm Gr}_1(\mathcal{W})}|S_k=P_{\tilde G_k}\times_{\tilde G_k, {\rm ev}_1\circ \tilde \mu_2^N\circ (i, i_{\mathcal{W}})}{\rm Gr}_N(\mathcal{W})\rightarrow S_k\subset U$ given by $s_l$ as described in the proof of Theorem \ref{genclassN}, where the implicit embedding is here $i_{\mathcal{W}}:{\rm Gr}_N(\mathcal{W})\rightarrow \mathcal{A}_1^N$. Using Lemma \ref{eigenvalues}, it then follows in the same sense as above that
\[
L \simeq {\rm Spec}\left(\frac{\mathcal{O}_{T^*L}}{I_{\Omega,min}}\right)
\]
where $I_{\Omega,min}$ is the ideal in $\mathcal{O}_{T^*L}$ generated by the minimal polynomial of $\Omega$, acting on $i_L^*\mathcal{L}$, since $L\simeq {\rm Supp}(i_L^*\mathcal{L})$. If the Lagrangian submanifold of $T^*N$ given by $l:\tilde L\hookrightarrow T^*N$ is furthermore {\it exact}, i.e. if it is the time $1$-image of the zero section of $T^*N$ under a Hamiltonian flow, one can show as above that $l$ gives rise to a (weakly, in general singular) {\it rigid} and {\it self-dual} indecomposable standard Frobenius structure, but this will be done in \cite{kleinlag}.

\end{document}